\documentclass[12pt, lettersize]{amsart}
\usepackage{setspace}

\usepackage{amssymb, amsmath, amsthm}   
\usepackage{mathrsfs}
\usepackage{stmaryrd}
\usepackage{newlfont}
\usepackage{amscd}
\usepackage{mathtools}
\usepackage{bm}
\usepackage[all]{xy}
\usepackage{tikz-cd}
\usepackage{graphicx}
\usepackage{hyperref}
\usepackage{framed}
\usepackage{comment}

\usepackage[normalem]{ulem}

\usepackage{enumitem}

\usepackage{textcomp}

\usepackage{amsbsy}

\usepackage[OT2, T1]{fontenc}

\newtheorem{thm}{Theorem}[section]

\newtheorem{thm-defn}[thm]{Theorem/Definition}
\newtheorem{lem}[thm]{Lemma}
\newtheorem{prop}[thm]{Proposition}
\newtheorem{cor}[thm]{Corollary}

\theoremstyle{definition}
\newtheorem{defn}[thm]{Definition}

\newtheorem{eg}[thm]{Example}

\newtheorem{construction}[thm]{Construction}
\newtheorem{set-up}[thm]{Set-up}

\theoremstyle{remark}
\newtheorem{rem}[thm]{Remark}

\numberwithin{equation}{section}

\usepackage{relsize}
\usepackage[bbgreekl]{mathbbol}
\usepackage{amsfonts}

\DeclareSymbolFontAlphabet{\mathbb}{AMSb}
\DeclareSymbolFontAlphabet{\mathbbl}{bbold}
\newcommand{\Prism}{{\mathlarger{\mathbbl{\Delta}}}}

\DeclareMathOperator{\Gal}{Gal}

\DeclareMathOperator{\Spec}{Spec}
\DeclareMathOperator{\Spa}{Spa}
\DeclareMathOperator{\Spf}{Spf}

\DeclareMathOperator{\Ker}{Ker}

\DeclareMathOperator{\Fil}{Fil}
\DeclareMathOperator{\Hom}{Hom}

\DeclareMathOperator{\Mor}{Mor}
\DeclareMathOperator{\Sh}{Sh}

\DeclareMathOperator{\Ab}{Ab}

\DeclareMathOperator{\Mod}{Mod}
\DeclareMathOperator{\Tot}{Tot}

\newcommand{\pt}{\mathrm{pt}}
\newcommand{\gp}{\mathrm{gp}}
\newcommand{\op}{\mathrm{op}}
\newcommand{\id}{\mathrm{id}}
\newcommand{\CR}{\mathrm{CR}}

\newcommand{\Vect}{\mathrm{Vect}}
\newcommand{\Loc}{\mathrm{Loc}}
\newcommand{\perf}{\mathrm{perf}}
\newcommand{\str}{\mathrm{str}}

\newcommand{\an}{\mathrm{an}}
\newcommand{\aff}{\mathrm{aff}}

\newcommand{\cris}{\mathrm{cris}}
\newcommand{\CRIS}{\mathrm{CRIS}}
\newcommand{\dCRIS}{{\delta\text{-}\mathrm{CRIS}}}

\renewcommand{\inf}{\mathrm{inf}}
\newcommand{\st}{\mathrm{st}}
\newcommand{\HK}{\mathrm{HK}}

\newcommand{\dR}{\mathrm{dR}}
\newcommand{\et}{\mathrm{\acute{e}t}}
\newcommand{\proet}{\mathrm{pro\acute{e}t}}
\newcommand{\proetdR}{\proet\mhyphen\dR}

\newcommand{\Perfd}{\mathrm{Perfd}}
\newcommand{\conv}{\mathrm{conv}}
\newcommand{\pd}{\mathrm{pd}}

\newcommand{\BK}{\mathrm{BK}}
\newcommand{\Br}{\mathrm{Br}}
\newcommand{\rel}{\mathrm{rel}}
\newcommand{\can}{\mathrm{can}}
\newcommand{\logcris}{\mathrm{log\text{-}cris}}

\newcommand{\N}{\mathbf{N}}
\newcommand{\F}{\mathbf{F}}
\newcommand{\Z}{\mathbf{Z}}
\newcommand{\Q}{\mathbf{Q}}

\newcommand{\A}{\mathbf{A}}
\newcommand{\B}{\mathbf{B}}

\newcommand{\Ainf}{\mathbf{A}_{\mathrm{inf}}}

\renewcommand{\log}{\mathrm{log}}

\newcommand{\fkm}{\mathfrak{m}}

\newcommand{\fkF}{{\mathfrak F}}

\newcommand{\fkS}{{\mathfrak S}}

\newcommand{\calD}{\mathcal{D}}
\newcommand{\calE}{\mathcal{E}}
\newcommand{\calF}{\mathcal{F}}
\newcommand{\calG}{\mathcal{G}}

\newcommand{\calI}{\mathcal{I}}
\newcommand{\calJ}{\mathcal{J}}
\newcommand{\calK}{\mathcal{K}}

\newcommand{\calM}{\mathcal{M}}
\newcommand{\calN}{\mathcal{N}}
\newcommand{\calO}{\mathcal{O}}

\newcommand{\calS}{\mathcal{S}}
\newcommand{\calT}{\mathcal{T}}

\newcommand{\calX}{\mathcal{X}}
\newcommand{\calY}{\mathcal{Y}}

\newcommand{\bA}{\mathbb{A}}
\newcommand{\bB}{\mathbb{B}}

\newcommand{\bL}{\mathbb{L}}

\newcommand{\PD}{\mathrm{PD}}

\mathchardef\mhyphen="2D

\begin{document}

\pagenumbering{arabic}

\title{A log prismatic-crystalline comparison theorem}

\author{Heng Du} 
\address{Yau Mathematical Sciences Center, Tsinghua University, Beijing 100084, China}
\email{hengdu@mail.tsinghua.edu.cn}
\author{Yong Suk Moon} 
\address{Beijing Institute of Mathematical Sciences and Applications, Beijing 101408, China}
\email{ysmoon@bimsa.cn}
\author{Koji Shimizu}
\address{Yau Mathematical Sciences Center, Tsinghua University, Beijing 100084, China;
Beijing Institute of Mathematical Sciences and Applications, Beijing 101408, China}
\email{shimizu@tsinghua.edu.cn}

\begin{abstract} 
We show a comparison theorem between log prismatic cohomology and log crystalline cohomology for a $p$-adic formal scheme with semistable reduction. Combined with the prismatic-\'etale comparison theorem recently proved by Tian \cite{Tian-prism-etale-comparison}, this implies the $C_{\st}$-conjecture in the semistable case with coefficients given by semistable local systems. 
\end{abstract}

\maketitle

\tableofcontents

\section{Introduction} \label{sec: intro}

Fix a prime $p$, and let $K$ be a complete discretely valued field over $\Q_p$ whose residue field $k$ is perfect. Let $K_0 \coloneqq W(k)[p^{-1}]$. Understanding the relation among various $p$-adic cohomology theories for varieties over $K$ has been of great interest. The $C_{\st}$-conjecture of Fontaine--Jannsen compares the $p$-adic \'etale cohomology and Hyodo--Kato cohomology for a proper semistable scheme $X$ over $\calO_K$:
\begin{equation} \label{eq: Cst-const-coeff}
H^i_{\et}(X_{\overline{K}}, \Z_p)\otimes_{\Z_p} B_{\st} \cong H^i_{\HK}(X_0)\otimes_{K_0} B_{\st}.    
\end{equation}
Here, $X_0\coloneqq X\otimes_{\calO_K}  k$, and $B_{\st}$ is the semistable period ring constructed by Fontaine, and the above isomorphism is compatible with Galois actions, Frobenii, monodromy operators, as well as filtrations after base change to the de Rham period ring $B_{\dR}$ (see below for the detail). This conjecture and its variants have been proved in various works, including \cite{Tsuji-Cst, faltings-almostetale, niziol-semist-conj-K-theory, andreatta-iovita-semistable-relative, bhatt-derived-deRham-cohom, Beilinson-crystalline-period-map, colmez-niziol, Cesnavicius-Koshikawa}. 

In studying geometric families of $p$-adic Galois representations, one is naturally led to generalize the $C_{\st}$-conjecture~\eqref{eq: Cst-const-coeff} to allow certain coefficients. The goal of this paper is to give a $C_\st$-type comparison theorem for coefficients formulated as follows.

\begin{thm}[{Theorem~\ref{thm: Cst-conj-main}, based on \cite[Thm.~0.4]{Tian-prism-etale-comparison} and Theorems~\ref{thm: prism-cris-comparison-BK-case-intro}, \ref{thm:Hyodo-Kato-complex-intro}}] \label{thm: Cst-intro}
Let $(X, M_X)$ be a proper semistable $p$-adic formal scheme over $\mathcal{O}_K$. Let $\mathbb{L}$ be a semistable $\mathbf{Z}_p$-local system on the generic fiber $X_{K}$, and let $\mathcal{E}_{\cris, \Q}$ be the corresponding filtered $F$-isocrystal on $X_0$. Then there exists an isomorphism
\[
\alpha_\st\colon R\Gamma(X_{C, \et}, \mathbb{L})\otimes_{\Z_p}^L B_{\st} \xrightarrow{\cong} R\Gamma_{\HK}((X_0, M_{X_0}), \mathcal{E}_{\cris, \Q})\otimes_{K_0}^L B_{\st}
\]
compatible with Galois actions, Frobenii, and monodromy actions, as well as filtrations after base change to $B_\dR$.
\end{thm}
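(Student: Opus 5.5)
We would deduce the statement by chaining three comparisons through log prismatic cohomology over the log prism $(\Ainf, \ker\theta)$ attached to $\calO_C$. Let $(\mathcal{E}, \varphi_{\mathcal{E}})$ be the completed log prismatic $F$-crystal on $(X, M_X)_{\Prism}$ associated with $\mathbb{L}$; it exists because $\mathbb{L}$ is semistable. The three steps are as follows.

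First, we apply the prismatic-\'etale comparison \cite[Thm.~0.4]{Tian-prism-etale-comparison}. After inverting a suitable element $\mu = [\epsilon]-1$, it gives a Galois- and Frobenius-equivariant isomorphism
\[
R\Gamma(X_{C,\et}, \mathbb{L})\otimes^L_{\Z_p}\Ainf[1/\mu] \cong R\Gamma_{\Prism}((X_{\calO_C}, M)/\Ainf, \mathcal{E})[1/\mu].
\]

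Second, we use Theorem~\ref{thm: prism-cris-comparison-BK-case-intro}. Base changing along $\Ainf \to A_{\cris}$ should identify the log prismatic cohomology with the log crystalline cohomology of $(X_{\calO_C/p}, M)$ over $A_{\cris}$ with coefficients $\mathcal{E}_{\cris}$, compatibly with Frobenius. If that theorem is only stated over the Breuil--Kisin prism, we would base change it to $\Ainf$ via the map $\fkS \to \Ainf$ and use the base-change property of log prismatic cohomology for proper log-smooth $X$.

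Third, we use Theorem~\ref{thm:Hyodo-Kato-complex-intro}. It should give the Hyodo--Kato isomorphism
\[
R\Gamma_{\cris}((X_{\calO_C/p},M)/A_{\cris}, \mathcal{E}_{\cris})\otimes^L_{A_{\cris}} B_{\st}^+ \cong R\Gamma_{\HK}((X_0,M_{X_0}), \mathcal{E}_{\cris,\Q})\otimes^L_{K_0} B_{\st}^+.
\]
This isomorphism respects Frobenius, the monodromy operator $N$ (acting on $B_{\st}$ through $u = \log[\varpi^\flat]$) and the Galois action, after inverting $p$. Composing the three steps and inverting $t$ (noting $\mu$ becomes invertible in $B_{\cris}$ since $\mu \mid t$ up to units) gives $\alpha_{\st}$.

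Next we check the compatibilities. Frobenius and Galois compatibility follow from the functoriality of each step. For monodromy, $N$ on the étale side acts only through $B_{\st}$, so we must check that the Hyodo--Kato map intertwines $N\otimes 1 + 1\otimes N$ correctly. This should come from the construction in Theorem~\ref{thm:Hyodo-Kato-complex-intro} via the log structure on $A_{\cris}\langle u\rangle$. For the filtration, we base change to $B_{\dR}$. By the crystalline-de Rham comparison, $R\Gamma_{\cris}\otimes B_{\dR}^+$ is identified with de Rham cohomology over $B_{\dR}^+$. The filtration on the Hyodo--Kato side is the one transported via the Hyodo--Kato isomorphism tensored with $K$ and the filtration of $\mathcal{E}_{\cris,\Q}$. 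It remains to show this matches the filtration induced from $\mathbb{L}\otimes B_{\dR}$. We would get this from the relative de Rham comparison of $\mathbb{L}$ with its filtered vector bundle, through the $\mathcal{O}\mathbb{B}_{\dR}$ Poincaré lemma on the pro-étale site, which is the defining property of the associated filtered isocrystal.

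I expect the main obstacle to be the filtration compatibility. Steps one to three are formal once the cited theorems are available. Matching filtrations, however, requires showing that the prismatic-to-de Rham specialization of $\mathcal{E}$ recovers the filtered module attached to $\mathbb{L}$, uniformly in the semistable setting. My first attempt would be to check this on a basis of semistable affine charts. There I would compare both filtrations via the Nygaard filtration and the identification $\Ainf/\ker\theta \cong \calO_C$. Then I would glue by cohomological descent and use properness to pass to $R\Gamma$, where strict compatibility can be tested after taking graded pieces.
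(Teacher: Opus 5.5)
Your construction of $\alpha_{\st}$ and its compatibility with $\phi$, $N$ and Galois actions agree with the paper: it is Corollary~\ref{cor: semist-comparison-over-S} composed with $\rho_{\st}^{-1}$ from Theorem~\ref{thm:Hyodo-Kato-cohomology}(4). One correction: Tian's comparison and the base change use the log prism $(A_{\inf},\Ker(\theta\circ\phi^{-1}))$, for which $A_{\inf}\to A_{\cris}$ is a map of prisms. With $\Ker\theta$ you must insert a Frobenius (cf.~Remark~\ref{rem:two-structure-map-for-Ainf}).

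The genuine gap is the filtration part. Since \eqref{eq:etale-deRham-comparison} is already a filtered isomorphism, you must show that the specific isomorphism $\alpha_{\st}\otimes B_{\dR}$, followed by \eqref{eq:HT-to-dR-map}, \emph{equals} \eqref{eq:etale-deRham-comparison}. That is the commutative square in Theorem~\ref{thm: Cst-conj-main}. Your plan to match ``both filtrations'' via a Nygaard filtration on charts and then test strictness on graded pieces does not reach this. Graded pieces only tell you whether a map already known to be filtered is a filtered quasi-isomorphism, and you never produce such a map. Moreover, no Nygaard filtration is at hand for cohomology with coefficients in $j_\ast\calE_{\Prism}$, which is in general neither locally free nor effective. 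Finally, $\Fil^\bullet\mathbf{E}$ is characterized only by requiring $\alpha_{\cris,\bL,\calE_{\cris,\Q}}\otimes\mathbb{B}_{\dR}$ to be filtered, so you can only reach it through $\mathbb{B}_{\dR}$ on the pro-\'etale site.

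Two ingredients are missing.
\begin{enumerate}
\item $\alpha_{\st}$ is built from $\rho_{\st}$, whereas the de Rham side uses $\rho_\pi$ and the $\pi$-dependent embedding $B_{\st}\hookrightarrow B_{\dR}$. You need these to be compatible after base change along $B_{\st}^+\to\widehat{B}_{\st}^+\to A_{\logcris,K}[p^{-1}]\to B_{\dR}^+$; this is Theorem~\ref{thm:Hyodo-Kato-cohomology}(5).
\item You need a bridge from crystalline cohomology to the pro-\'etale cohomology of $\calE_{\cris,\Q}(\mathbb{B}_{\dR})$. The paper uses the infinitesimal cohomology of $\calE_{\inf}$ over $B_{\dR}^+$ (Proposition~\ref{prop: comparison-cris-cohom-inf-cohom}). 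This is computed both by a de Rham complex and by \v{C}ech--Alexander complexes on enlarged framings. The paper then checks on the perfectoid $G$-torsors $\Spa(R_{\infty,C},R_{\infty,\calO_C})$, via $\fkS_\square\to\A_{\inf}(R_{\infty,\calO_C})$ and $S_\square\to\A_{\logcris,K}(R_{\infty,\calO_C})$, that two maps $R\Gamma_{\Prism}\otimes^L_{A_{\inf}}B_{\dR}\to R\Gamma_{\proet}$ agree: the one from Tian's comparison and the one through crystalline and infinitesimal cohomology (Theorem~\ref{thm: comparison-maps-compatibility}).
\end{enumerate}
Only after both steps does the Guo--Reinecke $\mathcal{O}\mathbb{B}_{\dR}$ Poincar\'e-lemma argument you allude to close the diagram.
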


In the above statement, $M_X$ denotes the log structure $\calO_X\cap (\calO_X[p^{-1}])^\times$, and $C$ is the completion of an algebraic closure $\overline{K}$ of $K$, and $X_C\coloneqq X_K\times_KC$.

The notion of \emph{semistable $\mathbf{Z}_p$-local systems on $X_{K}$} articulates the families of semistable $p$-adic Galois representations parametrized by $X_{K}$ and is defined via the notion of association to a \emph{(filtered) $F$-isocrystal} (see \cite[Def.~3.39]{du-liu-moon-shimizu-purity-F-crystal} or \S~\ref{subsec: semist-local-syst}). 

Crucial to Theorem~\ref{thm: Cst-intro} is the \emph{Hyodo--Kato complex} $R\Gamma_{\HK}((X_0, M_{X_0}), \mathcal{E}_{\cris, \Q})$. Consider the crystalline cohomology $D=R\Gamma(((X_0, M_{X_0})/(W(k),M_0))_\CRIS, \mathcal{E}_{\cris, \Q})$ where the log structure $M_0$ on $W(k)$ is given by $\N\rightarrow W(k)$, $1\mapsto 0$. We show 
\begin{itemize}
 \item[(HK1)] $D$ is a perfect complex of $K_0$-vector spaces with a Frobenius $\phi_D$;
 \item[(HK2)] $\phi_D$ induces an isomorphism $D\otimes_{K_0,\phi}^L K_0\xrightarrow{\cong} D$;
 \item[(HK3)] $D$ admits a monodromy operator $N_D$ satisfying $p\phi_D N_D=N_D\phi_D$;
 \item[(HK4)] given a uniformizer $\pi$, there is an isomorphism $\rho_\pi\colon D\otimes_{K_0}^L K\cong R\Gamma_{\dR}(X_K, \mathbf{E})$ where $\mathbf{E}$ is a filtered vector bundle with integral connection on $X_K$ associated to $\calE_{\cris, \Q}$;
 for another $\pi'$, we have $\rho_{\pi'}=\rho_{\pi}\circ \exp(\operatorname{log}(\pi'/\pi) [K:K_0]N)$.
\end{itemize}
The Hyodo--Kato complex consists of the quadruple $(D,\phi_D,N_D,\rho_\pi)$. Such a formalism already appears in the classical $C_\st$-conjecture, which we refer to as $H^i_{\HK}(X_0)$ above (cf.~\cite[\S~5]{jannsen}, \cite[Conj.~6.2.7]{fontaine-exposeIII}, \cite[Intro.]{hyodo-kato}, \cite[Conj.~1.1]{Kato-exposeVI}).

\begin{rem}
We have been informed that in an upcoming work of Inoue--Koshikawa, the $C_{\st}$-conjecture in a more general set-up than Theorem~\ref{thm: Cst-intro} is proved. For example, the comparison between log prismatic cohomology and log crystalline cohomology as in Theorem~\ref{thm: pris-crys-comparison-associated-sheaves} were already known to them.   
\end{rem}

In this paper, we prove Theorem~\ref{thm: Cst-intro} using log prismatic cohomology theory. The $A_{\inf}$-cohomology by Bhatt--Morrow--Scholze \cite{bhatt-morrow-scholze-integralpadic} and prismatic cohomology by Bhatt--Scholze \cite{bhatt-scholze-prismaticcohom} have been major breakthroughs in integral $p$-adic Hodge theory. When $X$ is a proper smooth $p$-adic formal scheme over $\calO_K$, these cohomology theories bridge \'etale, crystalline, and de Rham cohomology theories. In particular, each of these works proved the $C_{\cris}$-conjecture for the constant coefficient case. When $X$ is a proper semistable $p$-adic formal scheme over $\calO_K$, \v{C}esnavi\v{c}ius--Koshikawa \cite{Cesnavicius-Koshikawa} obtained the $C_{\st}$-conjecture in the constant coefficient case using $A_{\inf}$-cohomology. See \cite{diao-yao} and \cite{koshikawa-yao} for further developments in these directions.

To go beyond the constant coefficient, Faltings \cite{faltings-almostetale} and Andreatta--Iovita \cite{Andreatta-Iovita-smooth, andreatta-iovita-semistable-relative} worked on the $C_\st$-conjecture with coefficients using the Faltings site. Guo--Reinecke \cite{GuoReinecke-Ccris} generalized the prismatic-\'etale comparison and prismatic-crystalline comparison in \cite{bhatt-scholze-prismaticcohom} to allow coefficients given by crystalline local systems on $X_{K}$ in the good reduction case, and thereby proved a $C_{\cris}$-conjecture with coefficients. 

This paper studies the interplay between log prismatic theory by Koshikawa \cite{koshikawa} and log crystalline theory in the semistable case; our main goal is to prove a log prismatic-crystalline comparison theorem with coefficients and study the Hyodo--Kato complex. Tian \cite{Tian-prism-etale-comparison} recently established a prismatic-\'etale comparison with coefficients given by semistable local systems. Combining these results will give Theorem~\ref{thm: Cst-intro}. Let us explain these results in detail now.
 
\medskip
\noindent \textbf{Logarithmic prismatic theory.}  
Let $X$ be a bounded $p$-adic formal scheme. The absolute prismatic site $X_{\Prism}$ consists of bounded prisms $(A, I)$ equipped with a map $\Spf(A/I) \rightarrow X$. Koshikawa \cite{koshikawa} developed logarithmic prismatic theory to generalize this to log formal schemes. A \emph{log prism} is given by a bounded prism $(A, I)$ and a log structure $M_{\Spf A}$ on $\Spf A$ equipped with a $\delta_\log$-structure. For a bounded integral $p$-adic log formal scheme $(X,M_X)$, one can define the \emph{absolute (strict) prismatic site} $(X,M_X)_\Prism$ consisting of log prisms over $(X,M_X)$ with strict log structures as well as its relative analogue (see Definitions~\ref{def: absolute-prismatic-site} and \ref{def: relative-prismatic-site}).

\begin{eg}[Breuil--Kisin log prism; Example~\ref{eg: Breuil-Kisin prism}] \label{eg:intro-BK-log-prisms}
Assume $X = \Spf R^0$ with 
\[
R^0 = \mathcal{O}_K \langle T_1, \ldots, T_m, T_{m+1}^{\pm 1}, \ldots, T_d^{\pm 1}\rangle / (T_1\cdots T_m - \pi),
\]
where $\pi \in \calO_K$ is a uniformizer with monic irreducible polynomial $E(u) \in W(k)[u]$. Let $M_X$ be the log structure on $X$ given by $\mathbf{N}^d \rightarrow R^0$, $e_i \mapsto T_i$ ($1\leq i\leq d$). Consider 
\[
\mathfrak{S} \coloneqq W(k) \langle T_1, \ldots, T_m, T_{m+1}^{\pm 1}, \ldots, T_d^{\pm 1}\rangle[\![u]\!] / (T_1\cdots T_m - u)
\]
equipped with Frobenius given by $\phi(T_i) = T_i^p$. Then we have $(\mathfrak{S}, (E(u))) \in X_{\Prism}$ with the structure map $\mathfrak{S}/(E(u)) \cong R^0$. The prelog structure $\mathbf{N}^d \rightarrow \mathfrak{S}$, $e_i \mapsto T_i$, further gives rise to a log prism in $(X, M_X)_{\Prism}$, called the \emph{Breuil--Kisin log prism}. The same construction works when $X$ is \emph{small affine}, i.e., when $X$ is affine and admits an \'etale map to $\Spf R_0$. The Breuil--Kisin log prism will play a key role in our local study. 
\end{eg}

\medskip
\noindent \textbf{Prismatic $F$-crystals and semistable local systems.}
The log prismatic site $(X,M_X)_\Prism$ admits a structure sheaf $\calO_\Prism$ of rings together with a Frobenius endomorphism $\phi$ and an ideal sheaf $\calI_\Prism$, given by assigning $A$, $\phi_A$, and $I$ to each log prism $(A,I, M_{\Spf A})$. Based on these structures, we can define various $F$-crystals on $X_\Prism$ and relate them to \'etale $\Z_p$-local systems on the generic fiber $X_K$.

From now on, assume that $(X, M_X)$ is a \emph{semistable} $p$-adic formal scheme over $\calO_K$. To study semistable local systems on $X_{K}$, the current authors and Tong Liu \cite{du-liu-moon-shimizu-purity-F-crystal} consider the category $\Vect^{\an, \phi}((X,M_X)_\Prism)$ of \emph{analytic prismatic $F$-crystals} (following \cite{GuoReinecke-Ccris} in the smooth case). It consists of pairs $(\calE, \phi_{\calE})$ where $\calE$ assigns to each $(A, I, M_{\Spf A})\in (X, M_X)_\Prism$ a vector bundle $\calE_A$ over $\Spec A \smallsetminus V(p,I)$, and $\phi_{\calE}$ is an isomorphism $(\phi_A^\ast\calE_A)[I^{-1}]\xrightarrow{\cong}\calE_A[I^{-1}]$. Let $(X_1,M_{X_1})_\CRIS$ denote the absolute logarithmic crystalline site of $(X_1, M_{X_1})\coloneqq (X,M_X)\otimes_{\Z_p}\Z_p/p$ (see Appendix~\ref{subsec:crystalline-sites}). To $(\calE_{\Prism}, \phi_{\calE_{\Prism}}) \in \Vect^{\an, \phi}((X,M_X)_\Prism)$, one can associate a $\Z_p$-local system on $X_{K}$ (\emph{\'etale realization}) and an $F$-isocrystal $(\calE_{\cris, \Q}, \phi_{\calE_{\cris, \Q}})$ on $(X_1,M_{X_1})_\CRIS$ (\emph{crystalline realization}); see \cite[\S~3.3, 3.5]{du-liu-moon-shimizu-purity-F-crystal} or \S~\ref{subsec: semist-local-syst}.
Write $\Loc_{\Z_p}^{\st}(X_{K})$ for the category of semistable $\Z_p$-local systems on $X_{K}$. The following is a main result of \cite{du-liu-moon-shimizu-purity-F-crystal}.

\begin{thm}[{\cite{du-liu-moon-shimizu-purity-F-crystal}}; see Theorem~\ref{thm:DLMS-association-via-prismatic-F-crystal}]  \label{thm: analy-prism-F-cryst-semist-loc-syst}
The \'etale realization induces an equivalence between $\Vect^{\an, \phi}((X,M_X)_\Prism)$ and $\Loc_{\Z_p}^{\st}(X_{K})$.
Furthermore, if $\bL \in \Loc_{\Z_p}(X_{K})^{\st}$ is a semistable $\Z_p$-local system and $(\calE_{\Prism}, \phi_{\calE_{\Prism}}) \in \Vect^{\an, \phi}((X,M_X)_\Prism)$ is the corresponding analytic prismatic $F$-crystal, then $\bL$ and the crystalline realization $(\calE_{\cris, \Q}, \phi_{\calE_{\cris, \Q}})$ of $(\calE_{\Prism}, \phi_{\calE_{\Prism}})$ are associated in the sense of \cite[Def.~3.39]{du-liu-moon-shimizu-purity-F-crystal}.
\end{thm}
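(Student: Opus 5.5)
This theorem is quoted from \cite{du-liu-moon-shimizu-purity-F-crystal}, so the plan is to reconstruct the argument there. It has two parts: full faithfulness plus essential surjectivity of the étale realization, and compatibility of the crystalline realization with association.

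\textbf{Reduction to the small affine case.} First I reduce to $X$ small affine. This works because both $\Vect^{\an,\phi}((X,M_X)_\Prism)$ and $\Loc^{\st}_{\Z_p}(X_K)$ satisfy étale descent, and semistability of a local system is an étale local condition. On a small affine $X$ I fix the Breuil--Kisin log prism $(\mathfrak S,(E(u)),M)$ of Example~\ref{eg:intro-BK-log-prisms}. I also fix its perfectoid companion $(A_{\inf}(\widetilde R),\ker\theta)$, built from a log-perfectoid tower obtained by extracting $p$-power roots of $\pi$ and of the $T_i$.

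\textbf{Full faithfulness.} An analytic prismatic $F$-crystal evaluated on the Breuil--Kisin prism gives a vector bundle on $\Spec\mathfrak S\smallsetminus V(p,E)$ with Frobenius. I would prove the following purity statement: such a bundle extends to a reflexive (in fact projective) module. In addition, an étale $\phi$-module over $\mathfrak S[1/E]^\wedge_p$ arising from a Kisin-type module determines that module. Full faithfulness then follows by Kisin's argument, comparing with the étale $\phi$-module over $A_{\inf}(\widetilde R)[1/\mu]$. The functor to local systems is fully faithful there by the prismatic–étale correspondence for Laurent $F$-crystals (Bhatt–Scholze).

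\textbf{Essential surjectivity.} Let $\bL$ be semistable, so it is associated to a filtered $F$-isocrystal. I would first produce the $A_{\inf}$-lattice $\bL\otimes A_{\inf}$ inside $\bL\otimes B_{\st}$-type period rings; the association gives the relevant Galois-stable $\phi$-module over $A_{\inf}(\widetilde R)[1/\mu]$. The main step is to show that this module descends to an analytic $F$-crystal. Concretely, it must be defined over $\Spec A_{\inf}\smallsetminus V(p,E)$ and be compatible with all log prisms. I would attempt this by a Kisin–Breuil–Kisin descent along $\mathfrak S\to A_{\inf}(\widetilde R)$. To do so, I use the monodromy and log connection data of the isocrystal to build a $\phi$-module with connection over the log-crystalline period ring $\mathcal O$-rigid disc. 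I then apply Kedlaya's slope filtration, or a relative version of Kisin's $\mathcal O$-module theorem, to descend to $\mathfrak S$ over the punctured spectrum. Finally, I use the stratification coming from the log prismatic self-product of the Breuil--Kisin prism to glue this into a crystal. I expect this descent, together with the verification of the crystal condition via the log self-products, to be the main obstacle. It is the place where the logarithmic $\delta_\log$-structure and the analytic (rather than integral) nature of the objects are essential.

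\textbf{Compatibility with the crystalline realization.} For the second statement, I would evaluate $\calE_\Prism$ on the Breuil--Kisin prism and base change along $\mathfrak S\to S$, the log PD-envelope. After inverting $p$, this gives the crystalline realization. Base changing further to $A_{\cris}(\widetilde R)$ and comparing with the étale side, the Frobenius twist $\phi^*$ yields a Frobenius- and Galois-equivariant isomorphism
\[
\bL\otimes B_{\cris}(\widetilde R)\cong \calE_{\cris,\Q}(A_{\cris}(\widetilde R))[1/\mu].
\]
This isomorphism, after extending to $B_{\st}$ and checking filtrations via the Hodge filtration induced by $E$-adic position, is exactly association in the sense of \cite[Def.~3.39]{du-liu-moon-shimizu-purity-F-crystal}.
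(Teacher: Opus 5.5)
The paper gives no argument for this statement. It is imported from \cite[Cor.~3.46, 5.2]{du-liu-moon-shimizu-purity-F-crystal} as Theorem~\ref{thm:DLMS-association-via-prismatic-F-crystal}, so your reconstruction has to stand on its own, and it breaks at essential surjectivity. The Laurent $F$-crystal attached to $\bL$ comes for free from Theorem~\ref{thm: Laurent-F-cryst-loc-syst}. The real content is to extend it to a vector bundle on $\Spec\fkS\smallsetminus V(p,E)$ (equivalently, to supply a compatible $\phi$-module over $\fkS[1/p]$) and to verify the crystal condition.

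For this you invoke ``Kedlaya's slope filtration, or a relative version of Kisin's $\mathcal O$-module theorem'', but no such tool is available over $\fkS_\square$. Kisin's descent to $\fkS_K=W(k)[\![u]\!]$ rests on two one-dimensional facts:
\begin{enumerate}
\item slope filtrations for $\phi$-modules over the Robba ring of a single field;
\item reflexive modules over the two-dimensional regular ring $\fkS_K$ are free.
\end{enumerate}
When $X$ has relative dimension $\geq 1$, $\fkS_\square$ is regular of dimension $d+1\geq 3$. In families, slope polygons only vary semicontinuously and there is no uniform slope filtration, and reflexive modules need not be projective. So the step you flag as ``the main obstacle'' has no workable method behind it. Your sketch also never explains how the association hypothesis, which only concerns $\bB_\cris$ on affinoid perfectoids, would produce the missing $\fkS[1/p]$-part.

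The cited source instead argues by purity, as its title suggests. The removed set $V(p,E)$ has codimension $2$ in $\Spec\fkS$, and its generic points are the generic points of the special fiber of $X$. Localizing and completing there reduces the extension problem to Breuil--Kisin-type prisms of complete discrete valuation rings with imperfect residue field, where a one-dimensional Kisin-type theory exists. Regularity of $\fkS$ (reflexive hulls), together with the Frobenius structure, is then used to return to $\Spec\fkS\smallsetminus V(p,E)$. In the smooth case, \cite{GuoReinecke-Ccris} follow yet another route through perfect prisms followed by descent.

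Two smaller points. First, your parenthetical ``(in fact projective)'' in the full faithfulness step is false in general, for the reason above. The paper relies on $j_\ast\calE_\Prism$ being only finitely generated: this is why it builds $i(\calE_\Prism)$ from finite projective resolutions, and why it stresses that the $F$-crystal underlying $\calE_{\cris,\Q}$ need not be locally free. Full faithfulness only needs the coherent reflexive extension, so drop that claim.

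Second, for the second assertion: association in the sense of \cite[Def.~3.39]{du-liu-moon-shimizu-purity-F-crystal} is just a Frobenius-compatible isomorphism $\calE_{\cris,\Q}(\bB_\cris)\cong\bL\otimes_{\Z_p}\bB_\cris$ of sheaves on $X_{K,\proet}$. No filtration and no $B_\st$ enter, so your final step of extending to $B_\st$ and checking filtrations is beside the point. What is needed is to build the isomorphism functorially in every $U\in\Perfd/X_{K,\proet}$, not just on one tower $\widetilde R$. On $(\Ainf(A^+),\Ker\widetilde\theta)$, combine the $\mu$-inverted \'etale comparison with the crystalline comparison through $\A_\cris(A^+)$, and correct by the Frobenius twist $\can\otimes\phi$, as packaged in Remark~\ref{rem:two-structure-map-for-Ainf}. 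This direction must be established \emph{before} essential surjectivity, since it is what shows the \'etale realization lands in $\Loc_{\Z_p}^{\st}(X_K)$ at all.
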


Hence one needs to compare the prismatic cohomology of $\calE_\Prism$ with the \'etale cohomology of $\mathbb{L}$ and the crystalline cohomology of $(\calE_{\cris, \Q}, \phi_{\calE_{\cris, \Q}})$. At this point, it is convenient to consider two variants of analytic prismatic $F$-crystals.

A \emph{completed prismatic $F$-crystal} is a sheaf $\calG$ of $\calO_\Prism$-modules together with a Frobenius structure on $\calG[\calI_\Prism^{-1}]$ such that for every $(A,I,M_{\Spf A})$, $\calG_A\coloneqq \calG(A,I,M_{\Spf A})$ is a finitely generated classically $(p,I)$-complete $A$-module and restricts to a vector bundle on $\Spec(A)\smallsetminus V(p,I)$ and such that for every map $(A,I,M_{\Spf A})\rightarrow (B,IB,M_{\Spf B})$, the induced map $\calG_A\widehat{\otimes}_AB\rightarrow \calG_B$ from the $(p,I)$-completed tensor product is an isomorphism. This notion is first introduced in \cite{du-liu-moon-shimizu-completed-prismatic-F-crystal-loc-system}, and used in \cite{Tian-prism-etale-comparison} (cf. \cite[Def.~1.13]{Tian-prism-etale-comparison}; see also \cite[Def.~11.1]{Tsuji-prismatic-q-Higgs} for relevant notions). Using the Breuil--Kisin prism and its self-coproducts, one can associate to $\calE_\Prism$ a completed prismatic $F$-crystal $j_\ast \calE_{\Prism}$ such that $(\calE_{\Prism})_A=(j_\ast \calE_\Prism)_A|_{\Spec A\smallsetminus V(p,I)}$. Furthermore, using the fact that each Breuil--Kisin prism underlies a regular Noetherian ring, one associates to $j_\ast \calE_\Prism$ a \emph{prismatic $F$-crystal in perfect complexes} $i(\calE_\Prism)$ together with a morphism $i(\calE_\Prism)\rightarrow j_\ast \calE_\Prism$ (see \S~\ref{sec: analy-pris-cryst-perf-cx}, \ref{subsec: analy-prism-crys-perf-cx}). We study the prismatic cohomology of $j_\ast \calE_\Prism$ and $i(\calE_\Prism)$. 

Let $(\fkS_K, (E), M_{\Spf\fkS_K})$ be the Breuil--Kisin log prism in $(\Spf\calO_K, M_{\Spf\calO_K})_{\Prism}$. The $p$-completed PD-envelope $S_K$ of $\mathfrak{S}_K$ with respect to $(E)$ defines the \emph{Breuil log prism} $(S_K, (p), M_{\Spf S_K})\coloneqq (S_K, (p), \phi_* \mathbf{N})^a$, and $\phi\colon \frak{S}_K \rightarrow S_K$ induces a map of log prisms $(\fkS_K, (E), M_{\Spf \fkS_K}) \rightarrow (S_K, (p), M_{\Spf S_K})$ in $(\Spf \mathcal{O}_K, M_{\Spf \mathcal{O}_K})_{\Prism}$ (cf.~Example~\ref{eg: examples-log-prisms}).

For any $p$-adically complete $\calO_K$-algebra $B$, set $X_B\coloneqq X\times_{\Spf \calO_K}\Spf B$ with pullback log structure $M_{X_B}$.
For $\calK=j_\ast \calE_\Prism, i(\calE_\Prism)$, consider its \emph{Breuil--Kisin cohomology} and \emph{Breuil cohomology}:
\begin{itemize}
\item $R\Gamma_{\BK}(X, \calK)\coloneqq R\Gamma(((X, M_X) / (\fkS_{K}, M_{\Spf\fkS_{K}}))_{\Prism}, \calK)$;
\item $R\Gamma_{\Br}(X, \calK)\coloneqq R\Gamma(((X_{S_K/p}, M_{X_{S_K/p}}) / (S_K, M_{\Spf S_k}))_{\Prism}, \calK)$.
\end{itemize}

By the \v{C}ech--Alexander method, we see that they satisfy the following properties:
\begin{enumerate}
 \item (\cite[Thm.~6.18]{Tian-prism-etale-comparison}) If $X$ is proper over $\calO_K$, then $R\Gamma_{\BK}(X, j_\ast \calE_\Prism)$ is a perfect complex in $\fkS_K$-modules.
 \item (Proposition~\ref{prop: BK-cohom}) $R\Gamma_{\BK}(X, i( \calE_\Prism))\rightarrow R\Gamma_{\BK}(X, j_\ast \calE_\Prism)$ is an isomorphism.
 \item (Theorem~\ref{thm: general-base-change-BK-cohom}) If $X$ is proper over $\calO_K$, the maps
 \[
 R\Gamma_{\BK}(X, i( \calE_\Prism))\otimes_{\fkS_K}^L S_K \rightarrow R\Gamma_{\Br}(X, i( \calE_\Prism))
 \]
 and
  \[
 R\Gamma_{\BK}(X, j_\ast \calE_\Prism)\otimes_{\fkS_K,\phi}^LS_K[p^{-1}] \rightarrow R\Gamma_{\Br}(X, j_\ast \calE_\Prism)\otimes_{S_K}^LS_K[p^{-1}]
 \]
 are isomorphisms.
\end{enumerate}

Here the perfectness uses the fact that $j_\ast \calE_\Prism$ underlies an $\calO_\Prism$-module; we have a stronger base change property for $i(\calE_\Prism)$ as it underlies a perfect complex in $\calO_\Prism$-modules, and the base change property for $j_\ast \calE_\Prism$ is deduced from the one for $i(\calE_\Prism)$.

\smallskip
\noindent \textbf{Prismatic-crystalline comparison and the Frobenius isogeny property.}
The Breuil cohomology for $j_{\ast}\mathcal{E}_{\Prism}$ is identified with the crystalline cohomology of the crystalline realization $\calE_{\cris, \Q}$ of $\calE_{\Prism}$, which is an $F$-isocrystal on $(X_1, M_{X_1})_{\CRIS}$.

\begin{thm}[{Theorems~\ref{thm: pris-cris-comparison-over-Breuil S} and \ref{thm:Frobenius-isogeny-property}}] \label{thm: prism-cris-comparison-BK-case-intro}
Keep the notation as above and assume that $X$ is proper over $\calO_K$.
We have an isomorphism   
\[
R\Gamma_\Br(X, j_{\ast}\mathcal{E}_{\Prism})\otimes_{S_K}^LS_K[p^{-1}]\cong R\Gamma(((X_1, M_{X_1})/(S_K,M_{S_K}))_{\CRIS}, \mathcal{E}_{\cris, \Q})
\]
of perfect complexes in $D(S_K[p^{-1}])$ that is compatible with Frobenii. Moreover, they satisfy the Frobenius isogeny property: if we write $\calM$ for the above complex, the induced map $1\otimes\phi\colon L\phi^* \calM \rightarrow \calM$ is an isomorphism.
\end{thm}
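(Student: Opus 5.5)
Our strategy is to construct the comparison locally on small affine opens via explicit \v{C}ech--Alexander complexes, and then globalize by \'etale descent. We first reduce to the case of $i(\calE_\Prism)$. By Theorem~\ref{thm: general-base-change-BK-cohom} and Proposition~\ref{prop: BK-cohom}, after inverting $p$, $R\Gamma_\Br(X,j_\ast\calE_\Prism)[p^{-1}]$ agrees with $R\Gamma_\Br(X,i(\calE_\Prism))[p^{-1}]$; both are identified with $R\Gamma_{\BK}(X,j_\ast\calE_\Prism)\otimes^L_{\fkS_K,\phi}S_K[p^{-1}]$. By Tian's perfectness result, this common object is a perfect complex over $S_K[p^{-1}]$. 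Since $i(\calE_\Prism)$ underlies a prismatic crystal in perfect complexes, every construction below will commute with base change. The crystalline side is also computed by a \v{C}ech--Alexander complex. We cover $X$ by small affines $U=\Spf R$ with framings, and take log PD-envelopes of $X_1\cap U$ in $S_K\langle T_i^{\pm}\rangle$-type log-smooth lifts over $(S_K,M_{S_K})$, together with their self-products.

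For the local comparison, fix a small affine $U$ with its Breuil--Kisin log prism $\fkS$. Base change along $\phi\colon\fkS_K\to S_K$ gives a log prism $\fkS_{S}$ over the Breuil prism. This is a covering object of the relative prismatic site $((U_{S_K/p})/S_K)_\Prism$, and its $(n+1)$-fold self-coproducts compute $R\Gamma_\Br$. The key identification is the following. For a log prism $(B,(p))$ over $(S_K,(p))$, the Frobenius $\phi_B$ factors through a PD-thickening, so $B$ with its log structure is an object of the log crystalline site of $(U_1,M_{U_1})$ over $(S_K,M_{S_K})$. This is the log analogue of Bhatt--Scholze's crystalline comparison: for crystalline prisms, the prismatic coproducts are exactly the $p$-completed log PD-envelopes. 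More precisely, the $(n+1)$-fold prismatic coproducts of $\fkS_S$ over $S_K$ are Frobenius twists of the $p$-completed log PD-envelopes of the diagonals in the corresponding self-products of the log-smooth lift. Hence the two cosimplicial rings agree after applying $\phi$. Evaluating $i(\calE_\Prism)$ and inverting $p$ then recovers the values of $\calE_{\cris,\Q}$, by the very definition of the crystalline realization in \cite[\S~3.5]{du-liu-moon-shimizu-purity-F-crystal}. Both constructions are functorial in \'etale localization, and we glue by taking the \v{C}ech nerve of the cover, or by comparing sheaves on the \'etale site of $X$. Frobenius compatibility holds because every identification is built from $\phi$-equivariant maps.

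For the Frobenius isogeny property, write $\calM=R\Gamma_{\BK}(X,j_\ast\calE_\Prism)\otimes^L_{\fkS_K,\phi}S_K[p^{-1}]$. The relative Frobenius on $\calM$ is induced by the Frobenius structure $\phi_\calE\colon\phi^\ast\calE[I^{-1}]\cong\calE[I^{-1}]$. On the Breuil--Kisin side this is an isomorphism only after inverting $E$, and the task is to upgrade it. The first step is to note that $\phi(E)/p$ is a unit in $S_K[p^{-1}]$; more precisely, $\phi(E)=p\cdot(\text{unit of } S_K)$ up to the divided-power correction. Using this, together with the base change $L\phi^\ast$ applied to the $\fkS_K$-perfect complex $R\Gamma_{\BK}$, the map $1\otimes\phi\colon L\phi^\ast\calM\to\calM$ is identified with $\phi$-twisted base change of the linearized Frobenius $\phi^\ast R\Gamma_{\BK}[1/E]\cong R\Gamma_{\BK}[1/E]$. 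This works because $E$ maps under $\phi$ to an element that becomes invertible in $S_K[p^{-1}]$. Concretely, the map $\fkS_K[1/E]\xrightarrow{\phi}S_K[p^{-1}]$ is defined, so the isomorphism after inverting $E$ base changes to an isomorphism.

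The main obstacle we anticipate is the local comparison: identifying log prismatic coproducts over the Breuil log prism with log PD-envelopes, while keeping track of the log structure $(S_K,(p),\phi_\ast\N)^a$ and the non-strictness. Our first attempt will be to reduce to Koshikawa's log prismatic-crystalline comparison for the structure sheaf, and then extend it to $i(\calE_\Prism)$ by crystal-ness and perfectness.
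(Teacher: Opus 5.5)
There is a genuine gap in your argument for the Frobenius isogeny property, and the local comparison rests on an identification that is false for your cover.

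\textbf{The Frobenius isogeny.} You assume that the linearized Frobenius $\phi_{\fkS_K}^\ast R\Gamma_{\BK}(X,j_\ast\calE_\Prism)[1/E]\to R\Gamma_{\BK}(X,j_\ast\calE_\Prism)[1/E]$ is an isomorphism ``because $\phi_\calE$ is''. This does not follow. The Frobenius on cohomology is a composite of three maps:
\begin{enumerate}
\item a base change along the Frobenius of the base prism;
\item a pullback along the relative Frobenius of $X$ over the base;
\item the map $\phi_\calE$.
\end{enumerate}
The hypothesis on $\phi_\calE$ controls only the last factor. That the relative Frobenius induces an isogeny on cohomology is a Cartier-type theorem. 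Already for $\calE=\calO_\Prism$ on smooth $X$ it relies on the Hodge--Tate comparison. Nothing of the sort is available here for log prismatic cohomology with these coefficients; it is essentially the statement to be proved. Your base change along $\fkS_K[1/E]\xrightarrow{\phi}S_K[p^{-1}]$ is fine, but it only transports an isogeny you have not established.

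The paper proves the property on the crystalline side instead. It introduces the crystalline realization $\calK=i(\calE_\Prism)_\cris$, an $F$-isocrystal in perfect complexes; perfectness is needed for the projection formula, and $\calE_\cris$ itself need not be locally free. It reduces to $\calK$ by Proposition~\ref{prop:comparison-of-cohomology-of-crystalline-realization} and factors $1\otimes\phi$ as $R\Gamma(\phi_\calK)\circ\beta\circ\delta$.
\begin{itemize}
\item $\beta_\Q$ is an isomorphism by the projection formula together with \cite[Prop.~2.24]{hyodo-kato}, which says that $\calO_{X_1'/\Z_p}\to RF_{X_1/Y,\CRIS,\ast}\calO_{X_1/\Z_p}$ is an isogeny. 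This is the Cartier input missing from your proposal.
\item $\delta_\Q$ is an isomorphism by Corollary~\ref{cor: Frob-twist-Breuil-cohom}, which is base change from $R\Gamma_{\BK}$ along $\phi$ and $\phi^2$ using Tian's perfectness. This is combined with Theorem~\ref{thm: prism-cryst-comparison-semist} for $A=\phi_\ast S_K$.
\end{itemize}

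\textbf{The local comparison.} Your covering object $\fkS_\square\widehat{\otimes}_{\fkS_K,\phi}S_K$ is the Frobenius twist of the crystalline lift $S_\square=\fkS_\square\widehat{\otimes}_{\fkS_K}S_K$. Let $t$ be an exactified diagonal generator, e.g.\ $(T_i\otimes 1)/(1\otimes T_i)-1$. The prismatic self-coproduct over $S_K$ adjoins $t/p$ as a $\delta$-ring. By \cite[Cor.~2.39]{bhatt-scholze-prismaticcohom}, the Frobenius twist of the log PD-envelope adjoins only $\phi(t)/p$. So the two cosimplicial rings are not isomorphic, not even after applying $\phi$.

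They are related by the relative Frobenius. That map is a quasi-isomorphism only after totalization, by a Cartier-type computation: \cite[Lem.~5.4]{bhatt-scholze-prismaticcohom} in the polynomial directions and \cite[Prop.~B.3]{koshikawa} in the log directions, carried out mod $p^m$ with coefficients. This is what Propositions~\ref{prop:comparison-delta-log-cris-log-cris} and \ref{prop:comparison-prismatic-delta-log-cris} do. They pass through the $\delta_{\log}$-crystalline site, after $j_\ast\calE_\Prism$ and $\calE_\cris$ are shown to be associated in the sense of Definition~\ref{defn: associated-crystals}. The result is then specialized to $(A,J)=(S_K,J_{S_K})$ in Theorem~\ref{thm: prism-cryst-comparison-semist}.

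Your fallback of following Koshikawa is the right direction. The paper carries it out with the coefficients kept inside the \v{C}ech--Alexander complexes throughout, and that step cannot be skipped by asserting a termwise identification of rings.
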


This is indeed a core result of our work, and we need to combine several different ideas.
The first part of Theorem~\ref{thm: prism-cris-comparison-BK-case-intro} is deduced from a more general form of \emph{prismatic-crystalline comparison} (Theorem~\ref{thm: pris-crys-comparison-associated-sheaves}) where we study completed prismatic crystals on a certain relative set-up. Following \cite[\S~6]{koshikawa} in the constant coefficient case, Theorem~\ref{thm: pris-crys-comparison-associated-sheaves} is proved via explicit local computations based on \v{C}ech--Alexander method. For the Frobenius isogeny property, we use a slightly ad hoc method; such a property is proved in a more general set-up for a finite locally free $F$-crystal on a crystalline site in \cite[Thm.~11.3]{du-moon-shimizu-cris-pushforward}. Since the $F$-crystal underlying $\mathcal{E}_{\cris, \Q}$ is not necessarily finite locally free, we also introduce the crystalline realization of $i(\calE_\Prism)$ in Construction~\ref{const: cryst-realization-perfect-cx}. We use the base change property of the Breuil cohomology and the comparison results of $i(\calE_\Prism)$ and $j_\ast\calE_\Prism$ as additional inputs to modify the proof of \cite{du-moon-shimizu-cris-pushforward}.

Similarly, we show a prismatic-crystalline comparison over $W(k)$ and $A_\cris$: the triple $(W(k), (p), M_0)$ gives a log prism of $(\Spf\calO_K, M_{\Spf\calO_K})_{\Prism}$  via $\calO_K \rightarrow k \cong W(k)/(p)$.
Consider the map $\theta\colon A_{\inf}\coloneqq W(\calO_C^{\flat})\rightarrow \calO_C$ lifting $\calO_C^\flat\rightarrow \calO_C/p$. Choose a compatible system $(\pi_n)_{n \geq 0}$ of $p$-power roots of $\pi$ in $C$ with $\pi_0 = \pi$ and $\pi_{n+1}^p = \pi_n$, and let $\pi^{\flat}\coloneqq (\pi_n)_{n \geq 0}$ be the corresponding element in $\calO_{C}^{\flat}$. Let $A_{\cris}$ be the $p$-completed PD-envelope of $A_{\inf}$ with respect to $(\Ker(\theta),p)$, which yields a log prism
 \[
 (A_{\cris}, (p), M_{\Spf A_{\cris}})\coloneqq (A_{\cris}, (p), (\N \rightarrow A_{\cris}, 1\mapsto [\pi^\flat]^p)^a) \in (\Spf\calO_K, M_{\Spf\calO_K})_{\Prism}
 \]
with $\calO_K\rightarrow\calO_K/p\xrightarrow{\phi}\calO_C/p\rightarrow A_\cris/p$ (cf.~Example~\ref{eg: examples-log-prisms}). Set $B_\cris^+\coloneqq A_\cris[p^{-1}]$.

\begin{thm}[cf.~Theorems~\ref{thm: pris-cris-comparison-over-Breuil S} and \ref{thm:Frobenius-isogeny-property}] 
Keep assuming that $X$ is proper over $\calO_K$. Then there exist isomorphisms of perfect complexes 
\[
R\Gamma(((X_0, M_{X_0}) / (W(k), M_0))_{\Prism}, j_{\ast}\calE_\Prism)\otimes_{W(k)}^L K_0
\cong R\Gamma(((X_0, M_{X_0})/(W(k), M_0))_{\CRIS}, \calE_{\cris,\Q})
\]
in $D(K_0)$ and 
\begin{align*}
R\Gamma(((X_0, M_{X_0}) / (A_\cris, M_{\Spf A_\cris}))_{\Prism}, &j_{\ast}\calE_\Prism)\otimes_{A_{\cris}}^L B_\cris^+\\
&\cong R\Gamma(((X_{\calO_C/p}, M_{X_{\calO_C/p}})/(A_{\cris}, M_{A_\cris}))_{\CRIS}, \calE_{\cris,\Q})
\end{align*}
in $D(B_\cris^+)$ that are compatible with Frobenii. Moreover, these perfect complexes satisfy the Frobenius isogeny property.
\end{thm}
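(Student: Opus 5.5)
We deduce both isomorphisms from the Breuil case, Theorem~\ref{thm: prism-cris-comparison-BK-case-intro}, by base change along maps of log prisms out of the Breuil log prism. The $S_K$ case is treated as the universal one: it carries the general relative comparison (Theorem~\ref{thm: pris-crys-comparison-associated-sheaves}), the base change property for $i(\calE_\Prism)$, and the Frobenius isogeny property.

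\textbf{The two base changes.} Consider the maps of log prisms in $(\Spf\calO_K, M_{\Spf\calO_K})_\Prism$ given by
\[
(S_K,(p),M_{\Spf S_K}) \rightarrow (W(k),(p),M_0), \qquad (S_K,(p),M_{\Spf S_K})\rightarrow (A_\cris,(p),M_{\Spf A_\cris}).
\]
The first sends $u\mapsto 0$; it is a PD-morphism, and it is compatible with log structures because $1\mapsto u^p$ goes to $1\mapsto 0$. The second sends $u\mapsto[\pi^\flat]$; it is compatible with Frobenius and with the log structures $\phi_*\N$ and $1\mapsto[\pi^\flat]^p$, and it extends to PD-envelopes because $E([\pi^\flat])\in\Ker\theta$. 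Modulo $p$ these maps are compatible with $X_{S_K/p}\leftarrow X_0$ and $X_{S_K/p}\leftarrow X_{\calO_C/p}$.

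\textbf{Step 1: base change on the prismatic side.} For $i(\calE_\Prism)$ I run the \v{C}ech--Alexander argument used for Theorem~\ref{thm: general-base-change-BK-cohom}. Locally on a small affine cover, the relative prismatic cohomology over a log prism $B$ is computed by the cosimplicial log prismatic envelopes of a Breuil--Kisin-type frame. Since $i(\calE_\Prism)$ is a crystal in perfect complexes, forming these envelopes commutes with completed base change $S_K\to B$, so
\[
R\Gamma_{\Br}(X,i(\calE_\Prism))\widehat\otimes^L_{S_K}B\cong R\Gamma(((X_{B/p},M)/(B,M_{\Spf B}))_\Prism, i(\calE_\Prism)).
\]
By properness and the perfectness in Theorem~\ref{thm: general-base-change-BK-cohom}, the completion can be dropped. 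After inverting $p$, the map $i(\calE_\Prism)\to j_*\calE_\Prism$ induces an isomorphism on cohomology over $B$. I would argue this as in Proposition~\ref{prop: BK-cohom}: the two objects agree away from $V(p,I)$, and here $I=(p)$.

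\textbf{Step 2: base change on the crystalline side.} The relative log crystalline cohomology of the isocrystal $\calE_{\cris,\Q}$ satisfies the analogous base change along $S_K\to B$. This holds because the crystalline realization of $i(\calE_\Prism)$ (Construction~\ref{const: cryst-realization-perfect-cx}) is a crystal in perfect complexes, and crystalline cohomology of such a crystal commutes with PD base change; this is the usual log crystalline base change for a proper log smooth morphism. Inverting $p$ recovers $\calE_{\cris,\Q}$.

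\textbf{Step 3: assembling.} Tensor the isomorphism of Theorem~\ref{thm: prism-cris-comparison-BK-case-intro} with $K_0$, respectively $B_\cris^+$, over $S_K[p^{-1}]$, and combine it with Steps 1 and 2. Frobenius compatibility follows because all the maps involved are Frobenius-equivariant. Perfectness and the Frobenius isogeny property are stable under base change: $L\phi^*$ commutes with $-\otimes^L_{S_K}B$ since the maps $S_K\to B$ commute with $\phi$.

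\textbf{Main obstacle.} The delicate point is making the base change in Step 1 rigorous. One must check that the log prismatic envelopes used in the \v{C}ech--Alexander complexes over $S_K$ really pull back to those over $W(k)$ and over $A_\cris$, and that this involves no Tor or flatness issues. The map $S_K\to W(k)$ is not flat, so I would rely on the perfect-complex nature of $i(\calE_\Prism)$, not on any flatness of the base change.

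As a fallback, I would redo the local comparison of Theorem~\ref{thm: pris-crys-comparison-associated-sheaves} directly over $W(k)$ and $A_\cris$, which that theorem's relative set-up should allow. The Frobenius isogeny property would then still be deduced by base change from the $S_K$ case.
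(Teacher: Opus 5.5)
Your argument is sound in outline, but its main route is not the paper's; your ``fallback'' is exactly what the paper does. The paper never transports the comparison isomorphism from $S_K$. It applies Theorem~\ref{thm: prism-cryst-comparison-semist} directly to $(A,J)=(W(k),(p))$ and to $(A,J)=(A_\cris,\Ker(A_\cris\to\calO_C/p))$; that theorem is the relative comparison of Theorem~\ref{thm: pris-crys-comparison-associated-sheaves}, valid over any crystalline base as in Set-up~\ref{set-up: base-cryst-prism}. Base change enters only in two places:
\begin{enumerate}
\item for perfectness, via Theorem~\ref{thm: general-base-change-BK-cohom}(2) from $\fkS_K$ (not from $S_K$) together with Theorem~\ref{thm: BK-cohom-perfect-cx};
\item for transporting the Frobenius isogeny property from the $S_K$ case, as you propose.
\end{enumerate}
This buys economy. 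Everything rests on one local comparison, and compatibility between the bases comes from the functoriality of Theorem~\ref{thm: prism-cryst-comparison-semist} in $(A,(p),M_A)$. No crystalline base change along a non-flat map is ever needed; the paper deduces such statements afterwards (cf.\ Theorem~\ref{thm:Hyodo-Kato-cohomology}(1) and the indirect argument in Proposition~\ref{prop: base-change-cris-cohom-Acris}(2)). Your route exhibits the $W(k)$ and $A_\cris$ isomorphisms visibly as base changes of the $S_K$ one, but at the price of your Step~2, which the paper does not supply.

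You have also misplaced the difficulty. Step~1 is not delicate. By Theorem~\ref{thm: general-base-change-BK-cohom} with $A=S_K$, $R\Gamma_{\Br}(X,i(\calE_\Prism))\cong R\Gamma_{\BK}(X,i(\calE_\Prism))\widehat{\otimes}^L_{\fkS_K,\phi}S_K$. Since $\fkS_K\to W(k)$ has Tor amplitude in $[-1,0]$, the same theorem already gives Step~1. The passage from $i(\calE_\Prism)$ to $j_\ast\calE_\Prism$ after inverting $p$ is Theorem~\ref{thm: general-base-change-BK-cohom}(2), via Lemma~\ref{lem:algebraic-lemmas-for-base-change}(2). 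It is not the flatness argument of Proposition~\ref{prop: BK-cohom}, whose coface maps are no longer flat after base change to $W(k)$.

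Running \v{C}ech--Alexander directly along $S_K\to W(k)$ would fail, but not because of flatness. Termwise base change is fine, since $S_K\to S_\Lambda^{\rel,[n]}$ is $p$-completely flat. The problem is that commuting $\Tot$ with $-\widehat{\otimes}^L_{S_K}W(k)$ requires finite $p$-complete Tor amplitude (Lemma~\ref{lem:algebraic-lemmas-for-base-change}(1)). $S_K\to W(k)$ does not have it: $S_K/p$ is flat over $k[u]/(u^{ep})$, and $k$ has infinite Tor dimension over the latter.

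The same obstruction is why Step~2 over $W(k)$ is the genuinely extra input. The \v{C}ech--Alexander method of Proposition~\ref{prop: base-change-cris-cohom-Acris} does not give it. You would need, for instance, a log de Rham (linearization) argument: locally, the cohomology of a crystal in perfect complexes carries a finite filtration whose graded pieces (its value on a local log PD-envelope tensored with $\omega^q$, shifted) commute with base change. Over $A_\cris$ there is no issue, since $S_K\to A_\cris$ is $p$-completely faithfully flat.

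Finally, the Breuil log prism has structure map $\calO_K\xrightarrow{\phi}S_K/p$. So your map $u\mapsto 0$ is a morphism over $\calO_K$ only if it is taken $\phi^{-1}$-semilinear on $W(k)$.
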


\medskip
\noindent \textbf{The Hyodo--Kato complex.}
We are now ready to state the properties of the Hyodo--Kato complex.

\begin{thm}[Theorem~\ref{thm:Hyodo-Kato-cohomology}]\label{thm:Hyodo-Kato-complex-intro}
The complex $R\Gamma(((X_0, M_{X_0})/(W(k), M_0))_{\CRIS}, \calE_{\cris, \Q})$
underlies a complex $R\Gamma_{\HK}((X_0, M_{X_0}), \mathcal{E}_{\cris, \Q})$ of $(\phi,N)$-modules over $K_0$  satisfying properties (HK1)-(HK4) (explained after Theorem~\ref{thm: Cst-intro}). Moreover, 
\begin{itemize}
 \item[\emph{(HK5)}]
 there exists an isomorphism of perfect complexes
\begin{align*}
\rho_{\st}\colon R\Gamma(((X_0, M_{X_0})&/(W(k), M_0))_{\CRIS}, \calE_{\cris, \Q})\otimes_{K_0}^L B_\st^+ \\
&\cong  R\Gamma(((X_{\calO_C/p}, M_{X_{\calO_C/p}})/(A_{\cris}, M_{A_\cris}))_{\CRIS}, \calE_{\cris, \Q})\otimes_{B_\cris^+}^L B_\st^+
\end{align*}
  in $D(B_\st^+)$ that is compatible with $\phi$, $N$, and $\Gal(\overline{K}/K)$-actions.
\end{itemize}
\end{thm}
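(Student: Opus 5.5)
The plan is to obtain everything from the Breuil cohomology via Theorem~\ref{thm: prism-cris-comparison-BK-case-intro} and its variants over $W(k)$ and $A_\cris$. The structure is as in Hyodo--Kato's classical construction, but run on the perfect complex $\calM\coloneqq R\Gamma_\Br(X,j_\ast\calE_\Prism)\otimes^L_{S_K}S_K[p^{-1}]$.

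\textbf{(HK1) and (HK2).} Consider the map of log prisms $(S_K,(p),M_{\Spf S_K})\to(W(k),(p),M_0)$ given by $u\mapsto 0$. By the base change statement (Theorem~\ref{thm: general-base-change-BK-cohom}) and the Čech--Alexander description, we get
\[
D\cong\calM\otimes^L_{S_K[p^{-1}]}K_0 .
\]
Hence $D$ is perfect, and the Frobenius isogeny property of $\calM$ descends to $D$.

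\textbf{Hyodo--Kato section.} Next we construct a $\phi$-equivariant section $s\colon D\to\calM$, or more precisely a $\phi$-equivariant isomorphism
\[
D\otimes^L_{K_0}S_K[p^{-1}]\cong\calM .
\]
Since $\phi_D$ is an isomorphism, $D$ is a perfect complex of $\phi$-modules over $K_0$. I would reduce to a statement about the derived mapping space of $\phi$-modules. The two objects $D\otimes S_K[p^{-1}]$ and $\calM$ become equal modulo the PD ideal $\Fil^1 S_K$ (or modulo $u$). The idea is that Frobenius is topologically nilpotent on the kernel of $S_K[p^{-1}]\to K_0$ (up to the usual $p^{-n}$ denominators controlled by the bounded amplitude of $D$). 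The classical Dwork trick $s=\lim\phi_\calM^n\circ\tilde s\circ\phi_D^{-n}$ then produces a unique lift.

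I expect this step to be the main obstacle: we must make the Dwork trick work in the derived category of perfect complexes rather than for modules. My first attempt would be to choose strictly perfect representatives, together with a homotopy-coherent Frobenius on $\calM$ realized by the Čech--Alexander complex of Breuil prisms. The convergence of $\sum$ would then be shown termwise, using that each term lies in finite projective modules and that $\phi(u)=u^p$.

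\textbf{(HK3) and (HK4).} The monodromy $N_D$ is obtained by transporting the derivation $N=-u\,d/du$ on $S_K$, which acts on $\calM$ through the Gauss--Manin-type connection coming from the crystal nature over $(S_K,M)$ relative to $(W(k),M_0)$. Uniqueness of $s$ gives $N\phi=p\phi N$. For $\rho_\pi$, specialise $\calM$ along $S_K\to\calO_K$, $u\mapsto\pi$. Crystalline-de Rham comparison for the filtered $F$-isocrystal then identifies the result with $R\Gamma_\dR(X_K,\mathbf E)$. Changing $\pi$ to $\pi'$ changes the specialisation by the flow of $N$, giving the stated $\exp$ formula.

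\textbf{(HK5).} We apply the same section construction over $A_\cris$. The map $S_K\to A_\cris$, $u\mapsto[\pi^\flat]$ (twisted by $\phi$ as in the log prism definitions), together with the $A_\cris$-variant of the comparison theorem and base change, gives
\[
\calM\otimes_{S_K[p^{-1}]}B_\cris^+\cong R\Gamma_{\cris,A_\cris}.
\]
Composing with $s$ and extending to $B_\st^+=B_\cris^+[\log[\pi^\flat]]$ via the standard twist $\exp(-N\otimes\log)$ yields $\rho_\st$, which commutes with $N$. Galois equivariance follows because $\sigma$ changes $[\pi^\flat]$ by $[\varepsilon]^{c(\sigma)}$, which is absorbed by the logarithm term, and from the uniqueness of the Frobenius-compatible section.
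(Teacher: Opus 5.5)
Your treatment of (HK1)--(HK2) matches the paper. Your section $s$ is essentially what the paper gets from Beilinson's derived Dwork trick for the $\phi$-equivariant surjection $S_K\to W(k)$ (Proposition~\ref{prop:Beilinson-epsilon-functor-properties}(2)). Your termwise sketch would need care there: a non-equivariant initial lift $D\to\calM$ of the identity exists only if $H^i(\calM)\to H^i(\calM\otimes^L_{S_K[p^{-1}]}K_0)$ is surjective. That, however, is not the real problem.

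The genuine gap is the monodromy and everything built on it. You only ever have the pair $(\calM,\phi)$ over the single ring $S_K[p^{-1}]$, and a complex carries no ``Gauss--Manin connection'' by itself. What is needed is that the Breuil cohomology forms a crystal on $(\Spec\calO_K/p,M_\can)_\CRIS$: a homotopy-coherent system of $\phi$-compatible base-change isomorphisms over the self-coproducts $S_K^{[\bullet]}$. Your proposal never produces this structure.

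This is the key step of the paper. $R\Gamma_\Br(X,i(\calE_\Prism))$ is perfect over $S_K$ and satisfies \emph{integral} base change along every projection $S_K\to S_K^{[n]}$ (Theorem~\ref{thm: general-base-change-BK-cohom}(1)). That is exactly why $i(\calE_\Prism)$ is introduced, since for $j_\ast\calE_\Prism$ base change holds only after inverting $p$. Descending along $S_K^{[\bullet]}$ gives a perfect $F$-crystal $\calK_\cris$ on $(Y_1,M_{Y_1})_\CRIS$. Beilinson's equivalence $D_{\phi,N}(K_0)^{\mathrm{eff}}\simeq D^{\mathrm{pcr}}_\phi(Y_1,M_{Y_1})^{\mathrm{nd}}_\Q$, after a Frobenius twist by $p^m$, then produces $(V,\phi,N)$. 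From the crystal, $\rho_\pi$ and its dependence on $\pi$ are read off via $S_K^{(1)}$. The map $\rho_\st$ is obtained by evaluating on $\widehat{A}_\st$, where $u$ is Galois-invariant, and passing to $N$-nilpotent parts (Proposition~\ref{prop:Bst-vs-Asthat}).

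A concrete step that fails is deriving Galois equivariance of $\rho_\st$ ``from the uniqueness of the Frobenius-compatible section'': over $A_\cris$ there is no such uniqueness. Since $\phi(t)=pt$ and $N\phi=p\phi N$, the operator $tN$ commutes with $\phi$. Hence if $s_\cris$ is a $\phi$-compatible section over $B_\cris^+$, so is $s_\cris\circ\exp(\lambda tN)$ for every $\lambda\in\Z_p$, and all of these have the same reduction modulo $\Ker(A_\cris\to W(\overline{k}))$. The base changes of $\calM$ along $u\mapsto[\pi^\flat]$ and along $u\mapsto\sigma([\pi^\flat])=[\epsilon]^{c(\sigma)}[\pi^\flat]$ differ by exactly such an exponential factor. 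Computing it is the Taylor formula of the stratification, which is again the missing crystal structure; the map $S_K\to A_\cris$ is not Galois-equivariant, which is why the paper passes through $\widehat{A}_\st$.

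For the same reason, you have no argument that your $N_D$ is independent of $\pi$, which the $\exp$ formula in (HK4) presupposes. The base change of crystalline cohomology along $S_K\to\calO_K$ used for $\rho_\pi$ also needs proof; the paper routes it through $A_{\logcris,K}$ (Proposition~\ref{prop: base-change-cris-cohom-Acris}).
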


We use Beilinson's framework of the derived category $D_{\phi, N}(K_0)$ of $(\phi,N)$-modules over $K_0$ and Hyodo--Kato theory in \cite{Beilinson-crystalline-period-map-arXiv} to formulate and prove Theorem~\ref{thm:Hyodo-Kato-complex-intro}.
In fact, Beilinson's Hyodo--Kato theory associates to an $F$-isocrystal in perfect complexes on $(\Spec\calO_K/p,M_\can)_\CRIS$\footnote{Here, $M_\can$ denotes the pullback of the canonical log structure on $\calO_K$} an object of $D_{\phi, N}(K_0)$ (see \S~\ref{subsec: hyodo-kato-theory}). We use $R\Gamma_{\Br}(X, i(\calE_{\Prism}))$ to define such an $F$-isocrystal in perfect complexes and apply the various comparison and base change results for $i(\calE_\Prism)$ and $j_\ast\calE_\Prism$ mentioned above to deduce Theorem~\ref{thm:Hyodo-Kato-complex-intro}.

\medskip
\noindent \textbf{Prismatic-\'etale comparison in \cite{Tian-prism-etale-comparison}.}
The comparison between prismatic and etale cohomology has been established recently by Tian \cite{Tian-prism-etale-comparison}: let $\mu\coloneqq [\epsilon]-1 \in A_{\inf}$ where $\epsilon = (\epsilon_n)_{n \geq 0} \in \calO_C^{\flat}$ is a non-trivial compatible system of $p$-power roots of unity such that $\epsilon_0 = 1$ and $\epsilon_{n+1}^p = \epsilon_n$. 

\begin{thm}[{\cite[Thm.~0.4(2), 5.6(2)]{Tian-prism-etale-comparison}}] \label{thm: prism-etale-comparison-intro}
Let $X$ be a proper semistable $p$-adic formal scheme over $\calO_K$. Let $\mathcal{E}_{\Prism}$ be an analytic prismatic $F$-crystal on $(X, M_X)_{\Prism}$, and let $\mathbb{L}$ be its \'etale realization. Then there is an isomorphism in $D(A_{\inf}[\mu^{-1}])$
\[
R\Gamma(((X_{\calO_C}, M_{X_{\calO_C}})/(A_{\inf}, M_{\Spf A_{\inf}}))_{\Prism}, j_\ast\mathcal{E}_{\Prism})\otimes_{A_{\inf}}^L A_{\inf}[\mu^{-1}] \cong R\Gamma(X_{C, \et}, \mathbb{L})\otimes_{\mathbf{Z}_p}^L A_{\inf}[\mu^{-1}]
\]
compatible with $\Gal(\overline{K}/K)$-actions and Frobenii.
\end{thm}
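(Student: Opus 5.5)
The statement is Tian's prismatic-\'etale comparison, which the paper cites rather than proves. I would follow the strategy of Bhatt--Scholze's \'etale comparison, adapted to log prisms and to the completed crystal $j_\ast\calE_\Prism$.

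\emph{Step 1: perfect prisms and the Frobenius-fixed points.}
First I would reduce to perfect log prisms. After inverting $\mu$, the cohomology of the relative log prismatic site over $A_{\inf}$ should be computable on the perfection. The natural tool is the quasi-syntomic (log-quasisyntomic) site of $X_{\calO_C}$, using covers by log perfectoid objects. On such an object $(A_{\inf}(R),\ker\theta)$, the analytic crystal $\calE_\Prism$ gives a vector bundle on $\Spec A_{\inf}(R)\smallsetminus V(p,I)$ with Frobenius. After inverting $\mu$, equivalently passing to $W(R^\flat)[1/\mu]$ in the $p$-complete sense, this becomes a $\phi$-module. Its Frobenius-fixed points recover $\mathbb{L}(R[1/p])$ by the Katz / Bhatt--Scholze correspondence between \'etale $\phi$-modules and $\Z_p$-local systems on $\Spa(R[1/p])$. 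The key identity to establish is the fiber sequence
\[
\mathbb{L}\otimes A_{\inf}\to \calE_{A}\otimes A[1/I]^\wedge_p\xrightarrow{\phi-1}\calE_A\otimes A[1/I]^\wedge_p .
\]
The crucial point is that the comparison should hold \emph{after} inverting $\mu$. That is, the natural map $\mathbb{L}\otimes A_{\inf}[1/\mu]\to (j_\ast\calE_\Prism)_A[1/\mu]$ is an isomorphism, because $\mu$ kills the cokernel. This is the analogue of the almost purity / $\mu$-torsion statements for the constant coefficient case.

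\emph{Step 2: globalize.}
Next I would apply $R\Gamma$ over the pro-\'etale site of $X_C$. I would use the primitive comparison theorem, $R\Gamma(X_{C,\et},\mathbb{L})\otimes A_{\inf}\cong R\Gamma(X_{C,\proet},\mathbb{L}\otimes \mathbb{A}_{\inf})$ almost, which holds since $X$ is proper. I would then compare with prismatic cohomology via the map from the prismatic site to the (log) perfectoid covers. This requires showing that $R\Gamma(\ldots_\Prism, j_\ast\calE_\Prism)[1/\mu]$ agrees with the pro-\'etale cohomology of $\mathbb{A}_{\inf}\otimes\mathbb{L}[1/\mu]$. Locally on a small affine I would prove this by a $q$-de Rham / Koszul computation on the Breuil--Kisin or $A_{\inf}$-framing, as in Bhatt--Morrow--Scholze. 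The semistable coordinates $T_1\cdots T_m=\pi$ make the torus $\Z_p(1)^{d-1}$, and group cohomology computes both sides up to $\mu$-torsion. Perfectness of $R\Gamma_{\BK}$, and hence after base change to $A_{\inf}$, justifies commuting the completion with inverting $\mu$. Galois and Frobenius compatibility are functoriality.

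\emph{Main obstacle.}
The hardest step is the local statement for non-locally-free $j_\ast\calE_\Prism$. The completed crystal is only a vector bundle away from $V(p,I)$, so the $\mu$-isogeny argument must pass through the analytic locus. I would first establish the isomorphism on $\Spec A_{\inf}\smallsetminus V(p,I)$ for the analytic bundle. I would then use that $\mu$ lies in $I$-adic neighborhoods avoiding the closed point only after inverting, so $A_{\inf}[1/\mu]$-modules see only the analytic locus. Reflexivity of $j_\ast$ of a bundle over the regular Breuil--Kisin ring controls the remaining error.
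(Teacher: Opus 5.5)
\paragraph{Verdict.} Your architecture is right, but the step that makes the theorem non-formal is not supplied, and the tool you name for it does not cover coefficients.

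\paragraph{What the paper does.} The paper does not prove this statement; it imports it from \cite{Tian-prism-etale-comparison}. What it records about the argument (proof of Theorem~\ref{thm: comparison-maps-compatibility}, Remark~\ref{rem:two-structure-map-for-Ainf}) is only the architecture:
\begin{itemize}
\item a canonical log structure on $\A_{\inf}(B^+)$ gives a morphism of topoi from $\Perfd/X_{C,\proet}$ to the relative log prismatic site over $A_{\inf}$;
\item there is a local isomorphism $T(\calE_\Prism)\otimes_{\Z_p}\A_{\inf}(B^+)[\mu^{-1}]\cong\calE_\Prism(\A_{\inf}(B^+))[\mu^{-1}]$, functorial in $\Spa(B,B^+)$;
\item the global comparison is induced from these together with the primitive comparison theorem.
\end{itemize}
Your outline has this shape.

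\paragraph{The gap.} You do not show that the induced map $R\Gamma_{A_{\inf}}(X,j_\ast\calE_\Prism)[\mu^{-1}]\to R\Gamma(X_{C,\proet},\calE_\Prism(\mathbb{A}_{\inf}))[\mu^{-1}]$ is an isomorphism. This cannot be checked prism by prism. For a non-perfect framed prism $B$ in the site, the $\mathbb{A}_{\inf}$-cohomology of the perfectoids over $B$ sees roughly the perfection of $B$, not $B[\mu^{-1}]$. So your sentence in Step~1, that after inverting $\mu$ the cohomology ``should be computable on the perfection'', is the theorem itself, not a reduction.

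In Step~2 you invoke the Koszul computation of Bhatt--Morrow--Scholze and \v{C}esnavi\v{c}ius--Koshikawa, but that is a constant-coefficient computation. Write $(j_\ast\calE_\Prism)(\A_{\inf}(R_\infty))=M\widehat{\otimes}_B\A_{\inf}(R_\infty)$, with $M$ the value at the framed prism $B$. You must then control how $\Gamma\cong\Z_p(1)^{d-1}$ acts on $M$ through the crystal structure. Roughly, each $\gamma_i$ should act as $1+\mu N_i$ with $N_i$ suitably topologically nilpotent (up to Frobenius-twist normalizations). Two things follow from that, and both are needed:
\begin{itemize}
\item the non-integral summands contribute only $\mu$-power torsion;
\item the integral part becomes a $q$-Koszul complex in the $N_i$, which must then be identified with the log prismatic cohomology of the crystal. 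This is a Morrow--Tsuji type crystal/$q$-connection dictionary, here needed in the log setting.
\end{itemize}
None of this follows from the constant-coefficient case. The alternative is a Frobenius/perfection argument using the $F$-structure and perfectness of $R\Gamma_{\BK}$, which you also do not give.

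\paragraph{Smaller points.}
\begin{itemize}
\item In Step~1 the fiber of $\phi-1$ on $\calE_A\otimes A[1/I]^\wedge_p$ is $\mathbb{L}$, not $\mathbb{L}\otimes A_{\inf}$.
\item There is no natural integral map $\mathbb{L}\otimes A_{\inf}\to(j_\ast\calE_\Prism)_A$ in general. The integral discrepancy is governed by a power of $\mu$ depending on the Frobenius height, not by $\mu$ itself. Descending the isomorphism from $A[1/I]^\wedge_p$ to $A[\mu^{-1}]$ is a genuine lemma: it is the one the paper cites as \cite[Lem.~5.5(1)]{Tian-prism-etale-comparison}, in the spirit of the Bhatt--Morrow--Scholze lemma for Breuil--Kisin--Fargues modules.
\item For the site over $(A_{\inf},\Ker(\theta\circ\phi^{-1}))$ the perfect prisms are $(\A_{\inf}(R),\Ker\widetilde{\theta})$. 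With $\Ker\theta$, as you wrote, the natural localization is at $\phi^{-1}(\mu)$; cf.\ Remark~\ref{rem:two-structure-map-for-Ainf}.
\item Your ``main obstacle'' is misplaced. Since $[p]_{[\epsilon]}=\phi(\mu)/\mu\equiv\mu^{p-1}\bmod p$, one has $V(p,I)\subset V(\mu)$ in $\Spec B$ for every $B$ over $A_{\inf}$. Hence $(j_\ast\calE_\Prism)_B[\mu^{-1}]$ is just the restriction of the analytic vector bundle to the affine open $D(\mu)$, so it is finite projective over $B[\mu^{-1}]$. The non-local-freeness of $j_\ast\calE_\Prism$ plays no role after inverting $\mu$, and no reflexivity argument is needed.
\end{itemize}
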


\medskip
\noindent \textbf{The $C_{\st}$-conjecture.}
Let us return to Theorem~\ref{thm: Cst-intro}. With the notation and assumption therein, we define the isomorphism $\alpha_\st$ to be the composite
\begin{align*}
R\Gamma(X_{C, \et}, \mathbb{L})\otimes_{\Z_p}^L B_{\st}
&\underset{\cong}{\longrightarrow} R\Gamma(((X_{\calO_C/p}, M_{X_{\calO_C/p}})/(A_{\cris}, M_{A_{\cris}}))_{\CRIS}, \calE_{\cris, \Q})\otimes_{A_{\cris}}^L B_{\st}\\
&\overset{\rho_\st\otimes B_\st}{\underset{\cong}{\longleftarrow}} R\Gamma_{\HK}((X_0, M_{X_0}), \mathcal{E}_{\cris, \Q})\otimes_{K_0}^L B_{\st},
\end{align*}
where the first isomorphism is deduced from Theorem~\ref{thm: prism-etale-comparison-intro} and a base change property along $A_\inf\rightarrow A_\cris$ (Theorem~\ref{thm: general-base-change-BK-cohom}), and the second is the one in (HK5) of Theorem~\ref{thm:Hyodo-Kato-complex-intro}. The results mentioned so far show that $\alpha_\st$ is an isomorphism that is compatible with Galois actions, Frobenii, and monodromy operators.

Finally, we need to prove the filtration compatibility after base change to $B_{\dR}$. The precise statement is given in Theorem~\ref{thm: Cst-conj-main}, using the \'etale-de Rham comparison by Scholze \cite[Thm. 8.4]{scholze-p-adic-hodge}. 
Following \cite[\S~10]{GuoReinecke-Ccris}, we study the infinitesimal cohomology over $B_\dR^+$ and show the filtration compatibility. It is worth noting that an embedding $B_\st\hookrightarrow B_\dR$ (or the definition of $B_\st$) depends on the choice of a uniformizer $\pi$. We explain in Remark~\ref{rem:independence-of-Cst-comparison-map} that $\alpha_\st$ is independent of the choice $\pi$.

\medskip
\noindent \textbf{Outline.} In Section~\ref{sec: log-prismatic-sites}, we recall the definition of (strict) logarithmic prismatic sites. We establish some necessary results on functoriality for log prismatic sites in Section~\ref{sec: functorial on log prismatic site}. Various notions of ($F$-)crystals on the log prismatic site are discussed in Section~\ref{sec: analy-pris-cryst-perf-cx}.

In Section~\ref{sec: prismatic-crystalline comparison}, we study the comparison between log prismatic cohomology and crystalline cohomology with coefficients given by $p$-adically completed crystals satisfying the association condition (given in Definition~\ref{defn: associated-crystals}).

Section~\ref{sec: semistable-case} is devoted to the semistable case: we set up notations in \S~\ref{sec: semistable-case-notation} and explain a \v{C}ech--Alexander method for computing cohomology. In \S~\ref{subsec: BK-prism}, we recall Breuil--Kisin log prism and construct \v{C}ech nerve given by a family of Breuil--Kisin log prisms. In \S~\ref{subsec: analy-prism-crys-perf-cx}, we associate to analytic prismatic crystals $\calE_\Prism$ completed prismatic crystals $j_\ast\calE_\Prism$ and prismatic crystals in perfect complexes $i(\calE_\Prism)$, which are the coefficients of prismatic cohomology. We introduce the Breuil--Kisin cohomology and Breuil cohomology and establish their comparison results and base change properties in \S~\ref{subsec: BK-cohom}. In \S~\ref{sec: pris-cris-comparison-crystals-semist}, a comparison between the prismatic and crystalline cohomology is given, and the Frobenius isogeny property for crystalline and Breuil cohomology is proved in \S~\ref{subsec: Frobenius-isogeny-property}. We discuss the Hyodo--Kato theory in \S~\ref{subsec: hyodo-kato-theory} based on \cite{Beilinson-crystalline-period-map-arXiv}.  We apply these results in \S~\ref{subsec: hyodo-kato-cohom} to define and study the Hyodo--Kato complex $R\Gamma_{\HK}((X_0, M_{X_0}), \mathcal{E}_{\cris, \Q})$.

We formulate and prove the $C_{\st}$-conjecture in Section~\ref{sec: Cst}. We recall relevant facts from \cite{du-liu-moon-shimizu-purity-F-crystal} on semistable local systems and their relation to analytic prismatic $F$-crystals in \S~\ref{subsec: semist-local-syst}. Then we prove Theorem~\ref{thm: Cst-intro} in the following subsections; the isomorphism $\alpha_{\st}$ is given in \S~\ref{subsec: Cst-conj}, and the filtration compatibility is proved in \S~\ref{subsec: etale-deRham comparison}. 

Appendix~\ref{sec:log-formal-scheme} discusses some general facts on log formal schemes. In Appendix~\ref{sec: cryst-sites-variants}, we explain certain variants of crystalline sites and recall $\delta_{\log}$-crystalline site, following \cite[\S~6]{koshikawa}.

\medskip
\noindent
\textbf{Notation and conventions}.
Throughout the paper, we fix a prime $p$.

When we work on the monoid $\N^d$, we write $e_i$ ($1\leq i\leq d$) for the element $(0,\ldots,0,1,0,\ldots,0)$ where $1$ is in the $i$-th entry.

Let $\Delta$ denote the category whose objects are $[n]\coloneqq\{0,1,2,\ldots,n\}$ ($n\geq 0)$, and a morphism between $[n]$ to $[m]$ is a nondecreasing map between the corresponding sets (see \cite[0164]{stacks-project} for example). For general notations and terminologies on $\infty$-categories, we follow \cite{Lurie-higher-topos-theory, Lurie-HA}.

We refer the reader to \cite{koshikawa, koshikawa-yao} for the terminology related to log prisms and log prismatic sites. Recall that a \emph{(bounded) prelog prism} is a $\delta_{\mathrm{log}}$-triple $(A,I,M)$ where $(A,I)$ is a (bounded) prism. A \emph{log prism} $(A,I,M_{\Spf A})$ consists of a bounded prism $(A, I)$ and a log structure $M_{\Spf A}$ on $(\Spf A)_\et$ together with $\delta_{\mathrm{log}}$-structure that comes from some prelog prism of the form $(A, I, M)$. Write $M_{\Spf (A/I)}$ for the log structure on $\Spf (A/I)$ induced from $M_{\Spf A}$. For a bounded prelog prism $(A,I,M)$, let $(A,I,M)^a\coloneqq (A,I,M_{\Spf A}^a)$ denote the associated log prism.

We use two crystalline sites appearing in \cite{koshikawa} and \cite{du-liu-moon-shimizu-purity-F-crystal, du-moon-shimizu-cris-pushforward}; the former is denoted by $((Y,M_Y)/(A,M_A))_\CRIS$, and an object is usually written as $(B,J)$ or $(B,B/J)$; the latter is $((Y,M_Y)/\calS^\sharp)_\CRIS$ with objects $(U,T)$. When $(\Spf A,M_A)$ underlies the $p$-adic log PD-formal scheme $\calS^\sharp$, $(B,J)\in((Y,M_Y)/(A,M_A))_\CRIS$ defines an ind-object $(\Spec B/J,\Spf B)=\varinjlim_n (\Spec B/J,\Spec B/p^n)$ of $((Y,M_Y)/\calS^\sharp)_\CRIS$. See \S~\ref{subsec:crystalline-sites} for the definition of these sites and the comparison of cohomology.

For a general bounded $p$-adic formal scheme $X$, let $X_n$ denote the scheme $X\otimes_{\Z_p}\Z_p/p^n$ equipped with the pullback log structure $M_{X_n}$ of $M_X$. We follow \cite[Notation~3.1]{bhatt-scholze-prismaticFcrystal} for the convention on the generic fiber and $\Z_p$-local systems: write $X_{\eta}$ for the generic fiber considered as a locally spatial diamond. For a locally spatial diamond $\calX$, write $\Loc_{\Z_p}(\calX)$ for the category of $\Z_p$-local systems on the quasi-pro-\'etale site of $\calX$. When $X$ is a semistable $p$-adic formal scheme over $\calO_K$, we write $X_{K}$ for $X_\eta$ and also regard it as the adic generic fiber; write $X_{K,\proet}$ for the pro-\'etale site in the sense of \cite{scholze-p-adic-hodge, Scholze-p-adicHodgeerrata}. Then $\Loc_{\Z_p}(X_{K})$ can be naturally identified with the category of $p$-torsion free lisse $\widehat{\Z}_p$-sheaves on $X_{K,\proet}$ in the sense of \cite[Def.~8.1]{scholze-p-adic-hodge} by \cite[Prop.~8.2]{scholze-p-adic-hodge}, \cite[Prop.~3.6, 3.7]{MannWerner-Localsystemsondiamonds}, and \cite[Lem.~15.6]{scholze-etalecohomologyofdiamonds}, and we simply call the objects \emph{$\Z_p$-local systems} on $X_K$ or $X_{K,\proet}$.

\medskip
\noindent
\textbf{Acknowledgments.}

Our special thanks go to Tong Liu for his collaboration during the early stages of the project and for numerous helpful discussions throughout. We are grateful for Teruhisa Koshikawa and Kentaro Inoue for many insightful conversations and sharing about their ongoing joint work as well as pointing out several errors in earlier drafts and providing suggestions . We also thank Shizhang Li for helpful discussions and correspondences. Finally, it is clear to the reader that the $C_{\st}$-conjecture for semistable formal schemes with coefficients consists of two pillars: prismatic-\'etale comparison by Tian \cite{Tian} and the current paper. We thank Yichao Tian for several inspiring conversations.

The first author was partially supported by the National Key R\&D Program of China (No. 2023YFA1009703) and the Beijing Natural Science Foundation (Youth Program, No. 1254044). The third author was partially supported by the NSFC Excellent Young Scientists Fund Program (Overseas).

\section{Logarithmic prismatic sites} \label{sec: log-prismatic-sites}

Let $(X,M_X)$ be a bounded $p$-adic log formal scheme such that $M_X$ is integral.

\begin{defn}[{cf.~\cite[Rem.~4.6]{koshikawa}, \cite[Def.~2.3]{du-liu-moon-shimizu-purity-F-crystal}}] \label{def: absolute-prismatic-site} 
Let $(X,M_X)_\Prism\coloneqq (X,M_X)_\Prism^\str$ denote the \emph{absolute strict prismatic site} of $(X,M_X)$. Recall that an object is a diagram
\[
(\Spf B,M_{\Spf B})\hookleftarrow (\Spf (B/J),M_{\Spf (B/J)})\xrightarrow{f_B} (X,M_X),
\]
where $(B,J,M_{\Spf B})$ is a log prism and $(\Spf (B/J),M_{\Spf (B/J)})\rightarrow (X,M_X)$ is a \emph{strict} morphism of log formal schemes. To simplify the notation, we often write an object as $(\Spf B,J, M_{\Spf B})$, or even as $(B,J,M_{\Spf B})$ with the opposite arrows. 
We equip $(X,M_X)_\Prism$ with strict flat topology. 
It comes with a sheaf of rings $\calO_\Prism$ defined by $\calO_\Prism(B,J,M_{\Spf B})=B$, which is equipped with the Frobenius endomorphism $\phi_{\calO_{\Prism}}$ induced by the Frobenius on each $B$. We often write $\phi$ for $\phi_{\calO_\Prism}$ for simplicity.

Let $\calI_{\Prism} \subset \calO_{\Prism}$ be the ideal sheaf given by $\calI_{\Prism} (B, J, M_{\Spf B}) = J$. For each integer $n \geq 1$, write $\calO_{\Prism, n} \coloneqq \calO_{\Prism} / (p, \calI_{\Prism})^n$. By fpqc descent, we have $\calO_{\Prism, n} (B, J, M_{\Spf B}) = B / (p, J)^n$, and $H^i((\Spf B, J, M_{\Spf B}), \calO_{\Prism, n}) = 0$ for any $i \geq 1$. Thus, we have $R^i \varprojlim_n \calO_{\Prism, n} = 0$ for each $i \geq 1$ and the natural isomorphism $\calO_{\Prism} \cong \varprojlim_n \calO_{\Prism, n}$ by \cite[Lem.~3.18]{scholze-p-adic-hodge} (see Corollary~\ref{cor:repleteness} below for a relevant result).
\end{defn}

\begin{defn} \label{def: relative-prismatic-site}
Let $(A,I,M_{\Spf A})$ be a log prism such that $M_{\Spf A}$ is integral and assume that we are given a morphism  $(X,M_X)\rightarrow (\Spf A/I,M_{\Spf A/I})$ of $p$-adic log formal schemes. We let $((X,M_X)/(A,M_{\Spf A}))_\Prism$ denote the \emph{relative strict prismatic site} of $(X,M_X)$ with strict flat topology as in \cite[Def.~4.1, Rem.~4.3]{koshikawa}. Moreover, it comes with a sheaf of rings $\calO_\Prism$. Note that the smoothness assumption involving $M_A$ and $X$ therein is not necessary to show that $((X,M_X)/(A,M_{\Spf A}))_\Prism$ is a site or $\calO_\Prism$ is a sheaf. When emphasizing the ideal $I$, we also write $((X,M_X)/(A,I, M_{\Spf A}))_\Prism$ for the site.

The obvious forgetful functor
\[
\iota\colon ((X,M_X)/(A,M_{\Spf A}))_\Prism \rightarrow (X,M_X)_\Prism
\]
is continuous and cocontinuous, which induces a morphism of topoi
\[
g\colon \Sh(((X,M_X)/(A,M_{\Spf A}))_\Prism) \rightarrow \Sh((X,M_X)_\Prism)
\]
such that $(g^{-1}\calF)(B,J,M_{\Spf B})=\calF(\iota(B,J,M_{\Spf B}))$ for $\calF\in \Sh((X,M_X)_\Prism)$ and $(B,J,M_{\Spf B})\in ((X,M_X)/(A,M_{\Spf A}))_\Prism$ (see \cite[00XO, 00XR]{stacks-project}). We usually write $\calF|_{((X,M_X)/(A,M_{\Spf A}))_\Prism}$ for $g^{-1}\calF$, or even $\calF$ when there is no ambiguity.
\end{defn}

\begin{defn}[{cf.~\cite[Rem.~4.5]{koshikawa}}]
\label{def:projection to etale site}
Define a morphism of topoi
\[
\nu_X=(\nu_X^{-1},\nu_{X,\ast})\colon \Sh((X,M_X)_\Prism)\rightarrow \Sh(X_\et)
\]
as follows. 
Let $\mathrm{ForSch}_\et/X$ denote the localized site of the site $\mathrm{ForSch}$ of $p$-adic formal schemes with \'etale topology by $X$ (equivalently, the category of log $p$-adic formal schemes that are strict over $(X,M_X)$ with \'etale topology). The association $(\Spf B,J,M_{\Spf B})\mapsto (f_{B}\colon\Spf  B/J\rightarrow X)$ defines a cocontinuous functor $(X,M_X)_\Prism\rightarrow \mathrm{ForSch}_\et/X$ by \cite[Rem.~4.2]{koshikawa} and thus a morphism $\Sh((X,M_X)_\Prism)\rightarrow \Sh(\mathrm{ForSch}_\et/X)$ of topoi. We define $\nu_X$ to be the composite of this with the restriction $\Sh(\mathrm{ForSch}_\et/X)\rightarrow\Sh(X_\et)$.
Concretely, $(\nu_{X,\ast}\calF)(U\rightarrow X)=\Gamma((U,M_U)_\Prism,\calF|_{(U,M_U)_\Prism})$ where $\calF|_{(U,M_U)_\Prism}$ denotes the obvious restriction (see \S~\ref{sec: functorial on log prismatic site} for a general discussion of functoriality of prismatic sites); $\nu_X^{-1}\calG$ is the flat sheafification of the presheaf sending $(\Spf B,J,M_{\Spf B})$ to $\Gamma((\Spf B/J)_\et,f_{B,\et}^{-1}\calG)$.
In the relative set-up, we also use $\nu_X$ to denote the composite $g\circ \nu_X\colon \Sh(((X,M_X)/(A,M_{\Spf A}))_\Prism)\rightarrow \Sh(X_\et)$ by abuse of notation.
\end{defn}

Let us discuss the repleteness of the log prismatic topoi.

\begin{lem}\label{lem:limit of flat covers}
Let 
\[
(\Spf A_1,I_1, M_{\Spf A_1})\xleftarrow{h_1}(\Spf A_2,I_2, M_{\Spf A_2})\xleftarrow{h_2}(\Spf A_3,I_3, M_{\Spf A_3})\xleftarrow{h_3} \cdots
\]
be a sequence of morphisms in the prismatic site $(X,M_X)_\Prism$ 
such that $h_i$ is a flat cover for every $i$. Then the inverse limit $\varprojlim_i (\Spf A_i,I_i, M_{\Spf A_i})$ exists in $(X,M_X)_\Prism$ and each projection $(\Spf A_i,I_i, M_{\Spf A_i})\leftarrow\varprojlim_i (\Spf A_i,I_i, M_{\Spf A_i})$ is a flat cover.
\end{lem}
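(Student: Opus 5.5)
The plan is to construct the limit explicitly as the $(p,I_1)$-completed colimit of rings. Set $A_\infty \coloneqq (\varinjlim_i A_i)^\wedge_{(p,I_1)}$, the derived (equivalently classical, once flatness is known) $(p,I_1)$-adic completion, and let $I_\infty\coloneqq I_1A_\infty$. Since each $h_i$ is a map of prisms, $I_iA_{i+1}=I_{i+1}$, so $I_\infty = I_iA_\infty$ for every $i$.

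The first step is flatness. Each $A_i\to A_{i+1}$ is $(p,I_i)$-completely faithfully flat, so $A_i/^L(p,I_i)\to A_{i+1}/^L(p,I_{i+1})$ is faithfully flat. A filtered colimit of faithfully flat maps is faithfully flat, so $A_i/^L(p,I_i)\to (\varinjlim_j A_j)\otimes^L_{A_i}A_i/(p,I_i)$ is faithfully flat. Derived completion does not change the reduction mod $(p,I_i)$, so $A_i\to A_\infty$ is $(p,I_i)$-completely faithfully flat. Then \cite[Lem.~2.22]{bhatt-scholze-prismaticcohom}, which constructs a prism from a completely flat map out of a bounded prism, shows that $A_\infty$ is classically $(p,I)$-complete, inherits a $\delta$-structure (the $\delta$-structures on the $A_i$ are compatible and pass to the colimit and its completion), and that $(A_\infty,I_\infty)$ is a bounded prism. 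Boundedness holds because $A_\infty/I_\infty$ is flat over $A_1/I_1$ modulo $p^n$, so its $p^\infty$-torsion is bounded by that of $A_1/I_1$.

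The second step is the log structure. Choose a prelog structure $M\to A_1$ with $\delta_{\log}$-structure inducing $M_{\Spf A_1}$. Because each $h_i$ is a morphism of log prisms that is strict (it is a flat cover in the strict flat topology, and both objects are strict over $(X,M_X)$), the pullback of $M$ to $A_i$ induces $M_{\Spf A_i}$. So endow $A_\infty$ with $(M\to A_1\to A_\infty)$, whose $\delta_{\log}$-structure is the composite. Set $(A_\infty,I_\infty,M_{\Spf A_\infty})\coloneqq (A_\infty,I_\infty,M)^a$. The map $\Spf(A_\infty/I_\infty)\to \Spf(A_1/I_1)\to X$ is strict, since the log structure is pulled back from $A_1/I_1$. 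This gives an object of $(X,M_X)_\Prism$ with maps to every $(A_i,I_i,M_{\Spf A_i})$, each a flat cover by the first step and strictness.

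The third step is the universal property. Take an object $(B,J,M_{\Spf B})$ with compatible maps to all $A_i$. The ring maps $A_i\to B$ give $\varinjlim A_i\to B$. This extends uniquely to $A_\infty$ because $B$ is $(p,J)$-complete and $I_1B\subset J$ (in fact $I_1B=J$ by rigidity of maps of prisms), and it is a $\delta$-map. The log structure is the subtle part, and I expect it to be the main obstacle. Compatibility of the log maps reduces to the map out of $A_1$, because $M_{\Spf A_\infty}$ is the pullback of $M_{\Spf A_1}$, and morphisms of log prisms out of a strict pullback are determined by the underlying map together with the log map from the source. Concretely, a morphism $(A_\infty,M^a)\to (B,M_{\Spf B})$ over $(A_1,M^a)$ is equivalent to a ring map plus the already given $M\to \Gamma(M_{\Spf B})$ compatible with $\delta_{\log}$. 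To avoid étale-local issues I would work étale locally on $\Spf B$ with charts and use that the $\delta_{\log}$-structure is determined by the chart, citing Koshikawa's description of $\delta_{\log}$-structures on associated log structures \cite{koshikawa}. Uniqueness of the log part follows similarly.
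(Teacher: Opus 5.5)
Your proposal is correct and follows essentially the paper's route: complete $\varinjlim_i A_i$ $(p,I_1)$-adically, obtain a bounded prism with $(p,I)$-completely faithfully flat projections (the paper simply cites \cite[Lem.~3.7]{bhatt-scholze-prismaticcohom}, which is the statement you are describing), give it a prelog $\delta_{\log}$-structure, and check the universal property. The one variation is in the log step: the paper uses \cite[Cor.~2.2(1)]{du-liu-moon-shimizu-purity-F-crystal} to build compatible charts $M_1\rightarrow M_2\rightarrow\cdots$ and then takes $M_\infty=\varinjlim_i M_i$, using Koshikawa's colimit and completion results for $\delta_{\log}$-rings; you instead use strictness of the transition maps to keep the single chart $M_1$, which is legitimate because a $\delta_{\log}$-structure on an associated log structure is determined by its values on the chart (on units it is forced by $\delta(u)=u^p\delta_{\log}(u)$), and this makes the universal-property check you sketch essentially immediate.
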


The proof is eventually reduced to the discussion on the (sequential) colimit of $\delta_\log$-triples, but one also needs to take care of the difference between morphisms of prelog prisms and those of log prisms.

\begin{proof}
By definition, $(\Spf A_1,I_1, M_{\Spf A_1})$ comes from a prelog prism $(A_1,I_1,M_1)$ for some monoid $M_1$, and thus $M_1\rightarrow A_1\rightarrow A_1/I_1$ gives a chart for $M_{\Spf A_1}$. By \cite[Cor.~2.2(1)]{du-liu-moon-shimizu-purity-F-crystal}, we obtain a prelog prism $(A_2,I_2,M_2)$ defining $(\Spf A_2,I_2, M_{\Spf A_2})$ together with a monoid map $M_1\rightarrow M_2$ compatible with $\delta_\log$-structures. By repeating this argument, we obtain a sequence of prelog prisms
\[
(A_1,I_1, M_1)\rightarrow(A_2,I_2, M_2)\rightarrow( A_3,I_3, M_3)\rightarrow \cdots
\]
giving the inverse system of log prisms in question. 
We know from \cite[Rem.~2.6]{koshikawa} that the colimit $\varinjlim_i (A_i,M_i)$ as a $\delta_\log$-ring exists, and the underlying ring and monoid are given by $A_\infty\coloneqq \varinjlim_i A_i$ and $M_\infty\coloneqq \varinjlim_i M_i$, respectively. Note $I_i=I_1A_i$ and let $B$ be the derived $(p,I_1)$-completion of $A_\infty$. Then each $(A_i,I_1A_i)\rightarrow (B,I_1B)$ becomes a $(p,I_1)$-completely faithfully flat map of bounded prisms with $B$ being discrete and classically $(p,I_1)$-complete by \cite[Lem.~3.7(1)(2)(3)]{bhatt-scholze-prismaticcohom}.
Moreover, $(B,M_\infty\rightarrow B)$ admits a natural $\delta_\log$-structure by \cite[Lem.~2.9]{koshikawa}. Hence we obtain a prelog prism $(B,I_1B,M_\infty)$ and morphisms $(\Spf A_i,I_i, M_{\Spf A_i})\leftarrow (\Spf B,I_1B, \underline{M_\infty}^a)$ of log prisms compatible with $h_i$. It is now straightforward to check that the latter represents the inverse limit and satisfies the required property by using \cite[Cor.~2.2(1)]{du-liu-moon-shimizu-purity-F-crystal}.
\end{proof}

\begin{cor}\label{cor:repleteness}
The absolute strict prismatic topos $\Sh((X,M_X)_\Prism)$ is replete. In particular, for any inverse system $\calF\colon \N^{\op}\rightarrow \Ab((X,M_X)_\Prism))$ of abelian sheaves with each $F_{n+1}\rightarrow F_n$ surjective, we have 
\[
\varprojlim_n \calF_n\cong R\varprojlim_n \calF_n \quad\text{and}\quad R\Gamma((X,M_X)_\Prism,\varprojlim_n \calF_n)\cong R\varprojlim_n R\Gamma((X,M_X)_\Prism,\calF_n).
\]
The same holds for any localized topoi.
\end{cor}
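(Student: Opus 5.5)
We will verify the definition of repleteness from Bhatt--Scholze's pro-étale paper (Def.~3.1.1 there): a topos is replete if every surjection-tower $\cdots\to F_3\to F_2\to F_1$ of sheaves (each $F_{n+1}\to F_n$ an epimorphism) has surjective projections $\varprojlim_n F_n\to F_m$ for all $m$. Once repleteness is established, the two displayed isomorphisms follow formally from \cite[Prop.~3.1.10]{BS-proetale}-type results: in a replete topos, $R^i\varprojlim$ of a tower of abelian sheaves with surjective transition maps vanishes for $i\ge 1$ (equivalently, sequential limits of surjections are surjections, so the Mittag-Leffler argument works on the level of sheaves), giving $\varprojlim_n\calF_n\cong R\varprojlim_n\calF_n$. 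Then, since $R\Gamma$ commutes with $R\varprojlim$ (both are right adjoints / derived limits), we get $R\Gamma(\varprojlim_n\calF_n)\cong R\Gamma(R\varprojlim_n\calF_n)\cong R\varprojlim_n R\Gamma(\calF_n)$. For localized topoi, note that a slice of a replete topos is replete (cf.~\cite[Lem.~3.1.3]{BS-proetale} or directly: surjections and limits in $\Sh/U$ are computed in $\Sh$), so the same conclusions hold.

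The key step is repleteness itself. Let $\cdots\to F_2\to F_1$ be a tower of epimorphisms of sheaves on $(X,M_X)_\Prism$, and let $U_1=(\Spf A_1,I_1,M_{\Spf A_1})$ be an object with a section $s_1\in F_1(U_1)$. We must find a flat cover $V\to U_1$ and a compatible system of sections of all $F_n$ over $V$ lifting $s_1|_V$. Since $F_2\to F_1$ is an epimorphism, there is a flat covering of $U_1$ on which $s_1$ lifts; as objects are affine and covers are (after refinement) given by finitely many faithfully flat maps, we may replace the covering family by a single object $U_2=(\Spf A_2,\dots)$ (finite disjoint union, i.e.\ product of rings, still a log prism over $(X,M_X)$ with strict structure) with $h_1\colon U_2\to U_1$ a flat cover and a lift $s_2\in F_2(U_2)$. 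Inductively we obtain a sequence $U_1\leftarrow U_2\leftarrow U_3\leftarrow\cdots$ of flat covers with compatible sections $s_n\in F_n(U_n)$.

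Now Lemma~\ref{lem:limit of flat covers} gives that $V\coloneqq\varprojlim_i U_i$ exists in $(X,M_X)_\Prism$ and $V\to U_1$ is a flat cover. Pulling back the $s_n$ to $V$ yields a compatible family, i.e.\ an element of $(\varprojlim_n F_n)(V)$ mapping to $s_1|_V$; hence $\varprojlim F_n\to F_1$ (and likewise to each $F_m$) is an epimorphism. The only delicate points are (a) replacing a covering family by a single cover, which needs finite disjoint unions of log prisms to remain in the site with the covering property (quasi-compactness of $\Spf A$ makes finitely many suffice), and (b) the existence of the limit as a log prism with flat projections, which is exactly Lemma~\ref{lem:limit of flat covers}; I expect (a) to require the most care, but it is a routine check with the strict flat topology.
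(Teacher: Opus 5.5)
Your proposal is correct and follows the paper's route. The paper proves repleteness by the same tower-of-flat-covers argument (it cites Bhatt--Scholze's pro-\'etale Ex.~3.1.7 and Rem.~2.4 of their prismatic $F$-crystal paper), with Lemma~\ref{lem:limit of flat covers} supplying the limit, and then gets $\varprojlim=R\varprojlim$, the commutation of $R\Gamma$ with $R\varprojlim$, and the case of localized topoi from the same general results you invoke; you just spell out the argument the paper leaves to citation, including the reduction to a single cover via finite disjoint unions, which the paper also uses in Lemma~\ref{lem:pullback of representable prismatic sheaf}.
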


\begin{proof}
The first assertion is proved as in \cite[Ex.~3.1.7]{bhatt-scholze-proetale} or \cite[Rem.~2.4]{bhatt-scholze-prismaticFcrystal} using Lemma~\ref{lem:limit of flat covers}. The assertion $\varprojlim_n \calF_n\cong R\varprojlim_n \calF_n$ is  \cite[Prop.~3.1.10]{bhatt-scholze-proetale}, and the assertion on cohomology is \cite[0D6K]{stacks-project}. The last assertion is \cite[Lem.~3.1.2]{bhatt-scholze-proetale}.
\end{proof}

\section{Functoriality of log prismatic sites} \label{sec: functorial on log prismatic site}

We discuss the functoriality of the absolute strict log prismatic sites. 
Throughout this section, fix a morphism of 
bounded \emph{integral} $p$-adic log formal schemes
\[
f\colon (X,M_X)\rightarrow (Y,M_Y).
\]
We will discuss a morphism of topoi 
\[
f_\Prism=(f^{-1}_\Prism, f_{\Prism,\ast})\colon \operatorname{Sh}((X,M_X)_\Prism)\rightarrow\operatorname{Sh}((Y,M_Y)_\Prism).
\]
If $f$ is strict, we have a fully faithful cocontinuous functor $(X,M_X)_\Prism \rightarrow (Y,M_Y)_\Prism$, which yields $f_\Prism$. We will construct $f_\Prism$ when $M_Y$ is quasi-coherent (Proposition~\ref{prop:functoriality of log prismatic site}).

\begin{defn}\label{def:f-prismatic morphism}
An \emph{$f$-prismatic morphism} (or an $f$-$\Prism$ morphism for short) from $(\Spf B, J, M_{\Spf B})\in (X,M_X)_\Prism$ to $(\Spf A, I, M_{\Spf A})\in (Y,M_Y)_\Prism$ is a morphism of log prisms $g\colon (\Spf B, J, M_{\Spf B}) \rightarrow (\Spf A, I, M_{\Spf A})$ that is compatible with $f$, namely, makes the following induced diagram commutative:
\begin{equation}\label{eq:f-prismatic morphism}
\xymatrix{
(X,M_X)\ar[d]^-f & (\Spf (B/J), M_{\Spf (B/J)})\ar[l]\ar[d]^-{\overline{g}}\ar@{^{(}->}[r] & (\Spf B, M_{\Spf B})\ar[d]^-{g}\\
(Y,M_Y) & (\Spf (A/I), M_{\Spf (A/I)})\ar[l]\ar@{^{(}->}[r] & (\Spf A, M_{\Spf A}).
}
\end{equation}
\end{defn}

\begin{lem}\label{lem:pullback of f-prismatic morphism}
Let $g\colon (\Spf B, J, M_{\Spf B}) \rightarrow (\Spf A, I, M_{\Spf A})$ be an $f$-prismatic morphism and let $h\colon (\Spf A', IA', M_{\Spf A'})\rightarrow(\Spf A, I, M_{\Spf A})$ be a $(p,I)$-completely flat morphism in $(Y,M_Y)_\Prism$. Consider the category of commutative diagrams of the form
\[
\xymatrix{
(\Spf B', JB', M_{\Spf B'})\ar[d]_-{g'}\ar[r]^-{h'} & (\Spf B,  J, M_{\Spf B})\ar[d]^-{g}\\
(\Spf A', IA', M_{\Spf A'})\ar[r]^-h & (\Spf A, I, M_{\Spf A}),
}
\]
where $(\Spf B', JB', M_{\Spf B'})\in (X, M_X)_\Prism$, $h'$ is a morphism in $(X,M_X)_\Prism$, and $g'$ is an $f$-prismatic morphism, together with the obvious notion of morphisms.
Then the category has a final object.
\end{lem}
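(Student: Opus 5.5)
The final object will be the completed base change of $B$ along $A \to A'$, with the log structure made strict over $B$. I construct it at the level of prelog prisms and then check that it has the required universal property.

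\emph{Step 1: charts.} By definition, $(\Spf A, I, M_{\Spf A})$ comes from a prelog prism $(A, I, M_A)$. By \cite[Cor.~2.2(1)]{du-liu-moon-shimizu-purity-F-crystal}, applied to $g$ and to $h$, I choose prelog prisms $(B, J, M_B)$ and $(A', IA', M_{A'})$ defining the given log prisms, together with monoid maps $M_A \to M_B$ and $M_A \to M_{A'}$ that are compatible with the $\delta_\log$-structures and induce $g$ and $h$.

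\emph{Step 2: the candidate.} Set $B'$ to be the derived $(p,I)$-completion of $B\otimes_A A'$. Since $A\to A'$ is $(p,I)$-completely flat, \cite[Lem.~3.7]{bhatt-scholze-prismaticcohom} gives the following: $B\to B'$ is $(p,J)$-completely flat, $B'$ is discrete and classically complete, and $(B', JB')$ is a bounded prism. Note that $J = IB$ here, because $g$ is a map of prisms and hence $J=IB$ by rigidity. The $\delta$-structure on $B\otimes_A A'$ extends to the completion. I equip $B'$ with the prelog structure $M_B \to B \to B'$. The $\delta_\log$-structure is the one on $M_B$ composed with $B\to B'$, so $h'\colon B \to B'$ is strict. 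Then $(\Spf B'/JB' , M)\to \Spf B/J \to X$ is strict, which gives an object of $(X,M_X)_\Prism$, and $h'$ is a morphism there.

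Next I need $g'$. Its ring map is $A' \to B'$. For the monoids, I need a map $M_{A'} \to M_{B'}$ of log structures, where $M_{B'}$ is the pullback of $M_{\Spf B}$. This is the delicate point. The log structure $M_{\Spf B'}$ is only strict over $B$, so a map from $M_{\Spf A'}$ is not automatic. I would use that $h$ is a morphism in $(Y,M_Y)_\Prism$ and hence strict, so that $M_{\Spf A'} = h^\ast M_{\Spf A}$. Then $M_{\Spf A'}\to M_{\Spf B'}$ is obtained as $h^{\ast}M_{\Spf A} \to$ (pullback of $g^\ast M_{\Spf A}$) $\to$ pullback of $M_{\Spf B}$. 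Compatibility of the $\delta_\log$-structures follows from the compatibility for $g$. Commutativity of diagram \eqref{eq:f-prismatic morphism} for $g'$ follows from that for $g$, since $\Spf B'/JB'$ maps to $X$ through $\Spf B/J$.

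\emph{Step 3: universality.} Given another square $(B'', M_{B''})$ with $h''\colon B\to B''$ in $(X,M_X)_\Prism$ and $g''\colon A'\to B''$, the ring maps agree on $A$. They therefore factor uniquely through $B\otimes_A A'$, and then through its completion because $B''$ is $(p,I)$-complete. The factorization is a $\delta$-map because $\delta$-structures on the completed tensor product are determined. On log structures, $h''$ is strict, so $M_{\Spf B''}$ is the pullback of $M_{\Spf B}$ and hence of $M_{\Spf B'}$. The induced map $B'\to B''$ is therefore strict and compatible with $g'$ and $g''$. Uniqueness is clear from the uniqueness of the ring map, together with strictness.

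I expect the main obstacle to be the log-structure bookkeeping in Steps 2 and 3. There are two issues: the chart maps from \cite{du-liu-moon-shimizu-purity-F-crystal} are only defined étale-locally, or up to associated log structures, and I must verify that the $\delta_\log$-structures glue compatibly. I would handle both by working throughout with associated log structures and using the fact that strict morphisms are determined by their ring maps.
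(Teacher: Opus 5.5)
Your proposal is correct and is essentially the paper's argument: take $B'$ to be the completed tensor product $B\widehat{\otimes}_AA'$ (the pushout of bounded prisms), give it the log structure pulled back from $M_{\Spf B}$, and use strictness of $h$ to get the log part of $g'$ and the universal property. The paper compresses your Steps 2--3 into a reference to \cite[Rem.~2.4]{du-liu-moon-shimizu-purity-F-crystal}, and your Step 1 on compatible charts is not actually needed.
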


\begin{proof}
This follows from an argument similar to \cite[Rem.~2.4]{du-liu-moon-shimizu-purity-F-crystal}: let $B'$ denote the classical $(p,I)$-completion of $B\otimes_AA'$. Then $(B',IB')$ represents the pushout of the diagram $(B, J)\leftarrow (A,I)\rightarrow (A', IA')$ of bounded prisms. Since $h\colon (\Spf A',M_{\Spf A'})\rightarrow (\Spf A,M_{\Spf A})$ is strict, one can check that $\Spf (B'/JB')\hookrightarrow \Spf B'$ is upgraded to a log prism in $(X,M_X)_{\Prism}$ and gives a final object of the category. 
\end{proof}

\begin{lem}\label{lem:pullback of representable prismatic sheaf}
For $(\Spf A, I, M_{\Spf A})\in (Y,M_Y)_\Prism$, the presheaf $f^{-1}(\Spf A, I, M_{\Spf A})$ on $(X,M_X)_\Prism$ defined by
\[
(X,M_X)_\Prism\ni(\Spf B, J, M_{\Spf B})\mapsto \Mor_{f\text{-}\Prism}\bigl((\Spf B, J, M_{\Spf B}),(\Spf A, I, M_{\Spf A})\bigr)
\]
is a sheaf of sets. 
\end{lem}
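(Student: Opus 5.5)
We reduce the sheaf property to descent for morphisms of log prisms, treating the prism part and the log part separately. Let $\{(\Spf B_i, J_i, M_{\Spf B_i})\rightarrow (\Spf B, J, M_{\Spf B})\}$ be a strict flat cover in $(X,M_X)_\Prism$. Since each $B\rightarrow B_i$ is $(p,J)$-completely faithfully flat, we may replace the family by a single cover $B\rightarrow B'$ after taking a finite product. We write $B''$ for the $(p,J)$-completed pushout $B'\widehat{\otimes}_B B'$. By \cite[Rem.~2.4]{du-liu-moon-shimizu-purity-F-crystal}, it underlies the self-product of $(\Spf B',JB',M_{\Spf B'})$ in $(X,M_X)_\Prism$, with the pulled-back log structure. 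Given an $f$-prismatic morphism $g'\colon \Spf B'\rightarrow\Spf A$ whose two pullbacks to $\Spf B''$ agree, we must show that $g'$ descends uniquely to an $f$-prismatic morphism $g\colon \Spf B\rightarrow \Spf A$.

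\emph{Step 1: the underlying ring map.} The ring map $A\rightarrow B'$ equalizes the two maps to $B''$. Since $B$ is the equalizer of $B'\rightrightarrows B''$ by $(p,J)$-completely faithfully flat descent (\cite[Cor.~3.12]{bhatt-scholze-prismaticcohom}), we get a unique ring map $A\rightarrow B$. It is automatically a $\delta$-map, because $B\hookrightarrow B'$ is injective and compatible with $\delta$. It sends $I$ into $J$, because $JB'\cap B=J$ by faithful flatness. The compatibility of $\overline{g}$ with $f$ in \eqref{eq:f-prismatic morphism} is a statement about morphisms of formal schemes $\Spf(B/J)\rightarrow Y$. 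It holds because it holds after the flat cover $\Spf(B'/JB')\rightarrow\Spf(B/J)$, and morphisms to $Y$ form an fpqc sheaf; this also uses that $\Spf(B'/JB')\rightarrow\Spf(B/J)$ is an effective epimorphism of formal schemes.

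\emph{Step 2: the log structures.} We need a morphism of log structures $g^{-1}M_{\Spf A}\rightarrow M_{\Spf B}$ compatible with $\delta_\log$-structures, and this is the main obstacle. We use the strictness of $\Spf B'\rightarrow \Spf B$ and $\Spf B''\rightarrow\Spf B$, which gives $M_{\Spf B'}=h^{-1}M_{\Spf B}$. Log structures on the small étale site of $\Spf B$, together with their morphisms, satisfy descent along $(p,J)$-completely faithfully flat strict covers. This holds at the level of $\Spf(B/(p,J))$-étale sites by topological invariance; alternatively, one can use the description of a log structure as a sheaf $M$ with a map to $\calO$ inducing an isomorphism on units, where units descend by the flat descent of Step 1 applied étale locally. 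Hence the log morphism given on $B'$ descends, and does so uniquely. A convenient way to organize this is to work with charts. Étale locally on $\Spf A$, pick a prelog prism $(A,I,M)$ inducing $M_{\Spf A}$. Then a morphism of log prisms from $\Spf B$ amounts to a monoid map $M\rightarrow \Gamma(\Spf B,M_{\Spf B})$ compatible with the map to $B$ and with $\delta_\log$. Global sections of $M_{\Spf B}$ satisfy flat descent by the preceding remark. Compatibility with $\delta_\log$ can be checked after the injection $B\hookrightarrow B'$.

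\emph{Step 3: assembly.} Uniqueness in Steps 1 and 2 gives separatedness, and existence gives gluing. Hence $f^{-1}(\Spf A,I,M_{\Spf A})$ is a sheaf of sets.
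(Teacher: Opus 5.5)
Your proposal is correct and follows essentially the same route as the paper. Both arguments reduce to a single $(p,J)$-completely faithfully flat cover, descend the ring map via \cite[Cor.~3.12]{bhatt-scholze-prismaticcohom}, descend the morphism of log structures, and then check compatibility with $f$ and $\delta_\log$ on $B'$. The one step the paper isolates is the log-structure descent, proved as Lemma~\ref{lem:ff descent of log str}: it reduces modulo $p^n$ to Ogus's fpqc descent for integral log structures with charts and passes to the limit via Proposition~\ref{lem:log structure on formal scheme as inverse limit}. Your ``topological invariance'' remark alone does not handle the $\calO^\times$-part, but your alternative via units and $\overline{M}$ is essentially that argument.
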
 

We often write $f^{-1} h_A$ for $f^{-1}(\Spf A, I, M_{\Spf A})$ if there is no confusion. When $f=\mathrm{id}$, the lemma claims that the site $(X,M_X)_\Prism$ is subcanonical.

\begin{proof}
Since affine formal schemes are quasi-compact, every cover in $(X,M_X)_\Prism$ is refined by a finite cover. Moreover, every finite disjoint union exists in $(X,M_X)_\Prism$. Hence it suffices to check the sheaf condition for a single cover: take a $(p,J)$-completely faithfully flat morphism $(\Spf B',JB', M_{\Spf B'})\rightarrow (\Spf B, J,M_{\Spf B})$ in $(X,M_X)_\Prism$.
By \cite[Rem.~2.4]{du-liu-moon-shimizu-purity-F-crystal}, the self-fiber product of $(\Spf B',JB', M_{\Spf B'})$ over $(\Spf B, J,M_{\Spf B})$ is representable by a log prism $(\Spf B'', JB'', M_{\Spf B''})$ where $B''$ is the classical $(p, J)$-completion of $B'\otimes_B B'$. 
Let $p_1,p_2\colon (\Spf B'', JB'', M_{\Spf B''})\rightarrow (\Spf B', JB', M_{\Spf B'})$ be the projections. 

Take an $f$-prismatic morphism $g'\colon (\Spf B', JB', M_{\Spf B'})\rightarrow (\Spf A, I, M_{\Spf A})$ with $g'\circ p_1=g'\circ p_2$. By \cite[Cor.~3.12]{bhatt-scholze-prismaticcohom} and its proof, the self-fiber product of the prism $(\Spf B',JB')$ over $(\Spf B, J)$ is represented by $(\Spf B'', JB'')$ and $g'$ descends uniquely to a morphism of prisms $g\colon (\Spf B, J)\rightarrow (\Spf A, I)$.
By Lemma~\ref{lem:ff descent of log str}, the monoid sheaf morphism $g'^{-1}M_{\Spf A}\rightarrow M_{\Spf B'}$ descends uniquely to a morphism $g^{-1}M_{\Spf A}\rightarrow M_{\Spf B}$ and thus defines a morphism of log schemes $g\colon (\Spf B,M_{\Spf B})\rightarrow (\Spf A, M_{\Spf A})$. Now it is easy to check that $g$ is compatible with $f$ and $\delta_\log$-structure and yields a morphism of log prisms from $(\Spf B, J,M_{\Spf B})$ to $(\Spf A, I, M_{\Spf A})$. 
\end{proof}

By \cite[Prop.~5.7]{berthelot-ogus-book}, there exists a unique pair of functors
\[
f_{\Prism,\ast}\colon \operatorname{PSh}((X,M_X)_\Prism)\rightarrow \operatorname{PSh}((Y,M_Y)_\Prism)
\]
and
\[
f_\Prism^\bullet\colon \operatorname{PSh}((Y,M_Y)_\Prism)\rightarrow \operatorname{PSh}((X,M_X)_\Prism)
\]
such that $f_\Prism^\bullet h_A$ agrees with $f^{-1}h_A$ in Lemma~\ref{lem:pullback of representable prismatic sheaf} and such that $f_\Prism^\bullet$ is left adjoint to $f_{\Prism,\ast}$. In fact, the proof of \textit{loc. cit.} gives explicit definitions of $f_{\Prism,\ast}$ and $f_\Prism^\bullet$:
for $\calF\in \operatorname{PSh}((X,M_X)_\Prism)$ and $(\Spf A, I, M_{\Spf A})\in (Y,M_Y)_\Prism$, we have
\[
(f_{\Prism,\ast}\calF)(\Spf A, I, M_{\Spf A})=\Hom_{\operatorname{PSh}((X,M_X)_\Prism)}(f^{-1}h_A,\calF).
\]
For 
$\calG\in \operatorname{PSh}((Y,M_Y)_\Prism)$ and $(\Spf B, J, M_{\Spf B})\in (X, M_X)_\Prism$, we have
\[
(f_\Prism^\bullet\calG)(\Spf B, J, M_{\Spf B})=\varinjlim_{(\Spf B, J, M_{\Spf B})\rightarrow (\Spf A, I, M_{\Spf A})}\calG(\Spf A, I, M_{\Spf A}),
\]
where the colimit is taken over all the $f$-prismatic morphisms from $(\Spf B, J, M_{\Spf B})$.

\begin{lem}
If $\calF$ is a sheaf on $(X,M_X)_\Prism$, then $f_{\Prism,\ast} \calF$ is a sheaf on $(Y,M_Y)_\Prism$.
\end{lem}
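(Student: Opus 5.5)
We reduce the sheaf condition for $f_{\Prism,\ast}\calF$ to the sheaf property of $\calF$ on suitable objects of $(X,M_X)_\Prism$, using the formula
\[
(f_{\Prism,\ast}\calF)(\Spf A, I, M_{\Spf A})=\Hom_{\operatorname{PSh}((X,M_X)_\Prism)}(f^{-1}h_A,\calF).
\]
As in the proof of Lemma~\ref{lem:pullback of representable prismatic sheaf}, every cover is refined by a finite one and finite disjoint unions exist. The compatibility of $f^{-1}h$ with finite disjoint unions is checked directly, since a morphism from a connected-component decomposition of $\Spf B$ splits. It therefore suffices to treat a single $(p,I)$-completely faithfully flat morphism $h\colon (\Spf A', IA', M_{\Spf A'})\rightarrow (\Spf A, I, M_{\Spf A})$ in $(Y,M_Y)_\Prism$. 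Write $A''$ for the classical completion of $A'\otimes_AA'$, which represents the self-fiber product.

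The key claim is that
\[
f^{-1}h_{A''}\rightrightarrows f^{-1}h_{A'}\rightarrow f^{-1}h_A
\]
is a coequalizer diagram in $\Sh((X,M_X)_\Prism)$. In other words, $f^{-1}h_{A'}\rightarrow f^{-1}h_A$ is an epimorphism of sheaves, and $f^{-1}h_{A''}=f^{-1}h_{A'}\times_{f^{-1}h_A}f^{-1}h_{A'}$.

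For the fiber product, note that $f^{-1}$ of representables preserves fiber products over $h_A$. An $f$-prismatic morphism $\Spf B\to \Spf A''$ is the same as a pair of $f$-prismatic morphisms to $\Spf A'$ agreeing on $\Spf A$. This holds because $A''$ is the coproduct in bounded log prisms, with the log structure pulled back strictly, as in \cite[Rem.~2.4]{du-liu-moon-shimizu-purity-F-crystal}.

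For surjectivity, let $g\colon (\Spf B,J,M_{\Spf B})\to (\Spf A,I,M_{\Spf A})$ be a section of $f^{-1}h_A$. Lemma~\ref{lem:pullback of f-prismatic morphism} gives a final diagram with $B'$ the classical completion of $B\otimes_AA'$. Then $B\to B'$ is $(p,J)$-completely faithfully flat, since flatness is stable under completed base change, and $h'$ is strict. Hence $h'$ is a cover in $(X,M_X)_\Prism$, and $g\circ h'$ lifts to $g'\in f^{-1}h_{A'}(B')$. So $g$ lifts locally, which gives surjectivity.

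Given the coequalizer, applying $\Hom(-,\calF)$ with $\calF$ a sheaf yields the equalizer
\[
(f_{\Prism,\ast}\calF)(A)\to(f_{\Prism,\ast}\calF)(A')\rightrightarrows(f_{\Prism,\ast}\calF)(A''),
\]
which is the sheaf condition. Homs from presheaves into a sheaf agree with Homs from their sheafifications, and by Lemma~\ref{lem:pullback of representable prismatic sheaf} these $f^{-1}h$ are already sheaves.

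The main obstacle is the surjectivity step. There one must check that the base-changed object from Lemma~\ref{lem:pullback of f-prismatic morphism} is genuinely a flat cover in the strict topology, and that its log structure is the pullback, so that $h'$ is strict. This is where the strictness of $h$ in $(Y,M_Y)_\Prism$ and the quasi-coherence and integrality hypotheses enter.
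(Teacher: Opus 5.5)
Your proposal is correct and is essentially the paper's argument. The paper simply says to argue as in \cite[Prop.~5.8.2]{berthelot-ogus-book} using Lemma~\ref{lem:pullback of f-prismatic morphism}, which amounts to your verification that $f^{-1}h_{A'}\rightarrow f^{-1}h_A$ is an epimorphism of sheaves (via the base-changed cover $B'$) with kernel pair $f^{-1}h_{A''}$. One small correction to your closing remark: quasi-coherence of $M_Y$ plays no role here; it only enters later, for the exactness of $f_\Prism^{-1}$ in Proposition~\ref{prop:functoriality of log prismatic site}.
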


\begin{proof}
    This is almost formal: we can argue as in the proof of \cite[Prop.~5.8.2]{berthelot-ogus-book} by using Lemma~\ref{lem:pullback of f-prismatic morphism}.
\end{proof}

\begin{defn}
    For a sheaf $\calG$ on $(Y,M_Y)_\Prism$, let $f_\Prism^{-1}\calG$ denote the sheafification of $f_\Prism^\bullet\calG$.
    Then $f_\Prism^{-1}\colon \operatorname{Sh}((Y,M_Y)_\Prism)\rightarrow \operatorname{Sh}((X,M_X)_\Prism)$ is left adjoint to $f_{\Prism,\ast}$
\end{defn}

\begin{prop}\label{prop:functoriality of log prismatic site}
Assume that $M_Y$ is quasi-coherent and integral. Then the functor $f^{-1}\colon \operatorname{Sh}((Y,M_Y)_\Prism)\rightarrow \operatorname{Sh}((X,M_X)_\Prism)$ commutes with finite limits. Hence the pair 
\[
f_\Prism\coloneqq (f^{-1}_\Prism,f_{\Prism,\ast})\colon \operatorname{Sh}((X,M_X)_\Prism)\rightarrow\operatorname{Sh}((Y,M_Y)_\Prism)
\]
is a morphism of topoi. 
\end{prop}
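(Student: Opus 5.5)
Since $f^{-1}_\Prism$ is the sheafification of $f^\bullet_\Prism$ and sheafification commutes with finite limits, I will show that $f^\bullet_\Prism$ commutes with finite limits. It suffices to treat the terminal object and fiber products. By the explicit formula, $(f^\bullet_\Prism\calG)(\Spf B,J,M_{\Spf B})$ is a colimit over the category $\calC_B$ of $f$-prismatic morphisms from $(\Spf B,J,M_{\Spf B})$ to objects of $(Y,M_Y)_\Prism$. Colimits over a cofiltered-opposite (i.e.\ filtered) index category commute with finite limits of sets. So the goal is to check that $\calC_B^{\op}$ is filtered, at least after passing to a flat cover of $B$, which is harmless because we sheafify afterwards. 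Concretely, there are three conditions: $\calC_B$ is nonempty, any two objects map to a common one in the appropriate direction, and parallel arrows are equalized. Each may hold only locally on $(X,M_X)_\Prism$.

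\textbf{Step 1 (nonemptiness).} Given $(\Spf B,J,M_{\Spf B})$, the composite $\Spf(B/J)\to X\to Y$ gives a map of log formal schemes to $(Y,M_Y)$. Because $M_Y$ is quasi-coherent, after replacing $B$ by an étale (hence flat) cover we may assume $Y$ is affine with a chart $P\to M_Y$. I want an object of $(Y,M_Y)_\Prism$ receiving an $f$-prismatic morphism from $B$. The natural candidate is $B$ itself, equipped with the log structure pulled back along $Y$, i.e.\ the strict log structure associated to $P\to M_{\Spf B}$. This requires a $\delta_\log$-compatible lift of the chart map into $M_{\Spf B}$. Via \cite[Cor.~2.2(1)]{du-liu-moon-shimizu-purity-F-crystal} (charts on log prisms), I expect to realize this after a further flat cover: the log prismatic envelope of $B$ with log structure $P\oplus M$ should be an object strict over $Y$ mapping from (a cover of) $B$. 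Quasi-coherence of $M_Y$ is exactly what is needed to have this chart.

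\textbf{Step 2 (cofilteredness).} Given two $f$-prismatic morphisms $B\to A_1$ and $B\to A_2$, I need an object $A_3\in(Y,M_Y)_\Prism$ with maps $A_3\to A_i$ and an $f$-prismatic morphism $B\to A_3$ compatible with both. The natural choice is the product $A_1\times A_2$ in $(Y,M_Y)_\Prism$; if it is not representable, I will instead use the log prismatic envelope of $A_1\widehat\otimes A_2$ over the diagonal, built as in \cite{koshikawa} using exactification with respect to the chart $P$. This object need not be strict over $Y$ globally, but it should be after a flat cover of $B$. Equalizers of parallel arrows $A_3\rightrightarrows A_1$ are handled similarly, by taking the coequalizer quotient plus log prismatic envelope. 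Integrality of $M_Y$ is used to keep the exactification well behaved.

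\textbf{Step 3 and the main obstacle.} Because these constructions work only flat-locally on $B$, I will formulate the conclusion as follows: the canonical map $f^\bullet(\lim \calG_i)\to\lim f^\bullet\calG_i$ becomes an isomorphism after sheafification. I will check injectivity and surjectivity on sections locally, using Lemma~\ref{lem:pullback of f-prismatic morphism} to pull $f$-prismatic morphisms back along flat covers. I expect the main obstacle to be Steps 1–2: producing log prisms with log structure strict over $Y$ that dominate two given $f$-prismatic targets. This needs a careful log prismatic envelope construction, with chart-lifting and $\delta_\log$-compatibility issues, which is where quasi-coherence and integrality of $M_Y$ enter. Once $f^{-1}_\Prism$ is left exact, the adjunction with $f_{\Prism,\ast}$ gives the morphism of topoi.
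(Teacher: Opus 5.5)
Your overall strategy is sound in principle: show that the index category of the colimit defining $f^\bullet_\Prism\mathcal{G}$ is cofiltered on a family of objects covering everything, then sheafify. But Steps 1--2, which you yourself flag as the main obstacle, are never carried out, and the routes you sketch do not work in this generality.

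In Step 1, a log prismatic envelope $D$ of $B$ receives a ring map \emph{from} $B$. So it cannot be the target of an $f$-prismatic morphism out of $(\Spf B,J,M_{\Spf B})$, which requires a ring map \emph{into} $B$. Nor is $B\to D$ a flat cover in general, so you cannot replace $B$ by $D$ either. In Step 2, products and equalizers in $(Y,M_Y)_\Prism$ are generally not representable. The (log) prismatic envelopes and exactifications you invoke are only known to exist as bounded log prisms under regularity, flatness and fineness hypotheses. None of these is available here: $X$ and $Y$ are arbitrary bounded, and $M_Y$ is only quasi-coherent and integral.

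The missing idea is that no new object is needed, because the underlying prism of $B$ itself carries a canonical log structure strict over $Y$. After an \'etale localization, pick a chart $P\to\Gamma(Y,M_Y)$ and set
\[
P_B=P\times_{\Gamma(\Spf(B/J),M_{\Spf(B/J)})}\Gamma(\Spf B,M_{\Spf B}).
\]
The map $\Gamma(\Spf B,M_{\Spf B})\to\Gamma(\Spf(B/J),M_{\Spf(B/J)})$ is exact surjective and a $(1+J)$-torsor, hence so is $P_B\to P$. The $\delta_\log$-structure on $\Gamma(\Spf B,M_{\Spf B})$ simply restricts along $P_B\to\Gamma(\Spf B,M_{\Spf B})$. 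So you never have to lift the chart $P\to\Gamma(\Spf(B/J),M_{\Spf(B/J)})$ into $M_{\Spf B}$, which was the obstruction you ran into.

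The log structure $M^P_{\Spf B}$ associated to $P_B\to B$ makes $(\Spf B,J,M^P_{\Spf B})$ an object of $(Y,M_Y)_\Prism$, and it satisfies
\[
M^P_{\Spf B}\cong M^P_{\Spf(B/J)}\times_{M_{\Spf(B/J)}}M_{\Spf B}.
\]
Every $f$-prismatic target is strict over $Y$. Combining this with the fiber-product description, the identity of $B$ gives an $f$-prismatic morphism $(\Spf B,J,M_{\Spf B})\to(\Spf B,J,M^P_{\Spf B})$ that is \emph{initial} among $f$-prismatic morphisms out of $B$ (Lemma~\ref{lem:f-prismatic morphism induced by chart}).

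Consequently $(f^\bullet_\Prism\mathcal{G})(\Spf B,J,M_{\Spf B})=\mathcal{G}(\Spf B,J,M^P_{\Spf B})$, which is just evaluation at one object. Evaluation commutes with all limits, and such $B$ cover every object of $(X,M_X)_\Prism$, so after sheafification $f^{-1}_\Prism$ commutes with finite limits. Cofilteredness becomes trivial, and no products, equalizers or envelopes are required.
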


\begin{construction}\label{construction:initial object for f-prismatic morphism}
Assume that $M_Y$ admits an integral chart $P\rightarrow \Gamma(Y,M_Y)$. For every $(\Spf B, J, M_{\Spf B})\in (X,M_X)_\Prism$, let $M_{\Spf (B/J)}^P$ and $M_{\Spf B}^P$ denote the log structures on $\Spf(B/J)$ and $\Spf B$ associated to
the monoid homomorphisms
\begin{align*}
&P\rightarrow \Gamma(Y,M_Y) \rightarrow \Gamma(X,M_X)\rightarrow \Gamma(\Spf (B/J), M_{\Spf (B/J)}) \rightarrow B/J \quad\text{and}    \\
&P_B\coloneqq P\times_{\Gamma(\Spf (B/J), M_{\Spf (B/J)})}\Gamma(\Spf B, M_{\Spf B})\rightarrow \Gamma(\Spf B, M_{\Spf B}) \rightarrow B,
\end{align*}
respectively. Note that these log structures are integral, and $M_{\Spf (B/J)}^P=(f\circ f_B)^\ast M_Y$ where $f_B\colon \Spf (B/J)\rightarrow X$ is the structure map.
\end{construction}

\begin{lem}\label{lem:f-prismatic morphism induced by chart}
The induced map $(\Spf (B/J), M_{\Spf (B/J)}^P)\rightarrow (\Spf B, M_{\Spf B}^P)$ is an exact closed immersion, 
and the induced map $M_{\Spf B}^P\rightarrow M_{\Spf (B/J)}^P\times_{M_{\Spf (B/J)}}M_{\Spf B}$ is an isomorphism where the fiber product is taken in $\Sh((\Spf (B/J))_\et)=\Sh((\Spf B)_\et)$. 
Moreover, the $\delta_\log$-structure on $\Gamma(\Spf B, M_{\Spf B})$ defines a $\delta$-log structure on $P_B$ and makes $(\Spf B, J, M_{\Spf B}^P)$ a log prism in $(Y,M_Y)_\Prism$. 
Finally, the natural map
\[
(\Spf B, J, M_{\Spf B})\rightarrow (\Spf B, J, M_{\Spf B}^P)
\]
is an $f$-prismatic morphism, and is an initial object in the category of $f$-prismatic morphisms from $(\Spf B, J, M_{\Spf B})$.
\end{lem}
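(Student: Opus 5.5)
We verify the four assertions of the lemma in order, working étale locally on $\Spf B$ where convenient; all the log structures involved are integral.

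\emph{Step 1: exactness and the fibre-product description.} Since $B\to B/J$ is a $(p,J)$-adic nilpotent-free closed immersion with the same étale site, we compare the sheaves $M_{\Spf B}$ and $M_{\Spf(B/J)}$. Strictness of $(\Spf(B/J),M_{\Spf(B/J)})\to(\Spf B,M_{\Spf B})$ gives $M_{\Spf(B/J)}=M_{\Spf B}\oplus_{(1+J\cdots)^\times}$ reduction, i.e.\ $M_{\Spf B}\to M_{\Spf(B/J)}$ is surjective with fibres torsors under $\Ker(B^\times\to (B/J)^\times)=1+J$, since $J$ lies in the Jacobson radical. The chart $P_B=P\times_{\Gamma(M_{\Spf(B/J)})}\Gamma(M_{\Spf B})$ maps onto $P$ with fibres $(1+J)$-torsors. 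Sheafifying (taking associated log structures commutes with these fibre products because everything is integral and $1+J\subset B^\times$), I will check stalkwise that $M^P_{\Spf B}\to M^P_{\Spf(B/J)}\times_{M_{\Spf(B/J)}}M_{\Spf B}$ is an isomorphism: the map to the fibre product is injective by integrality, and surjective since both sides are $(1+J)$-torsors over $M^P_{\Spf(B/J)}$ (using that $\overline{M}$ of an associated log structure is the pushout of the chart modulo units). Exactness of the closed immersion then follows: $M^P_{\Spf B}/\calO^\times\cong M^P_{\Spf(B/J)}/\calO^\times$ since $1+J$ dies.

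\emph{Step 2: the $\delta_\log$-structure.} A $\delta_\log$-structure on $\Gamma(M_{\Spf B})$ is a map $\delta_\log\colon \Gamma(M_{\Spf B})\to B$ with the usual compatibilities; restricting along $P_B\to\Gamma(M_{\Spf B})$ gives one on $(B,P_B)$, and the axioms are inherited since $P_B\to\Gamma(M_{\Spf B})$ is a monoid map compatible with the maps to $B$. Thus $(B,J,P_B)$ is a prelog prism and $(\Spf B,J,M^P_{\Spf B})$ is a log prism; it lies in $(Y,M_Y)_\Prism$ because $(\Spf(B/J),M^P_{\Spf(B/J)})=(f\circ f_B)^*M_Y$ is strict over $(Y,M_Y)$, and Step 1 identifies the reduction of $M^P_{\Spf B}$ with $M^P_{\Spf(B/J)}$.

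\emph{Step 3: $f$-prismatic and initial.} The identity of $B$ with $M^P_{\Spf B}\to M_{\Spf B}$ is a map of log prisms compatible with $\delta_\log$ and the diagram \eqref{eq:f-prismatic morphism} commutes by construction. For initiality, let $g\colon(\Spf B,J,M_{\Spf B})\to(\Spf A,I,M_{\Spf A})$ be $f$-prismatic. The underlying ring map is forced; we must factor $g^{-1}M_{\Spf A}\to M_{\Spf B}$ uniquely through $M^P_{\Spf B}$. Since $(\Spf(A/I),M_{\Spf(A/I)})\to (Y,M_Y)$ is strict, $M_{\Spf(A/I)}$ is the pullback of $M_Y$, so by Step 1 applied to $A$ (with $P$ and the identity of $Y$), $M_{\Spf A}\cong M_{\Spf (A/I)}\times_{M_{\Spf(A/I)}}M_{\Spf A}$, and the commutativity of \eqref{eq:f-prismatic morphism} shows that the image of $g^{-1}M_{\Spf A}$ in $M_{\Spf(B/J)}$ lands in $M^P_{\Spf(B/J)}$. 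Hence by the fibre-product description of Step 1 it factors uniquely through $M^P_{\Spf B}$; compatibility with $\delta_\log$ is automatic as $\delta_\log$ on $M^P_{\Spf B}$ is restricted from $M_{\Spf B}$ (and maps of log structures into an integral one with same $\calO$ are monos). Uniqueness follows from injectivity of $M^P_{\Spf B}\to M_{\Spf B}$.

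The main obstacle is Step 1, specifically passing from the global chart fibre product $P_B$ to sheaf-level fibre products and showing the associated log structure of $P_B$ is the full fibre product étale locally; I would handle this by using that $\Gamma(M_{\Spf B})\to\Gamma(M_{\Spf(B/J)})$ is surjective étale locally onto sections coming from $P$ (lifting via strictness and Henselian property of $(p,J)$-complete $B$).
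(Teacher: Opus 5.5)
Your outline matches the paper's: the torsor structure of the restriction map on log structures, a stalkwise check of the fibre product, restricting $\delta_{\log}$, and the fibre-product property for initiality. Two steps do not go through as written.

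\textbf{Global surjectivity of $P_B\to P$.} Step~1 needs $P_B\to P$ to be surjective, and this is a statement about global sections. Since $P_B$ is built from $\Gamma(\Spf B,M_{\Spf B})$, every $a\in P$ must lift to $\Gamma(\Spf B,M_{\Spf B})$. If only a proper submonoid $P'\subset P$ were hit, the restriction of $M^P_{\Spf B}$ to $\Spf(B/J)$ would be the log structure attached to $P'$. Then neither the exactness nor the surjectivity half of your stalkwise torsor comparison would follow.

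Your sheaf-level fact, that $M_{\Spf B}\to M_{\Spf(B/J)}$ is surjective with $(1+J\calO)$-torsor fibres, only shows that the fibres of the map on global sections are $(1+J)$-torsors \emph{once nonempty}. Nonemptiness is controlled by a class in $H^1((\Spf B)_\et,1+J\calO_{\Spf B})$. The Henselian lifting in your last paragraph only produces \'etale-local lifts, so it does not touch this class.

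The missing input is the vanishing of this $H^1$. For example, filter by $1+J^n\calO$, whose graded pieces $J^n\calO/J^{n+1}\calO$ are line bundles on the affine $\Spf(B/J)$, and pass to the limit using $J$-adic completeness. This is what the paper takes from \cite[Lem.~3.8]{koshikawa}: $\Gamma(\Spf B,M_{\Spf B})\to\Gamma(\Spf(B/J),M_{\Spf(B/J)})$ is exact surjective and a $(1+J)$-torsor, hence so is $P_B\to P$.

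\textbf{The injectivity claim in Step~3.} Step~3 treats $M^P_{\Spf(B/J)}$ and $M^P_{\Spf B}$ as subsheaves of $M_{\Spf(B/J)}$ and $M_{\Spf B}$. You justify this by saying maps of log structures into an integral one are monomorphisms, which is false. The log structure of $\N^2\to k$, $e_i\mapsto 0$, maps to that of $\N\to k$, $1\mapsto 0$, via $(a,b)\mapsto a+b$, and this is not injective.

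Accordingly, $(f\circ f_B)^*M_Y\to M_{\Spf(B/J)}$ is not injective when, say, two chart generators of $M_Y$ go to the same section of $M_X$. Then $M^P_{\Spf B}\to M_{\Spf B}$, its base change along the surjection $M_{\Spf B}\to M_{\Spf(B/J)}$, is not injective either. So ``lands in $M^P_{\Spf(B/J)}$'' and ``uniqueness follows from injectivity'' are unjustified.

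Argue with the fibre product instead:
\begin{enumerate}
\item The factorization $h$ must be a morphism in $(Y,M_Y)_\Prism$, hence compatible with the structure maps to $(Y,M_Y)$.
\item Since $M_{\Spf(A/I)}$ is the strict pullback of $M_Y$, this pins down the $M^P_{\Spf(B/J)}$-component of the log part of $h$. It must be the composite $g^{-1}M_{\Spf A}\to\overline{g}^{-1}M_{\Spf(A/I)}\to\overline{g}^*M_{\Spf(A/I)}\cong(f\circ f_B)^*M_Y$.
\item Commutativity of \eqref{eq:f-prismatic morphism}, checked on the image of $M_Y$ and on units, shows this component agrees in $M_{\Spf(B/J)}$ with the log part $g^{-1}M_{\Spf A}\to M_{\Spf B}$ of $g$.
\item The fibre-product description from Step~1 then gives existence and uniqueness, with no injectivity needed.
\end{enumerate}
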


\begin{proof}
    The argument in the first paragraph of \cite[Prop.~3.7]{koshikawa} works: as in the proof of \cite[Lem.~3.8]{koshikawa}, we see that $\Gamma(\Spf B, M_{\Spf B})\rightarrow \Gamma(\Spf (B/J), M_{\Spf (B/J)})$ is exact surjective and a $(1+J)$-torsor on monoids. Hence so is $P_B=P\times_{\Gamma(\Spf (B/J), M_{\Spf (B/J)})}\Gamma(\Spf B, M_{\Spf B})\rightarrow P$. 
    One can also check that the map $M_{\Spf B}^P\rightarrow M_{\Spf (B/J)}^P\times_{M_{\Spf (B/J)}}M_{\Spf B}$ is an isomorphism by computing the stalks at every geometric point of $\Spf B$.
    The remaining assertions are straightforward.
\end{proof}

\begin{proof}[Proof of Proposition~\ref{prop:functoriality of log prismatic site}]
We need to show that $f_\Prism^{-1}$ commutes with finite limits.
Since $M_Y$ is integral and quasi-coherent, it admits an integral chart \'etale locally. In particular, every $(\Spf B, J, M_{\Spf B})\in (X,M_X)_\Prism$ admits an \'etale cover $(\Spf B', JB', M_{\Spf B'})$ for which Construction~\ref{construction:initial object for f-prismatic morphism} works. Let $(\Spf B', JB', M_{\Spf B'})\rightarrow (\Spf B', JB', M_{\Spf B'}^P)$ denote the resulting $f$-prismatic morphism in Lemma~\ref{lem:f-prismatic morphism induced by chart}. Then for every $\calG\in \Sh((Y,M_Y)_\Prism)$, we see
\[
(f_\Prism^{\bullet}\calG)(\Spf B', JB', M_{\Spf B'})=\calG(\Spf B', JB', M_{\Spf B'}^P).
\]
It follows that $(f_\Prism^{-1}\calG)(\Spf B', JB', M_{\Spf B'})=\calG(\Spf B', JB', M_{\Spf B'}^P)$. Since evaluation at $(\Spf B', JB', M_{\Spf B'}^P)$ commutes with taking limits, we conclude that $f_\Prism^{-1}$ commutes with finite limits.
\end{proof}

We have an obvious variant for the relative prismatic sites.

\begin{prop}\label{prop:functoriality of relative log prismatic site}
Let $(A_0,I_0,M_{\Spf A_0})$ be a log prism such that $M_{\Spf A_0}$ is integral and assume that we are given a morphism $(Y,M_Y)\rightarrow (\Spf A_0/I_0,M_{\Spf A_0/I_0})$ of $p$-adic log formal schemes.
If $M_Y$ is quasi-coherent, then there exists a morphism of topoi
\[
f_\Prism\coloneqq (f^{-1}_\Prism,f_{\Prism,\ast})\colon \operatorname{Sh}(((X,M_X)/(A_0,M_{\Spf A_0}))_\Prism)\rightarrow\operatorname{Sh}(((Y,M_Y)/(A_0,M_{\Spf A_0})_\Prism)
\]
such that 
\[
(f_{\Prism,\ast}\calF)(\Spf A, I, M_{\Spf A})=\Hom_{\operatorname{PSh}((X,M_X)_\Prism)}(f^{-1}h_A,\calF)
\]
where $f^{-1}h_A$ is defined by sending $(\Spf B, J, M_{\Spf B})\in ((X,M_X)/(A_0,M_{\Spf A_0}))_\Prism$ to
the set $\Mor_{f\text{-}\Prism/A_0}\bigl((\Spf B, J, M_{\Spf B}),(\Spf A, I, M_{\Spf A})\bigr)$ of $f$-prismatic morphisms $g$ as in \eqref{def:f-prismatic morphism} such that $g$ is a map of log prisms over $(A_0,I_0,M_{\Spf A_0})$.
\end{prop}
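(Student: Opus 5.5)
The plan is to repeat the absolute construction of Proposition~\ref{prop:functoriality of log prismatic site} word for word, adding throughout the condition that every object and every morphism lies over $(A_0,I_0,M_{\Spf A_0})$.

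\textbf{Step 1: the representable presheaves.} First I will show that $f^{-1}h_A$, defined with $\Mor_{f\text{-}\Prism/A_0}$, is a sheaf on $((X,M_X)/(A_0,M_{\Spf A_0}))_\Prism$. The proof of Lemma~\ref{lem:pullback of representable prismatic sheaf} carries over. Reduce to a single $(p,J)$-completely faithfully flat cover $B\rightarrow B'$, whose \v{C}ech fiber product is computed by the completed tensor product. The map of prisms then descends by \cite[Cor.~3.12]{bhatt-scholze-prismaticcohom}, and the monoid map descends by Lemma~\ref{lem:ff descent of log str}. The descended map is automatically over $A_0$, because the structure maps $A_0\rightarrow A\rightarrow B'$ and $A_0\rightarrow B\rightarrow B'$ agree and $B\rightarrow B'$ is injective. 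Similarly, Lemma~\ref{lem:pullback of f-prismatic morphism} holds over $A_0$, since the pushout $B'=(B\otimes_AA')^\wedge$ inherits its $A_0$-structure from $B$.

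\textbf{Step 2: the adjoint pair.} Next, \cite[Prop.~5.7]{berthelot-ogus-book} provides the pair $(f^\bullet_\Prism,f_{\Prism,\ast})$ on presheaves, with $f_{\Prism,\ast}$ given by the stated formula. The argument of \cite[Prop.~5.8.2]{berthelot-ogus-book}, combined with the relative version of Lemma~\ref{lem:pullback of f-prismatic morphism}, shows that $f_{\Prism,\ast}$ preserves sheaves. I then define $f^{-1}_\Prism$ as the sheafification of $f^\bullet_\Prism$; it is left adjoint to $f_{\Prism,\ast}$.

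\textbf{Step 3: left exactness, the main obstacle.} The remaining task is to show that $f^{-1}_\Prism$ commutes with finite limits. Work \'etale locally on an object $(\Spf B,J,M_{\Spf B})$ so that $M_Y$ has an integral chart $P$. Then use Construction~\ref{construction:initial object for f-prismatic morphism} and Lemma~\ref{lem:f-prismatic morphism induced by chart} to produce the candidate initial $f$-prismatic morphism $(\Spf B,J,M_{\Spf B})\rightarrow(\Spf B,J,M^P_{\Spf B})$.

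The point needing care is that $(\Spf B,J,M^P_{\Spf B})$ must be an object over $(A_0,M_{\Spf A_0})$ and initial among $f$-prismatic morphisms over $A_0$.
\begin{itemize}
\item The underlying prism map $A_0\rightarrow B$ is unchanged.
\item The required log map comes from the universal property of the fiber product $M^P_{\Spf B}\cong M^P_{\Spf(B/J)}\times_{M_{\Spf(B/J)}}M_{\Spf B}$ in Lemma~\ref{lem:f-prismatic morphism induced by chart}. The pullback of $M_{\Spf A_0}$ maps into $M_{\Spf B}$ by hypothesis. It maps into $M^P_{\Spf(B/J)}=(f\circ f_B)^*M_Y$ because $X\rightarrow\Spf A_0/I_0$ factors through $Y$. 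These two maps agree in $M_{\Spf(B/J)}$.
\item Compatibility with $\delta_\log$-structures is checked on sections of $P_B$, where it is inherited from $\Gamma(\Spf B,M_{\Spf B})$.
\end{itemize}
Initiality over $A_0$ then follows from initiality in Lemma~\ref{lem:f-prismatic morphism induced by chart}: any factorization through $(\Spf B,J,M^P_{\Spf B})$ of an $f$-prismatic morphism over $A_0$ is again over $A_0$, since the maps agree after composing with the identity on $B$.

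\textbf{Conclusion.} Consequently $(f^{-1}_\Prism\calG)(\Spf B',JB',M_{\Spf B'})=\calG(\Spf B',JB',M^P_{\Spf B'})$ on a cofinal family of \'etale-local objects $B'$. Evaluation at a fixed object commutes with finite limits, so $f^{-1}_\Prism$ is left exact and $(f^{-1}_\Prism,f_{\Prism,\ast})$ is a morphism of topoi.
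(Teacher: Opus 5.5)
Your proposal is correct and takes the same route as the paper, whose entire proof is the remark that the argument for Proposition~\ref{prop:functoriality of log prismatic site} carries over; you make explicit the $A_0$-compatibility checks that the paper leaves implicit. One small imprecision concerns initiality over $A_0$: agreement after composing with the identity on $B$ only checks rings, so on log structures you should test both factors of $M^P_{\Spf B}\cong M^P_{\Spf(B/J)}\times_{M_{\Spf(B/J)}}M_{\Spf B}$ (the map to $M_{\Spf B}$ need not be injective), and the $M^P_{\Spf(B/J)}$-factor agrees because $\Spf(A/I)\to Y$ lies over $\Spf(A_0/I_0)$.
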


\begin{proof}
The proof of Proposition~\ref{prop:functoriality of log prismatic site} works.
\end{proof}

We now turn to a description of the higher direct image.
Let $(\Spf A, I, M_{\Spf A})\in (Y,M_Y)_\Prism$ and set 
\[
f_{A/I}\colon (X_{A/I},M_{X_{A/I}})\coloneqq (X,M_X)\times_{(Y,M_Y)}(\Spf (A/I), M_{\Spf (A/I)})\rightarrow(\Spf (A/I), M_{\Spf (A/I)}).
\]
Note that $(X_{A/I},M_{X_{A/I}})\rightarrow (X,M_X)$ is strict.
By construction, we have an obvious diagram of morphisms of topoi
\[
\xymatrix{
\Sh(((X_{A/I},M_{X_{A/I}})/(A,M_{\Spf A}))_\Prism) \ar[d]\ar[r]^-{(f_{A/I})_{\Prism}}& \Sh(((\Spf A/I,M_{\Spf A/I})/(A,M_{\Spf A}))_\Prism)\ar[d]\\
\Sh((X,M_X)_\Prism)\ar[r]^-{f_{\Prism}}& \Sh((Y,M_Y)_\Prism)
}
\]
that is commutative up to canonical isomorphism, and the top horizontal morphism is canonically identified with the induced morphism between the localized topoi
\[
\Sh((X,M_X)_\Prism)/f_\Prism^{-1}h_A\xrightarrow{f_{\Prism}} \Sh((Y,M_Y)_\Prism)/h_A
\]
given in \cite[04H1]{stacks-project}.

\begin{prop} \label{prop:prismatic-higher-direct-image}
Keep the notation as above.
Let $\calK\in D((X,M_X)_\Prism, \calO_\Prism)$ and consider $(Rf_{\Prism,\ast}\calK)\in D((Y,M_Y)_\Prism,\calO_\Prism)$. Then there is a natural quasi-isomorphism
\[
R\Gamma((\Spf A, I, M_{\Spf A}), Rf_{\Prism,\ast}\calK)\cong R\Gamma(((X_{A/I},M_{X_{A/I}})/(A,M_{\Spf A}))_\Prism,\calK),
\]
where we continue to write $\calK$ for the restriction of $\calK$ to $\Sh(((X_{A/I},M_{X_{A/I}})/(A,M_{\Spf A}))_\Prism)$.
\end{prop}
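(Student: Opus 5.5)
The plan is to reduce the statement to a formal property of localized topoi, using the commutative square of topoi displayed just before Proposition~\ref{prop:prismatic-higher-direct-image}. First, for any $\calF\in D((Y,M_Y)_\Prism,\calO_\Prism)$, evaluation at the object $(\Spf A,I,M_{\Spf A})$ is the same as derived global sections on the localized topos $\Sh((Y,M_Y)_\Prism)/h_A$. Indeed, by Lemma~\ref{lem:pullback of representable prismatic sheaf} the presheaf $h_A$ is a sheaf. Moreover, $\Sh((Y,M_Y)_\Prism)/h_A$ is equivalent to the topos of the localized site $(Y,M_Y)_\Prism/(\Spf A,I,M_{\Spf A})$. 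So $R\Gamma((\Spf A,I,M_{\Spf A}),Rf_{\Prism,\ast}\calK)=R\Gamma(\Sh((Y,M_Y)_\Prism)/h_A,(Rf_{\Prism,\ast}\calK)|_{h_A})$.

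Next, I would use the standard base change for localization \cite[04H1]{stacks-project}. For the morphism of ringed topoi $f_\Prism$ and the localization at $h_A$ and at $f_\Prism^{-1}h_A$, restriction to a localized topos is exact and preserves injectives (or K-injectives) of $\calO$-modules, since it has an exact left adjoint, namely extension by zero. Also, $(f_{\Prism,\ast}\calJ)|_{h_A}=(f_\Prism|_{h_A})_\ast(\calJ|_{f^{-1}_\Prism h_A})$ holds at the level of sheaves. Combining these gives $(Rf_{\Prism,\ast}\calK)|_{h_A}\cong R(f_\Prism|_{h_A})_\ast(\calK|_{f_\Prism^{-1}h_A})$. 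Taking global sections and composing $R\Gamma\circ R(f_\Prism|_{h_A})_\ast=R\Gamma$ yields
\[
R\Gamma((\Spf A,I,M_{\Spf A}),Rf_{\Prism,\ast}\calK)\cong R\Gamma(\Sh((X,M_X)_\Prism)/f_\Prism^{-1}h_A,\calK).
\]

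Finally, I would identify $\Sh((X,M_X)_\Prism)/f_\Prism^{-1}h_A$ with $\Sh(((X_{A/I},M_{X_{A/I}})/(A,M_{\Spf A}))_\Prism)$, compatibly with the restriction of $\calK$. This identification is asserted in the text preceding the proposition, but it deserves checking, and I expect it to be the main point requiring care. Concretely, I would check that the category of pairs $((\Spf B,J,M_{\Spf B}),g)$, with $g$ an $f$-prismatic morphism to $(\Spf A,I,M_{\Spf A})$, is equivalent to the relative site $((X_{A/I},M_{X_{A/I}})/(A,M_{\Spf A}))_\Prism$. This uses the universal property of the fiber product of log formal schemes defining $X_{A/I}$. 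One must verify that strictness of $\Spf(B/J)\to X$ corresponds to strictness of $\Spf(B/J)\to X_{A/I}$; this holds because $X_{A/I}\to X$ is strict, as noted in the text. One must also verify that the coverings match, since both topologies are strict flat. The one subtle point is the fiber product in integral log formal schemes: because $M_{\Spf A/I}$ and $M_Y$ are integral and the base change is along a strict map, the fiber product is the naive one.

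With this equivalence, the category of elements of $f_\Prism^{-1}h_A$ is that relative site. Since $f^\bullet_\Prism h_A=f^{-1}h_A$ is already a sheaf by Lemma~\ref{lem:pullback of representable prismatic sheaf}, the localized topos is the topos of this site. This gives the claimed quasi-isomorphism, natural in $\calK$.
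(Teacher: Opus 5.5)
Your proposal is correct and follows the paper's route. The paper simply cites \cite[0D6H]{stacks-project} and adapts it from ringed sites to ringed topoi via \cite[04IF]{stacks-project}; this is exactly the localization argument you spell out: restriction to localized topoi commutes with $Rf_{\Prism,\ast}$ and preserves K-injectives, and then $R\Gamma\circ R(f_\Prism|_{h_A})_\ast=R\Gamma$. The identification of $\Sh((X,M_X)_\Prism)/f_\Prism^{-1}h_A$ with the relative prismatic topos, which you check by hand using the strictness of $\Spf(A/I)\rightarrow Y$, is taken in the paper as part of the set-up stated just before the proposition.
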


\begin{proof}
This follows from \cite[0D6H]{stacks-project}. Note that the statement therein concerns a morphism of ringed sites, but the same proof works for a morphism of ringed topoi thanks to \cite[04IF]{stacks-project}.
\end{proof}

\section{Prismatic \texorpdfstring{$F$}{F}-crystals in perfect complexes} \label{sec: analy-pris-cryst-perf-cx}

Let $(X, M_X)$ be a bounded $p$-adic log formal scheme such that $M_X$ is integral. We review prismatic ($F$-)crystals in perfect complexes as well on $(X, M_X)$ as completed prismatic $F$-crystals and analytic prismatic $F$-crystals. These notions and their variants have appeared in \cite[\S~4]{bhatt-scholze-prismaticFcrystal}, \cite[\S~3.2]{du-liu-moon-shimizu-completed-prismatic-F-crystal-loc-system}, \cite[\S~3.1, 5.1]{GuoReinecke-Ccris}, \cite[\S~3.1]{du-liu-moon-shimizu-purity-F-crystal}, \cite[\S~11]{Tsuji-prismatic-q-Higgs}, \cite[\S~1]{Tian-prism-etale-comparison}.

\smallskip
\noindent
\textbf{Prismatic crystals in perfect complexes.} Given a (discrete) ring $A$, write $\mathcal{D}(A)$ for the derived $\infty$-category of $A$-modules, and $\mathcal{D}_{\perf}(A) \subset \mathcal{D}(A)$ for the full subcategory of perfect complexes over $A$ (cf.~\cite[Def.~7.2.4.1, Prop.~7.2.4.2]{Lurie-HA}).
By a perfect complex on $((X, M_X)_{\Prism}, \mathcal{O}_{\Prism})$, we mean an object $\mathcal{E}$ in the derived $\infty$-category $\mathcal{D}((X, M_X)_{\Prism}, \mathcal{O}_{\Prism})$ such that there exists a cover $\{U_i\}$ of $(X, M_X)_{\Prism}$ (jointly covering the final object in the topos) and perfect complexes $P_i \in \mathcal{D}(\mathcal{O}_{\Prism}(U_i))$ satisfying $\mathcal{E}|_{U_i} \cong P_i\otimes_{\mathcal{O}_{\Prism}(U_i)} \mathcal{O}_{\Prism, U_i}$. 
Write $\mathcal{D}_{\perf}((X, M_X)_{\Prism}) \subset \mathcal{D}((X, M_X)_{\Prism}, \mathcal{O}_{\Prism})$ for the full subcategory of perfect complexes, which we call \emph{prismatic crystals in perfect complexes} on $(X,M_X)$. By $(p, I)$-completely faithfully flat descent (see \cite[Prop.~2.7 Pf.]{bhatt-scholze-prismaticFcrystal}), we have a natural equivalence
\[
\mathcal{D}_{\perf}((X, M_X)_{\Prism}) \xrightarrow{\cong} \lim_{(A, I, M_{\Spf A}) \in (X, M_X)_{\Prism}^\op} \mathcal{D}_{\perf}(A).
\]

For any $(A, I, M_{\Spf A}) \in (X, M_X)_{\Prism}$, note that the Frobenius endomorphism $\phi_A$ on $A$ induces an action on $\Spec(A)\smallsetminus V(p,I)$, denoted also by $\phi_A$.
Write $\mathcal{D}_{\perf}^{\phi}(A)$ for the $\infty$-category of pairs $(\mathcal{E}_A, \phi_{\mathcal{E}_A})$ where $\mathcal{E}_A \in \mathcal{D}_{\perf}(A)$ and 
\[
\phi_{\mathcal{E}_A}\colon L\phi_A^*\mathcal{E}_A\otimes_A^{L}A[I^{-1}] \stackrel{\cong} {\rightarrow} \mathcal{E}_A\otimes_A^{L}A[I^{-1}].
\]
is an isomorphism. Following \cite[Rem.~4.2]{bhatt-scholze-prismaticFcrystal}, let $\mathcal{D}_{\perf}^{\phi}((X, M_X)_{\Prism})$ be the $\infty$-category of pairs $(\mathcal{E}, \phi_{\mathcal{E}})$ where $\mathcal{E} \in \mathcal{D}_{\perf}((X, M_X)_{\Prism})$ and $\phi_{\mathcal{E}}\colon L\phi_{\calO_\Prism}^*\mathcal{E}[\mathcal{I}_{\Prism}^{-1}] \stackrel{\cong}{\rightarrow} \mathcal{E}[\mathcal{I}_{\Prism}^{-1}]$ is an isomorphism. The objects are called \emph{prismatic $F$-crystals in perfect complexes} on $(X,M_X)$.

\smallskip
\noindent
\textbf{Completed prismatic crystals.} Let us first recall the definitions.

\begin{defn}[{\cite[Def.~3.11]{du-liu-moon-shimizu-completed-prismatic-F-crystal-loc-system}}]\hfill
\begin{enumerate}
  \item A \emph{completed crystal of $\calO_\Prism$-modules} on $(X,M_X)_\Prism$ is a sheaf $\calF$ of $\calO_\Prism$-modules such that for every $(A,I,M_{\Spf A})\in (X,M_X)_\Prism^\op$, the $A$-module $\calF_A\coloneqq \calF(A,I,M_{\Spf A})$ is classically $(p,I)$-complete and for every map $(A,I,M_{\Spf A})\rightarrow (B,IB,M_{\Spf B})$ in $(X, M_X)_{\Prism}^{\op}$, the induced map
\[
\calF_A\widehat{\otimes}_AB\rightarrow \calF_B
\]
is an isomorphism, where $\calF_A\widehat{\otimes}_AB$ denotes the completed tensor product $\varprojlim_n (\calF_A\otimes_AB)/(p,I)^n(\calF_A\otimes_AB)$.
 We also use the terminology \emph{completed prismatic crystal} interchangeably.  Let $\CR^{\wedge}((X, M_X)_{\Prism}, \calO_{\Prism})$ denote the category of \emph{finitely generated} completed crystal of $\calO_\Prism$-modules on $(X,M_X)_\Prism$. 
  \item A \emph{crystal of $\calO_{\Prism,n}$-modules} on $(X,M_X)_\Prism$ is a sheaf $\calF_n$ of $\calO_{\Prism,n}$-modules such that for every $(A,I,M_{\Spf A})\rightarrow (B,IB,M_{\Spf B})$, the induced map $\calF_{n,A}\otimes_AB\rightarrow \calF_{n,B}$ is an isomorphism, where $\calF_{n}\coloneqq \calF_{n,A}(A,I,M_{\Spf A})$.
  \end{enumerate}
\end{defn}

\begin{rem}\label{rem:completed crystal as limit}
Let us recall the discussions in \cite[Rem.~3.12, Lem.~3.13]{du-liu-moon-shimizu-completed-prismatic-F-crystal-loc-system} (see also \cite[Rem.~11.2, 11.4]{Tsuji-prismatic-q-Higgs}).
\begin{enumerate}
 \item If $\calF$ is a completed crystal of $\calO_\Prism$-modules on $(X,M_X)_\Prism$, then for every $f\colon (A,I,M_{\Spf A})\rightarrow (B,IB,M_{\Spf B})$, the induced map
\[
\calF_A\otimes_AB/(p,IB)^n\rightarrow \calF_B/(p,IB)^n\calF_B
\]
is an isomorphism for every $n$ (use \cite[Thm.~1.2(2)]{yekutieli-flatnesscompletion}).
Moreover, the association $(A,I,M_{\Spf A})\mapsto \calF_A/(p,I)^n\calF_A$ represents the quotient sheaf $\calF_{n}\coloneqq \calF/(p,\calI_\Prism)^n\calF$ and defines a crystal of $\calO_{\Prism,n}$-modules. To see this, assume that $f$ is a flat cover (hence, strict) and set $B'\coloneqq (B\otimes_AB)^\wedge_{(p,I)}$ (the classical completion). Then $(B',IB',\underline{M_A}^a)\in (X,M_X)_\Prism^\op$ represents the self-cofiber product of $f$, and one can use the argument in \cite[Lem.~3.13, Pf., 1st para.]{du-liu-moon-shimizu-completed-prismatic-F-crystal-loc-system} verbatim to conclude the above association is indeed a sheaf. Since $\calF_A$ is classically $(p,I)$-complete, we also have an isomorphism $\calF\xrightarrow{\cong}\varprojlim_n\calF_n$. If $\calF$ is finitely generated, then for any $f$, the induced map
\[
\calF\otimes_AB\rightarrow \calF_A\widehat{\otimes}_AB\xrightarrow{\cong}\calF_B
\]
is surjective; it becomes an isomorphism if, moreover, $A$ and $B$ are Noetherian, or if $A$ is Noetherian and $A\rightarrow B$ is classically flat.
 \item Let $(\calF_n)_n$ be an inverse system of $\calO_\Prism$-modules such that $\calF_n$ is a crystal of $\calO_{\Prism,n}$-modules and the induced map $\calF_{n+1}\otimes_{\calO_{\Prism,n+1}}\calO_{\Prism,n}\rightarrow \calF_n$ is an isomorphism for each $n$. Then by arguing as in (1), we see that the association $A\mapsto \calF_{n+1,A}\otimes_{A}A/(p,I)^n$ represents the sheaf $\calO_{\Prism,n}\otimes_{\calO_{\Prism,n+1}}\calF_{n+1}$. Hence $\calF\coloneqq \varprojlim_n \calF_n$ is a completed crystal of $\calO_\Prism$-modules. Finally, we see from \cite[Thm.~1.2(2)]{yekutieli-flatnesscompletion} that the induced map $\calF\otimes_{\calO_\Prism}\calO_{\Prism,n}=\calF/(p,\calI_\Prism)^n\calF\rightarrow \calF_n$ is an isomorphism. Note that $\calF_n$ is finitely generated for each (or some) $n$ if and only if so is $\calF$.
  \item For a completed crystal $\calF$ of $\calO_\Prism$-modules on $(X,M_X)_\Prism$, we define
\[
\widehat{\phi}^\ast\calF\coloneqq \calF\widehat{\otimes}_{\calO_\Prism,\phi_{\calO_\Prism}}\calO_\Prism\coloneqq \varprojlim_n (\calF\otimes_{\calO_\Prism,\phi_{\calO_\Prism}}\calO_\Prism)/(p,\calI_\Prism)^n(\calF\otimes_{\calO_\Prism,\phi_{\calO_\Prism}}\calO_\Prism).
\]
Then $\widehat{\phi}^\ast\calF$ is also a completed crystal of $\calO_\Prism$-modules on $(X,M_X)_\Prism$. In fact, we have $\calF=\varprojlim_n\calF_n$ as in (1). Then $\widehat{\phi}^\ast\calF=\varprojlim_n \phi_{\calO_\Prism}^\ast\calF_n$ because $\phi_{\calO_\Prism}^\ast\calF_n=(\calF\otimes_{\calO_\Prism,\phi_{\calO_\Prism}}\calO_\Prism)/(p,\phi_\Prism(\calI_\Prism))^n(\calF\otimes_{\calO_\Prism,\phi_{\calO_\Prism}}\calO_\Prism)$ and $(p,\calI_\Prism)\supset (p,\phi_{\calO_\Prism}(\calI_\Prism))\supset (p,\calI_\Prism)^p$. Since $(\phi_{\calO_\Prism}^\ast\calF_n)_n$ satisfies the assumption in (2), we conclude that $\widehat{\phi}^\ast\calF$ is a completed prismatic crystal. This argument also shows that $(\widehat{\phi}^\ast\calF)_A=\calF_A\widehat{\otimes}_{A,\phi_A}A$ for every $(A,I,M_{\Spf A})\in (X,M_X)_\Prism$.
\end{enumerate}
\end{rem}

\begin{defn}[{cf.~\cite[Def.~1.15]{Tian-prism-etale-comparison}}] \hfill
\begin{enumerate}
    \item Define $\CR^{\wedge, \an}((X, M_X)_{\Prism}, \calO_{\Prism})$ to be the full subcategory of $\CR^{\wedge}((X, M_X)_{\Prism}, \calO_{\Prism})$ consisting of $\calF$ such that $\calF[\calI_\Prism^{-1}]$ (resp.~$\calF[p^{-1}]$) is a locally free sheaf of $\calO_\Prism[\calI_\Prism^{-1}]$-modules (resp.~$\calO_\Prism[p^{-1}]$-modules). Equivalently, one can require that for every $(A, I, M_{\Spf A}) \in (X, M_X)_{\Prism}^\op$, the restriction $\calF_A^\sim|_{\Spec(A)\smallsetminus V(p, I)}$ of the quasi-coherent sheaf $\calF_A^\sim$ on $\Spec(A)$ associated to $\calF_A$ be a vector bundle.

    \item Define the category $\CR^{\wedge, \an, \phi}((X, M_X)_{\Prism}, \calO_{\Prism})$ of \emph{completed prismatic $F$-crystals} on $(X,M_X)_\Prism$ to be the category of pairs $(\calF, \phi_{\calF})$ where $\calF \in \CR^{\wedge, \an}((X, M_X)_{\Prism}, \calO_{\Prism})$ and 
    \[
    \phi_{\calF}\colon (\widehat{\phi}_{\calO_{\Prism}}^\ast\calF)[\calI_{\Prism}^{-1}] \stackrel{\cong}{\rightarrow} \calF[\calI_{\Prism}^{-1}] 
    \]
    is an isomorphism.
\end{enumerate}
\end{defn}

\begin{rem}
In the small affine case with trivial log structure, the category of completed prismatic $F$-crystals 
defined in \cite[Def.~3.16]{du-liu-moon-shimizu-completed-prismatic-F-crystal-loc-system} is a full subcategory of the one defined as above by \cite[Lem.~3.24(iv)]{du-liu-moon-shimizu-completed-prismatic-F-crystal-loc-system}. However, the latter is strictly larger: first, we do not impose the saturation condition here (e.g. $\calO_\Prism/(p,\calI_\Prism)$ together with induced $\phi$ is a completed prismatic $F$-crystal in the current sense); second, we also include non-effective completed prismatic $F$-crystals. Namely, the isomorphism $\phi_\calF\colon (\widehat{\phi}_{\calO_{\Prism}}^\ast\calF)[\calI_{\Prism}^{-1}] \stackrel{\cong}{\rightarrow} \calF[\calI_{\Prism}^{-1}]$ does not necessarily come from $\phi_\calF\colon \calF\rightarrow \calF$ whose cokernel is killed by a power of $\calI_\Prism$.
In Proposition~\ref{prop: relations-prism-crystals} below, we introduce a fully faithful functor $j_\ast$ to $\CR^{\wedge, \an, \phi}((X, M_X)_{\Prism}, \calO_{\Prism})$. The objects in the essential image satisfy the saturation condition.
\end{rem}

\smallskip
\noindent
\textbf{Analytic prismatic $F$-crystals.} 

\begin{defn}[{\cite[Def.~3.1]{GuoReinecke-Ccris}, \cite[Def.~3.3]{du-liu-moon-shimizu-purity-F-crystal}}]\hfill
\begin{enumerate}
 \item Let $(A, I, M_{\Spf A}) \in (X, M_X)_{\Prism}$. Write $\Vect^{\mathrm{an}}(A,I)$ for the category of vector bundles over $\Spec(A)\smallsetminus V(p,I)$ and $\Vect^{\mathrm{an},\phi}(A,I)$ for the category of pairs $(\mathcal{E}_A, \phi_{\mathcal{E}_A})$ where $\mathcal{E}_A\in \Vect^{\mathrm{an}}(A,I)$ and $\phi_{\mathcal{E}_A}$ is an isomorphism of vector bundles
\[
    \phi_{\mathcal{E}_A}\colon (\phi_A^\ast\mathcal{E}_A)[I^{-1}] \simeq \mathcal{E}_A[I^{-1}]
\]
over $\Spec(A)\smallsetminus V(I)$.
 \item Define the category $\Vect^{\mathrm{an}(,\phi)}((X,M_X)_\Prism)$ of \emph{analytic prismatic ($F$-)crystals} over $(X,M_X)$ by
\[
\Vect^{\mathrm{an}(,\phi)}((X,M_X)_\Prism)\coloneqq \lim_{(A,I,M_{\Spf A})\in (X,M_X)_\Prism^\op} \Vect^{\mathrm{an}(,\phi)}(A,I).
\]
\end{enumerate}
\end{defn}

\smallskip
\noindent
\textbf{Relations among them.} 
Let us discuss particular situations in which we can relate these notions. 
To simplify the notation, for $(A,I,M_{\Spf A})\in (X,M_X)_\Prism$, we write $h_A$ for the sheaf it represents; by abuse of notation, we also use phrases such as a map $h_A\rightarrow h_{A'}$ or the evaluation $\calF_A\coloneqq \calF(h_A)$ of a sheaf $\calF$.

\begin{prop} \label{prop: relations-prism-crystals}
Assume that there exists $(A,I,M_{\Spf A})\in (X,M_X)_\Prism$ such that 
\begin{itemize}
  \item the sheaf $h_A$ it represents covers the final object,
  \item $A$ is a regular Noetherian ring, and
  \item the self-product $h_A^{n+1}$ is representable, say by $(A_n,I_n,M_{\Spf A_n})$ for every $n\geq 0$ (with $A=A_0$), and each of the projections $p_0^n,\ldots, p_n^n\colon \Spec A_n\rightarrow \Spec A$ is (classically) faithfully flat.
\end{itemize}
\begin{enumerate}
 \item (cf.~\cite[Prop.~6.8]{Tian-prism-etale-comparison}, \cite[Thm.~5.10]{GuoReinecke-Ccris}, \cite[Lem.~3.8]{du-liu-moon-shimizu-purity-F-crystal})
 The natural restriction functor
\[
j^\ast\colon \CR^{\wedge, \an(, \phi)}((X, M_X)_{\Prism}, \calO_{\Prism})\rightarrow \Vect^{\mathrm{an}(,\phi)}((X,M_X)_\Prism)
\]
sending $\calF$ to $(\calF_A^\sim|_{\Spec (A)\smallsetminus V(p,I)})_A$
admits a right adjoint
\[
j_\ast\colon \Vect^{\mathrm{an}(,\phi)}((X,M_X)_\Prism)\rightarrow \CR^{\wedge, \an(, \phi)}((X, M_X)_{\Prism}, \calO_{\Prism}).
\]
Moreover, the counit map $j^\ast j_\ast\rightarrow \id$ is an isomorphism.

\item One can functorially and uniquely associate to $\calF \in \CR^{\wedge, \an}((X, M_X)_{\Prism}, \calO_{\Prism})$ a perfect complex $\iota_A(\calF)\in \calD_\perf((X,M_X)_\Prism)$ together with a morphism $f\colon \iota_A(\calF)\rightarrow \calF$ in $\calD((X,M_X)_\Prism, \calO_{\Prism})$ satisfying the following properties.
\begin{enumerate}
 \item The restriction $\iota_A(\calF)|_{h_A}\rightarrow \calF|_{h_A}$ to the localized topos $\Sh((X,M_X)_\Prism)/h_A$ is given by $P^\bullet\otimes_A\calO_\Prism\rightarrow \calF_A\otimes_A\calO_\Prism\rightarrow \calF|_{h_A}$ where $P^\bullet\rightarrow \calF_A$ is a resolution by a bounded complex of finite projective $A$-modules. 
 \item For every $(B,J,M_{\Spf B})\in (X,M_X)_\Prism/h_A$ with $A\rightarrow B$ being classically flat, $f$ gives an isomorphism $\iota_A(\calF)(h_B)\xrightarrow{\cong}\calF_B$ in $\calD(B)$. 
 \item The induced map $R\Gamma((X,M_X)_\Prism,\iota_A(\calF))\rightarrow R\Gamma((X,M_X)_\Prism,\calF)$ is a quasi-isomorphism.
\end{enumerate}
If $\calF$ comes with an isomorphism $\phi_\calF\colon (\widehat{\phi}_{\calO_{\Prism}}^\ast\calF)[\calI_{\Prism}^{-1}] \stackrel{\cong}{\rightarrow} \calF[\calI_{\Prism}^{-1}]$, then $\iota_A(\calF)$ comes with an isomorphism $\phi_{\iota_A(\calF)}\colon (L\phi_{\calO_{\Prism}}^\ast \iota_A(\calF))[\calI_{\Prism}^{-1}] \stackrel{\cong}{\rightarrow} \iota_A(\calF)[\calI_{\Prism}^{-1}]$ compatible with $\phi_\calF$ via $\iota_A(\calF)\rightarrow \calF$.
\end{enumerate}
Hence we obtain fully faithful functors
\[
\Vect^{\mathrm{an} (, \phi)}((X,M_X)_\Prism) \rightarrow 
\CR^{\wedge, \an (, \phi)}((X, M_X)_{\Prism}, \calO_{\Prism}) \rightarrow \calD_\perf^{(\phi)}((X,M_X)_\Prism).
\]
\end{prop}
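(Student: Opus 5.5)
For part (1) I would construct $j_\ast$ by descent along the \v{C}ech nerve $(A_\bullet,I_\bullet,M_{\Spf A_\bullet})$ of $h_A$. Let $U_n\coloneqq \Spec(A_n)\smallsetminus V(p,I_n)$. Given $\calE=(\calE_B)_B\in \Vect^{\an}((X,M_X)_\Prism)$, I would set $N_n\coloneqq \Gamma(U_n,\calE_{A_n})$.

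The first step is to show that $N_0$ is a finitely generated $A$-module, and that $(N_0)^\sim|_{U_0}\cong\calE_A$. Here the input is that $A$ is regular Noetherian and $V(p,I)$ has codimension $\geq 2$ in $\Spec A$, since $(p,I)$ is generated by a regular sequence of length $2$ locally. Hence the reflexive hull of the vector bundle extends it coherently, and sections over $U_0$ coincide with this reflexive extension. By the Noetherian Artin--Rees argument, finite generation then gives classical $(p,I)$-completeness.

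The second step is to show that $N_0\otimes_{A,p_i^n}A_n\cong N_n$ for each projection. This is flat base change for the quasi-coherent cohomology $H^0$ over the quasi-compact open $U_0$. It uses exactly the hypothesis that the $p_i^n$ are classically faithfully flat, together with $(p_i^n)^{-1}(U_0)=U_n$ and the crystal property of $\calE$. Consequently $(N_n)_n$ is a cosimplicial module satisfying the cocycle condition, so it defines a descent datum.

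The third step produces the sheaf. For an arbitrary $(B,J,M_{\Spf B})$, I would choose a flat cover $B\to B'$ admitting a map $h_{B'}\to h_A$; this is possible because $h_A$ covers the final object. Define $(j_\ast\calE)_{B'}\coloneqq N_0\widehat{\otimes}_AB'$, and descend to $B$ using the descent datum, invoking Remark~\ref{rem:completed crystal as limit}(1)(2) to handle completed tensor products via the quotients mod $(p,I)^n$. This gives a finitely generated completed crystal whose restriction to each $U_B$ is $\calE_B$. In particular the counit $j^\ast j_\ast\to\id$ is an isomorphism, and $j_\ast\calE$ lies in $\CR^{\wedge,\an}$. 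For adjointness, a map $\calF\to j_\ast\calE$ is determined on $h_A$, where it amounts to a descent-compatible map $\calF_A\to\Gamma(U_0,\calE_A)$. Such maps correspond exactly to maps $\calF_A^\sim|_{U_0}\to\calE_A$ compatible over $A_1$, that is, to maps $j^\ast\calF\to\calE$. Frobenius structures transport because $\phi_A^{-1}(V(p,I))=V(p,I)$, using $\phi(x)\equiv x^p \bmod p$, so Frobenius pullback commutes with restriction to $U$. I expect the main technical obstacle of the whole proposition to be the first two steps, namely the coherence of $N_0$ and the compatibility of $\Gamma(U,-)$ with the flat maps $p_i^n$.

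For part (2), I would avoid choosing compatible resolutions and argue as follows. By Remark~\ref{rem:completed crystal as limit}(1), since $A$ is Noetherian and each $p_i^n$ is flat, $\calF_A\otimes_{A,p_i^n}A_n\cong\calF_{A_n}$ with the uncompleted tensor product. Since $A$ is regular Noetherian, $\calF_A$ admits a finite resolution $P^\bullet$ by finite projectives; I take this in the local-global sense and assume finite Krull dimension. Hence each $\calF_{A_n}$ is perfect, and the cosimplicial system $(\calF_{A_n})_n$, together with its canonical base-change isomorphisms (these are isomorphisms of discrete modules, so no higher coherences are needed), defines an object of $\lim_{[n]\in\Delta}\calD_\perf(A_n)\simeq\calD_\perf((X,M_X)_\Prism)$. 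This object is $\iota_A(\calF)$.

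The map $f\colon\iota_A(\calF)\to\calF$ comes from the evident maps $P^\bullet\otimes_A\calO_\Prism\to\calF_A\otimes_A\calO_\Prism\to\calF|_{h_A}$ on the localized topos, which are compatible with the descent data; this gives (a). Property (b) holds because for flat $A\to B$ the value $\iota_A(\calF)(h_B)$ is $P^\bullet\otimes_AB\simeq\calF_A\otimes_AB\cong\calF_B$, again by Remark~\ref{rem:completed crystal as limit}(1). For (c), I would compute both cohomologies by \v{C}ech--Alexander complexes along $A_\bullet$. The terms agree by (b) applied to $A_n$, and the higher cohomology of the discrete sheaf $\calF$ on each $h_{A_n}$ vanishes by completed flat descent. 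Uniqueness follows since an object of the limit category is determined by its values on $A_\bullet$.

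For the Frobenius on $\iota_A(\calF)$, the point is that $L\phi_A^\ast\calF_A\to\phi_A^\ast\calF_A=\calF_A\widehat{\otimes}_{A,\phi}A$, the equality holding because $A$ is Noetherian and $\calF_A$ is finitely generated, becomes an isomorphism after inverting $I$. Indeed, $\calF_A$ is locally free on $U_0$ and $\phi_A^{-1}(U_0)=U_0$, so the higher Tor's are supported on $V(p,I)$. This lets $\phi_\calF$ induce $\phi_{\iota_A(\calF)}$ compatibly on each $A_n$. Full faithfulness of the composite then follows from (1) together with full faithfulness of $\iota_A$ on these objects, which in turn follows from (b) on $h_A$ and descent.
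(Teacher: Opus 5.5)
Your outline follows the paper's proof. For $j_\ast\calE$, both start from $N_0=\Gamma(U_0,\calE_A)$, use flat base change of $H^0$ along the faithfully flat projections, then completed base change and descent. For $\iota_A(\calF)$, both descend the discrete perfect modules $\calF_{A_n}\cong\calF_A\otimes_{A,p_i^n}A_n$, with (c) via \v{C}ech--Alexander complexes and full faithfulness from discreteness.

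The one real problem is the justification in Step~1. For a prism $(A,I)$, the ideal $(p,I)$ need not be locally generated by a regular sequence of length $2$. For a crystalline prism, $(p,I)=(p)$, so $V(p,I)$ has codimension $1$. Codimension $\geq 2$ also cannot be derived from the stated hypotheses, and part (1) fails without it:
\begin{itemize}
\item Take $X=\Spec k$ with trivial log structure and $(A,I)=(W(k),(p))$. This is the final object of $(X,M_X)_\Prism$, so all hypotheses hold with $A_n=W(k)$.
\item Then $\Vect^{\an}((X,M_X)_\Prism)$ is the category of finite-dimensional $K_0$-vector spaces, and $\CR^{\wedge,\an}((X,M_X)_\Prism,\calO_\Prism)$ is the category of finitely generated $W(k)$-modules.
\item A right adjoint would give $j_\ast V\cong\Hom(W(k),j_\ast V)\cong\Hom_{K_0}(K_0,V)=V$, which is not finitely generated for $V\neq 0$.
\end{itemize}
So codimension $\geq 2$ of $V(p,I)$ in $\Spec A$ must be imposed as a hypothesis. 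The paper's proof uses it tacitly when invoking \cite[0BK3]{stacks-project}. It holds for the Breuil--Kisin prisms where the proposition is applied, since $(p,E)$ is a regular sequence in $\fkS_\square$. Under that hypothesis your reflexive-hull argument is correct.

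Two claims in Step~3 are asserted without the input they need.
\begin{itemize}
\item Take $B$ over $h_A$ with $A\to B$ not flat. Crystal pullback identifies $\calE_B$ with the restriction of $(N_0\otimes_AB)^\sim$ to $\Spec B\smallsetminus V(p,IB)$, not of $(N_0\widehat{\otimes}_AB)^\sim$. You must show that $N_0\otimes_AB\rightarrow N_0\widehat{\otimes}_AB$ becomes an isomorphism after inverting $p$ and after inverting $I$. This uses the projectivity of $N_0[p^{-1}]$ and $N_0[I^{-1}]$, via an argument as for Lemma~\ref{lem:algebraic-lemmas-for-base-change}(2); the paper cites \cite[Lem.~3.24(iv)]{du-liu-moon-shimizu-completed-prismatic-F-crystal-loc-system}.
\item For objects not over $h_A$, the cover $B\rightarrow B'$ is only $(p,J)$-completely faithfully flat. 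To conclude that $j_\ast\calE$ lies in $\CR^{\wedge,\an}$ and that the counit is an isomorphism there, you need descent of vector bundles on $\Spec B\smallsetminus V(p,J)$ along such maps. The paper takes this from \cite[Thm.~7.8]{mathew-descent}.
\end{itemize}

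Finally, in part (2) the finite Krull dimension assumption is unnecessary. Over any regular Noetherian ring, the loci where a finitely generated module has projective dimension $\leq n$ are open and exhaust the quasi-compact $\Spec A$, so the module is perfect. The rest of part (2), including your treatment of Frobenius via $\phi_A^{-1}(U_0)=U_0$, is fine.
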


The construction in (2) a priori depends on the choice of $h_A$. In the semistable case, we show that it is independent of the choice and is globalized (see \S~\ref{subsec: analy-prism-crys-perf-cx}).

\begin{proof}
(1) The uniqueness follows from the adjoint property. 
 For each $n \geq 0$, let $j^n\colon \Spec(A_n)\smallsetminus V(p, I_n) \hookrightarrow \Spec(A_n)$ denote the open immersion (note $A_0=A$). We write $p_0^n,\ldots, p_n^n\colon \Spec A_n\rightarrow \Spec A$ for the projections.
 
Let $\calF \in \Vect^{\mathrm{an}}((X,M_X)_\Prism)$, and write $\calF_{/B}\in \Vect^{\an}(B,J)$ for the value at $(B,J,M_{\Spf B})$. By the crystal property of $\calF$, we have an isomorphism $\varepsilon_{/A_1}\colon p_0^{1,*}\calF_{/A} \xrightarrow{\cong}\calF_{/A_1}\xleftarrow{\cong} p_1^{1,*}\calF_{/A}$ in $\Vect^{\an}(A_1, I_1)$ satisfying the cocycle condition in $\Vect^{\an}(A_2, I_2)$. 

Since $A$ is a Noetherian regular ring, $j_*^0 \calF_{/A}$ is a coherent sheaf on $\Spec(A)$ by \cite[0BK3]{stacks-project}. Thus $\fkF_A\coloneqq H^0(\Spec(A), j_*^0 \calF_{/A}) = H^0(\Spec(A)\smallsetminus V(p, I), \calF_{/A})$ is a finitely generated $A$-module, and so it is classically $(p, I)$-complete. Furthermore, we have $j^{0, *}\fkF_A^\sim = \calF_{\fkS_{R, \square}}$ by \cite[0BK0]{stacks-project}. Since $A\rightarrow A_n$ is assumed to be flat, the flat base change for quasi-coherent sheaves \cite[02KH]{stacks-project} gives  a canonical isomorphism
\[
\fkF_A\otimes_{A, p^n_i} A_n \cong H^0(\Spec(A_n), j^n_*(p^{n, *}_i \calF_{/A})).
\]
We also note that $\fkF_A\otimes_{A, p^n_i} A_n$ is classically $(p,I_n)$-complete by \cite[0912]{stacks-project} and $p^{n, *}_i \calF_{/A}\cong j^{n, *}(\fkF_A\otimes_{A, p^n_i} A_n)^\sim$. In particular, $\varepsilon_{/A_1}$ induces an isomorphism 
\[
\varepsilon\colon \fkF_A\otimes_{A, p^1_0} A_1 \cong \fkF_A\otimes_{A, p^1_1} A_2
\]
of $A_1$-modules satisfying the cocycle condition over $A_2$. We also remark $\Delta^\ast\varepsilon=\id_{\fkF_A}$ where $\Delta$ denotes the diagonal $\Spec A\rightarrow \Spec A_1$; to see this, consider the pullback of the cocycle condition along the triple diagonal $\Spec A\rightarrow \Spec A_2$ to obtain $\Delta^\ast\varepsilon\circ \Delta^\ast\varepsilon=\Delta^\ast\varepsilon$.

For every morphism $(A,I,M_{\Spf A})\rightarrow (B,IB,M_{\Spf B})$ in $(X,M_X)_\Prism^\op$, define $\fkF_B$ to be the classical $(p,IB)$-completion $\fkF_A\widehat{\otimes}_AB$. Thanks to $\varepsilon$, the $B$-module $\fkF_B$ depends only on the log prism $(B,IB,M_{\Spf B})$ and is independent of the map $h_B\rightarrow h_A$. Moreover, the induced maps $\fkF_A[p^{-1}]\otimes_AB\rightarrow\fkF_B[p^{-1}]$ and $\fkF_A[I^{-1}]\otimes_AB\rightarrow\fkF_B[I^{-1}]$ are isomorphisms; this follows from \cite[Lem.~3.24(iv)]{du-liu-moon-shimizu-completed-prismatic-F-crystal-loc-system} as the proof therein only uses the Noetherian property of the Breuil--Kisin prism. In particular, we have a functorial isomorphism $\calF_{/B}\cong \fkF_B^\sim|_{\Spec (B)\smallsetminus V(p,J)}$.

Using the faithfully flat descent and Remark~\ref{rem:completed crystal as limit}, it is standard to see that the association $(h_B\rightarrow h_A)\mapsto \fkF_B$ is a sheaf of $\calO_\Prism$-modules on $(X,M_X)_\Prism/h_A$ and it further descends to a completed crystal $\fkF$ of $\calO_\Prism$-modules on $(X,M_X)_\Prism$; see \cite[Prop.~3.26, Pf.]{du-liu-moon-shimizu-completed-prismatic-F-crystal-loc-system} for example. The construction also shows that $\fkF$ is indeed an object of $\CR^{\wedge, \an}((X, M_X)_{\Prism}, \calO_{\Prism})$ by \cite[Thm.~7.8]{mathew-descent}, and provides a canonical isomorphism $\calF\cong j^\ast\fkF$. Since the association $\calF\mapsto \fkF$ is functorial, setting $j_\ast\calF\coloneqq \fkF$ defines a functor $j_\ast\colon \Vect^{\mathrm{an}}((X,M_X)_\Prism)\rightarrow \CR^{\wedge, \an}((X, M_X)_{\Prism}, \calO_{\Prism})$. One can easily check that $j_\ast$ is right adjoint to $j^\ast$ and the counit map $j^\ast j_\ast\rightarrow \id$ agrees with the above identification, and thus an isomorphism. It is also straightforward to see from the construction that $j_\ast$ can be upgraded to a functor $\Vect^{\mathrm{an},\phi}((X,M_X)_\Prism)\rightarrow 
\CR^{\wedge, \an, \phi}((X, M_X)_{\Prism}, \calO_{\Prism})$ with the same properties. Note that $j_{\ast}$ is fully faithful since $j^\ast j_\ast\xrightarrow{\cong}\id$.

(2) We first note that the evaluation induces an equivalence
\begin{equation} \label{eq: compl-faith-flat-descent-wrt-A}
\calD_\perf^{(\phi)}((X,M_X)_\Prism) \xrightarrow{\cong} \lim_{[n] \in \Delta} \calD_\perf^{(\phi)}(A_n)    
\end{equation}
by $(p, I)$-completely faithfully flat descent since $h_A$ covers the final object. Let $\calF \in \CR^{\wedge, \an}((X, M_X)_{\Prism}, \calO_{\Prism})$. Since $A$ is regular Noetherian, we have a resolution $P^{\bullet} \rightarrow \calF_A$ by a bounded complex of finite projective $A$-modules. 
For any map $h_B\rightarrow h_A$, we have morphisms of complexes of $B$-modules $P^\bullet\otimes_AB\rightarrow \calF_A\otimes_AB\rightarrow \calF_B$.
Any coface map $p^n_i\colon A \rightarrow A_n$ is classically flat by assumption, so the resulting maps
\[
P^{\bullet}\otimes_{A, p^n_i} A_n 
\xrightarrow{\cong} \calF_A\otimes_{A, p^n_i} A_n \xrightarrow{\cong} \calF_{A_n}
\]
are quasi-isomorphisms by Remark~\ref{rem:completed crystal as limit}(1). By the equivalence \eqref{eq: compl-faith-flat-descent-wrt-A}, this defines $\iota_A(\calF) \in \calD_\perf((X,M_X)_\Prism)$ together with a morphism $f\colon \iota_A(\calF) \rightarrow \calF$ in $\calD((X,M_X)_\Prism, \calO_{\Prism})$ such that $\iota_A(\calF)|_{h_A}$ is given by $P^{\bullet}\otimes_A \calO_{\Prism}$ and $f|_{h_A}$ is the composite $P^\bullet\otimes_A\calO_\Prism\rightarrow \calF_A\otimes_A\calO_\Prism\rightarrow \calF|_{h_A}$.
It is straightforward to see that this is independent of the choice of the resolution $P^\bullet\rightarrow \calF_A$ and the construction is functorial in $\calF$.
Let $(B,J,M_{\Spf B})\in (X,M_X)_\Prism/h_A$ with $A\rightarrow B$ being classically flat. Then we have the natural isomorphism $\calF_A\otimes_A B \cong \calF_B$ again by Remark~\ref{rem:completed crystal as limit}(1), so $f$ gives an isomorphism $\iota_A(\calF)(h_B)\xrightarrow{\cong}\calF_B$ in $\calD(B)$. 
We have thus verified (a) and (b), and it is easy to see that these two properties  uniquely characterize $\iota_A(\calF)$ and $f$. 

For (c), observe that $f$ yields a commutative diagram
\[
\xymatrix{
R\Gamma((X,M_X)_\Prism,\iota_A(\calF)) \ar[r] \ar[d]^{\cong} & R\Gamma((X,M_X)_\Prism,\calF) \ar[d]^{\cong}\\
\Tot(\iota_A(\calF)(h_{A_\bullet})) \ar[r]^-{\cong} & \Tot(\calF_{A_\bullet})
}
\]
where $\Tot(C^\bullet)$ denotes the cochain complex associated to a cosimplicial complex (or a module) $C^\bullet$, and the vertical quasi-isomorphisms are given by \v{C}ech cohomological descent and 
the higher vanishing $H^i((X,M_X)_\Prism/h_{A_n},\calG)=0$ for every $n\geq 0$, $i\geq 1$ with $\calG$ a finite projective $\calO_\Prism$-module or a completed crystal of $\calO_\Prism$-module (cf.~Lemmas~\ref{lem:CA method} and \ref{lem: CA-method-infinity-category}). Since the bottom horizontal morphism is a quasi-isomorphism by (b), so is the top horizontal morphism. 

Suppose that $\calF$ comes with $\phi_\calF\colon (\widehat{\phi}_{\calO_{\Prism}}^\ast\calF)[\calI_{\Prism}^{-1}] \stackrel{\cong}{\rightarrow} \calF[\calI_{\Prism}^{-1}]$. Then by the equivalence \eqref{eq: compl-faith-flat-descent-wrt-A}, the above construction also gives an isomorphism $\phi_{\iota_A(\calF)}\colon (L\phi_{\calO_{\Prism}}^\ast \iota_A(\calF))[\calI_{\Prism}^{-1}] \stackrel{\cong}{\rightarrow} \iota_A(\calF)[\calI_{\Prism}^{-1}]$ compatible with $\phi_\calF$ via $f$.

It remains to show that the functor $\iota_A\colon \CR^{\wedge, \an (, \phi)}((X, M_X)_{\Prism}, \calO_{\Prism}) \rightarrow \calD_\perf^{(\phi)}((X,M_X)_\Prism)$ is fully faithful. Let $\calF, \calG \in \CR^{\wedge, \an}((X, M_X)_{\Prism}, \calO_{\Prism})$. 
On one hand, as in the proof of (1), we deduce from the faithfully flat descent for morphisms that 
\[
\Hom_{\mathrm{Mod}((X,M_X)_\Prism,\calO_\Prism)}(\calF,\calG)\rightarrow \Hom_{\mathrm{Mod}_{A}}(\calF_A,\calG_A)\rightrightarrows\Hom_{\mathrm{Mod}_{A_1}}(\calF_{A_1},\calG_{A_1})
\]
is exact. On the other hand, for any $n \geq 0$, $\calF_{A_n}$ and $\calG_{A_n}$ in $\calD_{\perf}(A_n)$ lie in cohomological degree zero, so we have
\[
\Hom_{\mathrm{Mod}_{A_n}}(\calF_{A_n}, \calG_{A_n}) = \Hom_{\calD(A_n)}(\calF_{A_n}, \calG_{A_n}) = \Hom_{\calD(A_n)}(\iota_A(\calF)(h_{A_n}), \iota_A(\calG)(h_{A_n})).
\]
Thus, $\iota_A$ is fully faithful by the equivalence \eqref{eq: compl-faith-flat-descent-wrt-A}.
\end{proof}

\section{Prismatic-crystalline comparison for associated sheaves} \label{sec: prismatic-crystalline comparison}

We study the comparison between log prismatic cohomology and crystalline cohomology with coefficients given by $p$-adically completed crystals satisfying the association condition (given in Definition~\ref{defn: associated-crystals}). Our argument is based on \cite[\S~6]{koshikawa} where the constant coefficient case is proved (cf.~\cite[Thm.~6.3]{koshikawa}). The main result in this section (Theorem~\ref{thm: pris-crys-comparison-associated-sheaves}) will be used in \S~\ref{sec: pris-cris-comparison-crystals-semist} for the semistable case. We refer the reader to Appendix~\ref{sec: cryst-sites-variants} for the definitions and properties of crystalline and $\delta_{\log}$-crystalline sites; in this section, we use the crystalline site used in \cite{koshikawa} and $p$-adically completed crystals (Definitions~\ref{def: Koshikawa-crystalline-site} and \ref{defn:p-complete crystals over OCRIS}), and their relation to the ones in \cite{du-moon-shimizu-cris-pushforward} is explained in Proposition~\ref{prop:comparison of crystals in two crystalline sites}.

\begin{set-up} \label{set-up: base-cryst-prism}
Fix a crystalline log prism $(A, (p), M_{\Spf A}) = (A, (p), M_A)^a$ where $(A, (p), M_A)$ is a prelog prism in the sense of \cite[Def.~3.3]{koshikawa}. Assume that $M_A$ is integral, and that either the $\delta_{\log}$-ring $(A, M_A)$ is of rank $1$ or is a log ring. Let $I \subset A$ be a PD-ideal containing $p$. Then the Frobenius $\phi$ on the prelog ring $(A/(p), M_A)$ factors as
\[
(A/p, M_A)\rightarrow (A/I, M_A)\xrightarrow{\psi} (\phi_\ast A/p,\phi_\ast M_A),
\]
where the first map is the natural quotient map, and $(\phi_\ast A/p, \phi_\ast M_A)$ denotes $(A/p, M_A)$ regarded as a prelog ring over itself by $\phi$.

Let $(Y, M_Y)$ be a smooth log scheme over $(A/I, M_A)$ of Cartier type as in \cite[Def.~4.8]{Kato-log}. 
Set $(Y^{(1)}, M_{Y^{(1)}})\coloneqq (Y, M_Y)\times_{(\Spec A/I,M_A^a),\psi}(\Spec \phi_\ast A/p,(\phi_\ast M_A)^a)$.
\end{set-up}

Recall from Construction~\ref{const: delta-cris to affine cris} that we have morphisms of topoi
\[
u_{Y}^\delta\colon \Sh(((Y,M_Y)/(A,M_A))_{\dCRIS})\xrightarrow{\nu_\CRIS} \Sh(((Y,M_Y)/(A,M_A))_\CRIS)\xrightarrow{u_Y} \Sh(Y_\et).
\]

\begin{prop}[{cf.~\cite[Prop.~6.8]{koshikawa}}] \label{prop:comparison-delta-log-cris-log-cris}
Let $\calE_\CRIS$ be a $p$-adically completed crystal of $\calO_\CRIS$-modules on $((Y,M_Y)/(A,M_A))_{\CRIS}$, and write $\calE_{\dCRIS}\coloneqq \nu_{\CRIS}^\ast\calE_{\CRIS}$ for the associated sheaf of $\calO_\dCRIS$-modules on $((Y,M_Y)/(A,M_A))_{\dCRIS}$. Then the induced morphism
\[
Ru_{Y,\ast}\calE_{\CRIS} \rightarrow Ru_{Y,\ast}^\delta\calE_{\dCRIS}.
\]
is an isomorphism in $\mathcal{D}(Y_{\et}, A)$.
\end{prop}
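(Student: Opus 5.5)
The claim is étale local on $Y$, so I will reduce to the affine case and compare both sides with \v{C}ech--Alexander complexes, as in \cite[Prop.~6.8]{koshikawa}. Both $Ru_{Y,\ast}$ and $Ru^\delta_{Y,\ast}$ commute with restriction to étale $Y'\rightarrow Y$, and the map is compatible with these restrictions. It therefore suffices to show that, for $Y=\Spec R$ affine with a chart, the map
\[
R\Gamma(((Y,M_Y)/(A,M_A))_\CRIS,\calE_\CRIS)\rightarrow R\Gamma(((Y,M_Y)/(A,M_A))_\dCRIS,\calE_\dCRIS)
\]
is a quasi-isomorphism, functorially in such $Y$.

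\emph{Step 1: a common embedding.} Choose a surjection onto $R$ from a $p$-completed polynomial (log) $A$-algebra $P$ with free monoid chart. Give $P$ a $\delta_{\log}$-structure over $(A,M_A)$ by letting the Frobenius send variables to their $p$-th powers and the log coordinates to $\delta_{\log}=0$. Take the exactification of the resulting closed immersion and then its $p$-completed log PD-envelope $D$. By \cite[\S~6]{koshikawa}, $D$ carries a $\delta_\log$-structure and so defines an object of both sites. Its image in the crystalline site weakly covers the final object, and the same holds in the $\dCRIS$ site: any $\delta$-PD-thickening receives a map from $D$ locally, because $P$ is free as a $\delta_{\log}$-ring over $A$. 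The same construction applied to $P^{\otimes(n+1)}$ gives the \v{C}ech nerve $D^\bullet$ in both sites. The key point, established in \cite[Prop.~6.8]{koshikawa}, is that the $\delta$-envelope of $P^{\otimes(n+1)}$ computed in the $\dCRIS$ site agrees with the one computed in the $\CRIS$ site, since the $\delta$-structure extends uniquely to the PD-envelope. I will need to check that this holds with the $\delta_\log$-free generators, or else replace $P$ by the free $\delta_{\log}$-algebra and verify that the resulting envelope computes the same cohomology. This comparison of the two \v{C}ech nerves is the step I expect to be the main obstacle.

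\emph{Step 2: \v{C}ech--Alexander computation.} Since $\calE_\CRIS$ is a $p$-adically completed crystal, $H^i$ vanishes for $i>0$ on each affine object $D^n$. This uses the completed-crystal analogue of Lemma~\ref{lem:CA method}, together with Corollary~\ref{cor:repleteness}-type repleteness to pass to $\varprojlim$ over the mod-$p^m$ reductions. Hence both cohomologies are computed by $\Tot(\calE_\CRIS(D^\bullet))$. Since $\nu_\CRIS^\ast$ evaluates to the same modules on objects coming from $D^\bullet$, namely $\calE_\dCRIS(D^n)=\calE_\CRIS(D^n)$ by the crystal property, the two complexes coincide and the map is the identity on them.

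Finally, these identifications are functorial in étale maps, because the embeddings can be chosen compatibly after refinement. The local quasi-isomorphisms therefore glue, giving the isomorphism $Ru_{Y,\ast}\calE_\CRIS\cong Ru^\delta_{Y,\ast}\calE_\dCRIS$ in $\mathcal{D}(Y_\et,A)$.
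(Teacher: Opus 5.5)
Your Step~1 contains a false claim, and the rest of the argument depends on it. The polynomial algebra $P$ with Frobenius lift $x_i\mapsto x_i^p$ (that is, $\delta_{\log}(e_i)=0$, hence $\delta(x_i)=0$) is \emph{not} free as a $\delta_{\log}$-ring over $(A,M_A)$. A $\delta$-map out of $P$ amounts to choosing lifts $y_i$ with $\phi(y_i)=y_i^p$, and such lifts generally do not exist, even \'etale locally.

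Here is an example. Take $A=\Z_p$ and $Y=\Spec\F_p[x]$, and consider the object of the $\dCRIS$ site given by the $p$-completed free $\delta$-ring $B=\Z_p\{x\}^\wedge_p$ with PD-ideal $(p)$. A map from your $D=\Z_p\langle x\rangle$ after a $p$-completely \'etale cover $B\to B'$ needs $y=x+pz\in B'$ with $\delta(y)=0$. Since $\delta(y)\equiv\delta(x)+z^p \pmod p$, this would make $\delta(x)$ a $p$-th power in $B'/p$. That contradicts $d(\delta x)\neq 0$ in $\Omega^1_{(B'/p)/\F_p}$, because $B/p=\F_p[x,\delta x,\delta^2x,\dots]$ and $B'/p\neq 0$ is \'etale over it.

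So $D$ does not cover the final object of the $\dCRIS$ topos, and the \v{C}ech--Alexander method does not apply to $D^\bullet$ on the $\delta$-side. The conclusion of Step~2, that the two complexes coincide and the map is the identity, therefore has no basis. The failure is one of weak finality, not of the shape of the self-products, so the unique extension of $\delta$-structures to PD-envelopes does not rescue it.

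What is missing is exactly the comparison you defer. The paper uses two different envelopes:
\begin{enumerate}
\item On the crystalline side, $C_0^\bullet$: the log PD-envelopes of the \v{C}ech nerve of $B_0=A\langle\N^T\rangle$, which needs no $\delta$-structure.
\item On the $\delta$-side, the $p$-completed free $\delta_{\log}$-algebra $B=(A\{\N^T\}^\delta_{\log})^\wedge_p$ and $C=(B\otimes_{B_0}C_0)^\wedge_p$. This $C$ is the PD-envelope of $B$ and is a $\delta$-ring by \cite[Cor.~2.39]{bhatt-scholze-prismaticcohom}, with \v{C}ech nerve $C^\bullet\cong(B^\bullet\otimes_{B_0^\bullet}C_0^\bullet)^\wedge_p$.
\end{enumerate}
Two inputs then give the comparison, and your proposal supplies neither. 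First, $B$ is $p$-completely free (a polynomial algebra) over $B_0$, so $B_0^\bullet\to B^\bullet$ is a cosimplicial homotopy equivalence as in \cite[Ex.~2.16]{bhatt-deJong-crys-cohom-deRham-cohom}. Second, the crystal property gives $\calE_{\CRIS}(C^\bullet)\cong\calE_{\CRIS}(C_0^\bullet)\widehat{\otimes}_{B_0^\bullet}B^\bullet$, so $\calE_{\CRIS}(C_0^\bullet)\to\calE_{\CRIS}(C^\bullet)$ is again a cosimplicial homotopy equivalence.

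Alternatively, since $B$ is a polynomial $A$-algebra, $C$ is also weakly final in the $\CRIS$ site. You could then use $C$ on both sides, after checking that its self-products in the $\CRIS$ site agree with those in the $\dCRIS$ site. Either way, this verification is the real content of the proposition, and your proposal leaves it undone.
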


\begin{proof}
We follow a similar argument as in the proof of \cite[Prop.~6.8]{koshikawa}. Working \'etale locally on $Y$, we may assume that we have a smooth chart $P \rightarrow (Y, M_Y)$ as in \cite[Def.~A.11]{koshikawa} and $Y$ is affine, say $Y = \Spf R$. Then we have an exact surjection $(\widetilde{R}, \widetilde{P}) \rightarrow (R, P)$ from a smooth lift $(\widetilde{R}, \widetilde{P})$ over $(A/p, M_A)$. Choose a surjection
\[
(B_0, M_{B_0}) \coloneqq (A\langle \mathbf{N}^T\rangle, M_A\oplus \mathbf{N}^T) \twoheadrightarrow (\widetilde{R}, \widetilde{P})
\]
for some finite set $T$ whose kernel is Zariski locally generated by $p$ and a $p$-completely regular sequence relative to $A$. Write $(B_0^{\bullet}, M_{B_0}^{\bullet})$ for the \v{C}ech nerve of $(A, M_A) \rightarrow (B_0, M_{B_0})$, and $(C_0^{\bullet}, M_{C_0}^{\bullet})$ for the $p$-completed log PD-envelopes of surjections $(B_0^{\bullet}, M_{B_0}^{\bullet}) \rightarrow (R, P)$ as $p$-adic log-affine prelog rings. Note that $C_0^{\bullet}$ are $p$-completely flat over $A$ by \cite[Lem.~2.43]{bhatt-scholze-prismaticcohom}. Let $J$ be the kernel of $C_0 \rightarrow R$. Since $(C_0, J, M_{C_0})^a$ is weakly final in $((Y,M_Y)/(A,M_A))_{\CRIS}$, $R\Gamma(((Y,M_Y)/(A,M_A))_{\CRIS},\calE_{\CRIS})$ is computed by the totalization of the cosimplicial $A$-complex $\mathcal{E}_{\CRIS}(C_0^{\bullet})$ (cf.~Lemmas~\ref{lem: affine vanishing on crystalline site} and \ref{lem:CA method}). 

On the other hand, to compute $Ru_{Y,\ast}^\delta\calE_{\dCRIS}$, consider $B\coloneqq (A\{\mathbf{N}^T\}^{\delta}_{\log})^{\wedge}_p$ where $\{\mathbf{N}^T\}_{\log}^{\delta}$ denotes the free $\delta_{\log}$-ring with variables in $T$ as in \cite[Notation~2.7, Rem.~2.8]{koshikawa}, and write $B^{\bullet}$ for the $p$-completed \v{C}ech nerve of $(A, M_A) \rightarrow (B, M_B)$. Let $C = (B\otimes_{B_0}C_0)^{\wedge}_p$. Since $B_0 \rightarrow B$ is $p$-completely flat, $(JC)^{\wedge}_p$ is a PD-ideal of $C$, and $C$ agrees with the $p$-completed log PD-envelope of $(B, M_B)$ with respect to an ideal Zariski locally generated by $p$ and a $p$-completely regular sequence relative to $A$. In particular, $C$ is a $\delta$-ring over $C_0$ by \cite[Cor.~2.39]{bhatt-scholze-prismaticcohom} and Zariski descent. Note that $(C, (JC)^{\wedge}_p, M_C)^a$ is a weakly final object in $((Y,M_Y)/(A,M_A))_{\dCRIS}$, and denote its \v{C}ech nerve by $C^{\bullet}$. Again by \cite[Cor.~2.39]{bhatt-scholze-prismaticcohom} and Zariski descent, each $(B^{n}\otimes_{B_0^{n}}C_0^{n})^{\wedge}_p$ is a $\delta$-ring over $C_0^n$.  we have
\[
C^{\bullet} \cong (B^{\bullet}\otimes_{B_0^{\bullet}}C_0^{\bullet})^{\wedge}_p.
\]

Now, $Ru_{Y,\ast}^\delta\calE_{\dCRIS}$ is computed by the totalization of the cosimplicial $A$-complex $\mathcal{E}_{\CRIS}(C^{\bullet})$. Note that $B$ is $p$-completely free over $B_0$ by construction, so the map $B_0^{\bullet} \rightarrow B^{\bullet}$ is a cosimplicial homotopy equivalence by a similar construction as in \cite[Ex.~2.16]{bhatt-deJong-crys-cohom-deRham-cohom}. Since $\mathcal{E}_{\CRIS}$ is a $p$-adically completed crystal, we have
\[
\mathcal{E}_{\CRIS}(C^{\bullet}) \cong \mathcal{E}_{\CRIS}(C_0^{\bullet})\widehat{\otimes}_{C_0^{\bullet}} C^{\bullet} \cong \mathcal{E}_{\CRIS}(C_0^{\bullet})\widehat{\otimes}_{B_0^{\bullet}} B^{\bullet}.
\]
Thus, the map $\mathcal{E}_{\CRIS}(C_0^{\bullet}) \rightarrow \mathcal{E}_{\CRIS}(C^{\bullet})$ is a cosimplicial homotopy equivalence.
\end{proof}

We now compare the log prismatic cohomology and $\delta$-log crystalline cohomology. Recall from Construction~\ref{const: f^J_Prism} that we have a cocontinuous functor
\[
((Y,M_Y)/(A,M_A))_{\dCRIS}\rightarrow ((Y^{(1)},M_Y^{(1)})/(\phi_\ast A,(p), \phi_\ast M_A)^a)_{\Prism,\et},
\]
which induces a morphism of ringed topoi
\[
\Sh(((Y,M_Y)/(A,M_A))_{\dCRIS}, \phi_{\ast}\calO_{\dCRIS}) \xrightarrow{\nu_\Prism} \Sh(((Y^{(1)},M_Y^{(1)})/(\phi_\ast A, (p), \phi_\ast M_A)^a)_{\Prism,\et}, \calO_{\Prism}).
\]
By construction, we have $\nu_\Prism^{-1}\calO_{\Prism} = \phi_{\ast} \calO_{\dCRIS}$.

\begin{defn} \label{defn: associated-crystals}
Let $\calE_{\dCRIS}$ be a $p$-adically completed crystal of $\calO_{\dCRIS}$-modules on $(Y,M_Y)/(A,M_A))_{\dCRIS}$, and let $\calE^{(1)}_{\Prism}$ be a completed prismatic crystal of $\calO_{\Prism}$-modules on $((Y^{(1)},M_Y^{(1)})/(\phi_{\ast} A, (p), \phi_{\ast} M_A)^a)_{\Prism, \et}$.
We say that $\calE_{\dCRIS}$ and $\calE^{(1)}_\Prism$ are \emph{associated} if there exists an isomorphism
\[
c\colon \nu_\Prism^{-1}\calE^{(1)}_\Prism \xrightarrow{\cong} \phi_\ast \calE_{\dCRIS}
\]    
of $\phi_{\ast} \calO_{\dCRIS}$-modules.
\end{defn}

Let $\nu^{(1)}$ be the natural morphism of topoi
\[
\nu^{(1)} \colon \Sh(((Y^{(1)},M_{Y}^{(1)})/(\phi_{\ast} A, (p), \phi_{\ast} M_A)^a)_{\Prism}) \rightarrow \Sh((Y^{(1)})_{\et})
\]
as given in \cite[Rem.~4.5]{koshikawa}.

\begin{prop}[{cf.~\cite[\S~6.2]{koshikawa}}] \label{prop:comparison-prismatic-delta-log-cris}
Keep the notation as above, and assume that $\calE_{\dCRIS}$ and $\calE^{(1)}_\Prism$ are associated by an isomorphism $c$. Then the map
\[
R\nu^{(1)}_{*}\calE^{(1)}_{\Prism} \rightarrow \phi_* R(\nu^{(1)}\circ \nu_\Prism)_{*}\calE_{\dCRIS}
\]
induced by $c$ is an isomorphism in $\mathcal{D}((Y^{(1)})_{\et}, \phi_* A)$.
\end{prop}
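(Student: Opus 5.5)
The approach follows Koshikawa's constant-coefficient argument \cite[\S~6.2]{koshikawa}: both sides are computed \'etale locally by \v{C}ech--Alexander complexes built from the same free $\delta_\log$-objects. The key point is that $\nu_\Prism$ matches weakly final objects on the two sites.

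The question is \'etale local on $Y^{(1)}$, equivalently on $Y$, since $Y^{(1)}\rightarrow Y$ is a universal homeomorphism and the \'etale sites agree. So we may assume $Y=\Spec R$ is affine with a smooth chart $P\rightarrow M_Y$ as in the proof of Proposition~\ref{prop:comparison-delta-log-cris-log-cris}. We then take the same surjection $(B_0,M_{B_0})=(A\langle\N^T\rangle,M_A\oplus\N^T)\twoheadrightarrow(\widetilde R,\widetilde P)$ and the free $\delta_\log$-ring $B=(A\{\N^T\}^\delta_\log)^\wedge_p$ with its $p$-completed \v{C}ech nerve $B^\bullet$. Let $C^\bullet$ be the $p$-completed log PD-envelopes as in that proof. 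They are $\delta$-rings by \cite[Cor.~2.39]{bhatt-scholze-prismaticcohom}, and $C$ is weakly final in the $\delta$-log crystalline site. Consequently $R\Gamma(((Y,M_Y)/(A,M_A))_{\dCRIS},\calE_{\dCRIS})\cong\Tot\,\calE_{\dCRIS}(C^\bullet)$ by Lemmas~\ref{lem: affine vanishing on crystalline site} and \ref{lem:CA method}.

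On the prismatic side, \cite[\S~6.2]{koshikawa} shows that the prismatic envelope of $(B^\bullet,M_{B^\bullet})$ over $(\phi_\ast A,(p),\phi_\ast M_A)^a$ with respect to the ideal defining $Y^{(1)}$ is identified with $\phi_\ast C^\bullet$. This is the crystalline ``Frobenius twist = PD-envelope'' phenomenon (\cite[Lem.~2.36, Cor.~2.39]{bhatt-scholze-prismaticcohom}, log version in \cite{koshikawa}), and it uses the Cartier type assumption on $Y$. These envelopes give a weakly final cover of the relative prismatic site (\'etale-localized) together with its \v{C}ech nerve. Moreover, $\nu_\Prism$ sends the $\delta$-crystalline object $C^n$ to the prismatic object $\phi_\ast C^n$. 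So the first step is to record this identification $\nu_\Prism(C^\bullet)\cong(\phi_\ast C^\bullet)$ as cosimplicial objects in the prismatic site, compatibly with the structure maps. Applying Lemma~\ref{lem:CA method} on the prismatic side, with vanishing of higher cohomology of completed crystals on affine objects as in the proof of Proposition~\ref{prop: relations-prism-crystals}(2)(c), gives $R\Gamma(\text{prismatic},\calE^{(1)}_\Prism)\cong\Tot\,\calE^{(1)}_\Prism(\phi_\ast C^\bullet)$.

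Next, we evaluate the association isomorphism $c$ on $C^n$. Since $\nu_\Prism^{-1}\calE^{(1)}_\Prism(C^n)$ is computed by the value on $\nu_\Prism(C^n)$, we have to argue that the cocontinuous functor's pullback evaluates honestly at these objects. Granting this, $c$ gives isomorphisms $\calE^{(1)}_\Prism(\phi_\ast C^n)\cong\phi_\ast\calE_{\dCRIS}(C^n)$ compatible with cosimplicial maps. Hence the map of totalizations is an isomorphism, and since everything is functorial in \'etale $Y'\rightarrow Y$, the map of derived pushforwards to $(Y^{(1)})_\et$ is an isomorphism.

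The main obstacle is this last evaluation point together with the envelope identification. For the evaluation, I would use the explicit description of $\nu_\Prism^{-1}$ via Construction~\ref{const: f^J_Prism}, showing that the object $\phi_\ast C^n$ is initial among prismatic objects receiving $C^n$. For the envelope identification, I would import Koshikawa's local computation essentially verbatim; the only additional check is that completed tensor products commute with the identification, which holds by the crystal property and Remark~\ref{rem:completed crystal as limit}.
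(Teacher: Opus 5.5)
Your proposal has a genuine gap at its central step. The prismatic side is not computed by $\phi_\ast C^\bullet$, and \cite[\S~6.2]{koshikawa} gives no such identification.

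\textbf{What actually computes the prismatic side.} The weakly final objects there are the prismatic envelopes $D^\bullet$ of $(\phi_A^\ast\widetilde{J}^\bullet(B^\bullet)')^\wedge_p\subset(\phi_A^\ast(B^\bullet)')^\wedge_p$, with $\phi_A^\ast(-)=A\otimes_{\phi,A}(-)$. These are envelopes of the Frobenius \emph{twist} of your presentation. They are related to $\phi_\ast C^\bullet$ only through the relative Frobenius $1\otimes\phi\colon\phi_A^\ast(B^\bullet)'\to\phi_\ast(B^\bullet)'$. This extends to $D^\bullet\to\phi_\ast C^\bullet$ by \cite[Cor.~2.39]{bhatt-scholze-prismaticcohom}, but it is not an isomorphism.

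\textbf{Why $\phi_\ast C$ cannot play the role you assign it.}
\begin{enumerate}
\item Take $A=\Z_p$ and $Y=\Spec\F_p[x]$ with trivial log structures. Then $C^0=B=\Z_p\{t\}^\wedge_p$ and $D^0=B$, and $D^0\to\phi_\ast C^0$ is just $\phi_B$, so the map is not an isomorphism.
\item In the same example $\phi_\ast C$ is not weakly final. Its structure map sends $x$ to $t^p$ modulo $p$, so it admits no map to $(\Z_p\langle x\rangle,(p))$ with $\delta(x)=0$ and identity structure map, because $x$ is not a $p$-th power in $\F_p[x]$.
\item Without log structures, $D\to\phi_\ast C$ is at least $p$-completely faithfully flat, being a base change of the relative Frobenius of a polynomial algebra. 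So there $\phi_\ast C$ does cover the final object. You would still have to compute its prismatic \v{C}ech nerve, which you do not do.
\item In the log setting of the proposition even covering fails. The underlying ring map contains the relative Frobenius of the monoid algebra of $M_A\to M'_B$, for example $\Z_p[x,y,s]/(xy-s^p)\to\Z_p[x,y]$ with $x\mapsto x^p$, $y\mapsto y^p$, $s\mapsto xy$, and this is not flat.
\item Concretely, take $A=W(k)\langle s\rangle$ with $M_A=\N$, $1\mapsto s$, and $I=(p)$. Let $Y=\Spec k[x,y]$ with log structure $\N^2$ over $M_A$ via $s=xy$. Then the object $E=W(k)\langle x,y,s\rangle/(xy-s^p)$ of the prismatic site of $Y^{(1)}$ has no flat cover $E\to E'$ receiving a map from $\phi_\ast C$.
\begin{itemize}
\item Such a map gives $\xi,\eta\in E'/p$ with $\xi^p=x$, $\eta^p=y$ and $\xi\eta\in s\cdot(\text{unit})$.
\item Localize at a point over the origin to get a flat local extension $S$ of $(E/p)_{\mathfrak{m}}$.
\item In $S$ one finds $\eta\in(sS:_Sx)=(s,y)S\subset\eta\mathfrak{m}_S$. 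This forces $\eta=0$ and hence $y=0$, a contradiction.
\end{itemize}
\end{enumerate}
So Lemma~\ref{lem:CA method} cannot be applied to $\phi_\ast C^\bullet$.

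\textbf{The missing idea.} What you are missing is the actual content of the proposition, as in the paper's proof. Reduce modulo $p^m$, taking $R\varprojlim_m$ at the end. Then show that
\[
\calE^{(1)}_{\Prism,m}(D^\bullet)\to\calE^{(1)}_{\Prism,m}(D^\bullet)\otimes_{D^\bullet}\phi_\ast C^\bullet\cong\calE^{(1)}_{\Prism,m}(\phi_\ast C^\bullet)
\]
is a quasi-isomorphism on totalizations, even though it is not a termwise isomorphism. This is a cosimplicial homotopy-equivalence statement for the relative Frobenius, and it splits into two parts:
\begin{itemize}
\item the free polynomial part, handled by \cite[Lem.~5.4]{bhatt-scholze-prismaticcohom};
\item the cosimplicial monoid algebra $A\otimes_{\Z_p[M_A]}\Z_p[M'_B\oplus G^\bullet]$, handled by \cite[Prop.~B.3]{koshikawa}.
\end{itemize}
The Cartier type hypothesis is used in the second part, not in any envelope identification. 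The association $c$ then only identifies $\calE^{(1)}_{\Prism,m}(\phi_\ast C^\bullet)$ with $\phi_\ast\calE_{\dCRIS,m}(C^\bullet)$. The evaluation issue you single out is therefore not the real difficulty.
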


\begin{proof}
We follow a similar argument as in \cite[\S~6.2]{koshikawa}. Working \'etale locally, we can assume that $Y = \Spf R$ with a smooth chart $P \rightarrow (Y, M_Y)$ as in \cite[Def.~A.11]{koshikawa}. We first compute the $\delta_{\log}$-crystalline cohomology as in the proof of Proposition~\ref{prop:comparison-delta-log-cris-log-cris}: let $(\widetilde{R}, \widetilde{P}) \rightarrow (R, P)$ be an exact surjection from a smooth lift $(\widetilde{R}, \widetilde{P})$ over $(A/p, M_A)$, and choose a surjection
\[
(B_0, M_{B_0}) \coloneqq (A\langle \mathbf{N}^T\rangle, M_A\oplus \mathbf{N}^T) \twoheadrightarrow (\widetilde{R}, \widetilde{P})
\]
for some finite set $T$ whose kernel is Zariski locally generated by $p$ and a $p$-completely regular sequence relative to $A$. Let $B\coloneqq (A\{\mathbf{N}^T\}^{\delta}_{\log})^{\wedge}_p$ and $B^{\bullet}$ the $p$-completed \v{C}ech nerve of $(A, M_A) \rightarrow (B, M_B)$. Write $(B_0^{\bullet}, M_{B_0}^{\bullet})$ for the \v{C}ech nerve of $(A, M_A) \rightarrow (B_0, M_{B_0})$, and $(C_0^{\bullet}, M_{C_0}^{\bullet})$ for the $p$-completed log PD envelopes of surjections $(B_0^{\bullet}, M_{B_0}^{\bullet}) \rightarrow (R, P)$. Let $((B_0^{\bullet})', (M_{B_0}^{\bullet})')$ be the $p$-completed exactification of the surjection $(B_0^{\bullet}, M_B^{\bullet}) \rightarrow (\widetilde{R}, \widetilde{P})$. Then $C_0^{\bullet}$ agrees with the $p$-completed PD envelope of $\widetilde{J}^{\bullet} \subset (B_0^{\bullet})'$, where $\widetilde{J}^{\bullet} \coloneqq \Ker((B_0^{\bullet})' \rightarrow \widetilde{R})$. 

Write $B = (A\{\mathbf{N}^T\}^{\delta}_{\log})^{\wedge}_p$ and $B^{\bullet}$ for the $p$-completed \v{C}ech nerve of $(A, M_A) \rightarrow (B, M_B)$. Denoting by $C^{\bullet}$ the $p$-completed PD envelope of $(B^{\bullet})' \coloneqq (B^{\bullet}\otimes_{B_0^{\bullet}}(B_0^{\bullet})')^{\wedge}_p$ with respect to $(\widetilde{J}^{\bullet}(B^{\bullet})')^{\wedge}_p$, the $\delta_{\log}$-crystalline cohomology is computed by $(C^{\bullet}, M_C^{\bullet})$. Since $B_0^{\bullet} \rightarrow B^{\bullet}$ is $p$-completely flat, we have $C^{\bullet} = (B^{\bullet}\otimes_{B_0^{\bullet}}C_0^{\bullet})^{\wedge}_p$. 

For any $A$-algebra $E$, we write $\phi_A^* E\coloneqq A\otimes_{\phi, A} E$ regarded as a $\phi_* A$-algebra. To compute the log prismatic cohomology, consider the induced surjection
\[
((\phi_A^*B_0)^{\wedge}_p, M_B^{(1)}) \rightarrow (\widetilde{R}^{(1)}, \widetilde{P}^{(1)})
\]
over $(\phi_* A, \phi_* M_A)$. Then the prismatic envelope $D^{\bullet}$ over $(\phi_*A, (p))$ of $(\phi_A^*\widetilde{J}^{\bullet}(B^{\bullet})')^{\wedge}_p \subset (\phi_A^*(B^{\bullet})')^{\wedge}_p$ computes the log prismatic cohomology. Consider the relative Frobenius
\[
1\otimes \phi_{(B^{\bullet})'}\colon \phi_A^*(B^{\bullet})' \rightarrow \phi_*(B^{\bullet})'.
\]
By \cite[Cor.~2.39]{bhatt-scholze-prismaticcohom}, this extends to $D^{\bullet} \rightarrow \phi_* C^{\bullet}$. By construction, this is compatible with the morphism of ringed topoi. 

For each $m\geq 1$, let $\mathcal{E}^{(1)}_{\Prism, m}$ denote the mod $p^m$ reduction of the completed prismatic crystal $\mathcal{E}^{(1)}_{\Prism}$ as in Remark~\ref{rem:completed crystal as limit}(1) (recall that our base log prism is crystalline); we have $\mathcal{E}^{(1)}_{\Prism} \cong \varprojlim_m \mathcal{E}^{(1)}_{\Prism, m} \cong R\varprojlim_m \mathcal{E}^{(1)}_{\Prism, m}$ by \textit{loc.~cit.} and Corollary~\ref{cor:repleteness}. Similarly, we deduce from the fact that $\calE_{\dCRIS}$ is a $p$-adically completed crystal that the presheaf $\calE_{\dCRIS, m}$ given by $\calE_{\dCRIS, m}(B_1, J_1, M_{B_1})\coloneqq \calE_{\dCRIS}(B_1, J_1, M_{B_1})/p^m \calE_{\dCRIS}(B_1, J_1, M_{B_1})$ is a sheaf and that $\calE_{\dCRIS} \cong \varprojlim_m \calE_{\dCRIS, m} \cong R\varprojlim_m \calE_{\dCRIS, m}$ (imitate the proof of Lemma~\ref{lem: affine vanishing on crystalline site}). The map $c$ yields an isomorphism $c_m\colon \nu_\Prism^{-1}\calE^{(1)}_{\Prism,m} \xrightarrow{\cong} \phi_\ast \calE_{\dCRIS,m}$.

It suffices to show that for each $m \geq 1$, the map 
\[
\mathcal{E}^{(1)}_{\Prism, m}(D^{\bullet}) \rightarrow \mathcal{E}^{(1)}_{\Prism, m}(\phi_* C^{\bullet}) \cong \mathcal{E}^{(1)}_{\Prism, m}(D^{\bullet})\otimes_{D^{\bullet}} \phi_* C^{\bullet} 
\]
induced by $c_m$ gives a quasi-isomorphism on the associated totalizations; in fact, the proposition is deduced from this by taking $R\varprojlim_m$ and using \cite[0D6K]{stacks-project}.
Finally, the claimed quasi-isomorphism follows from \cite[Lem.~5.4]{bhatt-scholze-prismaticcohom} and \cite[Prop.~B.3]{koshikawa} together with the description of $D^{\bullet} \rightarrow \phi_* C^{\bullet}$ in \cite[\S~6.2, last para.]{koshikawa}: recall from the construction that $B_0'$ is $p$-completely free over the $p$-completion of $A\otimes_{\mathbf{Z}_p[M_A]} \mathbf{Z}_p[M_B']$. Note that $M_A \rightarrow M_B'$ is of Cartier type by assumption. Write $G \coloneqq (M_B')^{\gp} / M_A^{\gp}$, which is a free abelian group. Then $(M_B^n)'$ is isomorphic to $M_B'\oplus G^n$ for each $n$, and we have the cosimplicial $A$-algebra
\begin{equation} \label{eq: log-Cartier-cosimplicial-algebra}
A\otimes_{\mathbf{Z}_p[M_A]} \mathbf{Z}_p[M_B'] \rightarrow A\otimes_{\mathbf{Z}_p[M_A]} \mathbf{Z}_p[M_B'\oplus G] \rightarrow A\otimes_{\mathbf{Z}_p[M_A]} \mathbf{Z}_p[M_B'\oplus G^2] \rightarrow \cdots    
\end{equation}
as in \cite[Appx.~B]{koshikawa}. 
So the mod $p^m$ reduction of $D^{\bullet} \rightarrow \phi_* C^{\bullet}$ decomposes into two maps: the relative Frobenius on the free polynomial algebra part induced by the Frobenii on $A$ and $B$, and the relative Frobenius on \eqref{eq: log-Cartier-cosimplicial-algebra} as in \cite[Appx.~B.1]{koshikawa}. We obtain the quasi-isomorphism by applying \cite[Lem.~5.4]{bhatt-scholze-prismaticcohom} to the free part\footnote{In the statement of \cite[Lem.~5.4]{bhatt-scholze-prismaticcohom}, $k$ is assumed to be of characteristic $p$. However, given the relative Frobenius, the same argument works in our set-up.} and \cite[Prop.~B.3]{koshikawa} to the part on \eqref{eq: log-Cartier-cosimplicial-algebra}.
\end{proof}

\begin{thm}[cf. {\cite[Thm.~6.3]{koshikawa}}] \label{thm: pris-crys-comparison-associated-sheaves}
Assume Set-up~\ref{set-up: base-cryst-prism}, and let $(Y, M_Y)$ be a smooth log scheme over $(A/I, M_A)$ of Cartier type. Let $\calE^{(1)}_{\Prism}$ be a completed prismatic crystal of $\calO_{\Prism}$-modules on $((Y^{(1)},M_Y^{(1)})/(\phi_\ast A, (p), \phi_\ast M_A)^a)_{\Prism}$, and let $\calE_{\CRIS}$ be a $p$-adically completed crystal of $\calO_{\CRIS}$-modules on $((Y,M_Y)/(A,M_A))_{\CRIS}$.  Suppose $\calE^{(1)}_{\Prism}$ and $\nu_{\CRIS}^\ast\calE_{\CRIS}$ are associated by $c$. Then $c$ induces an isomorphism
\[
R\nu^{(1)}_{*}\calE^{(1)}_{\Prism} \cong  \phi_* Ru_{Y,\ast}\calE_{\CRIS}
\]
in $\mathcal{D}(Y_{\et}, \phi_* A)$ (under the natural identification $\mathcal{D}(Y_{\et}, \phi_* A) \cong \mathcal{D}(Y^{(1)}_{\et}, \phi_* A))$.
\end{thm}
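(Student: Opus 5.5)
The plan is to obtain the isomorphism as a composite of the two comparisons already established, Proposition~\ref{prop:comparison-prismatic-delta-log-cris} and Proposition~\ref{prop:comparison-delta-log-cris-log-cris}. Set $\calE_{\dCRIS}\coloneqq \nu_\CRIS^\ast\calE_\CRIS$. By assumption, $\calE^{(1)}_\Prism$ and $\calE_{\dCRIS}$ are associated via $c$ in the sense of Definition~\ref{defn: associated-crystals}. Moreover, $\calE_{\dCRIS}$ is a $p$-adically completed crystal on the $\delta_{\log}$-crystalline site, since the pullback of a $p$-adically completed crystal along $\nu_\CRIS$ is again one; this is immediate from the definitions in the appendix, because $\nu_\CRIS$ is induced by the forgetful functor on objects. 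So Proposition~\ref{prop:comparison-prismatic-delta-log-cris} applies and gives an isomorphism
\[
R\nu^{(1)}_{*}\calE^{(1)}_{\Prism} \xrightarrow{\cong} \phi_* R(\nu^{(1)}\circ \nu_\Prism)_{*}\calE_{\dCRIS}.
\]

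Next, I need to identify $\nu^{(1)}\circ\nu_\Prism$ with $u^\delta_Y$. Both are morphisms of topoi to the étale topos of $Y^{(1)}$, and this topos is identified with $Y_\et$ because $Y^{(1)}\rightarrow Y$ is a universal homeomorphism; the base change along $\psi$ is a nilpotent thickening up to Frobenius. Under this identification, the composite $\nu^{(1)}\circ \nu_\Prism$ sends a $\delta_{\log}$-crystalline object over $U\rightarrow Y$ to the étale open $U^{(1)}$. So it agrees with $u^\delta_Y=u_Y\circ\nu_\CRIS$. I would check this on the level of the underlying cocontinuous functors of Construction~\ref{const: f^J_Prism}, which should be essentially a tautology. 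The result is
\[
\phi_\ast R(\nu^{(1)}\circ\nu_\Prism)_\ast\calE_{\dCRIS}\cong \phi_\ast Ru^\delta_{Y,\ast}\calE_{\dCRIS}.
\]

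Finally, Proposition~\ref{prop:comparison-delta-log-cris-log-cris} gives that the adjunction map $Ru_{Y,\ast}\calE_\CRIS\rightarrow Ru_{Y,\ast}R\nu_{\CRIS,\ast}\nu_\CRIS^\ast\calE_\CRIS = Ru^\delta_{Y,\ast}\calE_{\dCRIS}$ is an isomorphism in $\mathcal{D}(Y_\et,A)$. Applying $\phi_\ast$, which is just restriction of scalars and hence exact, and composing the inverse with the first isomorphism yields the claimed isomorphism $R\nu^{(1)}_{*}\calE^{(1)}_{\Prism} \cong \phi_* Ru_{Y,\ast}\calE_{\CRIS}$. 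It is induced by $c$, since only the first step depends on $c$. Linearity over $\phi_\ast A$ follows because each map in the composite is $A$-linear for the appropriate structure.

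The main obstacle is the bookkeeping in the middle step. One must check that the identification of étale topoi and the equality $u^\delta_Y\simeq\nu^{(1)}\circ\nu_\Prism$ hold as morphisms of topoi, so that the derived pushforwards match, and that the $\phi_\ast A$-module structures line up. I would handle this by writing out the functors on objects and invoking uniqueness of the induced morphisms of topoi. The substantive inputs, namely the Čech–Alexander computations and the Cartier isomorphism argument, are already contained in the two propositions.
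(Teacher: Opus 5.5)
Your proposal is correct and is essentially the paper's own proof. The paper simply composes Proposition~\ref{prop:comparison-prismatic-delta-log-cris}, applied to $\calE_{\dCRIS}=\nu_\CRIS^\ast\calE_\CRIS$, with Proposition~\ref{prop:comparison-delta-log-cris-log-cris}; it leaves implicit the identification $\nu^{(1)}\circ\nu_\Prism\simeq u^\delta_Y$ under $Y_\et\simeq Y^{(1)}_\et$ and the crystal property of $\nu_\CRIS^\ast\calE_\CRIS$, both of which you spell out.
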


This generalizes \cite[Thm.~6.3]{koshikawa}, which works on the constant coefficient.

\begin{proof}
This follows from Propositions~\ref{prop:comparison-delta-log-cris-log-cris} and \ref{prop:comparison-prismatic-delta-log-cris}.
\end{proof}

\section{Prismatic-crystalline comparison on semistable formal schemes} \label{sec: semistable-case}

This section discusses the prismatic-crystalline comparison theorem on semistable formal schemes and defines the Hyodo--Kato complex. In \S~\ref{sec: semistable-case-notation}, we first set up notations related to semistable formal schemes and explain a general \v{C}ech--Alexander method for computing cohomology (Lemmas~\ref{lem:CA method} and \ref{lem: CA-method-infinity-category}). In \S~\ref{subsec: BK-prism}, we recall Breuil--Kisin log prism and construct \v{C}ech nerve given by a family of Breuil--Kisin log prisms (Construction~\ref{const: Cech-nerve-global-separated}). In \S~\ref{subsec: analy-prism-crys-perf-cx}, we associate to analytic prismatic crystals $\calE_\Prism$ completed prismatic crystals $j_\ast\calE_\Prism$ and prismatic crystals in perfect complexes $i(\calE_\Prism)$ (Propositions~\ref{prop: right-adjoint-semistable-case} and \ref{prop: functor-completed-prism-cryst-perfect-cx}). Based on the results and constructions in \S~\ref{sec: semistable-case-notation}--\S~\ref{subsec: analy-prism-crys-perf-cx}, we introduce and study Breuil--Kisin cohomology and Breuil cohomology (Definition~\ref{def: BK-cohom}), and establish their comparison results and base change properties (Proposition~\ref{prop: BK-cohom}, Theorem~\ref{thm: general-base-change-BK-cohom}) in \S~\ref{subsec: BK-cohom}. In \S~\ref{sec: pris-cris-comparison-crystals-semist}, a comparison between the prismatic and crystalline cohomology (Theorems~\ref{thm: prism-cryst-comparison-semist} and \ref{thm: pris-cris-comparison-over-Breuil S}) is studied, and the Frobenius isogeny property for crystalline and Breuil cohomology (Theorem~\ref{thm:Frobenius-isogeny-property}) is given in \S~\ref{subsec: Frobenius-isogeny-property}. We discuss the Hyodo--Kato theory in \S~\ref{subsec: hyodo-kato-theory} based on \cite{Beilinson-crystalline-period-map-arXiv}. We apply these results in \S~\ref{subsec: hyodo-kato-cohom} to define and study the Hyodo--Kato complex $R\Gamma_{\HK}((X_0, M_{X_0}), \mathcal{E}_{\cris, \Q})$ (Theorem~\ref{thm:Hyodo-Kato-cohomology}).

\subsection{Notation and overview}\label{sec: semistable-case-notation}
Let $k$ be a perfect field of characteristic $p$. Write $W = W(k)$. Let $K$ be a finite totally ramified extension of $K_0\coloneqq W[p^{-1}]$ and $e \coloneqq [K : K_0]$ the ramification index. Fix a uniformizer $\pi$ of $K$ and let $E = E(u)\in W[u]$ denote the monic minimal polynomial of $\pi$. Fix an algebraic closure $\overline{K}$ of $K$, and let $C \coloneqq \overline{K}^{\wedge}$ be its $p$-adic completion. 
Choose a nontrivial compatible system $(\epsilon_n)_{n \geq 0}$ of $p$-power roots of unity in $C$ such that $\epsilon_0 = 1$ and $\epsilon_{n+1}^p = \epsilon_n$, and write $\epsilon \coloneqq (\epsilon_n)_{n \geq 0}$ for the corresponding element in $\calO_C^{\flat}$. Also, pick a compatible system $(\pi_n)_{n \geq 0}$ of $p$-power roots of $\pi$ in $C$ with $\pi_0 = \pi$ and $\pi_{n+1}^p = \pi_n$, and let $\pi^{\flat}\coloneqq (\pi_n)_{n \geq 0}$ be the corresponding element in $\calO_{C}^{\flat}$.
We equip $\Spf \calO_K$ with the canonical log structure $M_{\can}$ (i.e., the one associated to $\calO_K\cap K^\times$). 

A \emph{semistable $p$-adic formal scheme} over $\calO_K$ refers to a $p$-adic formal scheme over $\calO_K$ that is, \'etale locally, \'etale over an affine formal scheme of the form $\Spf R^0$ with 
\[
R^0 = \mathcal{O}_K \langle T_1, \ldots, T_m, T_{m+1}^{\pm 1}, \ldots, T_d^{\pm 1}\rangle / (T_1\cdots T_m - \pi).
\]

In this section, we fix a semistable $p$-adic formal scheme $f\colon X\rightarrow \Spf \calO_K$ and always equip $X$ with the log structure $M_X$ given by the subsheaf associated to the subpresheaf $\calO_{X,\et}\cap (\calO_{X,\et}[p^{-1}])^\times \hookrightarrow \calO_{X,\et}$ (cf.~\cite[1.6(2)]{Cesnavicius-Koshikawa}).

We mainly concern the morphism of topoi
\[
f_\Prism\colon \Sh((X,M_X)_\Prism)\rightarrow \Sh((\Spf\calO_K,M_\can)_\Prism).
\]
Recall from Proposition~\ref{prop:prismatic-higher-direct-image} that for every $\calK\in D((X,M_X)_\Prism,\calO_\Prism)$ and $(A,I,M_{\Spf A})\in (\Spf \calO_K,M_\can)_\Prism^\op$, we have
\[
R\Gamma((\Spf A, I, M_{\Spf A}), Rf_{\Prism,\ast}\calK)\cong R\Gamma(((X_{A/I},M_{X_{A/I}})/(A,M_{\Spf A}))_\Prism,\calK),
\]
where $(X_{A/I},M_{X_{A/I}})\coloneqq (X,M_X)\times_{(\Spf\calO_K,M_\can)}(\Spf (A/I), M_{\Spf (A/I)})$.
Below are the main examples of $(A,I,M_{\Spf A})$.

\begin{eg} \label{eg: examples-log-prisms}
The following are log prisms in $(\Spf\calO_K, M_{\can})_{\Prism}^\op$.
\begin{enumerate}
 \item (Breuil--Kisin log prism). 
 A bounded prelog prism 
\[
(\fkS_{K}\coloneqq W(k)[\![u]\!], (E(u)), \N\rightarrow \fkS_{K}, 1\mapsto u)
\] 
 with $\delta_{\log}(1)=0$ and $\delta(u)=0$ defines a log prism
$(\fkS_{K}, (E), M_{\Spf\fkS_{K}})$ via $\calO_K \cong \fkS_{K}/(E)$. 
 \item (Breuil log prism and its Frobenius twist). Let $S_{K}$ denote the $p$-adically completed (log) PD-envelope of $\fkS_{K}$ with respect to $\Ker(\fkS_{K}\rightarrow \calO_K/p)$ equipped with the Frobenius extending the one on $\fkS_{K}$. Since $\phi(E)/p$ is a unit in $S_{K}$, we have a map $\phi\colon (\fkS_{K},(E))\rightarrow (S_{K},(p))$ of prisms, which defines a log prism
\[
(S_{K}, (p), M_{\Spf S_{K}})\coloneqq (S_{K}, (p), \phi_{\ast}\N)^a.
\]

Write $(\phi_{\ast}S_K, (p), M_{\Spf \phi_{\ast}S_K})$ for the log prism in $(\Spf\calO_K, M_{\can})_{\Prism}^{\op}$ whose underlying prism is $(S_K, (p))$ with log structure given by $\N \rightarrow S_K$, $1 \mapsto u^{p^2}$ and structure map $\calO_K \cong \fkS_K/(E) \stackrel{\phi^2}{\rightarrow} S/(p)$. Note that $\phi\colon (S_{K}, (p), M_{\Spf S_{K}}) \rightarrow (\phi_{\ast}S_K, (p), M_{\Spf \phi_{\ast}S_K})$ is a morphism in $(\Spf\calO_K, M_{\can})_{\Prism}^{\op}$.

 \item A bounded prelog prism 
 \[
 (W(k), (p), \N \rightarrow W(k), 1 \mapsto 0)
 \]
 with $\delta_{\log}(1) = 0$ defines a log prism  $(W(k), (p), M_0)$ via $\calO_K \rightarrow k \cong W(k)/(p)$.
 \item 
 Consider $A_\inf\coloneqq W(\calO_C^\flat)$ together with the surjection $\theta\colon A_\inf\rightarrow\calO_C$.
 A bounded prelog prism
\[
(A_\inf,\Ker(\theta\circ \phi^{-1}), \N\rightarrow A_\inf, 1 \mapsto [\pi^\flat]^p)
\]
defines a log prism $(A_\inf,\Ker(\theta\circ \phi^{-1}), M_{\Spf A_\inf})$\footnote{We work on the strict prismatic site, so we use the log structure associated to $\N\rightarrow A_\inf$ instead of $\calO_C^\flat\smallsetminus \{0\}\rightarrow A_\inf$. Note that $M_{\Spf A_\inf}$ is independent of the choice of $\pi$ or $\pi^\flat$.} via $\calO_K\rightarrow\calO_C\cong A_\inf/\Ker(\theta\circ \phi^{-1})$.

 \item Let $A_{\cris}$ be the $p$-completed (log) PD-envelope of $A_{\inf}$ with respect to $(\Ker(\theta),p)$, which comes with the Frobenius extending that on $A_{\inf}$. The natural inclusion map $A_{\inf} \rightarrow A_{\cris}$ gives a map of prisms $(A_{\inf}, \Ker(\theta\circ \phi^{-1}) \rightarrow (A_{\cris}, (p))$. This defines a log prism
 \[
 (A_{\cris}, (p), M_{\Spf A_{\cris}})\coloneqq (A_{\cris}, (p), \N \rightarrow A_{\inf} \rightarrow A_{\cris})^a.
 \]
\end{enumerate}
We have a following commutative diagram in $(\Spf\calO_K, M_{\can})_{\Prism}^{\op}$:
\begin{equation} \label{eq: diag-log-prism-site-O_K}
\xymatrix{
(\fkS_{K}, (E), M_{\Spf\fkS_{K}}) \ar[r]^{\phi} \ar[d]^{u\mapsto [\pi^{\flat}]^p} & (S_{K}, (p), M_{\Spf S_{K}}) \ar[d]^{u \mapsto [\pi^{\flat}]} \\
(A_\inf,\Ker(\theta\circ \phi^{-1}), M_{\Spf A_\inf}) \ar[r] &(A_{\cris}, (p), M_{\Spf A_{\cris}}).
}    
\end{equation}
\end{eg}

\begin{lem} \label{lem: finite-p-compl-Tor-amp} 
Consider the map of underlying rings in the diagram \eqref{eq: diag-log-prism-site-O_K}.
\begin{enumerate}
    \item The map $\fkS_{K} \rightarrow A_{\inf}$ is classically faithfully flat.

    \item The map $S_{K} \rightarrow A_{\cris}$ is $p$-completely faithfully flat.
    
    \item The maps $\phi\colon \fkS_{K} \rightarrow S_{K}$ and $A_{\inf} \rightarrow A_{\cris}$ have finite $p$-complete Tor amplitudes.    
\end{enumerate}
\end{lem}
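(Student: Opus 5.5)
For (1), I would use the local criterion for flatness over the two-dimensional regular local ring $\fkS_K=W(k)[\![u]\!]$. The target $A_\inf$ is $(p,[\pi^\flat]^p)$-adically complete. Modulo $p$, the map becomes $k[\![u]\!]\rightarrow \calO_C^\flat$ with $u\mapsto (\pi^\flat)^p$. This is flat, because $\calO_C^\flat$ is a torsion-free module over the DVR $k[\![u]\!]$ (it is a domain and $(\pi^\flat)^p\neq 0$). Also $p$ is a nonzerodivisor on $A_\inf$. So by the local flatness criterion, in the form for $p$-adically complete rings (cf.~\cite[0912]{stacks-project}, or via \cite[Lem.~3.7]{bhatt-scholze-prismaticcohom} together with Noetherianity of the source), the map is flat. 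Faithfulness holds because the maximal ideal $(p,u)$ extends to a proper ideal: $A_\inf/(p,[\pi^\flat]^p)=\calO_C^\flat/(\pi^\flat)^p\neq 0$. Since $\fkS_K$ is Noetherian and local, flatness plus this gives classical faithful flatness.

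For (2), I would first use the map $\phi\colon\fkS_K\rightarrow S_K$ to identify $S_K$ with the $p$-completed PD-envelope of $\fkS_K$ along $E(u)$ (equivalently along $(u^e)$ modulo $p$). Similarly, $A_\cris$ is the $p$-completed PD-envelope of $A_\inf$ along $\Ker\theta$, which is generated by $E([\pi^\flat])$ since $u\mapsto[\pi^\flat]$ in the right vertical map of the diagram. The key input is that PD-envelopes commute with flat base change. Since $u\mapsto [\pi^\flat]$ is flat by the argument of (1), the base change $S_K\widehat{\otimes}_{\fkS_K}A_\inf$ along this map computes $A_\cris$. Concretely, mod $p$ we have $S_K/p\cong k[u]/(u^{ep})\otimes$(divided powers of $u^e$), and $A_\cris/p\cong (S_K/p)\otimes_{k[\![u]\!]}\calO_C^\flat$. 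This reduces $p$-complete faithful flatness of $S_K\rightarrow A_\cris$ to the flatness and nonvanishing mod $p$ established in (1).

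For (3), I would bound Tor amplitude after reducing mod $p$, since $p$-complete Tor amplitude is detected by $-\otimes^L_{}\F_p$-type reduction. The map $A_\inf\rightarrow A_\cris$ factors as $A_\inf\rightarrow A_\inf\widehat{\otimes}_{\fkS_K}S_K\cong A_\cris$ along $u\mapsto [\pi^\flat]$. So it suffices to show that $\fkS_K\rightarrow S_K$ (via either $\phi$ or the natural map) has finite $p$-complete Tor amplitude. For this, note that $\fkS_K$ is regular of dimension $2$, so every module has Tor amplitude in $[-2,0]$. More precisely, $S_K/^L p$ is a $k[\![u]\!]$-module, and $k[\![u]\!]$ has global dimension $1$. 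Hence $S_K\otimes^L_{\fkS_K}\fkS_K/p$ has Tor amplitude in $[-1,0]$ over $\fkS_K/p$, which suffices. The Frobenius twist does not affect the argument, since $\phi$ on $\fkS_K$ is finite flat.

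I expect the main obstacle to be the identification $A_\cris\cong (S_K\otimes_{\fkS_K}A_\inf)^\wedge_p$ in (2). Here one must carefully match the PD-ideal $(E)$ with $\Ker\theta$ and justify the compatibility of $p$-completed PD-envelopes with $p$-completely flat base change, for instance via \cite[Lem.~2.43]{bhatt-scholze-prismaticcohom}.
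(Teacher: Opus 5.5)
Your proposal is correct and follows essentially the paper's route. The paper cites Cais--Liu for the two inputs you prove by hand, namely flatness of $\fkS_K/p\rightarrow A_\inf/p$ and $A_\inf\otimes_{\fkS_K}S_K/p^n\cong A_\cris/p^n$, and then uses Noetherianity of $\fkS_K$ for (1) and base change for (3); your global-dimension argument over $k[\![u]\!]$ replaces the cited footnote. One small correction: the cocartesian square you need (and in fact use) has the natural map $\fkS_K\rightarrow S_K$, $u\mapsto u$, and $u\mapsto[\pi^\flat]$, not the square with $\phi$ on top, so the opening mention of $\phi$ in your (2) should be dropped; the paper passes between the two squares using that $\phi$ on $\fkS_K$ is faithfully flat.
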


\begin{proof}
Note that $\phi\colon \fkS_{K} \rightarrow \fkS_{K}$ is classically faithfully flat. So it suffices to consider the maps $\fkS_{K} \rightarrow A_{\inf}$, $u \mapsto [\pi^{\flat}]$ and $\fkS_{K} \rightarrow S_{K}$, $u \mapsto u$. The statement (2) is \cite[Lem.~5.18(4)]{Cais-Liu}. In fact, in the proof of \textit{loc. cit.}, it is shown that the map $\fkS_{K}/(p) \rightarrow A_{\inf}/(p)$ is classically faithfully flat and  $A_{\inf}\otimes_{\fkS_{K}} S_{K}/(p^n) \cong A_{\cris}/(p^n)$ for each $n \geq 1$. Since $\fkS_{K}$ is Noetherian, this implies (1). The statement (3) follows from these and \cite[pp.~1225, fn.~3]{Cais-Liu}.
\end{proof}

We end this subsection with the \v{C}ech--Alexander method.
Let us fix $(A,I,M_{\Spf A})\in (\Spf \calO_K,M_\can)_\Prism^\op$. To simplify the notation, set $\overline{A}\coloneqq A/I$ and write  $((X_{\overline{A}},M_{X_{\overline{A}}})/A)_\Prism$ for the log prismatic site $(X_{A/I},M_{X_{A/I}})/(A,M_{\Spf A}))_\Prism$. We further assume that there exists an object $(B,IB,M_{\Spf B})\in (X_{\overline{A}}/A)_\Prism^\op$ that covers the final object of the associated topos and that for each $n\geq 0$, the $(n+1)$-st self-coproduct exists, which we denote by $(B^{[n]},IB^{[n]},M_{\Spf B^{[n]}})$ or even $B^{[n]}$. This gives rise to a simplicial ringed topos with the augmentation 
\[
a\colon (\Sh(((X_{\overline{A}},M_{X_{\overline{A}}})/A)_\Prism/B^{[\bullet]}),\calO_\Prism^\bullet)\rightarrow (\Sh((X_{\overline{A}},M_{X_{\overline{A}}})/A)_\Prism,\calO_\Prism),
\]
where we write $\calO_\Prism^\bullet$ for the sheaf of rings $a^{-1}\calO_\Prism$ on the simplicial topos; see \cite[03CI, 0GMA, 0D9Y]{stacks-project}. We let 
\[
g_n\colon (\Sh(((X_{\overline{A}},M_{X_{\overline{A}}})/A)_\Prism/B^{[n]}),\calO_\Prism)\rightarrow (\Sh(((X_{\overline{A}},M_{X_{\overline{A}}})/A)_\Prism/B^{[\bullet]}),\calO_\Prism^\bullet)
\]
denote the associated morphism of ringed topoi; note that $g_n^\ast$ admits a left adjoint, so it is exact. 

Together with the punctual topos $\Sh(\pt)$ and the associated constant simplicial topos $\Sh(\pt\times \Delta)$, we have a commutative diagram of morphisms of ringed topoi
\[
\xymatrix{
(\Sh(((X_{\overline{A}},M_{X_{\overline{A}}})/A)_\Prism/B^{[\bullet]}),\calO_\Prism^\bullet)\ar[r]^-{b^\bullet}\ar[d]_-a
& (\Sh(\pt\times \Delta),A^\bullet)\ar[d]_-{a_A}\\
(\Sh((X_{\overline{A}},M_{X_{\overline{A}}})/A)_\Prism,\calO_\Prism)\ar[r]^-b & (\Sh(\pt),A),
}
\]
where $b_\ast(\calF)=\Gamma(((X_{\overline{A}},M_{X_{\overline{A}}})/A)_\Prism,\calF)$, and $b^{\ast}$ sends an $A$-module $M$ to the associated sheaf $M\otimes_{A}\calO_\Prism$ of $\calO_\Prism$-modules.

\begin{lem}\label{lem:CA method}
Keep the above notation. \hfill
\begin{enumerate}
 \item For $\calK\in D(((X_{\overline{A}},M_{X_{\overline{A}}})/A)_\Prism,\calO_\Prism)$, the adjunction map
 \[
 \calK\rightarrow Ra_\ast a^{\ast}\calK
 \]
 is an isomorphism in $ D(((X_{\overline{A}},M_{X_{\overline{A}}})/A)_\Prism,\calO_\Prism)$.
 \item Let $\calF$ be a sheaf of $\calO_\Prism$-modules on $((X_{\overline{A}},M_{X_{\overline{A}}})/A)_\Prism$ such that $H^i(B^{[n]},\calF)=0$ for every $n\geq 0$ and $i>0$. Then $R\Gamma(((X_{\overline{A}},M_{X_{\overline{A}}})/A)_\Prism,\calF)$ is given by the cochain complex $s(\calF(B^{[\bullet]}))$ associated to the cosimplicial $A$-module $\calF(B^{[\bullet]})$ (as in \cite[019H]{stacks-project}). This is the case when $\calF$ is a finite projective $\calO_\Prism$-module or (the restriction of) a completed prismatic crystal on $(X,M_X)_\Prism$.
\end{enumerate}
\end{lem}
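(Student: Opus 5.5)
For (1), I will use that $a$ is the augmentation of a hypercovering (indeed a \v{C}ech covering) of the final object of the topos: since $h_B$ covers the final object and the $B^{[n]}$ represent the self-products $h_B^{n+1}$, the simplicial object $h_{B^{[\bullet]}}$ is the \v{C}ech nerve of an epimorphism onto the final object. Cohomological descent for \v{C}ech coverings in a ringed topos then gives $\calK\cong Ra_\ast a^\ast\calK$. Concretely, I will cite \cite[0D8N]{stacks-project} (or the neighbouring result on cohomological descent for hypercoverings of a site/topos, with the ringed version following from \cite[0D9Y]{stacks-project}). To check this locally, I will restrict to an object $U$ mapping to $h_B$. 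Over such a $U$ the \v{C}ech nerve acquires a section, hence becomes simplicially homotopy equivalent to the constant one. This makes $a$ an isomorphism locally, which suffices because $h_B$ covers the final object. For unbounded $\calK$, I will pass through K-injective resolutions, using that $g_n^\ast$ is exact and preserves K-injectives since it has an exact left adjoint.

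For the first part of (2), I apply $R\Gamma=Rb_\ast$ to (1). This gives $R\Gamma(\calK)\cong Rb_\ast Ra_\ast a^\ast\calK\cong Ra_{A,\ast}Rb^\bullet_\ast a^\ast\calK$. The $n$-th component of $Rb^\bullet_\ast a^\ast\calF$ is $R\Gamma(h_{B^{[n]}},\calF)$, which is concentrated in degree $0$ and equal to $\calF(B^{[n]})$ by the vanishing hypothesis. On the constant simplicial punctual topos, $Ra_{A,\ast}$ of a cosimplicial module is computed by its associated cochain complex $s(-)$ \cite[019H]{stacks-project}. So $R\Gamma\cong s(\calF(B^{[\bullet]}))$.

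It remains to verify the vanishing hypothesis in the two stated cases. Since the localized site at $B^{[n]}$ has a final object $B^{[n]}$, I will compute $H^i$ by \v{C}ech cohomology of flat covers, as in the standard prismatic argument (\cite[Cor.~3.12]{bhatt-scholze-prismaticcohom}). For a flat cover $B^{[n]}\to C$, the \v{C}ech complex is the derived-complete Amitsur complex $C\rightrightarrows (C\widehat\otimes C)\cdots$. It is exact modulo $(p,I)$ by faithfully flat descent; for $B/(p,I)^m$, the flatness statement of \cite[Thm.~1.2(2)]{yekutieli-flatnesscompletion} handles this. Passing to the limit, which is possible by Corollary~\ref{cor:repleteness} and Remark~\ref{rem:completed crystal as limit}(1), gives vanishing for $\calO_\Prism$, hence for finite projective modules as direct summands of free ones. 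For a completed crystal $\calF$, I reduce to the crystals $\calF_m=\calF/(p,\calI_\Prism)^m$ of $\calO_{\Prism,m}$-modules. Their \v{C}ech complexes are $\calF_{m,B^{[n]}}\otimes(\text{Amitsur complex mod }(p,I)^m)$, which are exact by flat descent for modules. Since the $\calF_m$ have surjective transitions and the $R\varprojlim$ of the vanishing groups (including the $\varprojlim^1$ term, which vanishes by Mittag-Leffler on degree-$0$ sections) is zero, $H^i(B^{[n]},\calF)=0$ for $i>0$. This limit step, namely controlling $R\varprojlim$ and ensuring the \v{C}ech-to-derived comparison holds, is the main technical point. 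I would handle it with repleteness (Corollary~\ref{cor:repleteness}) together with a \v{C}ech-to-cohomology spectral sequence argument as in \cite[03F7]{stacks-project}.
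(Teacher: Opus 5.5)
Your proposal follows essentially the same route as the paper. Part (1) is cohomological descent for the \v{C}ech hypercovering $h_{B^{[\bullet]}}$ (the paper simply cites \cite[0DA0]{stacks-project}). Part (2) applies $Rb_\ast$ to (1), uses the vanishing hypothesis to identify $Rb^\bullet_\ast a^\ast\calF$ with the cosimplicial module $\calF(B^{[\bullet]})$, and checks the vanishing via flat descent, the reductions $\calF/(p,\calI_\Prism)^m$, and repleteness.

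The one step you only assert is that $Ra_{A,\ast}M^\bullet\simeq s(M^\bullet)$ for a cosimplicial $A$-module $M^\bullet$ on $\Sh(\pt\times\Delta)$. \cite[019H]{stacks-project} only defines $s$ and does not prove this. The paper establishes it via \cite[0D7D]{stacks-project} for injective $M^\bullet$, then an injective resolution and a double-complex spectral sequence argument using exactness of the $g_{A,n}^\ast$. Alternatively, you can invoke the standard fact that $R\varprojlim_\Delta$ is computed by the associated cochain complex.
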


\begin{proof}
Part (1) is \cite[0DA0]{stacks-project}. The first part of (2) is \cite[fn.~10]{bhatt-scholze-prismaticcohom}. For the convenience of the reader and for later use, we give a slightly different argument here. By (1), we have an isomorphism
$R\Gamma(((X_{\overline{A}},M_{X_{\overline{A}}})/A)_\Prism,\calF)\xrightarrow{\cong} Ra_{A,\ast}Rb^\bullet_\ast a^\ast\calF$. It follows from the assumption and \cite[0DH1]{stacks-project} that the map $b^\bullet_\ast a^\ast\calF\xrightarrow{\cong} Rb^\bullet_\ast a^\ast\calF$ is an isomorphism. Note that $b^\bullet_\ast a^\ast\calF\in \Mod(A^\bullet)$ is given by the cosimplicial $A$-module $\calF(B^{[\bullet]})$ under (a ringed topos variant of) \cite[09WF]{stacks-project}.
Hence it remains to show  that for every $M^\bullet\in\Mod(A^\bullet)$, $Ra_{A,\ast}M^\bullet$ is given by the associated cochain complex $s(M^\bullet)$. This claim is a special case of \cite[0D7D]{stacks-project} if $M^\bullet$ is an injective $A^\bullet$-module on $\pt\times \Delta$; in fact, we have a functorial map $a_{A,\ast}M^\bullet\rightarrow s(M^\bullet)$ for any $M^\bullet$, and it is a quasi-isomorphism if $M^\bullet$ is injective. To show the general case, take an injective resolution $M^\bullet\rightarrow I^{0,\bullet}\rightarrow I^{1,\bullet}\rightarrow I^{2,\bullet}\rightarrow\cdots$. Then $Ra_{A,\ast}M^\bullet$ is given by $a_{A,\ast}I^{0,\bullet}\rightarrow a_{A,\ast}I^{1,\bullet}\rightarrow a_{A,\ast}I^{2,\bullet}\rightarrow\cdots$. Consider the double complex $I^{\star,\bullet}$ where we consider the cochain complex associated to the cosimplicial $A$-module in the $\bullet$-direction, and let $\Tot (I^{\star,\bullet})$ denote the associated total complex.
By construction, we have maps of complexes of $A$-modules
\[
s(M^\bullet)\rightarrow \Tot (I^{\star,\bullet})\leftarrow [a_{A,\ast}I^{0,\bullet}\rightarrow a_{A,\ast}I^{1,\bullet}\rightarrow a_{A,\ast}I^{2,\bullet}\rightarrow\cdots].
\]
By the $E_1$-spectral sequence associated to a double complex, we see that these two maps are quasi-isomorphisms; use the fact that each $g_{A,n}^\ast$ ($g_n^\ast$ for $(\Sh(\pt\times\Delta),A^\bullet)$) is exact for the first map and the preceding discussion for the second map. The second part of (2) follows from \cite[Cor.~3.12]{bhatt-scholze-prismaticcohom} in the former case; in the latter case, a similar argument works based on the faithfully flat descent, Remark~\ref{rem:completed crystal as limit}(1), Corollary~\ref{cor:repleteness}, and  \cite[0DD8]{stacks-project}.
\end{proof}

We also need an $\infty$-categorical analogue of this lemma. Since the category of modules on a ringed site is a Grothendieck abelian category, we have a commutative diagram of the associated derived $\infty$-categories
\[
\xymatrix{
\calD((((X_{\overline{A}},M_{X_{\overline{A}}})/A)_\Prism/B^{[\bullet]}),\calO_\Prism^\bullet)\ar[r]^-{Rb^\bullet_\ast}\ar[d]_-{Ra_\ast}
& \calD((\pt\times \Delta),A^\bullet)\ar[d]_-{Ra_{A,\ast}}\\
\calD(((X_{\overline{A}},M_{X_{\overline{A}}})/A)_\Prism,\calO_\Prism)\ar[r]^-{Rb_\ast} & \calD(\pt,A),
}
\]
where $Rb_\ast=R\Gamma(((X_{\overline{A}},M_{X_{\overline{A}}})/A)_\Prism,-)$ under the identification $\calD(\pt,A)=\calD(A)$.
For $\calK\in \calD(((X_{\overline{A}},M_{X_{\overline{A}}})/A)_\Prism,\calO_\Prism)$, the adjunction map $\calK\rightarrow Ra_\ast a^\ast\calK$ is an isomorphism by Lemma~\ref{lem:CA method}(1). We also need a variant of Lemma~\ref{lem:CA method}(2). Observe that the collection $(g_{A,n}^\ast)_{n\geq 0}$ yields a functor $g_A^\ast\colon\calD((\pt\times \Delta),A^\bullet)\rightarrow \mathrm{Fun}(N(\Delta),\calD(A))$. We remark that $g_{A,n}^\ast\circ Rb_{\ast}^\bullet=R\Gamma(B^{[n]},-)$ by \cite[0DH0]{stacks-project}.
Since $\calD(A)$ is presentable, we have the totalization functor $\Tot\colon \mathrm{Fun}(N(\Delta),\calD(A))\rightarrow \calD(A)$ by taking the limit. By abuse of notation, we continue to write $\Tot$ for the composite $\Tot\circ g_A^\ast$.

\begin{lem} \label{lem: CA-method-infinity-category}
For $M\in \calD^+((\pt\times \Delta),A^\bullet)$, $Ra_{A,\ast}M$ is given by $\Tot M$.
\end{lem}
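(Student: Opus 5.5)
The plan is to reduce the statement to the abelian-level computation already carried out in the proof of Lemma~\ref{lem:CA method}(2), and to compare the $\infty$-categorical limit with the classical cochain complex $s(-)$ by a Postnikov/boundedness argument.

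\emph{Step 1 (injective case).} Suppose first that $M$ is represented by a bounded below complex $I^{\star,\bullet}$ of injective $A^\bullet$-modules on $\pt\times\Delta$. Then $Ra_{A,\ast}M$ is computed by the complex $a_{A,\ast}I^{\star,\bullet}$. The proof of Lemma~\ref{lem:CA method}(2) (via \cite[0D7D]{stacks-project}) gives a natural quasi-isomorphism $a_{A,\ast}I^{q,\bullet}\simeq s(I^{q,\bullet})$ for each $q$. Combining these with the double complex argument in that proof yields a natural quasi-isomorphism $Ra_{A,\ast}M\simeq \Tot^{\oplus}(s(I^{\star,\bullet}))$. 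Here the total complex is well behaved because $I^{\star,\bullet}$ is bounded below in $\star$ and $s$ lives in nonnegative degrees, so each total degree involves only finitely many terms.

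\emph{Step 2 (identifying $s$ with the $\infty$-categorical totalization).} Next we must show that, for a cosimplicial object $N^\bullet$ in $\mathrm{Fun}(N(\Delta),\calD(A))$ coming from a cosimplicial complex that is uniformly bounded below, the limit $\lim_{\Delta}N^\bullet$ is computed by the total complex of the associated double complex $s(N^\bullet)$. This is the standard Dold--Kan statement: the limit over $\Delta$ in a stable $\infty$-category is computed by the totalization of the normalized cochain complex, provided boundedness ensures convergence. We would cite \cite[Rem.~1.2.4.4, Thm.~1.2.4.1]{Lurie-HA}, or argue via the skeletal filtration $\Tot=\lim_n\Tot_n$. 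In that argument each $\Tot_n$ is a finite limit, hence given by the truncated total complex, and in each fixed degree the system $\Tot_n$ stabilizes thanks to the lower bound. Applying this with $N^\bullet=g_A^\ast M$, whose $n$-th term is $g_{A,n}^\ast I^{\star,\bullet}=I^{\star,n}$ (using that $g_{A,n}^\ast$ is exact), we obtain $\Tot M\simeq\Tot^{\oplus}(s(I^{\star,\bullet}))$. We then check that the resulting equivalence $Ra_{A,\ast}M\simeq\Tot M$ is the canonical one. This amounts to the existence of a natural map $Ra_{A,\ast}\rightarrow \Tot\circ g_A^\ast$, induced by the restriction maps $a_{A,\ast}\rightarrow g_{A,n}^\ast$ and compatible with the cosimplicial structure; since every object of $\calD^+$ has such an injective model, this map is shown to be an equivalence.

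\emph{Main obstacle.} The main obstacle is Step 2. It requires constructing the natural comparison map at the $\infty$-categorical level rather than just an abstract quasi-isomorphism, and it requires controlling the convergence of $\lim_\Delta$ for unbounded-above but bounded-below objects. We would handle the convergence through the $\lim_n\Tot_n$ description: degreewise stabilization holds because the $n$-coskeletal part contributes only in cohomological degrees $\geq n+c$, where $c$ is the lower bound of $M$. The $\calD^+$ hypothesis is used precisely here, and also in Step 1 to guarantee that the double complex totalization with direct sums agrees with the one with products.
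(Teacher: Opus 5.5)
Your proposal is correct and follows essentially the same route as the paper: represent $M$ by a bounded below complex of injectives, use the argument of Lemma~\ref{lem:CA method}(2) to compute $Ra_{A,\ast}M$ as the total complex of the cosimplicial-cochain double complex, and then identify the $\infty$-categorical totalization with that total complex. The only difference is in this last identification. The paper passes to normalized cochains via \cite[019I]{stacks-project} and then cites \cite[Prob.~4.23]{Bunke}. You use the normalization step implicitly when you switch between $s(-)$ and the normalized complex, and you sketch the skeletal-filtration convergence argument directly instead of citing it.
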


\begin{proof}
Take a bounded below complex $I^\star$ of injectives in $\Mod(A^\bullet)$ that represents $M$. As in the proof of Lemma~\ref{lem:CA method}(2), $Ra_{A,\ast}M$ is given by the total complex associated to the double complex $(g_{A,\bullet}^\ast I^\star)$, where we take the cochain complex associated to the cosimplicial $A$-module in the $\bullet$-direction. We know from \cite[019I]{stacks-project} that the latter is quasi-isomorphic to the total complex associated to the double complex $(g_{A,\bullet}^\ast I^\star)'$ where we take the \emph{normalized} cochain complex associated to the cosimplicial $A$-module in the $\bullet$-direction. Hence it remains to prove that $\Tot g_A^\ast M$ is given by the total complex $\Tot (g_{A,\bullet}^\ast I^\star)'$; this is explained, for example, in \cite[Prob.~4.23]{Bunke} when $A=\Z$, and the proof therein works for a general $A$.
\end{proof}

\begin{rem} \label{rem: CA-for-jointly-surjective-case}
Let $\Lambda$ be a finite set and assume that there exists a collection of objects $\{(B_\lambda,IB_\lambda,M_{\Spf B_\lambda})\}_{\lambda\in \Lambda}$ of $(X_{\overline{A}}/A)_\Prism^\op$ that jointly covers the final object of the associated topos. We also assume that for every $n\geq 0$ and $\lambda_0,\ldots,\lambda_n\in \Lambda$, the coproduct $(B_{\lambda_0,\ldots\lambda_n},IB_{\lambda_0,\ldots\lambda_n}, M_{\Spf B_{\lambda_0,\ldots\lambda_n}})$ of $(B_{\lambda_0},IB_{\lambda_0},M_{\Spf B_{\lambda_0}}),\ldots,(B_{\lambda_0},IB_{\lambda_0},M_{\Spf B_{\lambda_0}})$  exists in $(X_{\overline{A}}/A)_\Prism^\op$.
Set $B\coloneqq \prod_{\lambda\in\Lambda}B_\lambda$. Then the resulting log prism $(B,IB,M_{\Spf B})$ covers the final object. In this case, we have  $B^{[n]}=\prod_{(\lambda_0,\ldots,\lambda_n)\in\Lambda^{n+1}}B_{\lambda_0,\ldots\lambda_n}$ and $\calF(B^{[n]})=\prod_{(\lambda_0,\ldots,\lambda_n)\in\Lambda^{n+1}}\calF(B_{\lambda_0,\ldots\lambda_n})$ for any sheaf $\calF$.
\end{rem}

Let us end with two purely algebraic lemmas from \cite{bhatt-scholze-prismaticcohom, du-liu-moon-shimizu-purity-F-crystal}, which will be key algebraic inputs when we discuss the base change properties.
\begin{lem}\label{lem:algebraic-lemmas-for-base-change}
\hfill
\begin{enumerate}
 \item Let $A\rightarrow B$ be a map of commutative rings that has finite $I$-complete Tor amplitude for a finitely generated ideal $I\subset A$. The $I$-completed base change functor $-\widehat{\otimes}_A^LB$ commutes with the totalization functor $\Tot$ on $\calD^{\geq c}$, for any $c\in \Z$.
 \item Let $A\rightarrow B$ be a map of $p$-adically complete rings with $A$ $p$-torsion free and let $M$ be a finitely generated $p$-adically complete $A$-module. If $M[p^{-1}]$ is a finite projective $A[p^{-1}]$-module, then the map
\[
M[p^{-1}]\otimes_AB\rightarrow (M\widehat{\otimes}_AB)\otimes_BB[p^{-1}]
\]
is an isomorphism. In particular, the latter is finite projective over $B[p^{-1}]$.
\end{enumerate}
\end{lem}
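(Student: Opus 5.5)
Both parts are standard algebra, and the lemma is attributed to \cite{bhatt-scholze-prismaticcohom, du-liu-moon-shimizu-purity-F-crystal}, so for each part I will reduce to the relevant source argument and record only the key points.

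For (1), I follow the argument behind \cite[Lem.~4.22]{bhatt-scholze-prismaticcohom}. Let $X^\bullet\colon N(\Delta)\to\calD^{\geq c}(A)$ be a cosimplicial object. There is a natural comparison map
\[
(\Tot X^\bullet)\widehat{\otimes}_A^L B\longrightarrow \Tot(X^\bullet\widehat{\otimes}_A^L B).
\]
Derived $I$-completion is a limit, so it commutes with $\Tot$. It therefore suffices to treat the uncompleted tensor product modulo a Koszul complex. Concretely, apply $-\otimes^L_{\Z[\underline{f}]}\Z[\underline{f}]/(\underline{f}^n)$ for generators $\underline{f}$ of $I$, then take the limit over $n$. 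After reducing modulo $(\underline{f}^n)$, the functor $-\otimes^L_A B$ has finite Tor amplitude, say in $[-d,0]$.

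Now use the standard truncation argument. For $Y\in\calD^{\geq c}$, the partial totalizations $\Tot_{\leq m}$ agree with $\Tot$ in degrees $\le m-c$, up to a shift. Each $\Tot_{\leq m}$ is a finite limit, and finite limits commute with the exact functor $-\otimes^L B$. Bounded Tor amplitude then shows that both sides agree in each fixed degree once $m$ is large enough. Passing to the limit gives the isomorphism. The step that needs care is controlling the connectivity uniformly in $n$; this is exactly where the hypotheses ``finite $I$-complete Tor amplitude'' and ``bounded below'' are used.

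For (2), choose a presentation $A^r\to M\to 0$. Since $A$ is $p$-torsion free and $M[p^{-1}]$ is finite projective, take the $p$-power torsion $M_{\mathrm{tors}}$ and put $M'\coloneqq M/M_{\mathrm{tors}}$. Because $A$ is Noetherian-free here, I argue via a splitting instead. $M[p^{-1}]$ is a direct summand of $A[p^{-1}]^r$, so there are maps $M\to A^r\to M$ whose composite is multiplication by $p^N$ for some $N$. This follows by clearing denominators in a section of $A[p^{-1}]^r\to M[p^{-1}]$, using that $M$ is finitely generated.

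Both sides of the claimed map are additive functors of $M$, and they agree on $A^r$: for $A^r$ the completed tensor product is just $B^r$. The maps with composite $p^N$ show that on each side the object for $M$ is a retract, up to the invertible element $p^N$, of the object for $A^r$. After inverting $p$ these become genuine retracts, compatible with the natural map. Hence the natural map is an isomorphism for $M$ as well, and the target is a direct summand of $B[p^{-1}]^r$, hence finite projective.

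I expect the main obstacle to be checking that the completed tensor product $M\widehat{\otimes}_A B$ behaves functorially and additively enough for this retract argument to go through. This holds because completion is an additive functor; the classical versus derived completion issue disappears after inverting $p$ by the factorization through $A^r$.
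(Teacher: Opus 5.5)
Your proposal is correct. The paper gives no argument of its own here. For (1) it cites \cite[Lem.~4.22]{bhatt-scholze-prismaticcohom} in the case $c=0$ and gets general $c$ by shifting; for (2) it cites \cite[Lem.~B.17]{du-liu-moon-shimizu-purity-F-crystal}.

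\textbf{Part (1).} You are rerunning the argument behind the cited lemma, but directly on $\calD^{\geq c}$. You write the derived completion as a limit of Koszul reductions, observe that each reduction has bounded Tor amplitude, and compare through finite partial totalizations. This is essentially the same route. No uniformity in $n$ is needed: prove the commutation for each Koszul reduction separately, then use that $R\varprojlim_n$ commutes with $\Tot$. The Tor-amplitude hypothesis enters only to show that each reduction has bounded amplitude, which follows by d\'evissage from $A/I$-modules.

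\textbf{Part (2).} You supply a self-contained retract argument in place of the citation, and it is valid. It also shows that the statement needs only two inputs: $A$ is $p$-torsion free (to build $t\colon M\to A^r$), and $B$ is $p$-complete (so that $A^r\widehat{\otimes}_AB=B^r$). Completeness of $A$ or $M$ is not used.

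\textbf{One step to tighten.} Clearing denominators in a section of $A[p^{-1}]^r\to M[p^{-1}]$ does not give $\pi\circ t=p^N$ on the nose. It only gives that $\pi\circ t-p^N$ has image in the $p$-power torsion of $M$, and this need not vanish: take $M=A\oplus A/p$, the obvious section, and $N=0$. There are two easy fixes.
\begin{itemize}
\item The image of $\pi\circ t-p^N$ is a finitely generated torsion module, hence killed by some $p^k$. Replacing $t$ by $p^kt$ then gives $\pi\circ(p^kt)=p^{N+k}$ exactly.
\item Alternatively, note that both functors $M\mapsto M[p^{-1}]\otimes_AB$ and $M\mapsto (M\widehat{\otimes}_AB)[p^{-1}]$ kill modules of bounded $p$-power torsion. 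So the discrepancy disappears after applying them, and that is all the retract argument needs.
\end{itemize}
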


\begin{proof}
Part (1) for $c=0$ is \cite[Lem.~4.22]{bhatt-scholze-prismaticcohom}, which also implies the general case since the functors involved commute with shifts. Part (2) is \cite[Lem.~B.17]{du-liu-moon-shimizu-purity-F-crystal}.
\end{proof}

\subsection{Breuil--Kisin log prism} \label{subsec: BK-prism}

We first recall the notion of Breuil--Kisin log prism, which will be useful when working locally.

\smallskip
\noindent \textbf{Small affine case.}
\begin{defn} \label{defn: small affine}
For $d\geq m\geq 1$, let
\[
R^0 = \mathcal{O}_K \langle T_1, \ldots, T_m, T_{m+1}^{\pm 1}, \ldots, T_d^{\pm 1}\rangle / (T_1\cdots T_m - \pi),
\]
and consider the prelog structure $\mathbf{N}^d \rightarrow R^0$ sending $e_i \mapsto T_i$ for $1 \leq i \leq d$.
Let $R$ be a \emph{connected} $\mathcal{O}_K$-algebra equipped with a $p$-adically completed \'etale map 
\[
\square\colon R^0 \rightarrow R.
\]
We call a $p$-adic log formal scheme $(X,M_X)$ \emph{small affine with framing $\square$} if it is an affine log formal scheme of the form $(X,M_X)=(\Spf R, \N^d\rightarrow R^0\xrightarrow{\square} R)^a$.    
\end{defn}

\begin{eg}[Breuil--Kisin log prism and Breuil log prism in the small affine case] \label{eg: Breuil-Kisin prism}
Assume $(X,M_X)=(\Spf R, \N^d\rightarrow R^0\xrightarrow{\square} R)^a$ is small affine as in Definition~\ref{defn: small affine}. We will often consider the \emph{Breuil--Kisin log prism} $(\mathfrak{S}, (E(u)), M_{\Spf \mathfrak{S}}) \in (X,M_X)_{\Prism}$ (with respect to $\pi$ and $\square$) given as follows.

Let $\fkS_{R^0}\coloneqq W(k)\langle T_1, \ldots, T_m, T_{m+1}^{\pm 1}, \ldots, T_d^{\pm 1}\rangle[\![u]\!] / (T_1\cdots T_m-u)$. Note that $\fkS_{R^0}$ is $(p, E)$-adically complete and $\fkS_{R^0}/(E) \cong R^0$. We equip $\fkS_{R^0}$ with the prelog structure $\mathbf{N}^d \rightarrow \mathfrak{S}_{R^0}$, $e_i \mapsto T_i$ ($1 \leq i \leq d$) lifting the one on $R^0$. Since $\square$ is $p$-adically completed \'etale, it lifts uniquely to a $(p, E)$-adically completed \'etale map 
\[
\square_{\mathfrak{S}}\colon \mathfrak{S}_{R^0} \rightarrow \mathfrak{S}_{\square}.
\]
We often write $\fkS$ for $\fkS_{\square}$. Setting $\delta_{\mathrm{log}}(e_i) = 0 $ and $\delta(T_j) = 0$ makes $(\fkS_{R^0}, \delta, (E), \N^d, \delta_{\mathrm{log}})$ a bounded prelog prism; we will also write it as $(\mathfrak{S}_{R^0}, (E), \N^d)$ for short. By \cite[Lem.~2.13]{koshikawa}, $(\mathfrak{S}, (E), \N^d\rightarrow \mathfrak{S}_{R^0}\rightarrow \mathfrak{S})$ admits a bounded prelog prism structure. The \emph{Breuil--Kisin log prism} $(\mathfrak{S}, (E), M_{\Spf \mathfrak{S}})$ is the log prism associated to $(\mathfrak{S}, (E), \N^d)$, which is an object of $((\Spf R,\N^d)^a)_\Prism$ via $R \cong \fkS/(E)$.

To relate the log prismatic site to log crystalline site in the small affine case, we will also consider the \emph{Breuil log prism} $(S, (p), M_{\Spf S})$: let $S_{\square}$ be the $p$-adic completion of the log PD-envelope of $\fkS_{\square}$ with respect to the kernel of the surjection $\fkS_{\square} \to R/p$. By \cite[Cor.~2.39]{bhatt-scholze-prismaticcohom}, we have $S_{\square} \simeq \fkS_{\square}\{\phi(E)/p\}_\delta^\wedge$, where $\{-\}_\delta$ denotes adjoining elements as $\delta$-ring. If there is no confusion, we write $S$ for $S_{\square}$. The pullback of the prelog structure on $\fkS$ defines a prelog structure $\phi^{\ast}\N^d$ on $S$, and we have $(S, (p), M_{\Spf S})\coloneqq (S, (p), \phi^\ast \N^d)^a \in (X,M_X)_{\Prism}$. By construction, we have a well-defined map $\phi\colon(\fkS,(E), M_{\Spf \fkS}) \to (S,(p), M_{\Spf S})$ of log prisms in $(X,M_X)_{\Prism}^{\op}$. 
\end{eg}

For more details on the Breuil--Kisin log prism and Breuil log prism, we refer to \cite[\S~2.2, 3.5]{du-liu-moon-shimizu-purity-F-crystal}. For example, the following properties hold.

\begin{lem}[{\cite[Lem.~2.9]{du-liu-moon-shimizu-purity-F-crystal}}] \label{lem: coprod with fkS is cover}
Keep the small affine assumption, and let $(A,I,M_{\Spf A})$ be any log prism in the opposite category $(X,M_X)_\Prism^{\mathrm{op}}$. Then the coproduct $(A,I,M_{\Spf A})\amalg (\mathfrak{S}, (E(u)), M_{\Spf \mathfrak{S}})$ of $(A,I,M_{\Spf A})$ and $(\mathfrak{S}, (E(u)), M_{\Spf \mathfrak{S}})$ exists in  $(X,M_X)_\Prism^{\mathrm{op}}$, and the structure map 
\[
(A,I,M_{\Spf A}) \to (A,I,M_{\Spf A})\amalg (\mathfrak{S}, (E(u)), M_{\Spf \mathfrak{S}}) 
\]
is a cover. In particular, $(\mathfrak{S}, (E(u)), M_{\Spf \mathfrak{S}})$ covers the final object of $\Sh((X, M_X)_{\Prism})$.   
\end{lem}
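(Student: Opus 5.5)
The statement has two parts. First, for any $(A,I,M_{\Spf A})$ in $(X,M_X)_\Prism^{\op}$, the coproduct with the Breuil--Kisin log prism exists and the map $(A,I,M_{\Spf A})\to (A,I,M_{\Spf A})\amalg(\fkS,(E),M_{\Spf\fkS})$ is a flat cover. Second, as a consequence, $\fkS$ covers the final object. I would prove the existence of the coproduct by writing it down as an explicit prismatic envelope and then checking its universal property.

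\textbf{Step 1: a chart for $A$.} Since $(X,M_X)$ is small affine with chart $\N^d\to R$, we have strictness $\Spf(A/I)\to(X,M_X)$. So the images of $e_i$ give sections of $M_{\Spf(A/I)}$. After an étale localization, or after using the $(1+I)$-torsor property of $\Gamma(M_{\Spf A})\to\Gamma(M_{\Spf A/I})$ as in Lemma~\ref{lem:f-prismatic morphism induced by chart}, these lift to elements $m_i\in \Gamma(\Spf A,M_{\Spf A})$ with images $\alpha(m_i)\in A$. These lifts are unique up to $1+I$.

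\textbf{Step 2: the envelope.} Form the $(p,I)$-completed tensor product $C_0=(A\otimes_{W}\fkS)^\wedge$, with the log structure generated by $M_A$ and $\N^d$. Exactify the map $\N^d\oplus M_A\to$ (log structure on $R$) by adjoining units $v_i$ with $T_i = v_i\,\alpha(m_i)$, so that the $T_i$ and the $m_i$ differ by units. Then take the log prismatic envelope relative to $A$ of the ideal generated by $I$, by $v_i-1$, and by $E(u)$ (equivalently $E(u)\in IC$). Here $\delta_{\log}$ forces $\delta(v_i)$ to be determined. The smoothness of $\fkS_{R^0}\to\fkS$, being $(p,E)$-completely étale, lets the étale part be handled by unique lifting. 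This follows Koshikawa's construction of log prismatic envelopes and \cite[Prop.~3.13]{bhatt-scholze-prismaticcohom}.

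\textbf{Step 3: flatness.} The key point is that, relative to $A$, the ideal is generated by a $(p,I)$-completely regular sequence $v_i-1$, since $d-1$ of the $T$'s are free coordinates and $u$ is determined by $T_1\cdots T_m$. Hence the envelope is $(p,I)$-completely flat over $A$ by \cite[Prop.~3.13]{bhatt-scholze-prismaticcohom}. It is faithfully flat because the diagonal-type section shows its reduction mod $(p,I)$ is nonzero over every point: there is a map $A/I\otimes R\to A/I$ coming from the structure map. Strictness of the log structures then gives the universal property, exactly as in the proof of Lemma~\ref{lem:pullback of f-prismatic morphism}.

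\textbf{Step 4: covering the final object.} Given any object $A$, its product with $h_{\fkS}$ is covered by $h_A$-local data, i.e. $h_{\fkS}\times h_A\to h_A$ is a covering. Hence $h_\fkS\to *$ is an epimorphism.

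The main obstacle is the regularity and flatness claim in Step 3 for the log (exactified) envelope. I would reduce it to the computation in \cite[Lem.~2.9]{du-liu-moon-shimizu-purity-F-crystal}, which the statement cites and which I would in any case simply invoke.
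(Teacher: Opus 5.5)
Your route — realize the coproduct as the prismatic envelope of $(I,v_1-1,\dots,v_d-1)$ in the exactified completed tensor product, get flatness from \cite[Prop.~3.13]{bhatt-scholze-prismaticcohom}, then deduce that $h_{\fkS}$ covers the final object — is the standard argument behind \cite[Lem.~2.9]{du-liu-moon-shimizu-purity-F-crystal}, which the paper only cites. Steps~1, 2 and~4 are fine.

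\textbf{The faithfulness argument in Step~3 does not work.} There is no ``diagonal-type section'' in general. A retraction of $A\to A\amalg\fkS$ in $(X,M_X)_\Prism^{\op}$ is the same thing as a morphism $\fkS\to A$ of log prisms over $X$. Such a morphism need not exist; if it always did, the lemma would be trivial. The map $A/I\otimes R\to A/I$ only tells you that $B/J\cong A/I$. It says nothing about whether the envelope $D$ has nonzero fibres over $\Spec A/(p,I)$. Flatness alone does not give this: the envelope is $0$ if $J$ contains a unit, and the hypotheses of \textit{loc.~cit.} still hold in that case.

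What you actually need is the structure of $D$ modulo $(p,I)$. Reduce, by base change as in the proof of \textit{loc.~cit.}, to the universal case $A_0\{x\}\to A_0\{y\}$ with $x\mapsto dy$. There, modulo $(p,d)$, the image of $\delta^{j+1}(x)$ is $\delta(d)^{p^j}\delta^j(y)^p$ plus a polynomial in $y,\dots,\delta^{j-1}(y)$. Consequently $D/(p,I)$ is a free $B/(p,J)$-module on the monomials $\prod_{i,j}\delta^j(y_i)^{a_{ij}}$ with $0\le a_{ij}<p$; this basis contains $1$. Take $B=A\langle v_1^{\pm1},\dots,v_d^{\pm1}\rangle$ and lift the étale part afterwards, as you suggest. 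Then $B/(p,J)=A/(p,I)$, so $A\to D$ is $(p,I)$-completely faithfully flat, which is exactly the cover property.

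\textbf{Two smaller points.}

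First, the regular sequence has length $d$, not ``$d-1$ free coordinates.'' With $t_i=\alpha(m_i)$ you have $T_i=v_it_i$ and $u=v_1\cdots v_m\,t_1\cdots t_m$. In the absolute site there is no relation among the $v_i$. Moreover, $E(u)\in(I,v_1-1,\dots,v_d-1)$ holds automatically, because $E(t_1\cdots t_m)\in I$. The relation $\prod_{i\le m}v_i=1$ only appears in the relative coproduct over $\fkS_K$, and if it held here, $v_1-1,\dots,v_d-1$ would not be regular.

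Second, your closing sentence defers the key step to \cite[Lem.~2.9]{du-liu-moon-shimizu-purity-F-crystal}, which is the statement being proved, so that appeal is circular. With the corrections above, Step~3 is self-contained and needs no such appeal.
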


For two framings $\square$, $\square'$ of small affine $X = \Spf R$, write $(\fkS_{\square,\square'}, (E), M_{\Spf \fkS_{\square,\square'}})$ be the coproduct of $(\fkS_{\square},(E),M_{\Spf\fkS_{\square}})$ and $(\fkS_{\square'},(E),M_{\Spf\fkS_{\square'}})$ in $(X,M_X)_\Prism^{\mathrm{op}}$. When the framing $\square$ is fixed, we write $(\fkS_{\square}^{[n]}, (E), M_{\Spf \fkS_{\square}^{[n]}})$ or simply $(\fkS^{[n]}, (E), M_{\Spf \fkS^{[n]}})$ for the $(n+1)$-st self-coproduct of $(\fkS_{\square},(E),M_{\Spf\fkS_{\square}})$. 
\footnote{In \cite{du-liu-moon-shimizu-purity-F-crystal}, the object $\mathfrak{S}^{[n]}$ is denoted by $\mathfrak{S}^{(n)}$, and the notation $\mathfrak{S}^{[n]}$ therein denotes a different object (cf.~\cite[Ex.~2.15]{du-liu-moon-shimizu-purity-F-crystal}). }
For $n \geq 1$ and $0 \leq i \leq n$, write $p_i^n\colon \mathfrak{S} \rightarrow \mathfrak{S}^{[n]}$ for the $i$-th projection (coface) map.

\begin{lem}[{\cite[Cor.~2.12]{du-liu-moon-shimizu-purity-F-crystal}}] \label{lem: BK-prism-coface-map-faithfully flat}
Each coface map $p_i^n\colon \mathfrak{S} \rightarrow \mathfrak{S}^{[n]}$ is classically faithfully flat.    
\end{lem}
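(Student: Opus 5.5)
The plan is to identify $\fkS^{[n]}$ with an explicit prismatic envelope over $\fkS$ along the $i$-th factor. From that description, $(p,E)$-complete faithful flatness of $p_i^n$ follows from the Bhatt--Scholze flatness of prismatic envelopes of regular sequences. I would then upgrade to classical faithful flatness using that $\fkS$ is Noetherian. By the symmetry of the coproduct it suffices to treat one coface map; I also expect to reduce from $\fkS^{[n]}$ to $\fkS^{[1]}$, since $\fkS^{[n]}$ is the iterated coproduct $\fkS^{[1]}\amalg_{\fkS}\cdots\amalg_{\fkS}\fkS^{[1]}$, and flat maps are stable under the relevant completed base change and composition.

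\emph{Step 1 (explicit description).} Start from the $(p,E)$-completed tensor product $\fkS^{\widehat\otimes (n+1)}$ over $W(k)$, with the diagonal prelog structure $\N^d\oplus\cdots\oplus\N^d$. To make the map to $R$ strict, I would exactify: adjoin the units $T_j\otimes 1/1\otimes T_j$ for all $j$. This is legitimate because the two images of $T_j$ in $R$ agree up to a unit, and for $j\le m$ it is forced by $\prod T_j=u$. I would then take the prismatic envelope, over the $i$-th copy of $\fkS$ equipped with $(E)$, of the ideal generated by the elements $T_j\otimes 1/1\otimes T_j-1$ and by the kernel of the multiplication map on the remaining étale coordinates. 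Using that $\square_\fkS$ is $(p,E)$-completely étale, I would check that the result is
\[
\fkS^{[1]}\cong \fkS\{z_1,\dots,z_d\}^{\wedge}_{\delta},\qquad z_j=\frac{T_j\otimes 1/1\otimes T_j-1}{E},
\]
where for $j\le m$ the variables satisfy a single relation coming from $\prod_{j\le m}(1+Ez_j)=u\otimes 1/1\otimes u$. Concretely, this is the prismatic envelope of a $(p,E)$-completely regular sequence relative to $\fkS$ in a $(p,E)$-completed polynomial/Laurent algebra over $\fkS$. It must also be verified that this object carries the induced $\delta_\log$-structure and represents the coproduct; for this I would argue as in Koshikawa's construction of log prismatic envelopes and use Lemma~\ref{lem: coprod with fkS is cover}.

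\emph{Step 2 (completely faithfully flat).} By \cite[Prop.~3.13]{bhatt-scholze-prismaticcohom}, the prismatic envelope of a $(p,E)$-completely regular sequence relative to $\fkS$ is $(p,E)$-completely flat over $\fkS$. Modulo $(p,E)$, the map is a nonzero flat extension of $\fkS/(p,E)=R/p$ that contains $R/p$ as a retract via the diagonal. Hence $p_i^1$ is $(p,E)$-completely faithfully flat, and so is $p_i^n$ after iterating.

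\emph{Step 3 (classical).} Since $\fkS$ is Noetherian and $\fkS^{[n]}$ is $(p,E)$-adically complete, bounded, and $(p,E)$-completely flat, it is classically flat over $\fkS$. For this I would invoke the standard criterion that a derived $I$-complete, $I$-completely flat module over a Noetherian ring is flat; this holds because $\mathrm{Tor}$ against finitely generated modules is $I$-adically complete and vanishes mod $I$. Faithfulness then follows from faithful flatness mod $(p,E)$, because every maximal ideal of the $(p,E)$-complete ring $\fkS$ contains $(p,E)$.

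I expect the main obstacle to be Step 1 in the log setting: showing that the log-exactified envelope really is the coproduct, and that the defining sequence is $(p,E)$-completely regular relative to $\fkS$ despite the relation $\prod(1+Ez_j)=u\otimes1/1\otimes u$ among the first $m$ variables. My first attempt would be to eliminate one variable using $u$, reducing to a free situation in $d-1$ variables over $\fkS$.
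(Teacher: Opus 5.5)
Your overall strategy is the right one, and it is how the paper itself argues when it uses this statement: get $(p,E)$-complete faithful flatness of $p_i^n$, then upgrade to classical faithful flatness because $\fkS$ is Noetherian and $\fkS^{[n]}$ is classically $(p,E)$-complete, checking faithfulness modulo $(p,E)$. However, Step~1 contains a genuine error, and it sits exactly at the point you single out as the main obstacle.

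In the absolute site $(X,M_X)_\Prism$ the coproduct is formed over $W(k)$, not over $\fkS_K$. So $u\otimes 1$ and $1\otimes u$ are different elements, and no base log structure identifies them. Exactifying the sum map $\N^d\oplus\N^d\to\N^d$ adjoins $d$ independent units $t_j=T_j\otimes 1/1\otimes T_j$. The identity $\prod_{j\le m}t_j=u\otimes 1/1\otimes u$ only expresses the unit $u\otimes 1/1\otimes u$ in terms of the $t_j$; it places no constraint on the $z_j$. There is therefore nothing to eliminate: $(t_1-1,\dots,t_d-1)$ is already a $(p,E)$-completely regular sequence relative to $\fkS$ in the exactified algebra, and \cite[Prop.~3.13]{bhatt-scholze-prismaticcohom} applies directly.

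Your proposed fix, eliminating one variable to reach $d-1$ free variables, would compute the wrong ring. It gives the relative self-coproduct $\fkS^{\rel,[1]}$ of Lemma~\ref{lem: BK-prism-self-products-relative}. There the base log structure $\N\to\fkS_K$, $1\mapsto u$, identifies the two images of $u$ in the log structure and forces $\prod_{j\le m}t_j=1$. That ring does not represent the absolute coproduct, so the verification you plan in Step~1 would fail.

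In fact you can skip Step~1 entirely. Lemma~\ref{lem: coprod with fkS is cover} says that for every log prism $(A,I,M_{\Spf A})$ in $(X,M_X)_\Prism^\op$ the map $A\to A\amalg\fkS$ is a cover, hence $(p,I)$-completely faithfully flat. Taking $A=\fkS^{[n-1]}$ shows that $\fkS^{[n-1]}\to\fkS^{[n]}$ is $(p,E)$-completely faithfully flat. By induction on $n$ and symmetry of the factors, every $p_i^n$ is then $(p,E)$-completely faithfully flat, and your Step~3 concludes. This is precisely the paper's argument in the proof of Proposition~\ref{prop: right-adjoint-semistable-case}, via Lemma~\ref{lem: coprod with fkS is cover} and \cite[Tag 0912]{stacks-project}.

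One correction to your justification in Step~3: $\mathrm{Tor}_i^{\fkS}(N,\fkS^{[n]})$ does not literally ``vanish mod $(p,E)$''. The complex $N\otimes^L_{\fkS}\fkS^{[n]}/(p,E)$ has $\mathrm{Tor}_i^{\fkS}(N,\fkS/(p,E))\otimes\fkS^{[n]}/(p,E)$ in degree $-i$. The standard argument instead writes $N\otimes^L_{\fkS}\fkS^{[n]}=R\varprojlim_k N\otimes^L_{\fkS}\fkS^{[n]}/(p,E)^k$. It then uses that the system $\{\mathrm{Tor}_i^{\fkS}(N,\fkS/(p,E)^k)\}_k$ is pro-zero for $i>0$, by Artin--Rees.
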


\begin{lem} \label{lem:Breuil-prism-weakly-initial-for-crystalline-prisms}
Suppose that $(X, M_X) = (\Spf R, \mathbf{N}^d)^a$ is small affine.
Then the coproduct $(A, (p), M_{\Spf A}) \amalg (S, (p), M_{\Spf S})$ of any crystalline log prism $(A, (p), M_{\Spf A})$ and a Breuil log prism $(S, (p), M_{\Spf S})$ exists in $(X, M_X)_{\Prism}^{\op}$, and the structure map
\[
(A, (p), M_{\Spf A}) \rightarrow (A, (p), M_{\Spf A}) \amalg  (S, (p), M_{\Spf S})
\]
is a cover.
\end{lem}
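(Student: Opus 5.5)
We reduce to Lemma~\ref{lem: coprod with fkS is cover} by exploiting the fact that the Breuil log prism receives a map from the Breuil--Kisin log prism, and that a crystalline log prism is one where $p$ generates the prism ideal. Concretely, let $(A,(p),M_{\Spf A})$ be a crystalline log prism in $(X,M_X)_\Prism^{\op}$. First, apply Lemma~\ref{lem: coprod with fkS is cover} to get the coproduct $(A',(p),M_{\Spf A'}) \coloneqq (A,(p),M_{\Spf A})\amalg(\fkS,(E),M_{\Spf\fkS})$. It is a flat cover of $A$, so it is again crystalline: $IA'=pA'$ and $E$ becomes $p$ times a unit in $A'$ (up to the $\delta$-structure, the image of $E$ generates $(p)$). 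Write $q_2\colon \fkS\rightarrow A'$ for the structure map.

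Next, we build the coproduct with $S$ as a base change of $A'$ along the Frobenius. Since $(A',(p))$ is crystalline, its Frobenius $\phi_{A'}$ is a map of prisms $(A',(p))\rightarrow(A',(p))$ and hence lifts the relative Frobenius. Concretely, $\phi(E)/p$ is already a unit in $A'$, since $q_2(E)\in pA'$ gives $\phi(q_2(E))\in pA'$ with $\phi(E)\equiv E^p \bmod p$. I would instead look for the universal map out of $S$ directly. A morphism $(S,(p),M_{\Spf S})\rightarrow(B,(p),M_{\Spf B})$ over $X$ into a crystalline log prism should be equivalent to a morphism $\fkS\rightarrow B$ of log prisms over $X$, obtained by composing with $\phi$. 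One direction is clear. For the converse, one uses that $B$ is $p$-torsion free with $\ker(\fkS\to R/p)$ mapping into a PD-ideal (the ideal $(p)$ of a crystalline prism carries divided powers after composing with Frobenius, because $\phi(x)=x^p+p\delta(x)$ and $x^p/p!$ lies in $B$ when $x\in pB$). So $\fkS\rightarrow B$ extends uniquely to $S=\fkS\{\phi(E)/p\}^\wedge_\delta$ by \cite[Cor.~2.39]{bhatt-scholze-prismaticcohom}.

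The compatibility with log structures and with the structure maps to $X$ must be checked: the log structure on $S$ is $\phi^\ast\N^d$, and the structure map of $S$ to $R$ factors through Frobenius. With this universal property, set the candidate coproduct to be $A'$ with its $\fkS$-structure twisted by $\phi_{A'}$: define $S\rightarrow A'$ as the extension of $\phi_{A'}\circ q_2$. However, the $A$-structure must remain $A\rightarrow A'$ and not be twisted, which conflicts with the $X$-structure. The cleaner route is therefore to use the strict-morphism equivalence: Hom$(S,B)\cong$ Hom$(\fkS,B)$ for crystalline $B$ in $(X,M_X)_\Prism^{\op}$, where the maps $\phi\colon \fkS\to S$ define the equivalence. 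The main obstacle is exactly this bijection, and in particular verifying that the map $\fkS\to B$ obtained from $S\to B$ recovers the correct structure map to $X$ and the correct $\delta_{\log}$-log structure. Granting it, $(A,(p),M_{\Spf A})\amalg(S,(p),M_{\Spf S})$ is represented by the same object $A'$, since $A'$ is crystalline and coproducts are tested against crystalline targets after noting every target receiving $A$ is crystalline. The structure map from $A$ is then the cover from Lemma~\ref{lem: coprod with fkS is cover}.
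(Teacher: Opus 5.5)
Your strategy of starting from $A' \coloneqq A\amalg\fkS$ (Lemma~\ref{lem: coprod with fkS is cover}), noting that every object receiving a map from $A$ is crystalline, and invoking \cite[Cor.~2.39]{bhatt-scholze-prismaticcohom} is the right one. However, there is a genuine gap: the bijection you ``grant'' is false, and so is the conclusion that $A'$ itself represents $A\amalg S$.

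The extension supplied by \cite[Cor.~2.39]{bhatt-scholze-prismaticcohom} goes along the natural inclusion $\iota\colon\fkS\to S$. Your forward map, by contrast, is composition with $\phi=\iota\circ\phi_{\fkS}\colon\fkS\to S$. So if $g\colon S\to B$ extends $h\colon\fkS\to B$ along $\iota$, then $g\circ\phi=h\circ\phi_{\fkS}=\phi_B\circ h$, not $h$. Correspondingly, $g$ is compatible with the structure map to $X$ and with the log structure $\phi^\ast\N^d$ (which sends $e_i$ to $T_i^p$) only after Frobenius-twisting those of $B$. This is exactly the conflict you noticed and then set aside. A true inverse of $g\mapsto g\circ\phi$ would require $\delta$-compatible $p$-th roots of the images of the $T_i$ in $B$. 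These exist when $B$ is perfect, but not for a general crystalline $B$, for instance $A'$ itself.

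What \cite[Cor.~2.39]{bhatt-scholze-prismaticcohom} actually gives is the following. For crystalline $B$, composition with $\iota$ identifies morphisms out of $(S,(p),\phi^\ast\N^d)^a$ with morphisms out of the Frobenius twist $(\fkS,(\phi(E)),\phi^\ast\N^d)^a$ of the Breuil--Kisin log prism. Hence $A\amalg S$ is the coproduct of $A$ with this twist. Equivalently, it is the pushout of $A'$ along
\[
\phi\colon(\fkS,(E),\N^d)^a\to(\fkS,(\phi(E)),\phi^\ast\N^d)^a,
\]
whose underlying ring is $A'\widehat{\otimes}_{\fkS,\phi}\fkS$ and which in general is not $A'$.

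This is the paper's argument, and it needs one input your proposal lacks: that this $\phi$ is a flat cover of log prisms \cite[Lem.~2.7]{du-liu-moon-shimizu-purity-F-crystal}. That makes $A\to A'\to A'\widehat{\otimes}_{\fkS,\phi}\fkS$ a composite of covers. Without the Frobenius twist, neither the identification of the coproduct nor the covering claim is established.
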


\begin{proof}
By Lemma~\ref{lem: coprod with fkS is cover}, we have the coproduct $(A, (p), M_{\Spf A})) \amalg (\mathfrak{S}, (E), M_{\Spf \mathfrak{S}})$ of $(A, (p), M_{\Spf A}))$ and $(\mathfrak{S}, (E), M_{\Spf \mathfrak{S}})$ in $(X, M_X)_{\Prism}^{\op}$, and the map
\[
(A, (p), M_{\Spf A})) \rightarrow (A, (p), M_{\Spf A})) \amalg (\mathfrak{S}, (E), M_{\Spf \mathfrak{S}})
\]
is a cover. Consider the Frobenius twist $(\mathfrak{S}, (\phi(E)), \phi^*\mathbf{N}^d)^a \in (X, M_X)_{\Prism}^\op$ of the Breuil--Kisin log prism as in \cite[Ex.~2.15]{du-liu-moon-shimizu-purity-F-crystal}. The map $\phi\colon (\mathfrak{S}, (E), \mathbf{N}^d)^a \rightarrow (\mathfrak{S}, (\phi(E)), \phi^*\mathbf{N}^d)^a$ is a cover by \cite[Lem.~2.7]{du-liu-moon-shimizu-purity-F-crystal}. Let $(B, (p), M_{\Spf B}) \in (X, M_X)_{\Prism}^\op$ be the pushout of $\phi\colon (\mathfrak{S}, (E), \mathbf{N}^d)^a \rightarrow (\mathfrak{S}, (\phi(E)), \phi^*\mathbf{N}^d)^a$ along the structure map $(\mathfrak{S}, (E), M_{\Spf \mathfrak{S}}) \rightarrow (A, (p), M_{\Spf A}) \amalg (\mathfrak{S}, (E), M_{\Spf \mathfrak{S}})$ given by \cite[Rem.~2.4]{du-liu-moon-shimizu-purity-F-crystal}. By construction, $(B, (p), M_{\Spf B})$ is the coproduct of $(A, (p), M_{\Spf A})$ and $(\mathfrak{S}, (\phi(E)), \phi^*\mathbf{N}^d)^a$ in $(X, M_X)_{\Prism}^{\op}$, and the map $(A, (p), M_{\Spf A}) \rightarrow (B, (p), M_{\Spf B})$ is a cover.

Since $(B, (p), M_{\Spf B})$ admits a map from $(\mathfrak{S}, (\phi(E)), \phi^*\mathbf{N}^d)^a$, we have $\phi(E)/p \in B$. By \cite[Cor.~2.39]{bhatt-scholze-prismaticcohom}, the map $(\mathfrak{S}, (\phi(E)), \phi^*\mathbf{N}^d)^a \rightarrow (B, (p), M_{\Spf B})$ factors through
\[
(\mathfrak{S}, (\phi(E)), \phi^*\mathbf{N}^d)^a \rightarrow (S, (p), \phi^*\mathbf{N}^d)^a \rightarrow (B, (p), M_{\Spf B}).
\]
Thus, the log prism $(B, (p), M_{\Spf B}) = (A, (p), M_{\Spf A})) \amalg (\mathfrak{S}, (\phi(E)), \phi^*\mathbf{N}^d)^a$ gives the coproduct $(A, (p), M_{\Spf A})) \amalg (S, (p), M_{\Spf S})$. 
\end{proof}

Now, denote by $(\fkS_{K}, (E), M_{\Spf S_{K}})$ (resp. $(S_{K}, (p), M_{\Spf S_{K}})$) the Breuil--Kisin (resp. Breuil) log prism in $(\Spf\calO_K, M_{\Spf\calO_K})_{\Prism}$. Consider the map $\fkS_{K}/(E) = \calO_K \rightarrow S_{K}/(p)$ induced by $\phi\colon \fkS_{K} \rightarrow S_{K}$, and let $X^{(1)}\coloneqq X\times_{\Spf \calO_K} \Spf S_{K}/(p)$ the corresponding base change with the induced log structure $M_{X^{(1)}}$. When $X = \Spf R$ is small affine, the following relative version is proved in \cite{min-wang-HT-crys-log-prism}.

\begin{lem}[cf. {\cite[\S~3.2.1]{min-wang-HT-crys-log-prism}}] \label{lem: BK-prism-self-products-relative}
Let $X = \Spf R$ be small affine with a framing $\square$. The Breuil--Kisin log prism $(\fkS_{\square}, (E), M_{\Spf\fkS_{\square}})$ in $(X, M_X)_{\Prism}^\op$ covers the final object of $\Sh(((X, M_X)/(\fkS_{K}, M_{\Spf \fkS_{K}}))_{\Prism})$. Furthermore, for each $n \geq 0$, the $(n+1)$-st self-coproduct of $(\fkS_{\square}, (E), M_{\Spf\fkS_{\square}})$ in $((X, M_X)/(\fkS_{K}, M_{\Spf \fkS_{K}}))_{\Prism}^\op$ exists (which we denote by $(\fkS_{\square}^{\rel, [n]}, (E), M_{\Spf \fkS_{\square}^{\rel, [n]}}$). Any coface map $\fkS_{\square} \rightarrow \fkS_{\square}^{\rel, [n]}$ is classically faithfully flat.
\end{lem}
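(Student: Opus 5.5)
The plan is to deduce the relative statement from the absolute one (Lemmas~\ref{lem: coprod with fkS is cover} and \ref{lem: BK-prism-coface-map-faithfully flat}), together with an explicit description of the relative coproduct as a quotient, or more precisely a prismatic envelope, of the absolute one.

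\emph{Covering.} Let $(A,I,M_{\Spf A})$ be an object of $((X,M_X)/(\fkS_K,M_{\Spf\fkS_K}))_\Prism^\op$. By Lemma~\ref{lem: coprod with fkS is cover}, the absolute coproduct $C\coloneqq A\amalg\fkS_\square$ exists and $A\rightarrow C$ is a flat cover. The object $C$ has two $\fkS_K$-structures: one through $A$, and one through $\fkS_K\rightarrow\fkS_\square$, $u\mapsto T_1\cdots T_m$. On the log side, the two images of the generator of $\N$ differ by a unit of $C$; call it $v$. I will form the relative coproduct as the prismatic envelope $C\{(u_A-u_\fkS)/E\}$, but performed in the log sense. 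That is, I adjoin $\log v$-type data, following the exactification-and-envelope construction of Koshikawa / Min--Wang \S~3.2.1: impose $v=1$ after passing to the $(p,E)$-completed $\delta$-envelope of the ideal generated by $v-1$ together with $I$. Since $v\equiv1$ modulo $I$ (the two maps to $A/I$ agree) and the relevant log sequence is regular, \cite[Prop.~3.13]{bhatt-scholze-prismaticcohom} applies and gives a flat prismatic envelope. The resulting map $A\rightarrow C\rightarrow C'$ is then $(p,I)$-completely faithfully flat, since it is a composite of a cover with a flat envelope of a regular sequence that is trivial modulo $I$. It is also clearly the coproduct in the relative site, and this yields the covering property.

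\emph{Self-coproducts and flatness.} Apply the same construction to $A=\fkS_\square$ iteratively, or directly to $\fkS^{[n]}$, to obtain $\fkS^{\rel,[n]}$ as the completed $\delta$-log envelope of $\fkS^{[n]}$ forcing the $n+1$ images of the log generator of $\fkS_K$ to coincide. To establish classical faithful flatness of the coface maps, I would compute explicitly. Writing the coordinates of the $i$-th factor as $T_j^{(i)}$, the absolute $\fkS^{[n]}$ is (up to completed \'etale base change) a completed $\delta$-envelope over $\fkS$ in the variables $(T_j^{(i)}/T_j^{(0)}-1)/E$ together with log variables. The relative version imposes $\prod_{j\le m}T_j^{(i)}/T_j^{(0)}=1$, which removes exactly one variable per index $i$. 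Thus $\fkS^{\rel,[n]}$ is $\fkS$ tensored with a $(p,E)$-completed free PD/$\delta$-polynomial-type algebra in $(d-1)n$ variables, as in \cite[Cor.~2.12]{du-liu-moon-shimizu-purity-F-crystal}. Since $\fkS$ is Noetherian and the extension is $(p,E)$-completely faithfully flat with bounded torsion, classical faithful flatness follows by the same argument as in Lemma~\ref{lem: BK-prism-coface-map-faithfully flat}.

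\emph{Main obstacle.} The hardest step is the explicit description in the last paragraph. One has to verify that imposing the relative condition yields a regular sequence, so that the envelope is flat, and that the resulting ring is still (completed-)flat over $\fkS$. The delicate point is the interaction between the log coordinates $T_1,\dots,T_m$, which satisfy $\prod T_j=u$, and the relative condition. I would handle it by choosing the variables $T_1,\dots,T_{m-1},T_{m+1},\dots,T_d$ as free log coordinates over $\fkS_K$, which makes the relative situation look like the smooth case, and then follow \cite[\S~3.2.1]{min-wang-HT-crys-log-prism}.
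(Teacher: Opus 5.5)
Your covering argument has a genuine gap. You treat the passage from the absolute coproduct $C=A\amalg\fkS_\square$ to the relative one as a flat prismatic envelope, but it is the opposite operation.

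In $C$, both $u_A$ and $u_{\fkS}$ map to $\pi$ in $C/IC$, and $IC=EC$; likewise your unit $v$ lies in $1+EC$. Since $E$ is a nonzerodivisor, $(u_A-u_{\fkS})/E$ and $(v-1)/E$ are already elements of $C$. So $C\{(u_A-u_{\fkS})/E\}^{\wedge}_{\delta}=C$, and \cite[Prop.~3.13]{bhatt-scholze-prismaticcohom} applied to $(I,v-1)=I$ says nothing.

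What the relative site requires is that the two $\fkS_K$-structures on $C$ \emph{coincide}. You must kill $(v-1)/E$ together with all its $\delta$-iterates, i.e.\ pass to the pushout $C\widehat{\otimes}_{\fkS_K^{[1]}}\fkS_K$ along the diagonal $\fkS_K^{[1]}\rightarrow\fkS_K$. This is how Construction~\ref{const: Cech-nerve-global-separated} describes the relative coproducts of Breuil--Kisin prisms. The map $C\rightarrow C\widehat{\otimes}_{\fkS_K^{[1]}}\fkS_K$ is a quotient, not a flat map. So ``a composite of a cover with a flat envelope'' is not a valid reason for $A\rightarrow C'$ to be $(p,I)$-completely faithfully flat, and as written the covering assertion is not proved. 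Your later phrases ``envelope of $\fkS^{[n]}$ forcing the images to coincide'' and ``imposing the relative condition yields a regular sequence'' rest on the same confusion.

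What is missing is the explicit computation you only sketch for $\fkS^{\rel,[n]}$, carried out for an arbitrary object $A$ of the relative site. Build $C'$ directly from $A\widehat{\otimes}_{\fkS_K}\fkS_\square$:
\begin{enumerate}
\item Choose lifts $t_j\in\Gamma(\Spf A,M_{\Spf A})$ of the images of $e_j$ (\'etale locally on $A$ if needed). Normalize them so that $\prod_{j\le m}t_j$ is the image of $1\in\N$ under the $\fkS_K$-log structure; this is possible because the discrepancy lies in $1+I$.
\item After exactification you get units $U_j=T_j/t_j$ with $\prod_{j\le m}U_j=1$ holding automatically.
\item The kernel of the map to $A/I$ is then generated, up to completed \'etale base change, by $I$ and the sequence $(U_j-1)_{j\neq m}$. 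This sequence is $(p,I)$-completely regular relative to $A$.
\item Now \cite[Prop.~3.13]{bhatt-scholze-prismaticcohom} genuinely applies and gives $C'\cong A\{(U_j-1)/E : j\neq m\}^{\wedge}_{\delta}$, which is $(p,I)$-completely faithfully flat over $A$. This is the covering statement.
\end{enumerate}
Equivalently, write the absolute coproduct as $A\{z_1,\dots,z_d\}^{\wedge}_{\delta}$ with $U_j=1+Ez_j$. Killing the $\delta$-ideal generated by $\bigl(\prod_{j\le m}(1+Ez_j)-1\bigr)/E$ just eliminates $z_m$.

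Taking $A=\fkS_\square$ and iterating gives your $(d-1)n$-variable description of $\fkS_\square^{\rel,[n]}$. Classical faithful flatness then follows from Noetherianity of $\fkS_\square$, as you say (cf.~\cite[0912]{stacks-project}). Note that the paper does not prove this lemma itself; it cites \cite[\S~3.2.1]{min-wang-HT-crys-log-prism}, where this explicit computation is carried out.
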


Note that the map $\fkS_{K} \rightarrow \fkS_{\square}$ is classically faithfully flat, and so the structure map $\fkS_{K} \rightarrow \fkS_{\square}^{\rel, [n]}$ is classically faithfully flat for any $n$. Thus, the derived $p$-completion of $\fkS_{\square}^{\rel, [n]}\otimes_{\fkS_{K}, \phi}^L S_{K}$ is discrete and agrees with the classical $p$-completion $\fkS_{\square}^{\rel, [n]}\widehat{\otimes}_{\fkS_{K}, \phi} S_{K}$ of $\fkS_{\square}^{\rel, [n]}\otimes_{\fkS_{K}, \phi} S_{K}$. Furthermore, since the map $\phi\colon (\fkS_{K}, (E), M_{\Spf\fkS_{K}}) \rightarrow (S_{K}, (p), M_{\Spf S_{K}})$ in $(\Spf\calO_K, M_{\Spf\calO_K})_{\Prism}^\op$ is strict, $S_{\square}^{\rel, [n]}\coloneqq \fkS_{\square}^{\rel, [n]} \widehat{\otimes}_{\fkS_{K}, \phi} S_{K}$ with the induced log and $\delta_\log$ structures gives rise to an object $(S_{\square}^{\rel, [n]}, (p), M_{\Spf S_{\square}^{\rel, [n]}})$ of $((X^{(1)}, M_{X^{(1)}})/(S_{K}, M_{\Spf S_{K}}))_{\Prism}^\op$. 

\begin{lem} \label{lem: Breuil-prism-self-products-relative}
Let $X = \Spf R$ be small affine with framing $\square$. Then $(S_{\square}^{\rel, [0]}, (p), M_{\Spf S_{\square}^{\rel, [0]}})$ covers the final object of $\Sh(((X^{(1)}, M_{X^{(1)}})/(S_{K}, M_{\Spf S_{K}}))_{\Prism})$, and its $(n+1)$-st self-coproduct is given by $(S_{\square}^{\rel, [n]}, (p), M_{\Spf S_{\square}^{\rel, [n]}})$.
\end{lem}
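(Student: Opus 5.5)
The strategy is to deduce the statement from Lemma~\ref{lem: BK-prism-self-products-relative} by base change along the strict map $\phi\colon (\fkS_{K}, (E), M_{\Spf\fkS_{K}}) \rightarrow (S_{K}, (p), M_{\Spf S_{K}})$. We will prove that base change is a functor between relative prismatic sites that preserves coproducts and covers, and that every object of the target admits enough maps from base-changed objects.

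\emph{Step 1: the base change functor.} We first construct a functor
\[
\beta\colon ((X, M_X)/(\fkS_{K}, M_{\Spf \fkS_{K}}))_{\Prism}^\op \rightarrow ((X^{(1)}, M_{X^{(1)}})/(S_{K}, M_{\Spf S_{K}}))_{\Prism}^\op,
\]
sending $(B,(E)B,M_{\Spf B})$ to $(B\widehat{\otimes}_{\fkS_K,\phi}S_K,(p),\cdot)$, with the log structure pulled back. This is an instance of Lemma~\ref{lem:pullback of f-prismatic morphism} and \cite[Rem.~2.4]{du-liu-moon-shimizu-purity-F-crystal}. Strictness of $\phi$ ensures that the log structure and the $\delta_\log$-structure descend correctly. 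We will only apply $\beta$ to objects flat over $\fkS_K$, such as $\fkS_\square^{\rel,[n]}$, where the completed tensor product is the derived one and is bounded, so $\beta$ is well defined there. Also, $X^{(1)}=X\times_{\calO_K}S_K/p$ is exactly the reduction of the base change.

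\emph{Step 2: the universal property.} By adjunction, for any $(C,(p),M_{\Spf C})$ in the target site, morphisms from $\beta(B)$ in the opposite category correspond to $\fkS_K$-prism maps $B\rightarrow C$ over $(X,M_X)$, where $C$ is regarded over $\fkS_K$ via $\phi$. Here $\Spf(C/p)\rightarrow X^{(1)}\rightarrow X$ makes $C$ a log prism over $(X,M_X)$ relative to $\fkS_K$ (restriction along $\phi$). Since $\beta$ is a left adjoint to this restriction, it preserves coproducts, so $\beta(\fkS_\square^{\rel,[n]})$ is the $(n+1)$-fold self-coproduct of $\beta(\fkS_\square)=S_\square^{\rel,[0]}$. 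This identifies $S_\square^{\rel,[n]}$ with the $(n+1)$-st self-coproduct.

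\emph{Step 3: covering the final object.} This is the main point. Given $C$ in the target, we restrict it as in Step 2. By Lemma~\ref{lem: BK-prism-self-products-relative}, the coproduct $C\amalg\fkS_\square$ in the $\fkS_K$-relative site exists and $C\rightarrow C\amalg\fkS_\square$ is a flat cover. Its underlying prism is $(p)$-torsion and lies over $S_K$, so by the adjunction it is the coproduct $C\amalg\beta(\fkS_\square)$ in the $S_K$-relative site. The reason is that a map out of $\beta(\fkS_\square)$ into an $S_K$-algebra is the same as a map out of $\fkS_\square$ over $\fkS_K$.

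The obstacle is to check that the pushout $C\amalg \fkS_\square$, formed in the $\fkS_K$-relative site, is canonically an $S_K$-algebra whose structure is compatible with the one from $C$. For this we use that the map $\fkS_K\rightarrow C$ factors through $S_K$, and that the coproduct can be computed as a prismatic envelope of $C\widehat{\otimes}\fkS_\square$ over $\fkS_K$, which therefore admits a natural map from $C$ and hence from $S_K$.

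We also need the ideal to be $(p)$ and the log structure to be strict over $X^{(1)}$. Both are inherited from $C$, because the cover is flat and strict. Hence the map $C\rightarrow C\amalg S_\square^{\rel,[0]}$ is a cover, and $S_\square^{\rel,[0]}$ covers the final object.
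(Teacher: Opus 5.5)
Your proposal is correct and follows the paper's route. The paper deduces the lemma from Lemma~\ref{lem: BK-prism-self-products-relative} together with the construction $S_{\square}^{\rel,[n]}=\fkS_{\square}^{\rel,[n]}\widehat{\otimes}_{\fkS_K,\phi}S_K$, and your Step~2 adjunction (restricting along the strict map $\phi$, using $\phi(E)S_K=pS_K$) is exactly what makes that one-line deduction work.

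One minor remark. As stated, Lemma~\ref{lem: BK-prism-self-products-relative} only says that $\fkS_\square$ covers the final object; it does not say that $C\amalg\fkS_\square$ exists for arbitrary $C$. The weaker statement already suffices: a cover of the restriction of $C$ by objects receiving a map from $\fkS_\square$ is also a cover in the $S_K$-relative site, and those objects receive a map from $S_\square^{\rel,[0]}$ by the same adjunction.
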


\begin{proof}
This follows from Lemma~\ref{lem: BK-prism-self-products-relative} and the construction of $S_{\square}^{\rel, [n]}$.
\end{proof}

\smallskip
\noindent \textbf{Separated case.}
In the global case when $X$ is separated, the following \v{C}ech nerve construction via a family of Breuil--Kisin log prisms will be useful.

\begin{construction}[\v{C}ech nerve for separated semistable case] \label{const: Cech-nerve-global-separated}
Suppose $(X, M_X)$ is a \emph{separated} semistable $p$-adic formal scheme over $\Spf \calO_K$. Take a $p$-completely \'etale covering $\{g_\lambda\colon X_\lambda\rightarrow X\}_{\lambda\in \Lambda}$ such that each $X_\lambda$ is small affine with framing $\square_\lambda$. For each $\lambda$, let $(\fkS_{\square_{\lambda}}, (E), M) \in (X_{\lambda}, M_{X_{\lambda}})_{\Prism}$ be the Breuil--Kisin log prism.\footnote{Here we often write $M$ for the log structure of a log prism (e.g.~$M_{\Spf \fkS_{\square_{\lambda}}}$) to simplify the notation.}
Since $X$ is separated, $X_{\lambda \lambda'}\coloneqq X_{\lambda}\times_X X_{\lambda'}$ is small affine for each $\lambda, \lambda' \in \Lambda$. Since $X_{\lambda \lambda'} \rightarrow X_{\lambda}$ is $p$-completely \'etale, it lifts uniquely to a $(p, E)$-completely \'etale map $(\fkS_{\square_{\lambda}}, (E), M) \rightarrow (\fkS_{\lambda \lambda', \square_{\lambda}}, (E), M)$ in $(X_{\lambda}, M_{X_{\lambda}})_{\Prism}^{\op}$ by \cite[Lem.~2.13, Cor.~2.15]{koshikawa}, where $(\fkS_{\lambda \lambda', \square_{\lambda}}, (E), M)$ is the Breuil--Kisin log prism in $(X_{\lambda \lambda'}, M_{X_{\lambda \lambda'}})_{\Prism}$ with respect to the framing $\square_{\lambda}$. Similarly, the $p$-completely \'etale map $X_{\lambda \lambda'} \rightarrow X_{\lambda'}$ lifts uniquely to a $(p, E)$-completely \'etale map $(\fkS_{\square_{\lambda}}, (E), M) \rightarrow (\fkS_{\lambda \lambda', \square_{\lambda'}}, (E), M)$ where $(\fkS_{\lambda \lambda', \square_{\lambda'}}, (E), M)$ is the Breuil--Kisin log prism in $(X_{\lambda \lambda'}, M_{X_{\lambda \lambda'}})_{\Prism}$ for the framing $\square_{\lambda'}$. By Lemma~\ref{lem: coprod with fkS is cover}, we have the coproduct $(\fkS_{\lambda \lambda', \square_{\lambda}, \square_{\lambda'}}, (E), M)$ of $(\fkS_{\lambda \lambda', \square_{\lambda}}, (E), M)$ and $(\fkS_{\lambda \lambda', \square_{\lambda'}}, (E), M)$ in $(X_{\lambda \lambda'}, M_{X_{\lambda \lambda'}})_{\Prism}^{\op}$. Note that $(\fkS_{\lambda \lambda', \square_{\lambda}, \square_{\lambda'}}, (E), M)$ is also the coproduct of $(\fkS_{\square_{\lambda}}, (E), M)$ and $(\fkS_{\square_{\lambda'}}, (E), M)$ in $(X, M_X)_{\Prism}^{\op}$, and write 
\[
(\fkS_{\square_{\lambda}, \square_{\lambda'}}, (E), M_{\Spf \fkS_{\square_{\lambda}, \square_{\lambda'}}})\coloneqq (\fkS_{\lambda \lambda', \square_{\lambda}, \square_{\lambda'}}, (E), M_{\Spf \fkS_{\lambda \lambda', \square_{\lambda}, \square_{\lambda'}}}).
\]
Repeating this argument, we deduce that for any $\lambda_0, \ldots, \lambda_n \in \Lambda$, the coproduct $(\fkS_{\square_{\lambda_0}, \ldots, \square_{\lambda_n}}, (E), M)$ of $(\fkS_{\square_{\lambda_0}}, (E), M), \ldots, (\fkS_{\square_{\lambda_n}}, (E), M)$ exists in $(X, M_X)_{\Prism}^{\op}$. 

The family $(\fkS_{\square_{\lambda}}, (E), M_{\Spf\fkS_{\square_{\lambda}}})_{\lambda\in \Lambda}$ jointly covers the final object of $\Sh((X, M_X)_{\Prism})$ by Lemma~\ref{lem: coprod with fkS is cover} and \cite[Lem.~2.13, Cor.~2.15]{koshikawa}, and we get the \v{C}ech nerve of the cover $\coprod_{\lambda\in \Lambda}(\Spf \fkS_{\square_{\lambda}}, (E), M) \twoheadrightarrow \ast$; the $n$-th spot is $\coprod_{(\lambda_0, \ldots, \lambda_n) \in \Lambda^{n+1}}(\Spf\fkS_{\square_{\lambda_0}, \ldots, \square_{\lambda_n}}, (E), M)$. We simply denote the \v{C}ech nerve by $\Spf( \fkS^{[\bullet]}_{\Lambda})$. By $(p, E)$-completely faithfully flat descent, we have a natural equivalence
\[
\cal{D}_{\perf}((X, M_X)_{\Prism}) \cong \lim_{[n] \in \Delta} \calD_{\perf}(\fkS^{[n]}_{\Lambda}).
\]
Furthermore, we also have a natural equivalence
\[
\mathcal{D}_{\perf}^{\phi}((X, M_X)_{\Prism}) \cong \lim_{[n] \in \Delta} \calD_{\perf}^{\phi}(\fkS^{[n]}_{\Lambda}).
\]
For this, by Zariski descent, we may reduce to the case that $X$ is quasi-compact so that the set $\Lambda$ above can be taken to be finite. Then the equivalence follows from $(p, E)$-completely faithfully flat descent for Frobenius structure \cite[Thm.~7.8]{mathew-descent}, by considering an effective morphism which induces the isogeny.

For the relative site $((X, M_X)/(\fkS_{K}, M_{\Spf \fkS_{K}}))_{\Prism}$, we also have an analogous \v{C}ech nerve construction generalizing Lemma~\ref{lem: BK-prism-self-products-relative}. Note that the family $(\fkS_{\square_{\lambda}}, (E), M_{\Spf\fkS_{\square_{\lambda}}})_{\lambda\in \Lambda}$ jointly covers the final object of $\Sh(((X, M_X)/(\fkS_{K}, M_{\Spf \fkS_{K}}))_{\Prism})$. For each $n \geq 0$, let $(\fkS_{K}^{[n]}, (E), M_{\Spf\fkS_{K}^{[n]}})$ be the $(n+1)$-th self-coproduct of $(\fkS_{K}, (E), M_{\Spf\fkS_{K}})$ in $(\Spf\calO_K, M_{\can})_{\Prism}$. Let $\lambda_0, \ldots, \lambda_n \in \Lambda$. Since the structure map $\fkS_{K}^{[n]} \rightarrow \fkS_{\square_{\lambda_0}, \ldots, \square_{\lambda_n}}$ is $(p, E)$-completely faithfully flat, the pushout of this map along the diagonal map $\fkS_{K}^{[n]} \rightarrow \fkS_{K}$ in the category of bounded prisms is given by the classical $(p, E)$-completion of $\fkS_{\square_{\lambda_0}, \ldots, \square_{\lambda_n}}\otimes_{\fkS_{K}^{[n]}} \fkS_{K}$ by \cite[Lem.~3.3]{du-liu-moon-shimizu-completed-prismatic-F-crystal-loc-system} equipped with the induced log and $\delta_{\log}$-structures. This gives the coproduct of $(\fkS_{\square_{\lambda_i}}, (E), M_{\Spf\fkS_{\square_{\lambda_i}}})$'s with $0 \leq i \leq n$ in $((X, M_X)/(\fkS_{K}, M_{\Spf \fkS_{K}}))_{\Prism}^{\op}$. Denote the corresponding \v{C}ech nerve by $\Spf(\fkS_{\Lambda}^{\rel, [\bullet]})$.

Lastly, for each $\lambda$, let $(S_{\square_{\lambda}}, (p), M_{\Spf S_{\square_{\lambda}}}) \in (X_{\lambda}, M_{X_{\lambda}})_{\Prism}$ be the Breuil log prism. By Lemma~\ref{lem:Breuil-prism-weakly-initial-for-crystalline-prisms} and \cite[Lem.~2.13, Cor.~2.15]{koshikawa}, the family $(S_{\square_{\lambda}}, (p), M)_{\lambda \in \Lambda}$ jointly covers the final object of $\Sh(((X, M_X)/(S_{K}, M_{\Spf S_{K}}))_{\Prism})$, and a similar construction as above gives the corresponding \v{C}ech nerve which we denote by $\Spf(S_{\Lambda}^{\rel, [\bullet]})$.
\end{construction}

\subsection{Analytic prismatic $F$-crystals and perfect complexes} \label{subsec: analy-prism-crys-perf-cx} 

Throughout this subsection, fix a semistable $p$-adic formal scheme over $\Spf \calO_K$.
Note that the restriction to the analytic locus gives a functor
\[
j^*\colon \CR^{\wedge, \an (, \phi)}((X, M_X)_{\Prism}, \calO_{\Prism}) \rightarrow \Vect^{\mathrm{an} (, \phi)}((X,M_X)_\Prism).
\]

\begin{prop}[{cf. \cite[Prop.~6.8]{Tian-prism-etale-comparison}}] \label{prop: right-adjoint-semistable-case}
 The functor $j^*$ admits a right adjoint
\[
j_*\colon \Vect^{\mathrm{an} (, \phi)}((X,M_X)_\Prism) \rightarrow \CR^{\wedge, \an (,\phi)}((X, M_X)_{\Prism}, \calO_{\Prism})
\]
such that the counit map $j^*j_* \rightarrow \id$ is an equivalence. In particular, $j_*$ is fully faithful. Furthermore, such $j_*$ is unique up to canonical isomorphism.    
\end{prop}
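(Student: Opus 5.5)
Uniqueness is formal from the adjunction, so the work is existence, which I would reduce to Proposition~\ref{prop: relations-prism-crystals}(1) by working locally and then gluing. First treat the case where $X=\Spf R$ is small affine with a framing $\square$. Take the Breuil--Kisin log prism $(\fkS_\square,(E),M_{\Spf\fkS_\square})$. It covers the final object by Lemma~\ref{lem: coprod with fkS is cover}. Its self-coproducts $\fkS^{[n]}$ exist, and the coface maps are classically faithfully flat by Lemma~\ref{lem: BK-prism-coface-map-faithfully flat}. Moreover, $\fkS_\square$ is regular Noetherian, since it is $(p,E)$-completely étale over $W(k)\langle T_1,\ldots,T_d^{\pm1}\rangle[\![u]\!]/(T_1\cdots T_m-u)\cong W(k)\langle T_1,\ldots,T_m,T_{m+1}^{\pm1},\ldots\rangle$ suitably completed, which is regular. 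Hence Proposition~\ref{prop: relations-prism-crystals}(1) gives a right adjoint $j_\ast$ on $(X,M_X)_\Prism$ with $j^\ast j_\ast\cong\id$, together with its $\phi$-version.

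Next, to globalize, I would record the intrinsic description of $j_\ast\calF$ implicit in that proof. For any $(B,J,M_{\Spf B})$ admitting a map from $\fkS_\square$, the value is $(j_\ast\calF)_B=H^0(\Spec\fkS_\square\smallsetminus V(p,E),\calF_{/\fkS_\square})\widehat\otimes_{\fkS_\square}B$. By the adjunction and uniqueness of right adjoints, the resulting sheaf does not depend on the framing: two framings $\square,\square'$ give two right adjoints of the same functor $j^\ast$, so they are canonically isomorphic, and these isomorphisms satisfy the cocycle condition by uniqueness. Similarly, for a $p$-completely étale map $U\to X$ of small affines, restriction of $j_\ast$ along $(U,M_U)_\Prism\to(X,M_X)_\Prism$ is compatible. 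The reason is that the Breuil--Kisin prism of $U$ is the étale lift of that of $X$, and $H^0$ of $j^0_\ast$ commutes with flat base change; this is the flat base change already used in Proposition~\ref{prop: relations-prism-crystals}(1).

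For general $X$, choose an étale cover by small affines $X_\lambda$, with intersections handled first in the separated case via Construction~\ref{const: Cech-nerve-global-separated}. Define $j_\ast\calF$ on each localized site $(X_\lambda,M_{X_\lambda})_\Prism$ as above, and glue using the canonical isomorphisms on overlaps. The site $(X,M_X)_\Prism$ is covered by the $(X_\lambda,M_{X_\lambda})_\Prism$ via the functor $\nu_X$: any $(B,J)$ has an étale cover whose pieces map to some $X_\lambda$, and this lifts to $B$. Completed crystals glue because of flat descent and Remark~\ref{rem:completed crystal as limit}. The adjunction $\Hom(\calG,j_\ast\calF)\cong\Hom(j^\ast\calG,\calF)$ then holds locally and hence globally by descent for morphisms, and $j^\ast j_\ast\cong\id$ is local. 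The Frobenius structure is transported along the same canonical isomorphisms.

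I expect the main obstacle to be the independence and gluing step. Concretely, one must check that the locally defined modules $\fkS$-$H^0$-extensions are compatible under arbitrary maps of log prisms, not just those through a fixed $\fkS_\square$. I would handle this via the coproduct $\fkS_{\square,\square'}$ and the flatness of its projections. The key input would be Lemma~3.24(iv) of du-liu-moon-shimizu, which says that inverting $p$ or $I$ commutes with base change, together with the fact that a finitely generated module over the regular Noetherian $\fkS_\square$ extended from the punctured spectrum is reflexive. Reflexivity is what forces the two $H^0$ constructions to agree.
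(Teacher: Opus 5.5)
Your proposal is correct and follows essentially the paper's route: check the hypotheses of Proposition~\ref{prop: relations-prism-crystals}(1) with Breuil--Kisin log prisms, which are regular Noetherian, cover the final object, and have classically flat coface maps, then globalize using uniqueness of right adjoints and descent. The only difference is that the paper applies (1) in one step to the finite product $\prod_\lambda \fkS_{\square_\lambda}$ in the separated case and then glues Zariski-locally, whereas you first do small affines and then glue \'etale-locally; your closing worry is unnecessary, since uniqueness of adjoints already gives independence of the framing and (1) already produces a completed crystal on the whole site, so no reflexivity argument is needed.
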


\begin{proof}
The uniqueness of $j_*$ follows from the adjunction property. Suppose first that $X$ is separated. By Construction~\ref{const: Cech-nerve-global-separated}, we have a family of Breuil--Kisin log prisms which jointly covers the final object of the topos associated to $(X, M_X)_{\Prism}$, and have corresponding \v{C}ech nerve. Note that the underlying ring of any Breuil--Kisin log prism is regular Noetherian. Furthermore, each coface map from a Breuil--Kisin log prism of the \v{C}ech nerve is classically flat by Lemma~\ref{lem: coprod with fkS is cover} and \cite[Tag 0912]{stacks-project}. Thus, we can apply Proposition~\ref{prop: relations-prism-crystals}(1) to obtain $j_*$ in the separated case. It also follows from the uniqueness that $j_*$ is compatible with \'etale pullbacks.  

In the general case, we have a Zariski covering of $X$ by separated semistable $p$-adic formal schemes over $\calO_K$. So we prove the general case by observing that both source and target categories satisfy Zariski descent.
\end{proof}

\begin{prop} \label{prop: functor-completed-prism-cryst-perfect-cx}
 We have a canonical fully faithful functor 
\[
\iota\colon \CR^{\wedge, \an (, \phi)}((X, M_X)_{\Prism}, \calO_{\Prism}) \rightarrow \calD_{\perf}^{(\phi)}((X, M_X)_{\Prism})
\]
together with a morphism  $\iota(\calF)\rightarrow \calF$ in $\calD((X, M_X)_{\Prism}, \calO_{\Prism})$ functorial in $\calF$ (and compatible with $\phi$).
\end{prop}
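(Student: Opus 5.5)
The plan is to build $\iota$ locally from Proposition~\ref{prop: relations-prism-crystals}(2) and then glue, following the same pattern as the proof of Proposition~\ref{prop: right-adjoint-semistable-case}.

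First suppose $X$ is small affine with framing $\square$. Take $A=\fkS_\square$, the Breuil--Kisin log prism. It satisfies the hypotheses of Proposition~\ref{prop: relations-prism-crystals}: it covers the final object by Lemma~\ref{lem: coprod with fkS is cover}, it is regular Noetherian, and its self-coproducts $\fkS^{[n]}$ exist with classically faithfully flat coface maps by Lemma~\ref{lem: BK-prism-coface-map-faithfully flat}. So we obtain $\iota_\square(\calF)$ together with $f\colon\iota_\square(\calF)\rightarrow\calF$, and this is fully faithful and compatible with $\phi$.

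The key point is independence of the framing. Let $\square'$ be another framing. The coproduct $\fkS_{\square,\square'}$ receives maps from both $\fkS_\square$ and $\fkS_{\square'}$, and these maps are classically flat: Lemma~\ref{lem: coprod with fkS is cover} shows they are covers, and since the rings are Noetherian this gives classical flatness via \cite[Tag 0912]{stacks-project}. The same holds for the higher self-coproducts, which are also Noetherian. By property (b) of Proposition~\ref{prop: relations-prism-crystals}(2), both $\iota_\square(\calF)$ and $\iota_{\square'}(\calF)$ evaluate on $\fkS_{\square,\square'}^{[\bullet]}$ isomorphically to $\calF$ via $f$. This yields a canonical isomorphism $\iota_\square(\calF)|\cong\iota_{\square'}(\calF)|$ over the cover $h_{\fkS_{\square,\square'}}$, compatible with the maps to $\calF$. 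By descent in $\calD_\perf$ (both objects are determined by their restrictions to this cover together with descent data), it descends to an isomorphism $\iota_\square(\calF)\cong\iota_{\square'}(\calF)$ over $\calF$. Since everything is defined over $\calF$, the triple-overlap cocycle is automatic: the isomorphisms are characterized uniquely by property (b), as remarked in the proof. I expect this to be the main obstacle: making the uniqueness of $(\iota,f)$ precise at the $\infty$-categorical level. Here $\Hom$ in $\calD_\perf$ over the \v{C}ech nerve reduces to $\Hom$ of modules in degree $0$ after evaluation, because the evaluations are discrete and equal to $\calF$. That is the same argument as the full faithfulness step in the proof of Proposition~\ref{prop: relations-prism-crystals}.

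For the separated case, use the family of Construction~\ref{const: Cech-nerve-global-separated}. Apply Proposition~\ref{prop: relations-prism-crystals}(2) to $B=\prod_\lambda\fkS_{\square_\lambda}$ as in Remark~\ref{rem: CA-for-jointly-surjective-case}, reducing first to quasi-compact $X$ by Zariski descent. Each $\fkS_{\square_{\lambda_0},\dots,\square_{\lambda_n}}$ is Noetherian with flat coface maps, so the same argument applies. Alternatively, glue the small affine $\iota_{\square_\lambda}$ along the \'etale cover, using their compatibility with \'etale pullback; this compatibility follows from the same characterization by (b), since a $(p,E)$-completely \'etale map of Breuil--Kisin prisms is flat. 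Finally, the general case follows from Zariski descent of both $\CR^{\wedge,\an(,\phi)}$ and $\calD_\perf^{(\phi)}$ (the latter via the equivalences in Construction~\ref{const: Cech-nerve-global-separated}). Full faithfulness is local, so it holds globally as well, and compatibility with $\phi$ likewise descends.
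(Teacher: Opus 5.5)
Your proposal is correct and follows the paper's proof essentially step for step: independence of the framing via the cover $\fkS_{\square,\square'}$ and properties (a), (b) of Proposition~\ref{prop: relations-prism-crystals}(2), then the family of Construction~\ref{const: Cech-nerve-global-separated} in the separated case, then Zariski descent. Your remark that the mapping spaces are discrete also spells out the $\infty$-categorical coherence that the paper leaves implicit.

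One small correction: the coproducts $\fkS_{\square,\square'}^{[n]}$ and $\fkS_{\square_{\lambda_0},\ldots,\square_{\lambda_n}}$ are generally not Noetherian, since they are prismatic envelopes. You do not need this, because \cite[0912]{stacks-project} only uses that the source $\fkS_{\square}$ is Noetherian and that the target is classically $(p,E)$-complete and $(p,E)$-completely flat over it.
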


\begin{proof}
Suppose first that $X$ is small affine. For each framing $\square$, Proposition~\ref{prop: relations-prism-crystals}(2) gives a fully faithful functor $\iota_{\fkS_{\square}}\colon \CR^{\wedge, \an (, \phi)}((X, M_X)_{\Prism}, \calO_{\Prism}) \rightarrow \calD_{\perf}^{(\phi)}((X, M_X)_{\Prism})$ together with a natural transformation assigning $\iota_{\fkS_{\square}}(\calF)\rightarrow \calF$ to each $\calF$. We claim that this is independent of the choice of $\square$. To see this, take another framing $\square'$.
Since $(\Spf \fkS_{\square,\square'},(E),M_{\Spf }\fkS_{\square,\square'})$ covers the final object of $\Sh((X, M_X)_{\Prism})$, it suffices to show that $\iota_{\fkS_{\square}}$ and $\iota_{\fkS_{\square'}}$ are canonically identified over each of the self-products $\Spf \fkS_{\square,\square'}^{[\bullet]}$ compatibly with the projections between them. Since all the maps $\fkS_{\square}\rightarrow \fkS_{\square,\square'}^{[\bullet]}$ and $\fkS_{\square'}\rightarrow \fkS_{\square,\square'}^{[\bullet]}$ are classically flat by \cite[0912]{stacks-project}, the canonical identifications follow from Properties (a) and (b) of Proposition~\ref{prop: relations-prism-crystals}(2).

Next suppose that $X$ is separated. Take a $p$-completely \'etale covering $\{X_\lambda\rightarrow X\}_{\lambda\in \Lambda}$ such that each $X_\lambda$ is small affine with framing $\square_\lambda$. By Construction~\ref{const: Cech-nerve-global-separated}, we have a family of Breuil--Kisin log prisms $(\fkS_{\square_{\lambda}}, (E), M_{\Spf\fkS_{\square_{\lambda}}})_{\lambda\in \Lambda}$, which jointly covers the final object of $\Sh((X, M_X)_{\Prism})$ with induced \v{C}ech nerve $\Spf \fkS_{\Lambda}^{[\bullet]}$. Using $\fkS_{\Lambda}^{[\bullet]}$ and arguing as above, we obtain a fully faithful functor $\iota_{\fkS_{\Lambda}}\colon \CR^{\wedge, \an (, \phi)}((X, M_X)_{\Prism}, \calO_{\Prism}) \rightarrow \calD_{\perf}^{(\phi)}((X, M_X)_{\Prism})$ together with a natural transformation $\iota_{\fkS_{\Lambda}}(\calF)\rightarrow \calF$ such that the formation is compatible with refinements $\Lambda\subset \Lambda'$ and \'etale localizations. In the general case, we have a Zariski covering of $X$ by separated semistable $p$-adic formal schemes over $\calO_K$. Since the construction in the separated case is independent of the choice of coverings and framings and is compatible with \'etale pullbacks, we obtain by Zariski descent a fully faithful functor 
\[
\iota\colon \CR^{\wedge, \an (, \phi)}((X, M_X)_{\Prism}, \calO_{\Prism}) \rightarrow \calD_{\perf}^{(\phi)}((X, M_X)_{\Prism}).
\]
with a morphism  $\iota(\calF)\rightarrow \calF$ in $\calD((X, M_X)_{\Prism}, \calO_\Prism)$ functorial in $\calF$ (and compatible with $\phi$).
\end{proof}

\begin{defn}\label{def: analy-prism-cryst-perf-cx}
Define a functor $i\colon \Vect^{\mathrm{an} (, \phi)}((X,M_X)_\Prism) \rightarrow \calD_{\perf}^{(\phi)}((X, M_X)_{\Prism})$
by $i\coloneqq\iota\circ j_*$.
Note that $i$ is fully faithful by Propositions~\ref{prop: right-adjoint-semistable-case} and \ref{prop: functor-completed-prism-cryst-perfect-cx}.
\end{defn}

\subsection{The Breuil--Kisin and Breuil cohomologies} \label{subsec: BK-cohom}

For application to the $C_{\st}$-conjecture, we consider the following special cases with coefficients arising from analytic prismatic $F$-crystals. Let $\calE \in \Vect^{\an, \phi}((X,M_X)_\Prism)$ be an analytic prismatic $F$-crystal on $(X, M_X)_{\Prism}$. Write $j_{\ast}\calE \in \CR^{\wedge, \an, \phi}((X, M_X)_{\Prism}, \calO_{\Prism})$ and $i(\calE) \in \calD_{\perf}^{\phi}((X, M_X)_{\Prism})$ for the corresponding completed prismatic $F$-crystal and prismatic $F$-crystal in perfect complexes given by Proposition~\ref{prop: right-adjoint-semistable-case} and Definition~\ref{def: analy-prism-cryst-perf-cx} respectively. By construction, we have a morphism $i(\calE) \rightarrow j_{\ast}\calE$.

\begin{defn} \label{def: BK-cohom}
For log prisms in Example~\ref{eg: examples-log-prisms}, we define
\begin{enumerate}
 \item $R\Gamma_{\BK}(X, j_{\ast}\calE)\coloneqq R\Gamma(((X, M_X) / (\fkS_{K}, M_{\Spf\fkS_{K}}))_{\Prism}, j_{\ast}\calE) \in \calD(\fkS_{K})$;
 \item $R\Gamma_{\Br}(X, j_{\ast}\calE)\coloneqq R\Gamma(((X_{S_K/p}, M_{X_{S_K/p}}) / (S_K, M_{\Spf S_k}))_{\Prism}, j_{\ast}\calE) \in \calD(S_{K})$;
 \item $R\Gamma_{W}(X, j_{\ast}\calE)\coloneqq R\Gamma(((X_k, M_{X_k}) / (W(k), M_0))_{\Prism}, j_{\ast}\calE) \in \calD(W(k))$;
 \item $R\Gamma_{A_\inf}(X, j_{\ast}\calE)\coloneqq R\Gamma(((X_{\calO_C}, M_{X_{\calO_C}}) / (A_\inf, M_{\Spf A_\inf}))_{\Prism}, j_{\ast}\calE) \in \calD(A_\inf)$;
 \item $R\Gamma_{A_\cris}(X, j_{\ast}\calE)\coloneqq R\Gamma(((X_{A_\cris/p}, M_{X_{A_\cris/p}}) / (A_\cris, M_{\Spf A_\cris}))_{\Prism}, j_{\ast}\calE) \in \calD(A_\cris)$.
\end{enumerate}
We use similar notation for the coefficient $i(\calE)$.
Note that $R\Gamma_{\BK}$ and $R\Gamma_{\Br}$ depend on the choice of the uniformizer $\pi$, whereas $R\Gamma_W$, $R\Gamma_{A_\inf}$, and $R\Gamma_{A_\cris}$ are not. Moreover, $\Gal(\overline{K}/K)$ acts on the prisms in (4) and (5), which induces an action of $\Gal(\overline{K}/K)$ on $R\Gamma_{A_\inf}$ and $R\Gamma_{A_\cris}$.
\end{defn}

The Breuil--Kisin cohomology yields a perfect complex over $\fkS_K$ and provides the base change theorems (Theorems~\ref{thm: BK-cohom-perfect-cx} and \ref{thm: general-base-change-BK-cohom}), and the Breuil cohomology is shown to satisfy the Frobenius isogeny property via a crystalline method (Theorem~\ref{thm: pris-cris-comparison-over-Breuil S}).

\begin{thm}[{\cite[Thm.~6.18, Rem.~6.19]{Tian-prism-etale-comparison}}] \label{thm: BK-cohom-perfect-cx}
Assume that $X$ be proper over $\calO_K$, and let $\calE \in \Vect^{\an, \phi}((X,M_X)_\Prism)$. Then $R\Gamma_{\BK}(X, j_{\ast}\calE)$ is a perfect complex in $\calD(\fkS_{K})$. 
\end{thm}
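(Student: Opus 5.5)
The plan is to replace $j_\ast\calE$ by the prismatic crystal in perfect complexes $i(\calE)$, and to deduce perfectness over the Noetherian ring $\fkS_K$ from derived $(p,E)$-completeness together with perfectness of the reduction modulo $(p,E)$.

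\emph{Step 1: reduction to $i(\calE)$.} Work first with $X$ separated, and take the relative \v{C}ech nerve $\fkS_\Lambda^{\rel,[\bullet]}$ of Construction~\ref{const: Cech-nerve-global-separated}. By Lemma~\ref{lem:CA method}, $R\Gamma_{\BK}(X,j_\ast\calE)$ is the totalization of $(j_\ast\calE)(\fkS_\Lambda^{\rel,[\bullet]})$. The coface maps from a Breuil--Kisin prism are classically flat (Lemma~\ref{lem: BK-prism-self-products-relative}). Hence property (b) of Proposition~\ref{prop: relations-prism-crystals}(2), applied termwise as in the proof of part (c), identifies this totalization with $\Tot\bigl(i(\calE)(\fkS_\Lambda^{\rel,[\bullet]})\bigr)$. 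So $R\Gamma_{\BK}(X,j_\ast\calE)\cong R\Gamma_{\BK}(X,i(\calE))$. Each term is a perfect complex over a $(p,E)$-complete ring, and each term has amplitude in a fixed range. Therefore the totalization is derived $(p,E)$-complete and lies in $\calD^{\geq c}$.

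\emph{Step 2: reduction modulo $(p,E)$.} The map $\fkS_K\to k=\fkS_K/(p,E)$ has finite Tor amplitude. By Lemma~\ref{lem:algebraic-lemmas-for-base-change}(1), base change commutes with $\Tot$, so
\[
R\Gamma_{\BK}(X,i(\calE))\otimes^L_{\fkS_K}k\cong \Tot\bigl(i(\calE)(\fkS_\Lambda^{\rel,[\bullet]})\otimes^L_{\fkS_K}k\bigr).
\]
Over the Noetherian ring $\fkS_K$, a derived $(p,E)$-complete complex whose derived reduction modulo $(p,E)$ is perfect over $k$ is itself perfect (derived Nakayama). It therefore suffices to show that the right-hand side has finite-dimensional total cohomology. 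It is also enough to show this after reducing only modulo $E$, where the target is a perfect complex over $\calO_K$.

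\emph{Step 3: Hodge--Tate computation (main obstacle).} Modulo $E$, the terms $\fkS_\square^{\rel,[n]}/(E)$ in the small affine case should be explicit: they are $p$-completed log PD-polynomial-type algebras over $R$, in the variables coming from the ratios of the $T_i$ across the factors, as computed in \cite{min-wang-HT-crys-log-prism}. Under this description the cosimplicial module $i(\calE)(\fkS_\square^{\rel,[\bullet]})/E$ is the \v{C}ech--Alexander model of a relative log Hodge--Tate crystal. The standard Koszul/Higgs argument then identifies its totalization with a finite Higgs-type complex
\[
P/E\to P/E\otimes\Omega^{1,\log}_{R/\calO_K}\{-1\}\to\cdots,
\]
where $P$ is the bounded complex of finite projective $\fkS_\square$-modules resolving $(j_\ast\calE)_{\fkS_\square}$. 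This is a bounded complex of finite $R$-modules. I expect this step to be the hardest, because the coefficient is only a perfect complex, not a vector bundle, so the Higgs field must be handled at the level of complexes.

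\emph{Step 4: gluing and properness.} Show the local identification is compatible with \'etale localization and change of framing, using the uniqueness in Proposition~\ref{prop: functor-completed-prism-cryst-perfect-cx}. Then $R\nu_{X,\ast}(i(\calE)/E)$ is a perfect complex of $\calO_X$-modules. Since $X$ is proper, its cohomology $R\Gamma_{\BK}(X,i(\calE))\otimes^L_{\fkS_K}\calO_K$ is perfect over $\calO_K$ by coherent finiteness, and its reduction modulo $p$ is perfect over $k$. This completes the argument for $X$ separated. The general case follows by a finite Zariski cover by separated opens and Mayer--Vietoris, since perfect complexes are closed under finite limits.
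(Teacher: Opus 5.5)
The paper does not prove this statement itself. It quotes Tian, and the introduction remarks that the perfectness argument uses the fact that $j_\ast\calE$ underlies an honest $\calO_\Prism$-module; $i(\calE)$ is introduced for the base change theorems, not for this. Your overall strategy is the standard one and is sound:
\begin{itemize}
\item reduce modulo $E$;
\item compute the Hodge--Tate cohomology on small affines using the explicit shape of $\fkS_\square^{\rel,[n]}/E$;
\item obtain coherent cohomology sheaves on $X_\et$;
\item use properness and the regularity of $\calO_K$;
\item lift by derived Nakayama over the $E$-adically complete regular ring $\fkS_K$.
\end{itemize}

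The problem is that Step 3 is run on the wrong object, and as stated it does not go through. For $i(\calE)$, the identifications $P\otimes_{\fkS_\square,p_0}\fkS_\square^{\rel,[1]}\simeq P\otimes_{\fkS_\square,p_1}\fkS_\square^{\rel,[1]}$ exist only as quasi-isomorphisms passing through $(j_\ast\calE)(\fkS_\square^{\rel,[1]})$. The cocycle condition holds only up to coherent homotopy. So $P/E$ carries no strict stratification, and a Higgs field lifted to the projective resolution $P/E$ squares to zero only up to homotopy. Thus ``$P/E\to P/E\otimes\Omega^{1,\log}_{R/\calO_K}\{-1\}\to\cdots$'' is not a complex without homotopy-coherent machinery, which you do not supply. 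This is exactly the step you flag as the main obstacle.

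You do not need to overcome it; switch back to $j_\ast\calE$.
\begin{itemize}
\item Tensoring over $\fkS_K$ with $\calO_K=\fkS_K/E$ is the cone of $E$, so it commutes with $R\Gamma$ and with $\Tot$ directly. Step 1 then lets you replace $i(\calE)$ by $j_\ast\calE$.
\item The coface maps out of the Noetherian ring $\fkS_\square$ are flat. Hence $(j_\ast\calE)(\fkS_\square^{\rel,[n]})=(j_\ast\calE)(\fkS_\square)\otimes_{\fkS_\square}\fkS_\square^{\rel,[n]}$ on the nose.
\item $E$ acts injectively there. The module consists of sections of a vector bundle on $\Spec\fkS_\square\smallsetminus V(p,E)$, so it is torsion-free over the regular domain $\fkS_\square$, and flat base change preserves injectivity.
\item The mod-$E$ \v{C}ech--Alexander complex is therefore the strict cosimplicial module $M\otimes_R\fkS_\square^{\rel,[\bullet]}/E$, with $M=(j_\ast\calE)(\fkS_\square)/E$ finitely generated over $R$.
\item The standard Higgs computation is $R$-linear, and completed tensor products with finitely generated $M$ behave well by Artin--Rees. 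It therefore applies to such $M$ and yields a bounded complex of finitely generated $R$-modules, compatible with \'etale base change.
\item This gives coherent cohomology sheaves on $X_\et$, with no framing-independence needed, and properness finishes the argument.
\end{itemize}

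Two minor corrections. First, $\fkS_K/(p,E)=k[u]/(u^e)$, not $k$, so ``perfect over $k$'' is the wrong criterion there; reducing modulo $E$ alone, as you then do, avoids the issue. Second, a proper $X$ is already separated, so the final Mayer--Vietoris step is unnecessary. It would not work anyway, since the non-proper opens need not have perfect cohomology.
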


\begin{prop} \label{prop: BK-cohom}
Assume that $X$ is quasi-compact and separated, and let $\calE \in \Vect^{\an, \phi}((X,M_X)_\Prism)$. Then the natural map 
\[
R\Gamma_{\BK}(X, i(\calE)) \rightarrow R\Gamma_{\BK}(X, j_{\ast}\calE)
\]
given by Proposition~\ref{prop: functor-completed-prism-cryst-perfect-cx} is an isomorphism in $\calD(\fkS_{K})$.
\end{prop}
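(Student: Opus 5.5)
We compute both sides with the \v{C}ech--Alexander method attached to the relative \v{C}ech nerve $\Spf(\fkS_{\Lambda}^{\rel,[\bullet]})$ of Construction~\ref{const: Cech-nerve-global-separated}. Since $X$ is quasi-compact and separated, we choose a finite $p$-completely \'etale cover $\{X_\lambda\}_{\lambda\in\Lambda}$ by small affines with framings $\square_\lambda$. By Remark~\ref{rem: CA-for-jointly-surjective-case} the product $B\coloneqq\prod_\lambda \fkS_{\square_\lambda}$ covers the final object of $\Sh(((X,M_X)/(\fkS_K,M_{\Spf\fkS_K}))_\Prism)$, and $B^{[n]}=\fkS_\Lambda^{\rel,[n]}$.

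For $j_\ast\calE$, which is (the restriction of) a completed prismatic crystal, Lemma~\ref{lem:CA method}(2) gives $R\Gamma_{\BK}(X,j_\ast\calE)\simeq s((j_\ast\calE)(\fkS_\Lambda^{\rel,[\bullet]}))$. For $i(\calE)$, which is a perfect complex, we use Lemma~\ref{lem:CA method}(1) together with Lemma~\ref{lem: CA-method-infinity-category}. This gives $R\Gamma_{\BK}(X,i(\calE))\simeq \Tot\bigl(R\Gamma(\fkS_\Lambda^{\rel,[\bullet]},i(\calE))\bigr)$. We need the value of the perfect complex $i(\calE)$ on each affine object $B^{[n]}$ to have no higher cohomology, i.e.\ $R\Gamma(B^{[n]},i(\calE))=i(\calE)(B^{[n]})$. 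Locally $i(\calE)$ is a finite complex of finite projective $\calO_\Prism$-modules, so this follows from the vanishing in Lemma~\ref{lem:CA method}(2) for finite projective modules. The natural map $i(\calE)\to j_\ast\calE$ is compatible with these descriptions. Since both cosimplicial objects are uniformly bounded below, it therefore suffices to show that the map $i(\calE)(\fkS_\Lambda^{\rel,[n]})\to (j_\ast\calE)(\fkS_\Lambda^{\rel,[n]})$ is an isomorphism in $\calD(\fkS_\Lambda^{\rel,[n]})$ for every $n$.

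Each $\fkS_\Lambda^{\rel,[n]}$ is a finite product of factors $\fkS^{\rel}_{\square_{\lambda_0},\dots,\square_{\lambda_n}}$. Each such factor receives a map from $\fkS_{\square_{\lambda_0},\dots,\square_{\lambda_n}}$ (the absolute coproduct), which in turn receives a map from the Breuil--Kisin prism $\fkS_{\square_{\lambda_0}}$ of $X_{\lambda_0\cdots\lambda_n}$. By construction of $\iota$ (Proposition~\ref{prop: functor-completed-prism-cryst-perfect-cx}), property (b) of Proposition~\ref{prop: relations-prism-crystals}(2) applies over the localization at $h_{\fkS_{\square_{\lambda_0}}}$. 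It gives $i(\calE)(h_C)\xrightarrow{\cong}(j_\ast\calE)_C$ whenever $\fkS_{\square_{\lambda_0}}\to C$ is classically flat. So the key step is to check that $\fkS_{\square_{\lambda_0}}\to\fkS^{\rel}_{\square_{\lambda_0},\dots,\square_{\lambda_n}}$ is classically flat. For $\lambda_0=\dots=\lambda_n$ this is Lemma~\ref{lem: BK-prism-self-products-relative}. In general, the relative coproduct is the classical $(p,E)$-completion of $\fkS_{\square_{\lambda_0},\dots,\square_{\lambda_n}}\otimes_{\fkS_K^{[n]}}\fkS_K$, and the argument of \cite[\S~3.2.1]{min-wang-HT-crys-log-prism} should show that it is a (completed) \'etale localization of a relative Breuil--Kisin self-product. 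Flatness then follows as in Lemma~\ref{lem: BK-prism-self-products-relative}, using Noetherianity and \cite[0912]{stacks-project} to pass through completions.

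I expect this flatness verification for mixed framings in the relative nerve to be the main obstacle. If it proves awkward, the fallback is to treat $i(\calE)(C)$ directly as $P^\bullet\otimes_{\fkS_{\square_{\lambda_0}}}C$, with $P^\bullet$ a finite projective resolution of $(j_\ast\calE)_{\fkS_{\square_{\lambda_0}}}$. The claim then amounts to the vanishing of $\mathrm{Tor}_{>0}^{\fkS_{\square_{\lambda_0}}}\bigl((j_\ast\calE)_{\fkS_{\square_{\lambda_0}}},C\bigr)$ plus Remark~\ref{rem:completed crystal as limit}(1), and this Tor vanishing is again a consequence of flatness.
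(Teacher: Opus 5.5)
Your proposal is correct and is essentially the paper's proof: both sides are computed by the \v{C}ech--Alexander complex of the relative nerve $\fkS_{\Lambda}^{\rel,[\bullet]}$ (Lemmas~\ref{lem:CA method}, \ref{lem: CA-method-infinity-category}, Remark~\ref{rem: CA-for-jointly-surjective-case}), and the termwise isomorphism comes from Property~(b) of Proposition~\ref{prop: relations-prism-crystals}(2), using classical flatness of the coface maps $\fkS_{\square_\lambda}\to\fkS_{\Lambda}^{\rel,[n]}$. The paper states that flatness without further argument, so the obstacle you flag is exactly the input it takes for granted. One would presumably justify it as in the absolute case (proof of Proposition~\ref{prop: right-adjoint-semistable-case}), from $(p,E)$-complete flatness plus Noetherianity of $\fkS_{\square_\lambda}$ and \cite[0912]{stacks-project}, rather than through an \'etale-localization identification.
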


\begin{proof}
We have a $p$-completely \'etale covering $\{g_\lambda\colon X_\lambda\rightarrow X\}_{\lambda\in \Lambda}$ with $\Lambda$ finite such that each $X_\lambda$ is small affine with framing $\square_\lambda$. By Construction~\ref{const: Cech-nerve-global-separated}, the finite family $(\fkS_{\square_{\lambda}}, (E), M_{\Spf_{\fkS_{\square_{\lambda}}}})_{\lambda\in \Lambda}$ jointly covers the final object of $\Sh(((X, M_X)/(\fkS_{K}, M_{\Spf\fkS_{K}}))_{\Prism})$, and we have the corresponding \v{C}ech nerve $\Spf(\fkS_{\Lambda}^{\rel, [\bullet]})$.

We have a natural commutative diagram
\[
\xymatrix{
R\Gamma_{\BK}(X, i(\calE)) \ar[r] \ar[d]^{\cong} & R\Gamma_{\BK}(X, j_{\ast}\calE) \ar[d]^{\cong} \\
\Tot(i(\calE)(\fkS_{\Lambda}^{\rel, [\bullet]})) \ar[r] & \Tot(j_{\ast}\calE(\fkS_{\Lambda}^{\rel, [\bullet]}))
}
\]
where the vertical isomorphisms are given by \v{C}ech cohomological descent in Lemmas~\ref{lem:CA method} and \ref{lem: CA-method-infinity-category} and Remark~\ref{rem: CA-for-jointly-surjective-case} (applied to $(A, I, M_{\Spf A}) = (\fkS_{K}, (E), M_{\Spf\fkS_{K}})$ and $(B, IB, M_{\Spf B})$ given by $B = \prod_{\lambda \in \Lambda} \fkS_{\square_{\lambda}}$). Since any coface map $\fkS_{\square_{\lambda}} \rightarrow \fkS_{\Lambda}^{\rel, [n]}$ is classically flat, the bottom horizontal map is an isomorphism by Property~(b) therein (cf. proof of Proposition~\ref{prop: functor-completed-prism-cryst-perfect-cx}). Thus, the top horizontal map is an isomorphism.
\end{proof}

For later use, we consider the following base change property.

\begin{thm} \label{thm: general-base-change-BK-cohom}
Assume that $X$ is quasi-compact and separated over $\calO_K$, and let $\calE \in \Vect^{\an, \phi}((X,M_X)_\Prism)$. Let $(A, (E), M_{\Spf A}) \in ((\Spf\calO_K, M_{\can})/(\fkS_{K}, M_{\Spf \fkS_{K}}))_{\Prism}^\op$ such that the map $\fkS_{K} \rightarrow A$ has finite $(p, E)$-complete Tor amplitude. Write $X_{A/(E)}\coloneqq X\times_{\Spf\calO_K} \Spf A/(E)$ equipped with the log structure induced from $M_X$. 
\begin{enumerate}
  \item The map
\[
(R\Gamma_{\BK}(X, i(\calE))\widehat{\otimes}_{\fkS_{K}} A \rightarrow R\Gamma((X_{A/(E)}/(A, M_{\Spf A}))_{\Prism}, i(\calE))
\]
is an isomorphism in $\calD(A)$, where the tensor product is derived $(p, E)$-completed.

    \item The map
\[
(R\Gamma_{\BK}(X, j_{\ast}\calE)\widehat{\otimes}_{\fkS_{K}} A)\otimes_A^L A[p^{-1}] \rightarrow R\Gamma((X_{A/(E)}/(A, M_{\Spf A}))_{\Prism}, j_{\ast}\calE)\otimes_A^L A[p^{-1}]
\]
is an isomorphism in $\calD(A[p^{-1}])$.
\end{enumerate}
This is the case when $A$ is either $A_{\inf}$, $A_{\cris}$, $W(k)$, or $S^{[n]}_K$ for any $n \geq 0$. Here $(S^{[n]}_K, (p), \phi^{\ast}\N)^a$ denotes the $(n+1)$-st self-coproduct of the Breuil log prism $(S_{K}, (p), \phi^{\ast}\N)^a$ in $(\Spf\calO_K, M_{\Spf\calO_K})_{\Prism}^{\op}$, given by Lemma~\ref{lem:Breuil-prism-weakly-initial-for-crystalline-prisms}.
\end{thm}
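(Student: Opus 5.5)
Both statements will be computed by the \v{C}ech--Alexander method with the finite family of Breuil--Kisin log prisms from Construction~\ref{const: Cech-nerve-global-separated}. Since $X$ is quasi-compact and separated, we choose a finite $p$-completely \'etale cover $\{X_\lambda\}_{\lambda\in\Lambda}$ by small affines with framings $\square_\lambda$. This gives the relative \v{C}ech nerve $\fkS_\Lambda^{\rel,[\bullet]}$ over $\fkS_K$. Its base change $B^{[\bullet]}\coloneqq \fkS_\Lambda^{\rel,[\bullet]}\widehat{\otimes}_{\fkS_K}A$ should be the \v{C}ech nerve of a covering family in $((X_{A/(E)},M)/(A,M_{\Spf A}))_\Prism$. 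To see this, use that $\fkS_K\to\fkS_{\square_{\lambda_0},\dots}^{\rel}$ is classically faithfully flat, so the completed tensor product is discrete and represents the pushout of bounded prisms (\cite[Lem.~3.3]{du-liu-moon-shimizu-completed-prismatic-F-crystal-loc-system}). The base map $\fkS_K\to A$ is strict, so the log structures pull back. The covering property and the identification of coproducts then follow exactly as in Lemma~\ref{lem: Breuil-prism-self-products-relative}, using Lemma~\ref{lem: coprod with fkS is cover}. Lemmas~\ref{lem:CA method}, \ref{lem: CA-method-infinity-category} and Remark~\ref{rem: CA-for-jointly-surjective-case} then give
\[
R\Gamma((X_{A/(E)}/A)_\Prism,\calK)\cong \Tot(\calK(B^{[\bullet]}))
\]
for $\calK=i(\calE)$ and $\calK=j_\ast\calE$; the higher vanishing on each $B^{[n]}$ holds for perfect complexes by flat descent, and for completed crystals by Lemma~\ref{lem:CA method}(2).

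For (1), $i(\calE)$ is a crystal in perfect complexes, so $i(\calE)(B^{[n]})\cong i(\calE)(\fkS_\Lambda^{\rel,[n]})\widehat{\otimes}^L_{\fkS_\Lambda^{\rel,[n]}}B^{[n]}\cong i(\calE)(\fkS_\Lambda^{\rel,[n]})\widehat{\otimes}^L_{\fkS_K}A$, compatibly in the cosimplicial direction. Hence the claim reduces to commuting $-\widehat{\otimes}^L_{\fkS_K}A$ with $\Tot$. This is Lemma~\ref{lem:algebraic-lemmas-for-base-change}(1), applied with the finite $(p,E)$-complete Tor amplitude hypothesis. The needed uniform lower bound $\calD^{\geq c}$ holds because $i(\calE)(\fkS_\Lambda^{\rel,[n]})\cong j_\ast\calE(\fkS^{\rel,[n]}_\Lambda)$ is a module in degree $0$ (Property (b) of Proposition~\ref{prop: relations-prism-crystals}(2)). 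Combined with Proposition~\ref{prop: BK-cohom}, this also identifies the source with $R\Gamma_\BK(X,j_\ast\calE)\widehat{\otimes}A$.

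For (2), it suffices by (1) and Proposition~\ref{prop: BK-cohom} to show that $i(\calE)\to j_\ast\calE$ induces an isomorphism after inverting $p$ on $\Tot(-(B^{[\bullet]}))$. Termwise this is the map
\[
j_\ast\calE(\fkS^{\rel,[n]}_\Lambda)\widehat{\otimes}^L A\to j_\ast\calE(\fkS^{\rel,[n]}_\Lambda)\widehat{\otimes}A=j_\ast\calE(B^{[n]}).
\]
Since $M=j_\ast\calE(\fkS^{\rel,[n]}_\Lambda)$ becomes finite projective after inverting $p$, its derived and classical completed base changes agree after inverting $p$ (Lemma~\ref{lem:algebraic-lemmas-for-base-change}(2)). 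More precisely, the cone is killed by a bounded power of $p$; the bound comes from the finite Tor amplitude and from the $p$-power torsion of $\mathrm{Tor}$ of $M$, which vanishes rationally. The main obstacle is that inverting $p$ does not commute with the infinite limit $\Tot$. To get past it, I would try to show that the cones have $p$-torsion of \emph{bounded} exponent uniformly in $n$, which would let the isomorphism pass through $\Tot$. If a uniform bound is not directly visible, I would fall back on a different route: use that $\calE$ restricted to $\Spec\smallsetminus V(p,I)$ is a vector bundle, and compare both sides via the \v{C}ech complex of $j_\ast\calE$ on the analytic locus, where $p$ is invertible.

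Finally, the listed cases: $\fkS_K\to A_\inf$ is flat, and $\fkS_K\to W(k)$, $u\mapsto0$, has Tor amplitude $[-1,0]$ since $\fkS_K$ is regular of dimension $2$. The map $\fkS_K\to A_\cris$ factors through $A_\inf$, or through $\phi$ and $S_K$, and $\fkS_K\to S_K^{[n]}$ factors through $S_K$ followed by a $p$-completely flat map; these have finite complete Tor amplitude by Lemma~\ref{lem: finite-p-compl-Tor-amp}. Here one should also note that $(p,E)$- and $p$-completions agree for crystalline prisms, since $E\equiv u^e$ modulo $p$.
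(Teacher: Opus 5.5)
Your set-up (base-changing the finite Breuil--Kisin \v{C}ech nerve $\fkS_\Lambda^{\rel,[\bullet]}$ to $B^{[\bullet]}$), your proof of (1) via Lemma~\ref{lem:algebraic-lemmas-for-base-change}(1), and your check of the listed cases agree with the paper's proof. The gap is at the end of (2).

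You correctly reduce to the termwise maps $i(\calE)(B^{[n]})\to j_\ast\calE(B^{[n]})$. You also correctly see from Lemma~\ref{lem:algebraic-lemmas-for-base-change}(2) that each becomes an isomorphism after $-\otimes_A^L A[p^{-1}]$. But you then call the passage of $-\otimes^L_A A[p^{-1}]$ through $\Tot$ ``the main obstacle'' and leave it open. You only sketch a uniform bound on $p$-torsion exponents, with an unspecified analytic-locus argument as fallback. As written, (2) is therefore not proved.

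This step is not actually an obstacle, and neither workaround is needed. Both cosimplicial objects are uniformly bounded below:
\begin{itemize}
\item $j_\ast\calE(B^{[n]})$ is a module in degree $0$;
\item on each factor, $i(\calE)(B^{[n]})$ is represented by $P^\bullet_{\square_\lambda}\otimes_{\fkS_{\square_\lambda}} B^{[n]}$ for one of the finitely many bounded complexes $P^\bullet_{\square_\lambda}$ of finite projective modules. So it lies in a fixed $\calD^{[a,b]}$, the same kind of bound you need in (1).
\end{itemize}
On $\calD^{\geq c}$, each $H^i(\Tot(-))$ involves only finitely many terms of the cosimplicial object. Hence a base change of finite Tor amplitude commutes with $\Tot$. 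Since $A\to A[p^{-1}]$ is flat, Lemma~\ref{lem:algebraic-lemmas-for-base-change}(1) with the zero ideal gives $\Tot(C^\bullet)\otimes_A^LA[p^{-1}]\simeq\Tot(C^\bullet\otimes_A^LA[p^{-1}])$. This is exactly how the paper concludes.

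Equivalently, the termwise cones are uniformly bounded below and rationally acyclic. So each cohomology group of their totalization is a finite iterated extension of subquotients of $p$-power torsion groups, and it dies after inverting $p$. No uniform exponent is required.

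With this step inserted, your argument is complete. You route (2) through (1) and Proposition~\ref{prop: BK-cohom}, whereas the paper compares $\Tot(j_\ast\calE(\fkS_\Lambda^{\rel,[\bullet]}))\widehat{\otimes}A$ and $\Tot(j_\ast\calE(B^{[\bullet]}))$ directly after inverting $p$; that difference is only cosmetic.

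A minor point: for $\fkS_K\to W(k)$, the bound $[-1,0]$ comes from $u$ being a nonzerodivisor. Regularity of dimension $2$ alone would only give $[-2,0]$.
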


\begin{proof}
We will use \v{C}ech cohomological descent in Lemmas~\ref{lem:CA method} and \ref{lem: CA-method-infinity-category} and Remark~\ref{rem: CA-for-jointly-surjective-case}. As in the proof of Proposition~\ref{prop: BK-cohom}, we have a $p$-completely \'etale covering $\{g_\lambda\colon X_\lambda\rightarrow X\}_{\lambda\in \Lambda}$ with $\Lambda$ a finite set such that each $X_\lambda$ is small affine with framing $\square_\lambda$. The family $(\fkS_{\square_{\lambda}}, (E), M_{\Spf_{\fkS_{\square_{\lambda}}}})_{\lambda\in \Lambda}$ jointly covers the final object of $\Sh(((X, M_X)/(\fkS_{K}, M_{\Spf\fkS_{K}}))_{\Prism})$, with the corresponding \v{C}ech nerve $\Spf(\fkS_{\Lambda}^{\rel, [\bullet]})$. 

On the other hand, since the structure map $\fkS_{K} \rightarrow \fkS_{\Lambda}^{\rel, [n]}$ for any $n$ is classically faithfully flat, the derived $(p, E)$-completion of $\fkS_{\Lambda}^{\rel, [n]}\otimes_{\fkS_{K}}^L A$ is discrete and agrees with the classical $(p, E)$-completion $A_{\Lambda}^{\rel, [n]}\coloneqq \fkS_{\Lambda}^{\rel, [n]}\widehat{\otimes}_{\fkS_{K}} A$ of $\fkS_{\Lambda}^{\rel, [n]}\otimes_{\fkS_{K}} A$. Note that $(A_{\Lambda}^{\rel, [n]}, (E), M_{\Spf A_{\Lambda}^{\rel, [n]}})$ equipped with the induced log and $\delta_\log$-structures from those on $\fkS_{\Lambda}^{\rel, [n]}$ is an object in $(X_{A/(E)}/(A, M_{\Spf A}))_{\Prism}$. Furthermore, the family $(A_{\square_{\lambda}}^{\rel}, (E), M_{\Spf A_{\square_{\lambda}}^{\rel}})_{\lambda\in \Lambda}$ where $A_{\square_{\lambda}}^{\rel}\coloneqq \fkS_{\square_{\lambda}}\widehat{\otimes}_{\fkS_{K}} A$ jointly covers the final object of $\Sh((X_{A/(E)}/(A, M_{\Spf A}))_{\Prism})$, and its \v{C}ech nerve is given by $\Spf A_{\Lambda}^{\rel, [\bullet]}$.

(1) By above, we have
\[
R\Gamma_{\fkS_{K}}(X, i(\calE)) \cong \Tot(i(\calE)(\fkS_{\Lambda}^{\rel, [\bullet]}))
\]
and
\[
R\Gamma((X_{A/(E)}/(A, M_{\Spf A}))_{\Prism}, i(\calE)) \cong \Tot(i(\calE)(A_{\Lambda}^{\rel, [\bullet]})).
\]
Consider the commutative diagram
\[
\xymatrix{
R\Gamma_{\BK}(X, i(\calE)) \ar[r] \ar[d]^{\cong} & R\Gamma((X_{A/(E)}/(A, M_{\Spf A}))_{\Prism}, i(\calE)) \ar[d]^{\cong} \\
\Tot(i(\calE)(\fkS_{\Lambda}^{\rel, [\bullet]})) \ar[r] & \Tot(i(\calE)(A_{\Lambda}^{\rel, [\bullet]})).
}
\]
We need to show the map 
\begin{equation} \label{eq: base-change-totalization-i(E)}
\Tot(i(\calE)(\fkS_{\Lambda}^{\rel, [\bullet]}))\widehat{\otimes}_{\fkS_{K}}^L A \rightarrow \Tot(i(\calE)(A_{\Lambda}^{\rel, [\bullet]}))    
\end{equation}
is an isomorphism. For each $\lambda \in \Lambda$, $i(\calE)(\fkS_{\square_{\lambda}}) \in \calD(\fkS_{\square_{\lambda}})$ is represented by a bounded complex $P_{\square_{\lambda}}^{\bullet}$ of finite projective $\fkS_{\square_{\lambda}}$-modules. Then for any coface map $\fkS_{\square_{\lambda}} \rightarrow \fkS_{\Lambda}^{\rel, [n]}$, $i(\calE)(\fkS_{\Lambda}^{\rel, [n]})$ is represented by $P_{\square_{\lambda}}^{\bullet}\otimes_{\fkS_{\square_{\lambda}}} \fkS_{\Lambda}^{\rel, [n]}$, and $i(\calE)(A_{\Lambda}^{\rel, [n]})$ is represented by
\[
P_{\square_{\lambda}}^{\bullet}\otimes_{\fkS_{\square_{\lambda}}} A_{\Lambda}^{\rel, [n]} \cong (P_{\square_{\lambda}}^{\bullet}\otimes_{\fkS_{\square_{\lambda}}} \fkS_{\Lambda}^{\rel, [n]})\otimes_{\fkS_{\Lambda}^{\rel, [n]}} A_{\Lambda}^{\rel, [n]}.
\]
Thus, we have $i(\calE)(A_{\Lambda}^{\rel, [n]}) \cong i(\calE)(\fkS_{\Lambda}^{\rel, [n]})\widehat{\otimes}_{\fkS_{K}}^L A$. 
Note there exist $a, b \in \Z$ such that $i(\calE)(\fkS_{\Lambda}^{\rel, [n]}) \in \calD^{[a, b]}(\fkS_{\Lambda}^{\rel, [n]})$ for any $n$, since $\Lambda$ is finite.\footnote{In fact, $i(\calE)(B) \in \calD^{[a, b]}(B)$ for any $(B, I, M_{\Spf B}) \in ((X, M_X)/(\fkS_{K}, M_{\Spf\fkS_{K}}))_{\Prism}^{\op}$ by $(p, I)$-completely faithfully flat descent; namely, the argument in \cite[Prop.~3.11, Pf., Lem.~3.12]{GuoLi-FrobHeight} works in our situation by Lemma~\ref{lem: coprod with fkS is cover}}
Since $\fkS_{K} \rightarrow A$ has finite $(p, E)$-complete Tor amplitude, the map \eqref{eq: base-change-totalization-i(E)} is an isomorphism by Lemma~\ref{lem:algebraic-lemmas-for-base-change}(1). 

(2) Consider the commutative diagram
\[
\xymatrix{
R\Gamma_{\BK}(X, j_{\ast}\calE) \ar[r] \ar[d]^{\cong} & R\Gamma((X_{A/(E)}/(A, M_{\Spf A}))_{\Prism}, j_{\ast}\calE) \ar[d]^{\cong} \\
\Tot(j_{\ast}\calE(\fkS_{\Lambda}^{\rel, [\bullet]})) \ar[r] & \Tot(j_{\ast}\calE(A_{\Lambda}^{\rel, [\bullet]}))
}
\]
where the vertical isomorphisms are explained above. We need to show that the map
\begin{equation} \label{eq: base-change-totalization-jE}
(\Tot(j_{\ast}\calE(\fkS_{\Lambda}^{\rel, [\bullet]}))\widehat{\otimes}^L_{\fkS_{K}} A)\otimes_A^L A[p^{-1}] \rightarrow \Tot(j_{\ast}\calE(A_{\Lambda}^{\rel, [\bullet]}))\otimes_A^L A[p^{-1}]    
\end{equation}
is an isomorphism. 

For any coface map $\fkS_{\square_{\lambda}} \rightarrow \fkS_{\Lambda}^{\rel, [n]}$, note that 
\[
(j_{\ast}\calE(\fkS_{\Lambda}^{\rel, [n]})\widehat{\otimes}^L_{\fkS_{K}} A) \otimes_A^L A[p^{-1}]
\]
is represented by 
\[
(P_{\square_{\lambda}}^{\bullet}\otimes_{\fkS_{\square_{\lambda}}} \fkS_{\Lambda}^{\rel, [n]})\otimes_{\fkS_{\Lambda}^{\rel, [n]}} A_{\Lambda}^{\rel, [n]}[p^{-1}].
\]
Since $j_{\ast}\calE$ restricted to the analytic locus is a vector bundle, we deduce from Lemma~\ref{lem:algebraic-lemmas-for-base-change}(2) that the above is isomorphic to $j_{\ast}\calE(A_{\Lambda}^{\rel, [n]})\otimes_A A[p^{-1}]$ in $\calD(A_{\Lambda}^{\rel, [n]}[p^{-1}])$. Hence the map \eqref{eq: base-change-totalization-jE} is an isomorphism by Lemma~\ref{lem:algebraic-lemmas-for-base-change}(1), since $A \rightarrow A[p^{-1}]$ is classically flat.

For the last statement, note that any projection map $S_{K} \rightarrow S_{K}^{[n]}$ is $p$-completely faithfully flat by Lemma~\ref{lem:Breuil-prism-weakly-initial-for-crystalline-prisms}. So the map $\fkS_{K} \rightarrow A$ for each $A = A_{\inf}$, $A_{\cris}$, or $S^{[n]}_K$ has finite $(p, E)$-complete Tor amplitude by Lemma~\ref{lem: finite-p-compl-Tor-amp}. The map $\fkS_{K} \rightarrow W(k)$ also has finite $(p, E)$-complete Tor amplitude, 
due to $0 \rightarrow \fkS_{K} \stackrel{\times u}{\rightarrow} \fkS_{K} \rightarrow W(k) \rightarrow 0$.
\end{proof}

\begin{cor} \label{cor: Frob-twist-Breuil-cohom}
Consider the morphism $(\fkS_K, M_{\Spf \fkS_K}) \rightarrow (\phi_{\ast}S_K, M_{\Spf \phi_{\ast}S_K})$ in $(\Spf \calO_K, M_{\can})_{\Prism}^{\op}$ as in Example~\ref{eg: examples-log-prisms}, and write $X_{\phi_{\ast}S_K/p}\coloneqq X\times_{\calO_K} \phi_{\ast}S_K/p$. If $X$ is proper over $\calO_K$, then the map
\[
R\Gamma_{\Br}(X, j_{\ast}\calE)\otimes_{S_K, \phi}^L S_K[p^{-1}] \rightarrow R\Gamma((X_{\phi_{\ast}S_K/p}/(\phi_{\ast}S_K, M_{\Spf \phi_{\ast}S_K}))_{\Prism}, j_{\ast}\calE)\otimes_{S_K}^L S_{K}[p^{-1}]  
\]
is an isomorphism in $\calD(S_K[p^{-1}])$.
\end{cor}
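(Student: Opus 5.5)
The plan is to reduce both sides to base changes of the Breuil--Kisin cohomology by applying Theorem~\ref{thm: general-base-change-BK-cohom}(2) twice. The first base change is along $\phi\colon \fkS_K \rightarrow S_K$, i.e.\ along the map of log prisms $(\fkS_K,(E),M_{\Spf\fkS_K})\rightarrow (S_K,(p),M_{\Spf S_K})$. The second is along the composite $(\fkS_K,(E),M_{\Spf\fkS_K})\rightarrow (S_K,(p),M_{\Spf S_K}) \xrightarrow{\phi} (\phi_\ast S_K,(p),M_{\Spf\phi_\ast S_K})$, whose underlying ring map is $\phi^2\colon \fkS_K\rightarrow S_K$. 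Both maps have finite $(p,E)$-complete Tor amplitude by Lemma~\ref{lem: finite-p-compl-Tor-amp}, because $\phi\colon\fkS_K\rightarrow\fkS_K$ is faithfully flat. Properness gives quasi-compactness, and a proper formal scheme is separated, so the hypotheses of Theorem~\ref{thm: general-base-change-BK-cohom} hold.

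Set $M\coloneqq R\Gamma_{\BK}(X,j_\ast\calE)$. By Theorem~\ref{thm: BK-cohom-perfect-cx} this is a perfect $\fkS_K$-complex, so every completed tensor product $M\widehat{\otimes}^L_{\fkS_K}B$ below agrees with the uncompleted one $M\otimes^L_{\fkS_K}B$ for $B=S_K$.

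With this, Theorem~\ref{thm: general-base-change-BK-cohom}(2) gives
\[
R\Gamma_{\Br}(X,j_\ast\calE)\otimes^L_{S_K}S_K[p^{-1}]\cong M\otimes^L_{\fkS_K,\phi}S_K[p^{-1}].
\]
It also gives
\[
R\Gamma((X_{\phi_\ast S_K/p}/(\phi_\ast S_K, M_{\Spf \phi_{\ast}S_K}))_{\Prism},j_\ast\calE)\otimes^L_{S_K}S_K[p^{-1}]\cong M\otimes^L_{\fkS_K,\phi^2}S_K[p^{-1}].
\]
Base-changing the first isomorphism along $\phi\colon S_K[p^{-1}]\rightarrow S_K[p^{-1}]$ turns it into the second. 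So the left-hand side of the corollary becomes $M\otimes^L_{\fkS_K,\phi}S_K\otimes^L_{S_K,\phi}S_K[p^{-1}]=M\otimes^L_{\fkS_K,\phi^2}S_K[p^{-1}]$. The remaining point is that $R\Gamma_{\Br}\otimes_{S_K,\phi}^LS_K[p^{-1}]$ equals $(R\Gamma_{\Br}\otimes_{S_K}S_K[p^{-1}])\otimes_{S_K[p^{-1}],\phi}S_K[p^{-1}]$, which is a formal associativity of tensor products.

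The step I expect to need the most care is showing that the map in the corollary, which is induced by functoriality of prismatic cohomology along $\phi\colon(S_K,\ldots)\rightarrow(\phi_\ast S_K,\ldots)$, is the one matching the two base-change identifications above. I would check this at the level of \v{C}ech nerves. The nerve $S^{\rel,[\bullet]}_\Lambda=\fkS^{\rel,[\bullet]}_\Lambda\widehat{\otimes}_{\fkS_K,\phi}S_K$ of Construction~\ref{const: Cech-nerve-global-separated} maps to the analogous nerve $\fkS^{\rel,[\bullet]}_\Lambda\widehat{\otimes}_{\fkS_K,\phi^2}S_K$ computing the target via $1\otimes\phi$. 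On each term, $j_\ast\calE$ transforms by the crystal property, and after inverting $p$ this is compatible with base change by Lemma~\ref{lem:algebraic-lemmas-for-base-change}(2). Finally, since $\phi\colon S_K\rightarrow S_K$ need not have finite Tor amplitude in an obvious way, I would avoid commuting $\phi^\ast$ with $\Tot$ directly. Instead I would pass through the perfect complex $M$, using the Breuil--Kisin-level statement for $\phi^2$.
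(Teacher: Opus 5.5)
Your proposal is correct and is essentially the paper's proof. You apply Theorem~\ref{thm: general-base-change-BK-cohom}(2) once along $\phi\colon\fkS_K\to S_K$ and once along $\phi^2\colon\fkS_K\to S_K$, with the Tor-amplitude condition coming from Lemma~\ref{lem: finite-p-compl-Tor-amp} and flatness of $\phi$ on $\fkS_K$; you use perfectness of $R\Gamma_{\BK}(X,j_\ast\calE)$ from Theorem~\ref{thm: BK-cohom-perfect-cx} to drop the completions, and then combine the two isomorphisms. The compatibility of maps you flag is left implicit in the paper and is formal: all maps come from the commuting triangle $\fkS_K\to S_K\xrightarrow{\phi}\phi_\ast S_K$ of log prisms, so two-out-of-three finishes the argument without any \v{C}ech-level check.
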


\begin{proof}
Under the properness assumption, Theorems~\ref{thm: BK-cohom-perfect-cx} and \ref{thm: general-base-change-BK-cohom} tell that the map
\[
R\Gamma_{\BK}(X, j_{\ast}\calE)\otimes_{\fkS_K, \phi}^L S_K[p^{-1}] \rightarrow R\Gamma_{\Br}(X, j_{\ast}\calE)\otimes_{S_K}^L S_K[p^{-1}]
\]
is an isomorphism. Furthermore, since $\phi\colon \fkS_K \rightarrow \fkS_K$ is classically flat, the map $\phi^2\colon \fkS_K \rightarrow S_K$ has finite $(p, E)$-complete Tor amplitude by Lemma~\ref{lem: finite-p-compl-Tor-amp}. So the map
\[
R\Gamma_{\BK}(X, j_{\ast}\calE)\otimes_{\fkS_K, \phi^2}^L S_K[p^{-1}] \rightarrow R\Gamma((X_{\phi_{\ast}S_K/p}/(\phi_{\ast}S_K, M_{\Spf \phi_{\ast}S_K}))_{\Prism}, j_{\ast}\calE)\otimes_{S_K}^L S_{K}[p^{-1}]
\]
is an isomorphism again by Theorems~\ref{thm: BK-cohom-perfect-cx} and \ref{thm: general-base-change-BK-cohom}. The statement follows from the above two isomorphisms. 
\end{proof}

\subsection{Prismatic-crystalline comparison for crystals} \label{sec: pris-cris-comparison-crystals-semist}

We study a comparison between the prismatic and crystalline cohomology on semistable formal schemes, when the coefficients are given by completed prismatic ($F$-)crystals and their crystalline realizations. 

Let $f_1\colon (X_1,M_{X_1})\rightarrow (\Spec \calO_K/p, M_{\Spec \calO_K/p})$ denote the mod $p$ reduction of $f$. Let $(A, (p), M_A)$ be a crystalline prelog prism with $M_A$ integral such that either $(A, (p), M_A)$ is of rank $1$ or $(A, M_A)$ is a log ring. Let $J \subset A$ be a $p$-complete PD-ideal containing $p$. Suppose $(A, J, M_{\Spf A}) = (A, J, M_A)^a \in (\Spf\calO_K, M_\can)_{\dCRIS}$. Write $X_{A/J}\coloneqq X\times_{\Spf\calO_K} \Spf(A/J)$, and consider
$(X_{A/J}^{(1)}, M_{X_{A/J}}^{(1)})$ over $(\phi_{*}A, (p), \phi_* M_A)^a$ in Set-up~\ref{set-up: base-cryst-prism}.

Let $\calE_{\Prism} \in \Vect^{\an, \phi}((X,M_X)_\Prism)$ be an analytic prismatic ($F$)-crystal on $(X, M_X)_{\Prism}$, and write $j_{\ast}\calE_{\Prism} \in \CR^{\wedge, \an, \phi}((X, M_X)_{\Prism}, \calO_{\Prism})$ for the corresponding completed prismatic ($F$)-crystal given by Proposition~\ref{prop: right-adjoint-semistable-case}. 

\begin{defn}[{\cite[\S~3.5, Cor.~3.31]{du-liu-moon-shimizu-purity-F-crystal}}]\label{def:crystalline-realization}
Let $(X_1, M_{X_1})_{\CRIS}$ denote the absolute log crystalline site of $X_1$ (see also Remark~\ref{rem: two absolute crystalline sites}). Define $\calE_{\cris}$ to be the \emph{crystalline realization} of $j_{\ast}\calE_{\Prism}$ and let $(\calE_{\cris, \Q},\phi_{\calE_{\cris, \Q}})$ denote the associated finite locally free $F$-isocrystal on $(X_1, M_{X_1})_{\CRIS}$.
Recall that $\calE_{\cris}$ is
a quasi-coherent crystal of $\calO_{\CRIS}$-modules of finite type on $(X_1, M_{X_1})_{\CRIS}$ in the sense of \cite{du-liu-moon-shimizu-purity-F-crystal, du-moon-shimizu-cris-pushforward}, and in the small affine case with framing $\square$, it is given by 
\[
\calE_{\cris}(X_1, D_X) = j_{\ast}\calE_{\Prism}(S_{\square}, M_{\Spf S_{\square}})
\]
where $D_X \coloneqq \Spf S_{\square}$ equipped with the log structure $M_{D_X}$ associated to $\N^d$ with $e_i \mapsto T_i$; the exact closed immersion $X_1\hookrightarrow D_X$ is a log PD-thickening given by a PD ideal $J_\square\subset S_\square$, and $(X_1, D_X)$\footnote{This is also denoted by $(X_1, D_X, M_{D_X})$ to indicate the log structure in \cite[App.~B]{du-liu-moon-shimizu-purity-F-crystal}.} is a weakly final ind-object $\varinjlim_n (X_1,\Spec (S_{\square}/p^n))$ of $(X_1,M_{X_1})_{\CRIS}^{\mathrm{aff}}$ by \cite[Lem.~3.28]{du-liu-moon-shimizu-purity-F-crystal}. By Proposition~\ref{prop:comparison of crystals in two crystalline sites}, we also regarded $\calE_\cris$ as a $p$-adically completed crystal on $(X_1, M_{X_1})_{\CRIS}$ in the sense of Definitions~\ref{def: Koshikawa-crystalline-site} and \ref{defn:p-complete crystals over OCRIS}, in which case the evaluation is written as $\calE_{\cris}(S_{\square},J_\square)$ or $\calE_{\cris}(S_{\square},S_{\square}/J_\square)$.

Write $(E\otimes_{\calO_X}\omega^{\bullet}_{X/\calO_K}, \nabla_E)$ for the log de Rham complex associated to $\calE_{\cris}$ as in \cite[\S~6]{du-moon-shimizu-cris-pushforward}. Note from the construction that $E$ is a finitely generated $\calO_X$-module, since $\calE_{\cris}$ is a finitely generated $\calO_{\CRIS}$-module.  
\end{defn}

The log crystalline cohomology of $\calE_\cris$ is computed by log de Rham cohomology.  

\begin{thm} \label{thm: log-dR-cris-comparison}
Keep the notation as above. We have an isomorphism
\[
R\Gamma(((X_1, M_{X_1})/(\calO_K, M_\can))_{\CRIS}, \calE_{\cris}) \cong R\Gamma(X, (E\otimes_{\calO_X}\omega^{\bullet}_{X/\calO_K}, \nabla_E)). 
\]
In particular, $R\Gamma(((X_1, M_{X_1})/(\calO_K, M_\can))_{\CRIS}, \calE_{\cris})$ is a perfect complex over $\calO_K$ if $X$ is proper over $\calO_K$.
\end{thm}

Recall that we use two crystalline sites in this paper, and Proposition~\ref{prop:comparison of crystals in two crystalline sites} gives
$R\Gamma(((X_1, M_{X_1})/(\calO_K, M_\can))_{\CRIS}, \calE_{\cris})\cong R\Gamma(((X_1, M_{X_1})/(\Spf\calO_K)^\sharp)_{\CRIS}, \calE_{\cris})$, where $(\Spf\calO_K)^\sharp$ denotes $(\Spf \calO_K,M_\can)$ with the PD-ideal $(p)$.

\begin{proof}
 The comparison isomorphism is a special case of \cite[Thm.~6.21]{du-moon-shimizu-cris-pushforward}. Perfectness follows from the fact that $\calO_K$ is a Noetherian regular ring.  
\end{proof}

We next discuss the consequence of Theorem~\ref{thm: pris-crys-comparison-associated-sheaves}.

\begin{thm} \label{thm: prism-cryst-comparison-semist}
There exists an isomorphism of $\phi_* A$-complexes
\begin{align*}
R\Gamma(((X_{A/J})^{(1)}, M_{X_{A/J}}^{(1)})/&(\phi_{*}A, (p), \phi_* M_A)^a)_{\Prism}, (j_{\ast}\mathcal{E}_{\Prism})^{(1)})\\ 
&\cong \phi_\ast R\Gamma(((X_{A/J},M_{X_{A/J}})/(A,M_A))_\CRIS,\calE_\cris)
\end{align*}
which is functorial in $(A,(p),M_A)^a$. When $\calE_{\Prism}$ is an analytic prismatic $F$-crystal, then the above isomorphism is compatible with Frobenii after inverting $p$. Here, $(j_{\ast}\mathcal{E}_{\Prism})^{(1)}$ denotes the completed prismatic ($F$)-crystal on $((X_{A/J})^{(1)}, M_{X_{A/J}}^{(1)}/(\phi_{*}A, (p), \phi_* M_A)^a)_{\Prism}$ given by the pullback of $j_{\ast}\mathcal{E}_{\Prism}$. 
\end{thm}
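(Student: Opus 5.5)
The plan is to deduce the statement from Theorem~\ref{thm: pris-crys-comparison-associated-sheaves}, applied with $(Y,M_Y)=(X_{A/J},M_{X_{A/J}})$ over $(A/J,M_A)$. First I check the hypotheses of Set-up~\ref{set-up: base-cryst-prism}. Since $X$ is semistable over $\calO_K$, \'etale locally it is \'etale over $\Spf R^0$. Hence $(X_{A/J},M_{X_{A/J}})$ is log smooth of Cartier type over $(A/J,M_A)$: the base change of a semistable log scheme along an integral morphism of log points stays integral and of Cartier type, and Cartier type is preserved under base change. This follows from \cite[Def.~4.8]{Kato-log} and the local description via $\N\rightarrow\N^d$, $1\mapsto e_1+\cdots+e_m$.

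Next I need the coefficients. On the crystalline side I take $\calE_\CRIS\coloneqq$ the restriction of $\calE_\cris$ to $((X_{A/J},M_{X_{A/J}})/(A,M_A))_\CRIS$. This is a $p$-adically completed crystal by Definition~\ref{def:crystalline-realization} and Proposition~\ref{prop:comparison of crystals in two crystalline sites}. On the prismatic side I take $\calE^{(1)}_\Prism\coloneqq(j_\ast\calE_\Prism)^{(1)}$, the pullback along the strict map $X^{(1)}_{A/J}\rightarrow X$ composed with the forgetful functor. This is a completed prismatic crystal.

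The key step is to construct the association isomorphism
\[
c\colon \nu_\Prism^{-1}(j_\ast\calE_\Prism)^{(1)}\xrightarrow{\cong}\phi_\ast\nu_\CRIS^\ast\calE_\cris .
\]
I would build it on a basis of the $\delta$-log crystalline site. An object $(B,J_B,M_B)$ of $((X_{A/J},M_{X_{A/J}})/(A,M_A))_\dCRIS$ maps under $\nu_\Prism$ to the log prism $(\phi_\ast B,(p),\phi_\ast M_B)$ over $X^{(1)}_{A/J}$, which is also an object of $(X,M_X)_\Prism$. Both sides then evaluate to modules over $B$. The crystalline realization was defined in \cite[\S~3.5]{du-liu-moon-shimizu-purity-F-crystal} exactly by evaluating $j_\ast\calE_\Prism$ on such Frobenius-twisted crystalline prisms; locally this is the formula $\calE_\cris(S_\square,J_\square)=j_\ast\calE_\Prism(S_\square)$. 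So $c$ should be the tautological identification $\calE_\cris(B,J_B)\cong j_\ast\calE_\Prism(\phi_\ast B)$.

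To make this rigorous I would work \'etale locally on a small affine $X$ with framing $\square$. By Lemma~\ref{lem:Breuil-prism-weakly-initial-for-crystalline-prisms}, the coproduct of $\phi_\ast B$ with the Breuil log prism $S_\square$ exists and covers. Both crystals satisfy the completed crystal property along this map, so both evaluations become $j_\ast\calE_\Prism(S_\square)\widehat\otimes(\cdots)$. I then check that the two resulting descent data agree by comparing them on $S_\square^{[1]}$. Finally I glue over \'etale covers, using the functoriality of both constructions. \textbf{This is the step I expect to be the main obstacle}, especially keeping track of the log structures $\phi^\ast\N^d$ versus $\N^d$.

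With $c$ in hand, Theorem~\ref{thm: pris-crys-comparison-associated-sheaves} gives
\[
R\nu^{(1)}_\ast(j_\ast\calE_\Prism)^{(1)}\cong\phi_\ast Ru_{Y,\ast}\calE_\CRIS .
\]
Applying $R\Gamma(Y_\et,-)$ to this yields the displayed isomorphism, because the relative prismatic and crystalline cohomologies are $R\Gamma(Y_\et,-)$ of these pushforwards.

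Functoriality in $(A,(p),M_A)^a$ holds because $c$ is defined by evaluation and hence commutes with base change. The \v{C}ech--Alexander maps $D^\bullet\rightarrow\phi_\ast C^\bullet$ are also natural in $A$.

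For Frobenius compatibility after inverting $p$, I would use that $\phi_{j_\ast\calE}$ and $\phi_{\calE_{\cris,\Q}}$ correspond under $c$ after inverting $p$. This holds by the definition of the $F$-structure on the crystalline realization in \cite[Cor.~3.31]{du-liu-moon-shimizu-purity-F-crystal}. Since the relative Frobenius $D^\bullet\rightarrow\phi_\ast C^\bullet$ is Frobenius-equivariant, the isomorphism intertwines the Frobenii on cohomology.
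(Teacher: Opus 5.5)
Your proposal is correct and follows essentially the same route as the paper. The paper likewise builds the association isomorphism $\nu_\Prism^{-1}(j_\ast\calE_\Prism)^{(1)}\cong\phi_\ast\nu_\CRIS^\ast\calE_\cris$ objectwise: it reduces by \'etale descent to the small affine case, then uses Lemma~\ref{lem:Breuil-prism-weakly-initial-for-crystalline-prisms} and $p$-completely faithfully flat descent to reach objects receiving a map from the Breuil log prism, where the identification comes from the construction of the crystalline realization; it concludes by Theorem~\ref{thm: pris-crys-comparison-associated-sheaves} together with Propositions~\ref{prop:prismatic-higher-direct-image} and \ref{prop: proj to etale for cris topos}. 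Your extra remarks on the Cartier-type hypothesis and on Frobenius compatibility spell out points the paper leaves implicit.
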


\begin{proof}
We first claim that there is a natural isomorphism
\[
\nu_{\Prism}^{-1}(j_{\ast}\mathcal{E}_{\Prism})^{(1)} \stackrel{\cong}{\rightarrow} \phi_* \nu_{\CRIS}^*\mathcal{E}_{\cris},\quad\text{where}
\]
$\nu_{\Prism}\colon \Sh(((X_{A/J}, M_{X_{A/J}})/(A, M_A))_{\dCRIS}) \rightarrow \Sh(((X_{A/J})^{(1)}, M_{X_{A/J}}^{(1)}/(\phi_{*}A, (p), \phi_* M_A)^a)_{\Prism, \et})$ and $\nu_{\CRIS}\colon \Sh(((X_{A/J}, M_{X_{A/J}})/(A, M_A))_{\dCRIS}) \rightarrow \Sh(((X_{A/J}, M_{X_{A/J}})/(A, M_A))_{\CRIS})$ are morphisms of topoi as in \S~\ref{sec: prismatic-crystalline comparison}. Consider an object in $((X_{A/J}, M_{X_{A/J}})/(A, M_A))_{\dCRIS}$ given by a log prism $(B, (p), M_{\Spf B}) = (B, (p), M_B)^a$ over $(A, (p), M_{\Spf A})$ together with a $p$-completed PD-ideal $J_1 \subset B$ and map $\Spf(B/J_1) \rightarrow X_{A/J}$, as in \cite[Def.~6.4]{koshikawa}. Without loss of generality, we may assume that $(B, M_B)$ is a log ring. Note 
\[
\nu_{\Prism}^{-1}(j_{\ast}\mathcal{E}_{\Prism})^{(1)}(B, J_1, M_B) = (j_{\ast}\mathcal{E}_{\Prism})^{(1)}(\phi_* B, (p), M_B^{(1)})^a,
\]
and $(\phi_* B, (p), \phi_* M_B)^a \in (X, M_X)_{\Prism}$.

On the other hand, we claim that there is a natural identification 
\[
\phi_*\nu_{\CRIS}^*\mathcal{E}_{\cris}(B, J_1, M_B) = j_{\ast}\mathcal{E}_{\Prism}(\phi_* B, (p), \phi_* M_B)^a.
\]
By \cite[Rem.~4.2]{koshikawa} and \'etale descent, we may assume that $(X, M_X)$ is small affine. Furthermore, by Lemma~\ref{lem:Breuil-prism-weakly-initial-for-crystalline-prisms} and $p$-completely faithfully flat descent, we may assume to have a map $(S, (p), M_{\Spf S}) \rightarrow (\phi_* B, (p), \phi_* M_B)^a$ in $(X, M_X)_{\Prism}$ where $(S, (p), M_{\Spf S}) \in (X, M_X)_{\Prism}$ is the Breuil log prism. Then the claim follows directly from the construction of crystalline realization in \cite[Const.~3.29]{du-liu-moon-shimizu-purity-F-crystal}. Thus, we obtain a natural isomorphism $\nu_{\Prism}^{-1}(j_{\ast}\mathcal{E}_{\Prism})^{(1)} \stackrel{\cong}{\rightarrow} \phi_*\nu_{\CRIS}^*\mathcal{E}_{\cris}$.

Now, the statement follows from Theorem~\ref{thm: pris-crys-comparison-associated-sheaves}, Propositions~\ref{prop:prismatic-higher-direct-image} and \ref{prop: proj to etale for cris topos}.
\end{proof}

We apply the above comparison theorem in the following situation. For the Breuil ring $S_K$ as in Example~\ref{eg: examples-log-prisms}(2), write $J_{S_K} \subset S_K$ for the $p$-complete PD-ideal $\Ker(S_K \twoheadrightarrow \calO_K/p)$. We have $(S_K, S_K/J_{S_K} = \calO_K/p) \in (\Spec \calO_K/p, M_{\Spec \calO_K/p})_{\CRIS}^\op$ where the log structure $M_{S_K}$ on $\Spf S_K$ is given by $\N \rightarrow S_K$, $1 \mapsto u$.\footnote{Note that $M_{S_K}$ is different from $M_{\Spf S_K}$ in \textit{loc.~cit.}; $M_{S_K}=M_A^a$ and $M_{\Spf S_K}=(\phi_{\ast} M_A)^a$ in the notation of Theorem~\ref{thm: prism-cryst-comparison-semist}.} Note that $(S_K, \calO_K/p)$ is a weakly initial object of $(\Spec \calO_K/p, M_{\Spec \calO_K/p})_{\CRIS}^{\op}$. Let $(S_K^{(n)}, \calO_K/p)$ be the $(n+1)$-st self-coproduct of $(S_K, \calO_K/p)$ in $(\Spec \calO_K/p, M_{\Spec \calO_K/p})_{\CRIS}^{\op}$. We have a natural identification $S_K^{(n)} = S_K^{[n]}$ of rings by the construction and \cite[Cor.~2.39]{bhatt-scholze-prismaticcohom} (cf. \cite[Lem.~3.32]{du-liu-moon-shimizu-purity-F-crystal}). 

\begin{thm} \label{thm: pris-cris-comparison-over-Breuil S}
Suppose $(X, M_X)$ is a proper semistable $p$-adic formal scheme over $\Spf \calO_K$. We have a canonical isomorphism
\[
R\Gamma_{\Br}(X, j_{\ast}\calE_{\Prism}) \cong R\Gamma(((X_1, M_{X_1})/(S_K, M_{S_K}))_{\CRIS}, \calE_{\cris}).
\]
in $\calD(S_K)$. In particular, we have an isomorphism
\[
R\Gamma_{\BK}(X, j_{\ast}\calE_{\Prism})\otimes_{\fkS_K, \phi}^L S_K[p^{-1}] \cong R\Gamma(((X_1, M_{X_1})/(S_K,M_{S_K}))_{\CRIS}, \calE_{\cris, \Q})
\]
in $\calD_{\perf}(S_K[p^{-1}])$ compatible with the Frobenii.
Moreover, the following isomorphisms hold, and they are compatible with the Frobenii.
\begin{enumerate}
 \item For $(X_0, M_{X_0})\coloneqq (X_1, M_{X_1})\otimes_{\calO_K/p}k$, 
we have an isomorphism in $\calD_{\perf}(K_0)$:
\[
R\Gamma_W(X,j_\ast\calE_\Prism)\otimes_{W(k)}^L K_0\cong R\Gamma(((X_0, M_{X_0})/(W(k), M_0))_{\CRIS}, \calE_{\cris, \Q}).
\]

 \item The base change along any projection $S_K \rightarrow S_K^{[n]}$ induces an isomorphism
\[
R\Gamma_{\Br}(X, j_{\ast}\calE_{\Prism})\otimes_{S_K}^L S_K^{[n]}[p^{-1}] \cong R\Gamma(((X_1, M_{X_1})/(S_K^{[n]}, M_{S_K^{[n]}}))_{\CRIS}, \calE_{\cris, \Q})
\]
in $D_{\perf}(S_K^{[n]}[p^{-1}])$.

 \item The base change along $S_K \rightarrow A_{\cris}$, $u \mapsto [\pi^{\flat}]$, induces an isomorphism
\[
R\Gamma_{A_{\cris}}(X, j_{\ast}\calE_{\Prism})\otimes_{A_{\cris}}^L A_{\cris}[p^{-1}] \cong R\Gamma(((X_{\calO_C/p}, M_{X_{\calO_C/p}})/(A_{\cris}, M_{A_{\cris}}))_{\CRIS}, \calE_{\cris, \Q})
\]
in $D_{\perf}(A_{\cris}[p^{-1}])$.
 \end{enumerate}
Here the log structures $M_{S_K^{[n]}}$ and $M_{A_\cris}$ are the ones induced from $M_{S_K}$ via the described maps from $S_K$.
\end{thm}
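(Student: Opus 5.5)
The plan is to deduce everything from Theorem~\ref{thm: prism-cryst-comparison-semist}, applied to well-chosen crystalline prelog prisms, combined with the base change results of \S~\ref{subsec: BK-cohom}.

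\textbf{First isomorphism.} Take $(A,(p),M_A)=(S_K,(p),\N)$ with $1\mapsto u$ (a log ring of rank $1$) and $J=J_{S_K}$. Then $A/J=\calO_K/p$, so $X_{A/J}=X_1$. The Frobenius twist $(\phi_\ast S_K,(p),\phi_\ast\N)^a$ is precisely the Breuil log prism $(S_K,(p),M_{\Spf S_K})$ of Example~\ref{eg: examples-log-prisms}(2): the structure map $\calO_K\to S_K/p$ is induced by $\phi$, and $M_{\Spf S_K}=(\phi_\ast M_A)^a$, as noted in the footnote. Hence $X_{A/J}^{(1)}=X_{S_K/p}$, and the left-hand side of Theorem~\ref{thm: prism-cryst-comparison-semist} is $R\Gamma_{\Br}(X,j_\ast\calE_\Prism)$. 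The right-hand side is the crystalline cohomology of $\calE_\cris$ over $(S_K,M_{S_K})$. Forgetting the $\phi_\ast$ (which only changes the module structure by which the complex is regarded) gives the canonical isomorphism in $\calD(S_K)$. One should check that the $S_K$-linear structures match, since the prismatic side is linear over $\phi_\ast S_K$ viewed as the base prism.

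\textbf{Second isomorphism.} Invert $p$ and combine with Theorem~\ref{thm: general-base-change-BK-cohom}(2) for $A=\phi_\ast S_K$, i.e.\ along $\phi\colon\fkS_K\to S_K$, which has finite Tor amplitude by Lemma~\ref{lem: finite-p-compl-Tor-amp}. This yields the displayed isomorphism. Perfectness comes from Theorem~\ref{thm: BK-cohom-perfect-cx}, and $\calE_{\cris,\Q}$-cohomology equals $\calE_\cris$-cohomology with $p$ inverted because $X$ is quasi-compact. Frobenius compatibility comes from the last sentence of Theorem~\ref{thm: prism-cryst-comparison-semist}.

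\textbf{Parts (1)--(3).} For (2), first apply the same argument with $A=S_K^{[n]}$ and its PD ideal; this is legitimate because $S_K^{(n)}=S_K^{[n]}$ and functoriality in $(A,(p),M_A)^a$ holds. This identifies $R\Gamma((X_{\phi_\ast S_K^{[n]}/p}/\phi_\ast S_K^{[n]})_\Prism,j_\ast\calE)[p^{-1}]$ with the crystalline cohomology over $S_K^{[n]}$. Then compare the prismatic side with $R\Gamma_{\Br}\otimes_{S_K}S_K^{[n]}[p^{-1}]$ by writing both as base changes of $R\Gamma_{\BK}$ (Theorem~\ref{thm: general-base-change-BK-cohom}(2) for $A=S_K^{[n]}$ and for $S_K$), and use transitivity of base change for perfect complexes.

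For (3), use the analogous argument with $(A_\cris,(p),\N\to A_\cris,1\mapsto[\pi^\flat])$ and the PD ideal $\Ker(A_\cris\to\calO_C/p)$. Here one must identify its Frobenius twist with the log prism $(A_\cris,(p),M_{\Spf A_\cris})$ of Example~\ref{eg: examples-log-prisms}(5). The structure map $\calO_K\to\calO_C/p\xrightarrow{\phi}A_\cris/p$ and the log structure $1\mapsto[\pi^\flat]^p$ match, using the factorization of Set-up~\ref{set-up: base-cryst-prism}. The compatibility with $S_K\to A_\cris$, $u\mapsto[\pi^\flat]$, comes from diagram~\eqref{eq: diag-log-prism-site-O_K}.

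For (1), use $(W(k),(p),\N\xrightarrow{0}W(k))$ with $J=(p)$. Its Frobenius twist is again $(W(k),(p),M_0)$, since the log structure $1\mapsto 0$ is Frobenius stable and $\phi$ is bijective on $W(k)$; here $X_{A/J}=X_0$. Finally invert $p$, using perfectness from Theorem~\ref{thm: BK-cohom-perfect-cx} together with base change along $\fkS_K\to W(k)$.

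\textbf{Main obstacle.} The delicate step is the careful matching of the Frobenius-twisted prelog prisms and structure maps with the objects of Example~\ref{eg: examples-log-prisms}, including the rank $1$/log ring hypothesis. A second delicate point is ensuring that the Frobenius compatibility, which is only asserted after inverting $p$, survives all the base change identifications. I would first check the $A_\cris$ case explicitly.
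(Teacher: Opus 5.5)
Your proposal is correct and follows the paper's proof essentially step for step. You apply Theorem~\ref{thm: prism-cryst-comparison-semist} to $(S_K,J_{S_K})$, whose Frobenius twist is the Breuil log prism, then invoke Theorem~\ref{thm: general-base-change-BK-cohom}(2) with perfectness from Theorem~\ref{thm: BK-cohom-perfect-cx}, and repeat with $A=W(k)$, $S_K^{[n]}=S_K^{(n)}$, and $A_\cris$ for (1)--(3).

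Two cosmetic points do not affect the argument. First, the prelog structure $\N\to S_K$, $1\mapsto u$, is of rank $1$ but does not make $S_K$ a log ring; rank $1$ is all the hypothesis needs. Second, the base you call ``$\phi_\ast S_K$'' in Theorem~\ref{thm: general-base-change-BK-cohom} is what the paper calls the Breuil log prism $S_K$, because the paper reserves $\phi_\ast S_K$ for the doubly twisted prism of Example~\ref{eg: examples-log-prisms}(2).
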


Again, Proposition~\ref{prop:comparison of crystals in two crystalline sites} gives the identification of two crystalline sites
\[
R\Gamma(((X_{A/J}, M_{X_{A/J}})/(A,M_A))_{\CRIS}, \calE_{\cris, \Q})\cong R\Gamma(((X_{A/J}, M_{X_{A/J}})/(\Spf A)^\sharp)_{\CRIS}, \calE_{\cris, \Q})
\]
where $(\Spf A)^\sharp$ is the $p$-adic log PD-formal scheme $(\Spf A,M_A)$ with the PD-ideal $J$, and $R\Gamma(-,\calE_{\cris,\Q})$ denotes $R\Gamma(-,\calE_{\cris})\otimes_\Z^L\Q$.

\begin{proof}
Note that $R\Gamma_{\BK}(X, j_{\ast}\calE_{\Prism})$ is a perfect complex in $D(\fkS_K)$ by Theorem~\ref{thm: BK-cohom-perfect-cx}. 
Let us apply Theorem~\ref{thm: prism-cryst-comparison-semist} to $(A,J) = (S_K,J_{S_K})$; note $((X_{A/J})^{(1)}, M_{X_{A/J}}^{(1)})/(\phi_{*}A, (p), \phi_* M_A)^a)_{\Prism}$ is identified with $((X_{S_K/p}, M_{X_{S_K/p}}) / (S_K, M_{\Spf S_k}))_{\Prism}$ according to Example~\ref{eg: examples-log-prisms}(2). 
So as complexes of modules over $S_K=\phi_{*}A$, we have a canonical isomorphism 
\[
R\Gamma_{\Br}(X, j_{\ast}\calE_{\Prism}) \cong R\Gamma(((X_1, M_{X_1})/(S_K, M_{S_K}))_{\CRIS}, \calE_{\cris}).
\]
Thus, the first statement follows from Theorem~\ref{thm: general-base-change-BK-cohom} (for $A = S_K$). 

Parts (1), (2), and (3) follow from the first two, Theorems~\ref{thm: general-base-change-BK-cohom}(2) and \ref{thm: prism-cryst-comparison-semist} for $A = W(k)$, $A = S_K^{[n]} = S_K^{(n)}$, and $A = A_{\cris}$, respectively.
\end{proof}

\subsection{The Frobenius isogeny property for the crystalline and Breuil cohomology} \label{subsec: Frobenius-isogeny-property}

We use the method of \cite[Thm.~11.3, Pf.]{du-moon-shimizu-cris-pushforward} to show the following Frobenius isogeny property.

\begin{thm}\label{thm:Frobenius-isogeny-property}
Suppose that $(X, M_X)$ is a proper semistable $p$-adic formal scheme over $\calO_K$ and keep the notation as in \S~\ref{sec: pris-cris-comparison-crystals-semist}.
Then $R\Gamma(((X_1, M_{X_1})/(S_K, M_{S_K}))_{\CRIS}, \calE_{\cris, \Q})$ satisfies the Frobenius isogeny property: if we write $\calM \in \calD_{\perf}(S_K[p^{-1}])$ for the above complex, the induced map $1\otimes\phi\colon L\phi^* \calM \rightarrow \calM$ is an isomorphism. Consequently, the objects in Theorem~\ref{thm: pris-cris-comparison-over-Breuil S}(1)(2)(3) also satisfy the Frobenius isogeny property.
\end{thm}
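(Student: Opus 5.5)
Let $\calM\coloneqq R\Gamma(((X_1, M_{X_1})/(S_K, M_{S_K}))_{\CRIS}, \calE_{\cris, \Q})$. By Theorem~\ref{thm: pris-cris-comparison-over-Breuil S} we may identify $\calM$ with $R\Gamma_{\Br}(X, j_\ast\calE_\Prism)\otimes^L_{S_K}S_K[p^{-1}]$, and via Proposition~\ref{prop: BK-cohom} and Theorem~\ref{thm: general-base-change-BK-cohom} also with $R\Gamma_{\Br}(X,i(\calE_\Prism))[p^{-1}]$. I work with the perfect complex $i(\calE_\Prism)$ because its Frobenius $\phi_{i(\calE)}\colon L\phi^\ast i(\calE)[\calI_\Prism^{-1}]\xrightarrow{\cong} i(\calE)[\calI_\Prism^{-1}]$ is a genuine isomorphism of perfect complexes, and on crystalline prisms $\calI_\Prism=(p)$. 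The plan is to write $1\otimes\phi$ on $\calM$ as a composite of two isomorphisms.

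\textbf{The base-change isomorphism.} Consider the morphism $\phi\colon (S_K,(p),M_{\Spf S_K})\to(\phi_\ast S_K,(p),M_{\Spf\phi_\ast S_K})$ from Example~\ref{eg: examples-log-prisms}(2). Corollary~\ref{cor: Frob-twist-Breuil-cohom} gives an isomorphism
\[
L\phi^\ast\calM=R\Gamma_{\Br}(X,j_\ast\calE)\otimes^L_{S_K,\phi}S_K[p^{-1}]\xrightarrow{\cong}R\Gamma((X_{\phi_\ast S_K/p}/(\phi_\ast S_K,M_{\Spf\phi_\ast S_K}))_\Prism,j_\ast\calE)[p^{-1}].
\]
Its proof passes through the perfectness of $R\Gamma_{\BK}$ (Theorem~\ref{thm: BK-cohom-perfect-cx}) and finite Tor amplitude of $\phi^2\colon\fkS_K\to S_K$ (Lemma~\ref{lem: finite-p-compl-Tor-amp}).

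\textbf{The prismatic Frobenius.} Next I compare the target of this isomorphism with $\calM$ using the Frobenius of the crystal itself. Here I follow \cite[Thm.~11.3, Pf.]{du-moon-shimizu-cris-pushforward}. Pulling back along the relative Frobenius of the site, which sends a log prism $(B,(p),M)$ over $S_K$ to $(\phi_\ast B,(p),\phi_\ast M)$ over $\phi_\ast S_K$, the map $\phi_{i(\calE)}$ produces an isomorphism between this twisted Breuil cohomology and $R\Gamma_{\Br}(X,i(\calE))[p^{-1}]$. This is possible because $\phi_{i(\calE)}$ becomes an isomorphism after inverting $p$ on each crystalline prism. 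I then check locally on the \v{C}ech nerve $S^{\rel,[\bullet]}_\Lambda$ of Construction~\ref{const: Cech-nerve-global-separated} that the map is an isomorphism termwise: there it is $\phi_{i(\calE)}$ evaluated on $S^{\rel,[n]}_\Lambda$, tensored with $\phi_\ast$, and $\Tot$ commutes with inverting $p$ since the degrees are uniformly bounded. Finally I check that the composite of the two isomorphisms is the crystalline $1\otimes\phi$, using the Frobenius compatibility in Theorem~\ref{thm: prism-cryst-comparison-semist}. I expect this identification, with its bookkeeping of the twisted log structures ($\phi_\ast M$ versus $M$, and $M_{S_K}$ versus $M_{\Spf S_K}$), to be the main obstacle. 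I would handle it by working on the explicit \v{C}ech models, where both Frobenii are induced by $\phi$ on the Breuil rings.

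\textbf{The consequences.} The statements for (1), (2) and (3) of Theorem~\ref{thm: pris-cris-comparison-over-Breuil S} follow by base change. Those complexes are $\calM\otimes^L_{S_K}A[p^{-1}]$ for the Frobenius-equivariant maps $S_K\to W(k)$, $S_K\to S_K^{[n]}$ and $S_K\to A_\cris$. The map $1\otimes\phi$ commutes with such base change, and the base change of an isomorphism is an isomorphism.
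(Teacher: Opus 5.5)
Your outer steps agree with the paper. The base-change isomorphism from Corollary~\ref{cor: Frob-twist-Breuil-cohom} is exactly how the paper shows its map $\delta$ is a rational isomorphism. The Frobenius structure of the crystal is an isogeny by definition. The deduction of (1)--(3) by Frobenius-equivariant base change is also the same.

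The gap is the middle step. Pulling back along $(B,(p),M)\mapsto(\phi_\ast B,(p),\phi_\ast M)$ is \emph{not} termwise an isomorphism on \v{C}ech models. Moreover, this functor does not preserve coproducts, so $\phi_\ast S^{\rel,[\bullet]}_\Lambda$ is not a \v{C}ech nerve for the twisted site. By the proof of Theorem~\ref{thm: general-base-change-BK-cohom} (with $A=\phi_\ast S_K$), the twisted Breuil cohomology is computed by $i(\calE)(S^{\rel,[n]}_\Lambda)\otimes_{S_K,\phi}S_K$, where only the base is Frobenius-twisted. By contrast, $\phi_{i(\calE)}$ has source $i(\calE)(S^{\rel,[n]}_\Lambda)\otimes_{S^{\rel,[n]}_\Lambda,\phi}S^{\rel,[n]}_\Lambda$. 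Your map passes between these through the relative Frobenius $S^{\rel,[n]}_\Lambda\widehat{\otimes}_{S_K,\phi}S_K\rightarrow S^{\rel,[n]}_\Lambda$, $b\otimes s\mapsto\phi(b)s$ (e.g.\ $T_i\mapsto T_i^p$), which is not an isomorphism in any cosimplicial degree. That it nevertheless induces a rational quasi-isomorphism after totalization is a Cartier-type theorem, and it is the real content of the statement. Your proposal conflates $L\phi^\ast$ of the cohomology with the termwise Frobenius pullback, so it never proves this.

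In the paper this missing step is the map $\beta$, handled as follows.
\begin{enumerate}
\item Pass to the crystalline realization $\calK=i(\calE_\Prism)_\cris$ (Construction~\ref{const: cryst-realization-perfect-cx}, Proposition~\ref{prop:comparison-of-cohomology-of-crystalline-realization}).
\item Factor the absolute Frobenius of $X_1$ through the relative Frobenius $F_{X_1/Y}\colon X_1\rightarrow X_1'$, where $Y=\Spec\calO_K/p$.
\item Use the projection formula for the perfect complex $\calK'=g_\CRIS^\ast\calK$ to reduce to the adjunction map $\calO_{X_1'/\Z_p}\rightarrow RF_{X_1/Y,\CRIS,\ast}\calO_{X_1/\Z_p}$.
\item This map is an isogeny by \cite[Prop.~2.24]{hyodo-kato}.
\end{enumerate}
The projection formula is why perfectness of the coefficient is needed, and hence why the paper detours through $i(\calE_\Prism)$ rather than $j_\ast\calE_\Prism$. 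The reason is not that $\phi_{i(\calE)}$ is an isomorphism; an isomorphism after inverting $p$ is already available for $j_\ast\calE_\Prism$. The log-structure bookkeeping you flagged is routine by comparison. What your argument needs is this isogeny input, or an equivalent prismatic statement for the relative Frobenius with perfect coefficients.
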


To prove this, we need to introduce a perfect complex $i(\calE_\Prism)_\cris$ on $(X_1,M_{X_1})_\CRIS$, which is the crystalline realization of $i(\calE_\Prism)$.\footnote{The following argument works for $(X_1,M_{X_1})_\CRIS$ in the sense of Definition~\ref{def: Koshikawa-crystalline-site} as well as in the sense of \cite[Def.~10.1]{du-moon-shimizu-cris-pushforward}/Definition~\ref{def:DMS-crystalline-site}. Since we use the techniques in \cite{du-moon-shimizu-cris-pushforward} in the proof of Theorem~\ref{thm:Frobenius-isogeny-property}, we implicitly adopt the latter here.}

Let $\mathcal{D}_{\perf}((X_1, M_{X_1})_{\CRIS})$ be the derived $\infty$-category of perfect complexes, i.e., the full subcategory of $\mathcal{D}((X_1, M_{X_1})_{\CRIS}, \mathcal{O}_{X_1/\mathbf{Z}_p})$ consisting of perfect complexes. Set $\mathcal{D}_{\perf}((X_1, M_{X_1})_{\CRIS})_{\mathbf{Q}} \coloneqq \mathcal{D}_{\perf}((X_1, M_{X_1})_{\CRIS})\otimes_{\mathcal{D}(\mathbf{Z})}\mathcal{D}(\mathbf{Q})$. Let $\mathcal{D}_{\perf}^{\phi}((X_1, M_{X_1})_{\CRIS})$ be the $\infty$-category of pairs $(\mathcal{E}, \phi_{\mathcal{E}})$ where $\mathcal{E} \in \mathcal{D}_{\perf}((X_1, M_{X_1})_{\CRIS})$ and $\phi_{\mathcal{E}}\colon LF_{X_1, \CRIS}^*\mathcal{E} \stackrel{\cong}{\rightarrow} \mathcal{E}$ is an isomorphism in $\mathcal{D}_{\perf}((X_1, M_{X_1})_{\CRIS})_{\mathbf{Q}}$.\footnote{As in \cite{du-moon-shimizu-cris-pushforward}, we also write $F_{X_1, \CRIS}^*$ for $LF_{X_1, \CRIS}^*$ since $F_{X_1, \CRIS}^*$ is exact.}
Objects of $\mathcal{D}_{\perf}^{\phi}((X_1, M_{X_1})_{\CRIS})$ are called \emph{$F$-isocrystals in perfect complexes on $(X_1, M_{X_1})_{\CRIS}$}.

\begin{construction}[Crystalline realization of $i(\calE_{\Prism})$] \label{const: cryst-realization-perfect-cx}
Keep the set-up and notation as in \S~\ref{sec: pris-cris-comparison-crystals-semist}. Let $\{g_\lambda\colon X_\lambda\rightarrow X\}_{\lambda\in \Lambda}$ be a $p$-completely \'etale covering with $\Lambda$ a finite set such that each $X_\lambda$ is small affine with framing $\square_\lambda$. For each $\lambda$, consider the Breuil log prism $(S_{\square_{\lambda}}, (p), M_{\Spf S_{\square_{\lambda}}}) \in (X, M_X)_{\Prism}$ and the ind-object $(X_{\lambda, 1}, \Spf S_{\square_{\lambda}})=\varinjlim_m (X_{\lambda, 1}, \Spec S_{\square_{\lambda}}/p^m) \in (X_1, M_{X_1})_{\CRIS}^{\aff}$.\footnote{Here, the log structure of $(X_{\lambda, 1}, \Spf S_{\square_{\lambda}})$ is given as in Definition~\ref{def:crystalline-realization}.} Then the family $(X_{\lambda, 1}, \Spf S_{\square_{\lambda}})_{\lambda \in \Lambda}$ jointly covers the final object of $\Sh((X_1, M_{X_1})_{\CRIS})$. 

Similarly as in Construction~\ref{const: Cech-nerve-global-separated}, for any $\lambda_0, \ldots, \lambda_n \in \Lambda$, we have the coproduct $(S_{\square_{\lambda_0}, \ldots, \square_{\lambda_n}}, (p), M_{\Spf S_{\square_{\lambda_0}, \ldots, \square_{\lambda_n}}})$ of $(S_{\square_{\lambda_0}}, (p), M_{\Spf S_{\square_{\lambda_0}}}), \ldots, (S_{\square_{\lambda_n}}, (p), M_{\Spf S_{\square_{\lambda_n}}})$ in $(X, M_X)_{\Prism}^{\op}$. By \cite[Cor.~2.39]{bhatt-scholze-prismaticcohom}, $(X_{\lambda_0 \ldots \lambda_n, 1}, \Spf S_{\square_{\lambda_0}, \ldots, \square_{\lambda_n}})$ represents the product of $(X_{\lambda_0, 1}, \Spf S_{\square_{\lambda_0}}), \ldots, (X_{\lambda_n, 1}, \Spf S_{\square_{\lambda_n}})$ as ind-objects of $(X_1, M_{X_1})_{\CRIS}$, where $X_{\lambda_0 \ldots \lambda_n, 1}\coloneqq X_{\lambda_0, 1}\times_{X_1}\cdots \times_{X_1} X_{\lambda_n, 1}$ (cf.~\cite[Lem.~3.32]{du-liu-moon-shimizu-purity-F-crystal} for small affine case). Note that $H^q((X_{\lambda_0 \ldots \lambda_n, 1}, \Spf S_{\square_{\lambda_0}, \ldots, \square_{\lambda_n}}),\calO_{X_1/\Z_p})$ is $S_{\square_{\lambda_0}, \ldots, \square_{\lambda_n}}$ when $q=0$ and it vanishes when $q\geq 1$.

Write $\Spf(S^{[\bullet]}_{\Lambda})$ for the \v{C}ech nerve of the cover $\coprod_{\lambda\in \Lambda}(X_{\lambda, 1}, \Spf S_{\square_{\lambda}}) \rightarrow \ast$; the $n$-th spot is given by $\coprod_{(\lambda_0, \ldots, \lambda_n) \in \Lambda^{n+1}}(X_{\lambda_0 \ldots \lambda_n, 1}, \Spf(S_{\square_{\lambda_0}, \ldots, \square_{\lambda_n}}))$. By $p$-completely faithfully flat descent, we have a natural equivalence 
\[
\cal{D}_{\perf}^{\phi}((X_1, M_{X_1})_{\CRIS})\cong \lim_{[n] \in \Delta} \calD_{\perf}^{\phi}(S^{[n]}_{\Lambda}).
\]
We define \emph{crystalline realization of $i(\calE_{\Prism})$} to be the object $i(\calE_{\Prism})_{\cris} \in \cal{D}_{\perf}^{(\phi)}((X_1, M_{X_1})_{\CRIS})$ given by $\lim_{[n]\in \Delta} i(\calE_{\Prism})(S_{\Lambda}^{[n]})$ under the above equivalence.
\end{construction}

Let $(\Spf S_K)^\sharp$ be the $p$-adic log PD-formal scheme $(\Spf S_K, M_{S_K})$ with the PD-ideal $J_{S_K}=\Ker(S_K\rightarrow \calO_K/p)$.
So we have the identification
\[
R\Gamma(((X_1, M_{X_1})/(S_K,M_{S_K}))_{\CRIS}, \calE_{\cris, \Q})\cong R\Gamma(((X_1, M_{X_1})/(\Spf S_K)^\sharp)_{\CRIS}, \calE_{\cris, \Q}).
\]

\begin{construction}[\v{C}ech nerve of $((X_1, M_{X_1})/(\Spf S_K)^\sharp)_{\CRIS}$] \label{const: Cech-nerve-cristalline-site-over-SK}
Let $\{g_\lambda\colon X_\lambda\rightarrow X\}_{\lambda\in \Lambda}$ be a $p$-completely \'etale covering with $\Lambda$ a finite set such that each $X_\lambda$ is small affine with framing $\square_\lambda$. Construction~\ref{const: Cech-nerve-global-separated} gives a family of Breuil log prisms $(S_{\square_{\lambda}}, (p), M_{\Spf S_{\square_{\lambda}}}) \in (X_{\lambda}, M_{X_{\lambda}})_{\Prism}^\op$, which jointly covers the final object of the topos $\Sh(((X, M_X)/(S_{K}, M_{\Spf S_{K}}))_{\Prism})$, and its \v{C}ech nerve is denoted by $\Spf(S_{\Lambda}^{\rel, [\bullet]})$. Similarly as in small affine case, this gives rise to a family of ind-objects $(X_1, \Spf S_{\square_{\lambda}})_{\lambda \in \Lambda}$, which jointly covers the final object of $\Sh(((X_1, M_{X_1})/(\Spf S_K)^\sharp)_{\CRIS})$; its associated \v{C}ech nerve is given by $(X_1, \Spf S_{\Lambda}^{\rel, [\bullet]})$ by the construction and \cite[Cor.~2.39]{bhatt-scholze-prismaticcohom}. 
\end{construction}

\begin{prop} \label{prop:comparison-of-cohomology-of-crystalline-realization}
The induced map
\[
R\Gamma(((X_1, M_{X_1})/(\Spf S_K)^\sharp)_{\CRIS}, i(\calE_\Prism)_\cris)_\Q
\rightarrow 
R\Gamma(((X_1, M_{X_1})/(\Spf S_K)^\sharp)_{\CRIS}, \calE_{\cris, \Q})
\]
is an isomorphism in $\calD_\perf(S_K[p^{-1}])$ and compatible with Frobenii, where the subscript $\Q$ in the source refers to $-\otimes_\Z^L\Q=-\otimes_{S_K}^LS_K[p^{-1}]$.
\end{prop}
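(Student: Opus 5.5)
We compute both sides by the \v{C}ech--Alexander method with the nerve of Construction~\ref{const: Cech-nerve-cristalline-site-over-SK} and compare termwise. The covering ind-objects $(X_1,\Spf S_{\square_\lambda})$, and hence their fiber products $(X_1,\Spf S^{\rel,[n]}_\Lambda)$, have vanishing higher cohomology of $\calO_{X_1/\Z_p}$. The crystalline analogues of Lemmas~\ref{lem:CA method} and \ref{lem: CA-method-infinity-category} therefore identify the source with $\Tot\bigl(i(\calE_\Prism)_\cris(S^{\rel,[\bullet]}_\Lambda)\bigr)\otimes^L\Q$ and the target with $\Tot\bigl(\calE_\cris(S^{\rel,[\bullet]}_\Lambda)\bigr)\otimes^L\Q$. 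For $\calE_\cris$ we use the vanishing of higher cohomology of a $p$-adically completed crystal on such affine ind-objects, as in Lemma~\ref{lem: affine vanishing on crystalline site}. The totalizations are uniformly bounded below, since $\Lambda$ is finite and each $i(\calE_\Prism)(S_{\square_\lambda})$ has bounded amplitude. Hence $\otimes^L\Q$, which is filtered colimit along multiplication by $p$, commutes with $\Tot$ here. It thus suffices to show that for each $n$ the map $i(\calE_\Prism)_\cris(S^{\rel,[n]}_\Lambda)[p^{-1}]\to\calE_\cris(S^{\rel,[n]}_\Lambda)[p^{-1}]$ is an isomorphism in $\calD(S^{\rel,[n]}_\Lambda[p^{-1}])$, compatibly with the cosimplicial structure.

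For the termwise claim, we first identify the values via the prismatic side. By construction, $i(\calE_\Prism)_\cris$ evaluated at $S_{\square_{\lambda_0},\ldots,\square_{\lambda_n}}$ is $i(\calE_\Prism)(S_{\square_{\lambda_0},\ldots,\square_{\lambda_n}})$, and $\calE_\cris$ there is $j_\ast\calE_\Prism(S_{\square_{\lambda_0},\ldots,\square_{\lambda_n}})$ (Definition~\ref{def:crystalline-realization} and Theorem~\ref{thm: prism-cryst-comparison-semist}). The map between them is induced by $i(\calE_\Prism)\to j_\ast\calE_\Prism$. The relative objects $S^{\rel,[n]}_\Lambda$ are obtained from these by base change along the $p$-completely flat maps $S_K^{[n]}\to S_K$, so both evaluations are computed by pulling back.

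Fix $\lambda=\lambda_0$. The value $i(\calE_\Prism)(S_{\cdots})$ is $P^\bullet_{\square_\lambda}\otimes_{\fkS_{\square_\lambda}}S_{\cdots}$, where $P^\bullet_{\square_\lambda}$ is a finite projective resolution of $\fkF=j_\ast\calE_\Prism(\fkS_{\square_\lambda})$ and the map is taken along $\fkS_{\square_\lambda}\xrightarrow{\phi}S_{\square_\lambda}\to S_{\cdots}$. The target is the completed base change $\fkF\widehat\otimes S_{\cdots}$. After inverting $p$, Lemma~\ref{lem:algebraic-lemmas-for-base-change}(2) identifies $\fkF\widehat\otimes S_{\cdots}[p^{-1}]$ with $\fkF[p^{-1}]\otimes S_{\cdots}$; here we use that $\fkF[p^{-1}]$ is finite projective over $\fkS_{\square_\lambda}[p^{-1}]$ because $j_\ast\calE$ is a vector bundle on the analytic locus. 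Since $\fkF[p^{-1}]$ is projective, the derived tensor product $P^\bullet\otimes S_{\cdots}[p^{-1}]$ has no higher Tor and computes this same module. This is exactly the argument of Theorem~\ref{thm: general-base-change-BK-cohom}(2), applied termwise.

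Frobenius compatibility holds because the map $i(\calE_\Prism)\to j_\ast\calE_\Prism$ is $\phi$-compatible (Proposition~\ref{prop: functor-completed-prism-cryst-perfect-cx}) and the crystalline Frobenius is induced from the prismatic one on the Breuil prisms. Perfectness of the target follows from the isomorphism together with Theorem~\ref{thm: pris-cris-comparison-over-Breuil S}.

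The main obstacle is the bookkeeping of the \v{C}ech--Alexander step for $i(\calE_\Prism)_\cris$. This object lives only in the $\infty$-categorical descent limit, so we must check that its cohomology on the relative site over $(\Spf S_K)^\sharp$ is computed by the same nerve, and that the restriction from the absolute site $(X_1,M_{X_1})_\CRIS$ to the relative one is compatible with the absolute nerve $S^{[\bullet]}_\Lambda$ used to define $i(\calE_\Prism)_\cris$. We plan to handle this with Lemma~\ref{lem: CA-method-infinity-category} and a crystalline version of Proposition~\ref{prop:prismatic-higher-direct-image}.
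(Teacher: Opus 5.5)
Your proposal is correct and follows the paper's proof essentially step for step. The paper also computes both sides with the relative nerve $(X_1,\Spf S^{\rel,[\bullet]}_\Lambda)$ and identifies the terms via the constructions of $i(\calE_\Prism)$, $j_\ast\calE_\Prism$, $\calE_\cris$ and Lemma~\ref{lem:algebraic-lemmas-for-base-change}(2); it commutes $-\otimes^L\Q$ with $\Tot$ by Lemma~\ref{lem:algebraic-lemmas-for-base-change}(1) using flatness of $\Z\to\Q$, whereas you use a bounded-below filtered-colimit argument, which is equally valid.

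One minor slip: the diagonal maps $S_K^{[n]}\to S_K$ along which you base change are not $p$-completely flat. Nothing in your argument needs this, since the values of $j_\ast\calE_\Prism$ and $i(\calE_\Prism)$ are computed by completed, respectively derived, base change along any map.
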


\begin{proof}
Consider the \v{C}ech nerve $(X_1, \Spf S_{\Lambda}^{\rel, [\bullet]})$ in Construction~\ref{const: Cech-nerve-cristalline-site-over-SK}. We have natural isomorphisms
\[
i(\calE_{\Prism})(S_{\Lambda}^{\rel, [\bullet]})\otimes_{S_{\Lambda}^{\rel, [\bullet]}}^L S_{\Lambda}^{\rel, [\bullet]}[p^{-1}] \cong j_{\ast}\calE_{\Prism}(S_{\Lambda}^{\rel, [\bullet]})\otimes_{S_{\Lambda}^{\rel, [\bullet]}}^L S_{\Lambda}^{\rel, [\bullet]}[p^{-1}] \cong  \calE_{\cris, \Q}(S_{\Lambda}^{\rel, [\bullet]}),
\]
where the first follows from the constructions of $i(\calE_{\Prism})$ and $j_{\ast}\calE_{\Prism}$ with Lemma~\ref{lem:algebraic-lemmas-for-base-change}(2) (applied to $\fkS_{\square_{\lambda}} \rightarrow S_{\Lambda}^{\rel, [\bullet]}$), and the second uses the construction of $\calE_{\cris}$ and Lemma~\ref{lem:algebraic-lemmas-for-base-change}(2) (applied to $S_{\square_{\lambda}} \rightarrow S_{\Lambda}^{\rel, [\bullet]}$). Furthermore,  Construction~\ref{const: cryst-realization-perfect-cx} gives
\[
i(\calE_{\Prism})(S_{\Lambda}^{\rel, [\bullet]}) \cong i(\calE_{\Prism})_{\cris}(S_{\Lambda}^{\rel, [\bullet]}).
\]
Thus, we obtain $(i(\calE_{\Prism})_{\cris}(S_{\Lambda}^{\rel, [\bullet]}))_{\Q} \cong \calE_{\cris, \Q}(S_{\Lambda}^{\rel, [\bullet]})$.

We now apply the \v{C}ech cohomological descent; the proofs for Lemmas~\ref{lem:CA method} and \ref{lem: CA-method-infinity-category} with Remark~\ref{rem: CA-for-jointly-surjective-case} work verbatim for the current set-up for crystalline cohomology. So we have a commutative diagram
\[
\xymatrix{
R\Gamma(((X_1, M_{X_1})/(\Spf S_K)^\sharp)_{\CRIS}, i(\calE_\Prism)_\cris)_\Q \ar[r] \ar[d]^{\cong} & R\Gamma(((X_1, M_{X_1})/(\Spf S_K)^\sharp)_{\CRIS}, \calE_{\cris, \Q}) \ar[d]^{\cong} \\
\Tot(i(\calE_{\Prism})_{\cris}(S_{\Lambda}^{\rel, [\bullet]}))_{\Q} \ar[r] & \Tot(\calE_{\cris, \Q}(S_{\Lambda}^{\rel, [\bullet]}))
}
\]
where the vertical maps are isomorphisms. Since $\Z \rightarrow \Q$ is classically flat, it follows from Lemma~\ref{lem:algebraic-lemmas-for-base-change}(1) that
\[
\Tot(i(\calE_{\Prism})_{\cris}(S_{\Lambda}^{\rel, [\bullet]}))_{\Q} \cong \Tot((i(\calE_{\Prism})_{\cris}(S_{\Lambda}^{\rel, [\bullet]}))_{\Q}).
\]
Thus, the bottom map of the above diagram is an isomorphism, and the statement follows. The compatibility with Frobenii is straightforward.
\end{proof}

\begin{proof}[Proof of Theorem~\ref{thm:Frobenius-isogeny-property}]
By Proposition~\ref{prop:comparison-of-cohomology-of-crystalline-realization}, it suffices to prove the Frobenius isogeny property for $R\Gamma(((X_1, M_{X_1})/(\Spf S_K)^\sharp)_{\CRIS}, i(\calE_\Prism)_\cris)_\Q$; the second statement follows from Theorem~\ref{thm: general-base-change-BK-cohom} as the involved base changes are compatible with Frobenii. 
Set $(Y,M_Y)\coloneqq (\Spec \calO_K/p,M_{\Spec\calO_K/p})$ and $(X_1',M_{X_1'})\coloneqq (Y,M_Y)\times_{F_Y,(Y,M_Y)}(X_1,M_{X_1})$; they sit in the following commutative diagram:
\begin{equation}\label{eq:relative Frobenius}
\xymatrix{
(X_1,M_{X_1}) \ar[r]^-{F_{X_1/Y}}\ar[dr]_-{f_1} \ar@/^2pc/[rr]^-{F_{X_1}} & (X_1',M_{X_1'})\ar[d]_-{f_1'}\ar[r]^g \ar@{}[dr] | {\square} &(X_1,M_{X_1})\ar[d]^-{f_1}\\
& (Y,M_Y) \ar[r]^-{F_Y} & (Y,M_Y).
}
\end{equation}
Set $\calK\coloneqq i(\calE_\Prism)_\cris$ and $\calK'\coloneqq g_{\CRIS}^\ast \calK$.
Write $\phi_{\calK_{\Q}}=p^{r}\phi_{\calK}$ for some $r\in\Z$ and a morphism $\phi_{\calK}\colon F_{X_1,\CRIS}^\ast\calK\rightarrow \calK$ in $D_\perf((X_1,M_{X_1})_\CRIS,\calO_{X_1/\Z_p})$. 
To simplify the notation, we write $R\Gamma((X_1/S_K)_\CRIS,\calK)$ for $R\Gamma(((X_1, M_{X_1})/(\Spf S_K)^\sharp)_{\CRIS}, \calK)$ and so on.
Consider the composite $\alpha$ of the maps
\begin{equation}\label{eq:Frobenius pullback and higher direct image}
\xymatrix{
R\Gamma((X_1/S_K)_\CRIS,\calK)\otimes_{S_K,\phi}S_K \ar[r]^-{\delta}\ar@{-->}[dd]_-{\alpha}
&R\Gamma((X_1'/S_K)_\CRIS,\calK')\ar[d]_-{R\Gamma((X_1'/S_K)_\CRIS,\mathrm{adj})}^-{\beta}\\
&R\Gamma((X_1'/S_K)_\CRIS,RF_{X_1/Y,\CRIS,\ast}F_{X_1/Y,\CRIS}^\ast\calK')
\ar[d]^-\cong\\
R\Gamma((X_1/S_K)_\CRIS,\calK)
&
R\Gamma((X_1/S_K)_\CRIS,F_{X_1,\CRIS}^\ast\calK)\ar[l]_-{R\Gamma((X_1/S_K)_\CRIS,\phi_{\calK})},
}
\end{equation}
where $\delta$ is defined by the base change with respect to the Cartesian diagram in \eqref{eq:relative Frobenius} and the second right vertical map is an isomorphism given by the composition of right derived functors.
The morphism in question
\[
R\Gamma((X_1/S_K)_\CRIS,\calK)_\Q\otimes_{S_K[p^{-1}],\phi}S_K[p^{-1}] \rightarrow R\Gamma((X_1/S_K)_\CRIS,\calK)_\Q
\]
is given by $p^r\alpha_\Q$ where $\alpha_\Q\coloneqq \alpha\otimes \id_{S_K[p^{-1}]}$. 
Since $\phi_{\calK_\Q}$ is an isomorphism, so is $R\Gamma((X_1/S_K)_\CRIS,\phi_{\calK})_\Q$.
Hence it remains to show that $\beta_\Q$ and $\delta_\Q$ are isomorphisms. 

For $\beta_\Q$, consider the diagram
\[
\xymatrix{
\calK' \ar[r]^-{\mathrm{adj}}\ar@{=}[d]
&RF_{X_1/Y,\CRIS,\ast}(F_{X_1/Y,\CRIS}^\ast\calK'\otimes F_{X_1/Y,\CRIS}^\ast\calO_{X_1'/\Z_p})\\
\calK'\otimes \calO_{X_1'/\Z_p}\ar[r]^-{\id\otimes\mathrm{adj}} &\calK'\otimes RF_{X_1/Y,\CRIS,\ast} F_{X_1/Y,\CRIS}^\ast\calO_{X_1'/\Z_p}\ar[u]_-\gamma^-\cong.
}
\]
Here $\gamma$ is the canonical quasi-isomorphism given by the projection formula, which is applicable since $\calK'$ is a perfect complex, and the above diagram is commutative by construction (see \cite[0943]{stacks-project}). Since $\beta=R\Gamma((X_1/S_K)_\CRIS,\gamma\circ \id\otimes\mathrm{adj})$, it remains to prove that the canonical morphism given by adjunction
\[
\calO_{X_1'/\Z_p}\rightarrow RF_{X_1/Y,\CRIS,\ast}F_{X_1/Y,\CRIS}^\ast\calO_{X_1'/\Z_p}=RF_{X_1/Y,\CRIS,\ast}\calO_{X_1/\Z_p}
\]
is an isogeny (i.e., an isomorphism in $D_\perf((X,M_X)_\CRIS)_\Q$). This follows from \cite[Prop.~2.24]{hyodo-kato}; see \cite[Thm.~11.3, Pf., Rem.~11.6(1)]{du-moon-shimizu-cris-pushforward}.

For $\delta_\Q$, recall from Theorem~\ref{thm: pris-cris-comparison-over-Breuil S} that $R\Gamma_{\Br}(X, j_{\ast}\calE_{\Prism}) \cong R\Gamma((X_1/S_K)_{\CRIS}, \calE_{\cris})$.
Furthermore, we have a canonical isomorphism
\[
R\Gamma((X_{\phi_{\ast}S_K/p}/(\phi_{\ast}S_K, M_{\Spf \phi_{\ast}S_K}))_{\Prism}, j_{\ast}\calE_{\Prism}) \cong R\Gamma((X_1'/S_K)_{\CRIS}, \calE_{\cris}') 
\]
for $\calE_{\cris}'\coloneqq g_\CRIS^{\ast}\calE_{\cris}$
by Theorem~\ref{thm: prism-cryst-comparison-semist} applied to $A = \phi_{\ast}S_K$ with $M_A$ given by $\N \rightarrow \phi_{\ast}S_K$, $1\mapsto u^p$. On the other hand,  we have
\[
R\Gamma((X_1^{(')}/S_K)_{\CRIS}, \calK^{(')})_{\Q} \cong R\Gamma((X_1^{(')}/S_K)_{\CRIS}, \calE_{\cris, \Q}^{(')}).
\]
by Proposition~\ref{prop:comparison-of-cohomology-of-crystalline-realization} and its proof (for $\calK'$).
Thus, we deduce from the above isomorphisms and Corollary~\ref{cor: Frob-twist-Breuil-cohom} that $\delta_{\Q}$ is an isomorphism.
\end{proof}

\subsection{The Hyodo--Kato theory} \label{subsec: hyodo-kato-theory}

In this subsection, we discuss the Hyodo--Kato theory. This will be used to equip $R\Gamma(((X_0, M_{X_0})/((W(k),M_0))_{\CRIS}, \calE_{\cris, \Q})$ in Theorem~\ref{thm: pris-cris-comparison-over-Breuil S} with additional structures in \S~\ref{subsec: hyodo-kato-cohom}. We follow the Hyodo--Kato theory for log crystalline cohomology in \cite{hyodo-kato,Kato-exposeVI, Tsuji-Cst, Beilinson-crystalline-period-map, Beilinson-crystalline-period-map-arXiv}. For the main construction, we use Beilinson's work \cite{Beilinson-crystalline-period-map-arXiv} (the arXiv version) and the absolute crystalline site as in Definition~\ref{def:DMS-crystalline-site}, which is most suitable for our construction.

A \emph{$(\phi, N)$-module over $K_0$} is a triple $(V, \phi_V, N_V)$ where $V$ is a finite-dimensional $K_0$-vector space, $\phi_V\colon V \rightarrow V$ is a $\phi$-semilinear automorphism, and $N_V\colon V \rightarrow V$ is a $K_0$-linear endomorphism such that $N_V\phi_V = p\phi_V N_V$. It is said to be \emph{effective} if $V$ contains a $\phi_V$-stable $W(k)$-lattice. Write $D_{\phi, N}(K_0)$ for the bounded derived category of $(\phi, N)$-modules over $K_0$, and $D_{\phi, N}(K_0)^{\mathrm{eff}}$ for its subcategory constructed out of effective ones. Note that for every $V\in D_{\phi, N}(K_0)^{\mathrm{eff}}$, there exists $m\geq 0$ with $N_V^m=0$.

Denote $\calY = \Spf \calO_K$ with the canonical log structure $M_\calY$ and $(Y_n, M_{Y_n}) \coloneqq (\calY, M_{\calY})\times_{\Z_p} \Z_p/p^n$. Set $(Y_0, M_{Y_0}) \coloneqq (\calY, M_{\calY})\times_{\calO_K} k$. Note that $(Y_0, M_{Y_0})$ can be regarded as the mod $p$ reduction $(Y_1^0, M_{Y_1^0})$ of $\Spf W(k)$ with the canonical log structure. The map $k = W(k)/(p) \rightarrow \calO_K/(p) \rightarrow k$ gives 
\[
(Y_0, M_{Y_0}) \stackrel{i}{\rightarrow} (Y_1, M_{Y_1}) \stackrel{i^0}{\rightarrow} (Y_1^0, M_{Y_1^0}).
\]

We refer the reader to \cite[\S~1.14]{Beilinson-crystalline-period-map-arXiv}\footnote{More precisely, \cite[Def.~11.1, Rem.~11.2]{du-moon-shimizu-cris-pushforward} as we work on the big site. However, this does not cause any essential difference.} for the notion of non-degenerate perfect $F$-crystals on an integral and quasi-coherent log scheme $(Z, M_Z)$ over $\F_p$. As in \textit{loc. cit.}, let $D_{\phi}^{\mathrm{pcr}}(Z, M_Z)^{(\mathrm{nd})}$ denote the category of (non-degenerate) perfect $F$-crystals on $(Z, M_Z)_{\CRIS}$, and write $D_{\phi}^{\mathrm{pcr}}(Z, M_Z)^{(\mathrm{nd})}_{\Q}$ for its isogeny category.

We first recall from \cite{Beilinson-crystalline-period-map-arXiv} a description of the category of $F$-isocrystals in perfect complexes on $(Y_1, M_{Y_1})$ or $(Y_0, M_{Y_0})$. 

\begin{thm}[{\cite[\S~1.15, Thm.-Const.]{Beilinson-crystalline-period-map-arXiv}}] \label{thm: Beilison-HK-equiv}
Let $f\colon (Z, M_Z) \rightarrow (Y_1^0, M_{Y_1^0})$ be a morphism of integral quasi-coherent log schemes. There is a natural functor
\[
\epsilon_f\colon D_{\phi, N}(K_0)^{\mathrm{eff}} \rightarrow D_{\phi}^{\mathrm{pcr}}(Z, M_Z)^{\mathrm{nd}}_{\Q},
\]
which is compatible with base change: for $g\colon (Z', M_{Z'}) \rightarrow (Z, M_Z)$, we have a canonical identification $\epsilon_{f\circ g} \stackrel{\cong}{\rightarrow} g^{\ast}_{\CRIS}\epsilon_f$.

Moreover, the following functors give equivalences of categories
\[
D_{\phi, N}(K_0)^{\mathrm{eff}} \underset{\cong}{\xrightarrow{\epsilon\coloneqq\epsilon_\id}} D_{\phi}^{\mathrm{pcr}}(Y_1^0, M_{Y_1^0})^{\mathrm{nd}}_{\Q} \underset{\cong}{\xrightarrow{i^{0, \ast}_{\CRIS}}} D_{\phi}^{\mathrm{pcr}}(Y_1, M_{Y_1})^{\mathrm{nd}}_{\Q} \underset{\cong}{\xrightarrow{i^{\ast}_{\CRIS}}} D_{\phi}^{\mathrm{pcr}}(Y_0, M_{Y_0})^{\mathrm{nd}}_{\Q}.
\]    
\end{thm}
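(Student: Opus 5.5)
The plan is to follow Beilinson's argument in \cite[\S~1.15]{Beilinson-crystalline-period-map-arXiv} and construct $\epsilon_f$ from a universal object over the Hyodo--Kato base. Let $\calS^0$ be the $p$-adic PD-completion of $W(k)[u]$ along $(u)$, with log structure $1\mapsto u$ and Frobenius $u\mapsto u^p$. Since $u$ maps to $0$ in $k$, $(\Spf\calS^0)^\sharp$ is a log PD-thickening of $(Y_1^0,M_{Y_1^0})$.

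1. \textbf{Construction over the base.} Take an effective $(V,\phi_V,N_V)$ and choose a $\phi_V$-stable lattice $V_0$. Put the Frobenius $\phi_V\otimes\phi$ on $\calS^0\otimes V$. Define the log connection
\[
\nabla=\id\otimes\, d+N_V\otimes d\log u .
\]
Because $N_V$ is nilpotent, $\nabla$ is integrable and topologically quasi-nilpotent. Transporting it by $\exp(N_V\log(\cdot))$ gives the crystal structure on $Y_1^0$. The relation $N\phi=p\phi N$ makes $\phi$ horizontal, since $d\log(u^p)=p\,d\log u$. Isogeny removes the dependence on $V_0$. Passing to complexes gives $\epsilon$ on $D_{\phi,N}(K_0)^{\mathrm{eff}}$ with values in non-degenerate perfect $F$-crystals up to isogeny.

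2. \textbf{General $f$ and base change.} Define $\epsilon_f\coloneqq f^\ast_{\CRIS}\epsilon$. Then $\epsilon_{f\circ g}\cong g^\ast_\CRIS\epsilon_f$ follows from the transitivity of crystalline pullback.

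3. \textbf{$\epsilon$ is an equivalence.} For the quasi-inverse, evaluate a non-degenerate perfect $F$-crystal $\calK$ on the ind-object $\Spf \calS^0$. This gives a perfect $\calS^0[p^{-1}]$-complex with Frobenius and a log connection along $d\log u$. Specializing at $u=0$ yields $V$, and the residue of the connection yields $N$.
\begin{itemize}
\item I will prove that the complex over $\calS^0[p^{-1}]$ is canonically $V\otimes\calS^0$ by a Dwork-type trick. Frobenius contracts $u$, so iterating $\phi$, which is an isomorphism by non-degeneracy, produces a unique $\phi$-compatible section of the specialization map. This works after inverting $p$, with denominators controlled by effectivity.
\item Full faithfulness then reduces to the fact that $\phi$-equivariant horizontal maps are determined at $u=0$.
\item Derived statements follow by dévissage on perfect complexes.
\end{itemize}

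4. \textbf{The pullbacks $i^{0,\ast}_\CRIS$ and $i^\ast_\CRIS$ are equivalences.} The composite $i^0\circ i$ is the identity on $Y_0=Y_1^0$, so it suffices to show that $i^\ast_\CRIS$ is fully faithful. Equivalently, restriction from $Y_1$ to $Y_0$ loses nothing up to isogeny. I will use Frobenius again: some power of Frobenius on $(Y_1,M_{Y_1})$ factors through $Y_0$, because $\pi^{p^n}\in p\calO_K$ for $n$ large. Non-degeneracy then identifies $\calK$ up to isogeny with $F^{n\ast}\calK$, and hence with a pullback from $Y_0$. This is Dwork's trick in its crystalline form.

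The main obstacle is step 3: constructing the canonical Frobenius-equivariant splitting over $\calS^0[p^{-1}]$ for perfect complexes rather than modules, while controlling convergence of $\exp(N\log u)$ and the $p$-denominators. I would first do this at the level of non-degenerate $F$-crystals of modules and then extend by Postnikov dévissage.
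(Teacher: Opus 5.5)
The paper does not reprove this statement. It cites \cite[\S~1.15]{Beilinson-crystalline-period-map-arXiv} and only adds that the big crystalline site is covered by \cite[Rem.~10.3]{du-moon-shimizu-cris-pushforward} and $g^\ast_\CRIS=Lg^\ast_\CRIS$. Your construction of $\epsilon$ on modules (Steps~1--2) matches the explicit description the paper uses later, but there are two gaps.

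\textbf{Step~4 rests on a false claim.} The composite $i^0\circ i\colon (Y_0,M_{Y_0})\to (Y_1^0,M_{Y_1^0})$ is \emph{not} the identity. It is the identity on $\Spec k$, but on log structures it pulls $\bar p$ back to $a\bar\pi^e$ with $a\in k^\times$, i.e.\ it is multiplication by $e$ on the characteristic monoid $\N$. Consequently $i^\ast_\CRIS i^{0,\ast}_\CRIS\epsilon(V,\phi,N)\cong\epsilon_{\bar\pi}(V,\phi,eN)$, which the paper uses in the proof of Proposition~\ref{prop:Beilinson-epsilon-functor-properties}(2). So ``it suffices that $i^\ast_\CRIS$ be fully faithful'' does not follow. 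Your factorization $F^n_{Y_1}=i\circ h$, $F^n_{Y_0}=h\circ i$ for $p^n\geq e$ is correct, and it shows that $i^\ast_\CRIS$ is an equivalence with quasi-inverse $h^\ast_\CRIS$. To conclude for $i^{0,\ast}_\CRIS$ you also need $(i^0\circ i)^\ast_\CRIS$ to be an equivalence. That requires Step~3 for $(Y_0,M_{Y_0})$ with the chart $\bar\pi$, plus the check that this pullback rescales $N$ by $e$, which is an autoequivalence since $e\in K_0^\times$.

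\textbf{Step~3 only handles modules, and the ``d\'evissage'' is the actual content of the theorem.}
\begin{enumerate}
\item[(i)] $D_{\phi,N}(K_0)$ is the derived category of an abelian category of cohomological dimension $2$. For $H=\underline{\Hom}(A,B)$, $\operatorname{Ext}^\ast_{(\phi,N)}(A,B)$ is the cohomology of $H\to H\oplus H\to H$, with maps $f\mapsto((\phi-1)f,Nf)$ and $(x,y)\mapsto Nx-(p\phi-1)y$. For example, with $k=\mathbf{F}_p$, $A=(K_0,\phi=p)$, $B=(K_0,\phi=1)$ and $N=0$, one gets $\operatorname{Ext}^2=\Q_p\neq0$. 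Correspondingly, morphisms in $D^{\mathrm{pcr}}_\phi$ are homotopy fibres of $\phi-1$ on $R\Hom$, not naive $\phi$-compatible maps.
\item[(ii)] Your quasi-inverse (fibre at $u=0$ plus ``residue'') yields $\phi$ and $N$ only as endomorphisms of $V$ in $D(K_0)$. Since $D(K_0)$ is semisimple, such data cannot distinguish a non-split object of $D_{\phi,N}(K_0)$ from the sum of its shifted cohomology modules.
\item[(iii)] A crystal structure on a perfect complex is descent data over $\calS^{0,(\bullet)}$, not a connection whose residue can simply be read off.
\item[(iv)] ``Horizontal $\phi$-equivariant maps are determined at $u=0$'' controls only $\Hom^0$ between isocrystals. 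Any d\'evissage needs $\Hom(\epsilon A,\epsilon B[i])_{\Q}\cong\operatorname{Ext}^i_{(\phi,N)}(A,B)$ for $i=1,2$.
\end{enumerate}

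What is missing is the following computation. You must show that the fibre of $\phi-1$ on the complex $H\otimes\calS^0[p^{-1}]\xrightarrow{\nabla}H\otimes\calS^0[p^{-1}]\,d\operatorname{log}u$, which computes $R\Gamma_\CRIS(Y_1^0,\epsilon(H))_{\Q}$, agrees with the fibre of $\phi-1$ on $[H\xrightarrow{N}H]$, with Frobenius multiplied by $p$ in degree~$1$. The de Rham complex itself is far from $[H\xrightarrow{N}H]$: since $u\frac{d}{du}u^{[n]}=nu^{[n]}$, its $H^1$ has a large non-Hausdorff part. One needs that $\phi$ is topologically nilpotent on the non-constant terms. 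For instance $\phi(u^{[n]}\,d\operatorname{log}u)\in p^{n+1}\calS^0\,d\operatorname{log}u$, so the valuation gain grows exponentially under iteration and beats the bounded denominators of $\phi_H$. Hence $\phi-1$ is invertible there.
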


\begin{proof}
This is proved in \cite[\S~1.15, Thm.-Const.]{Beilinson-crystalline-period-map-arXiv}. Note that the big crystalline set-up is also applicable by \cite[Rem.~10.3]{du-moon-shimizu-cris-pushforward} and $g^{\ast}_{\CRIS}=Lg^{\ast}_{\CRIS}$.    
\end{proof}

\begin{rem}\label{rem:Beilinson-epsilon-functor}
If $V$ is a $(\phi,N)$-module over $K_0$, then $\epsilon(V)$ underlies a finite locally free $F$-crystal on $(Y_0, M_{Y_0})_\CRIS$ explicitly given by a module with log connection.
If $V\in D_{\phi, N}(K_0)^{\mathrm{eff}}$ is represented by a bounded complex $\cdots\rightarrow V_n\rightarrow V_{n+1}\rightarrow\cdots$ of effective $(\phi,N)$-modules over $K_0$, then $\epsilon_f(V)$ is represented by a bounded complex $\cdots\rightarrow f_\CRIS^\ast\epsilon(V_n)\rightarrow f_\CRIS^\ast\epsilon(V_{n+1})\rightarrow\cdots$ of $F$-isocrystals on $(Z,M_Z)_\CRIS$.
\end{rem}

We need to deduce several properties from the construction of $\epsilon$. 
Let $\calT^{\sharp} = (\calT, M_{\calT}, \calJ_{\calT}, \gamma_{\calT})$ be an affine $p$-adic log PD-formal scheme such that $M_{\calT}$ is integral and quasi-coherent (see \cite[\S~10]{du-moon-shimizu-cris-pushforward}).
Suppose that there exists a strict morphism $(U,M_U)\coloneqq (V(\calJ_{\calT}), M_{\calT}|_U)\rightarrow (Z, M_Z)$.
Then $\calT = \Spf A$ for a $p$-adically complete ring $A$, and $\calJ_{\calT}$ corresponds to an open PD-ideal $J$ such that $U = \Spec A/J$. For $\calK_\Q\in D_{\phi}^{\mathrm{pcr}}(Z, M_Z)^{\mathrm{nd}}_{\Q}$, we have $R\Gamma((U,\calT),\calK_\Q)\coloneqq R\Gamma((U,\calT),\calK)\otimes_\Z\Q\in D_\perf(A[p^{-1}])$ with $R\Gamma((U,\calT),\calK)=R\varprojlim_n R\Gamma((U,\Spec A/p^n),\calK)$ (cf.~\cite[Const.~10.6, Prop.~10.7]{du-moon-shimizu-cris-pushforward}).
We will write $\calK_\Q(A)$ for $R\Gamma((U,\calT),\calK_\Q)$ for simplicity.

When $(Z,M_Z)=(Y_1,M_{Y_1})$, the main two examples are $A=S_K$ with $J=\Ker(S_K\rightarrow\calO_K/p)$ and log structure associated to $\N\rightarrow S_K$, $1\mapsto u$, and $(\calO_K,(p),M_\can)$.  

To discuss the case $Z=\overline{Y}_1\coloneqq \Spec \calO_C/p$, let us first introduce two period rings.

\begin{defn}\label{def:AlogcrisK-and-Asthat}
Consider the log structures on $A=\calO_K, \calO_C,A_\inf,A_\cris$ associated to the pre-log structures $\N\rightarrow A$ sending $1$ to $\pi, \pi, [\pi^\flat], [\pi^\flat]$, respectively.
\begin{enumerate}
  \item Define $\widehat{A}_\st$ to be the $p$-completed log PD-envelope of the surjection $A_{\inf}\otimes_{W(k)}S_K \rightarrow \calO_C$, where $S_K\rightarrow \calO_C$ sends $u$ to $\pi$; if $A_\cris\{ v\}_\mathrm{PD}$ denotes the $p$-completed PD-polynomial algebra in $v$, a $p$-adically continuous $A_\cris$-algebra homomorphism $A_\cris\langle v\rangle_\mathrm{PD}\rightarrow \widehat{A}_\st$ sending $v^{[n]}$ to $(u/[\pi^\flat]-1)^{[n]}$ is an isomorphism. Write $\widehat{B}_{\st}^+ \coloneqq \widehat{A}_{\st}[p^{-1}]$.
  
  \item Let $\theta_K\colon A_{\inf, K}\coloneqq A_{\inf}\otimes_{W(k)} \calO_K \rightarrow \calO_C$ be the $\calO_K$-linear extension of $\theta\colon A_{\inf} \rightarrow \calO_C$. Define $A_{\logcris, K}$ to be the $p$-completed log PD-envelope of the surjection $\theta_K$; note that $S_K\rightarrow\calO_K$, $u\mapsto \pi$, induces an isomorphism $\widehat{A}_\st\otimes_{S_K}\calO_K\cong A_{\logcris,K}$. Write $M_{A_{\logcris,K}}$ for the resulting log structure.
  Let $\iota_1\colon S_K \rightarrow A_{\logcris, K}$ (resp. $\iota_2\colon S_K \rightarrow A_{\logcris, K}$) be the map given by $u \mapsto [\pi^{\flat}]$ (resp. $u \mapsto \pi$); $\iota_1$ factors through $A_{\cris}$, and $\iota_2$ factors through $\calO_K$.  
\end{enumerate}
\end{defn}

\begin{lem}\label{lem:crystalline-interpretation-Asthat}
Consider $h\colon (\overline{Y}_1,M_{\overline{Y}_1})\coloneqq(\Spec \calO_C/p,(\calO_{C}\smallsetminus\{0\})^a)\rightarrow (Y_1,M_{Y_1})$.
\begin{enumerate}
  \item $A=A_\cris$ with log structure associated to $\calO_{C^\flat}\smallsetminus\{0\}\xrightarrow{[-]} W(\calO_{C^\flat})\rightarrow A_\cris$ and PD-ideal $J=\Ker(A_\cris\rightarrow \calO_C/p)$ gives the initial ind-object of $(\overline{Y}_1,M_{\overline{Y}_1})_\CRIS$.
  \item $h_{\CRIS,\ast}\calO_{\overline{Y}_1/\Z_p}(S_K)=\widehat{A}_{\st}$ and $R^qh_{\CRIS,\ast}\calO_{\overline{Y}_1/\Z_p}(S_K)=0$ for $q\geq 1$.
  \item $h_{\CRIS,\ast}\calO_{\overline{Y}_1/\Z_p}(\calO_K)=A_{\logcris,K}$ and $R^qh_{\CRIS,\ast}\calO_{\overline{Y}_1/\Z_p}(\calO_K)=0$ for $q\geq 1$.
\end{enumerate}
\end{lem}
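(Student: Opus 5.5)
The plan is to treat the three parts in order. Part (1) goes by a universal-property argument. Parts (2) and (3) are computed by a \v{C}ech--Alexander argument on the crystalline site of $(\overline{Y}_1,M_{\overline{Y}_1})$, localized at the relevant object of $(Y_1,M_{Y_1})_\CRIS$.

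For (1), we use Fontaine's description of $A_\cris$. Any object $(U,T)$ of the crystalline site of $\overline{Y}_1$ with $T$ $p$-adic affine, $T=\Spf B$, receives a unique map of PD-thickenings $A_\cris\to B$. The map $\theta\colon A_\inf\to\calO_C\to\calO_C/p$ lifts uniquely to $A_\inf\to B$, because $\calO_C^\flat$ is perfect and $W(\calO_C^\flat)$ is formally étale over $\Z_p$ in the relevant sense. This uses the standard map $\calO_C^\flat\to\varprojlim_{x\mapsto x^p}B/p$, which is bijective on the $p$-power-compatible level, and the PD-property extends the lift to $A_\cris$.

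The log structure is handled the same way. Each element $\pi\in\calO_C\smallsetminus\{0\}$ of the chart of $M_{\overline{Y}_1}$ admits a compatible system of $p$-power roots, and the Teichmüller lift $[x]$ for $x\in\calO_C^\flat\smallsetminus\{0\}$ gives the unique lift of the log chart $\calO_{C^\flat}\smallsetminus\{0\}\to M_T$. Uniqueness holds because $1+J$ is uniquely $p$-divisible in the $p$-adic PD-sense up to the usual argument, so the $p^n$-th roots are forced. Strictly we need the monoid map $\calO_C^\flat\smallsetminus\{0\}\to\calO_C\smallsetminus\{0\}$ to be surjective onto the chart up to units; that holds since $\calO_C$ is algebraically closed-ish, i.e.\ every element has $p$-power roots. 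So $(A_\cris,J)$ is initial as an ind-object.

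For (2), by the explicit description of $h_{\CRIS,\ast}$ (compare Proposition~\ref{prop:prismatic-higher-direct-image} in the crystalline setting), the value of $Rh_{\CRIS,\ast}\calO$ at the ind-object $(\calO_K/p,\Spf S_K)$ is the crystalline cohomology of $\overline{Y}_1$ relative to $(\Spf S_K)^\sharp$. By (1), the relative site has as initial object the coproduct of $A_\cris$ and $S_K$ over the base. This coproduct is precisely the $p$-completed log PD-envelope of $A_\inf\otimes_{W(k)}S_K\to\calO_C$ taken with the log structure generated by both $u$ and $[\pi^\flat]$; exactification introduces $u/[\pi^\flat]$, which yields $\widehat{A}_\st$. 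Since there is an initial object, higher cohomology vanishes (Lemma~\ref{lem:CA method} with a trivial \v{C}ech nerve) and $H^0=\widehat{A}_\st$.

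Part (3) is the same argument over $\calO_K$. Alternatively, we use that the crystalline site of $\overline{Y}_1$ relative to $(\calO_K,(p))$ has initial object the log PD-envelope of $A_\inf\otimes\calO_K\to\calO_C$, namely $A_{\logcris,K}$, again with vanishing higher cohomology.

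The main obstacle I expect is the log bookkeeping. In (1), we must check that the coarse chart $\calO_{C^\flat}\smallsetminus\{0\}$ gives the right (integral, exact) log structure and that log PD-thickenings over it are uniquely liftable. In (2), we must identify the exactified envelope explicitly with $A_\cris\langle v\rangle_\PD$, which is where the isomorphism stated in Definition~\ref{def:AlogcrisK-and-Asthat}(1) is used.
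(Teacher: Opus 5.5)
This is essentially the paper's argument. For (1), the paper checks that $A_\cris\to\calO_C/p$ is exact for the $\calO_{C^\flat}\smallsetminus\{0\}$-log structure and that $\overline{M}_{\overline{Y}_1}$ is uniquely $p$-divisible, then invokes Beilinson's lemma (\S1.17); your hands-on lifting argument reproves that lemma. Parts (2)--(3) go as you say: pass to the relative site over $(\Spf S_K)^\sharp$ (resp.\ $\calO_K$), whose initial ind-object is the log PD-envelope over $A_\cris$, and conclude by affine vanishing plus a Mittag-Leffler limit over $p^n$.

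Two small corrections.
\begin{enumerate}
\item Uniqueness of the log lift does not come from $1+J$ being uniquely $p$-divisible; that is false, e.g.\ for $1+p\Z_p$. It comes from $(1+a)^{p^n}\equiv 1 \bmod p^n$ for $a$ in a PD-ideal.
\item In (2), the envelope must carry the full $\calO_{C^\flat}\smallsetminus\{0\}$-log structure to be an object over $(\overline{Y}_1,M_{\overline{Y}_1})$. Its underlying ring still agrees with your $\N^2$-version, because exactification only adjoins $(u/[\pi^\flat])^{\pm1}$.
\end{enumerate}
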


\begin{proof}
(1) Observe that the surjection $A_\cris\rightarrow \calO_C/p$ is exact with respect to the log structures we consider and that $(\calO_{C}\smallsetminus\{0\})^a/(\calO_C/p)^\times$ is uniquely $p$-divisible. Hence the assertion follows from \cite[\S~1.17, Lem.]{Beilinson-crystalline-period-map-arXiv}.

(2) We follow the notation in \cite[\S~10]{du-moon-shimizu-cris-pushforward}. Let $\calS^\sharp$ denote the affine $p$-adic log PD-formal scheme associated to $\calS=\Spf S_K$ (with the other structures as above). We deduce from \cite[Prop.~10.5, 10.7, Lem.~10.9]{du-moon-shimizu-cris-pushforward} that 
\[
Rh_{\CRIS,\ast}\calO_{\overline{Y}_1/\Z_p}(S_K)=R\Gamma(((\overline{Y}_1,M_{\overline{Y}_1})/\calS^\sharp)_\CRIS,\calO_{\overline{Y}_1/\calS}).
\]
Observe that $\widehat{A}_\st$ is identified with the $p$-completed log PD-envelope of the surjection $A_\cris\otimes_{W(k)}S_K\rightarrow \calO_C/p$ where the log structure on $A_\cris$ (resp.~$\calO_C/p$) is the one associated to $\calO_{C^\flat}\smallsetminus\{0\}$ (resp.~$\calO_{C}\smallsetminus\{0\}$). One can deduce from this and (1) that $\widehat{A}_\st$ (with the new PD-ideal and log structure) gives the initial ind-object of $((\overline{Y}_1,M_{\overline{Y}_1})/\calS^\sharp)_\CRIS^\op$.
Hence $H^q(((\overline{Y}_1,M_{\overline{Y}_1})/\calS^\sharp)_\CRIS,\calO_{\overline{Y}_1/\calS})=H^q((\overline{Y}_1,\calS),\calO_{\overline{Y}_1/\calS})$; the latter is $\widehat{A}_\st$ when $q=0$, and zero when $q\geq 1$ by an argument similar to Lemma~\ref{lem: affine vanishing on crystalline site}.

(3) This is proved exactly in the same way as (2).
\end{proof}

\begin{rem}\label{rem:Asthat-and-Tsuji'sring}
\hfill
\begin{enumerate}
 \item Recall the $A_\cris$-algebra isomorphism $A_\cris\langle v\rangle_\mathrm{PD}\cong \widehat{A}_\st$ sending $v$ to $u/[\pi^\flat]-1$.
It follows that the log connection $\nabla_{S_K}\colon S_K\rightarrow S_K\,d\operatorname{\log}u$ extends to a $p$-adically continuous $A_\cris$-linear log connection $\nabla_{\widehat{A}_\st}\colon \widehat{A}_\st\rightarrow \widehat{A}_\st\,d\operatorname{\log}u$. Define an $A_\cris$-linear derivation $N$ on $\widehat{A}_\st$ by the formula $\nabla_{\widehat{A}_\st}(x)=N(x)\,d\operatorname{\log}u$.
 \item It is not difficult to see that our $\widehat{A}_\st$ agrees with $\varprojlim_n P_n$ in \cite[\S~1.6]{Tsuji-Cst} as $A_\cris$-algebras with all the additional structures by sending $u/[\pi^\flat]-1$ to $(v_{[\pi^\flat]\bmod{p^n}})_n$; then (1) is compatible with \cite[Lem.~1.6.5]{Tsuji-Cst}. We remark that $\varprojlim_n P_n$ is initially introduced in \cite[\S~3]{Kato-exposeVI} but we follow the normalization of \cite{Tsuji-Cst}.
 Similarly, our $A_{\logcris,K}$ agrees with $A_{\mathrm{log\text{-}crys},S}$ in \cite[\S~4.6]{Tsuji-Cst}.
\end{enumerate}
\end{rem}

\begin{defn}[{cf.~\cite[\S~4.1]{Tsuji-Cst}}]\label{def:Bst}
Fix a uniformizer $\pi$ of $\calO_K$ and choose $\pi^\flat\in \calO_{C^\flat}$ given by a compatible system of $p$-power roots of $\pi$. We define $B_\st^+\coloneqq B_{\st,\pi}^+$ to be the subring $B_\cris[u_{\pi^\flat}]$ of $B_\dR^+$ where $u_{\pi^\flat}=\operatorname{log}([\pi^\flat]/\pi)$; it is independent of the choice of $\pi^\flat$ as a subring. Extend $\phi$ on $B_\cris^+$ to $B_\st^+$ by $\phi(u_{\pi^\flat})=pu_{\pi^\flat}$ and define a $B_\cris^+$-linear derivation $N=N_\pi$ on $B_\st^+$ by the formula $N(u_{\pi^\flat})=-1/e$ where $e$ is the ramification index of $K$. Set $B_\st\coloneqq B_\st^+[t^{-1}]$ and extend $\phi$ and $N$ in an obvious way.
\end{defn}

\begin{rem}\label{rem:semistable-period-ring-convention}
Let $B_{\st,\mathrm{Fon}}^+$ denote $\operatorname{Sym} (C^{\flat,\times})\otimes_{\operatorname{Sym} (\calO_{C^\flat}^\times),\lambda}B_\cris^+$, where $\lambda\colon \calO_{C^\flat}^\times\rightarrow B_\cris^+$ is an additive map defined by $\lambda(y)=\sum_{n\geq 1}(-1)^{n+1}[y]^n/n$ for $y\in 1+\fkm_{\calO_{C^\flat}}$.
This is the positive semistable period ring defined in \cite[\S~3]{fontaine-period-ring}, and a choice $\pi^\flat\in C^\flat$ identifies $B_{\st,\mathrm{Fon}}^+$  with $B_{\st,\pi}^+$ above. Our sign convention of the monodromy operator $N$ on $B_\st^+$ is compatible with that of \cite{Tsuji-Cst} and the opposite of \cite{fontaine-period-ring}. We use the canonical normalization in the sense of \cite[Rem.~3.2.4]{fontaine-period-ring}, whereas \cite{Tsuji-Cst} uses the normalization $N(u_{\pi^\flat})=-1$. We choose our normalization so that it is compatible with the explicit construction of Theorem~\ref{thm: Beilison-HK-equiv}; see Proposition~\ref{prop:Beilinson-epsilon-functor-properties} below.
\end{rem}

\begin{prop}\label{prop:Bst-vs-Asthat}
There exists a well-defined injective $B_\cris^+$-algebra map $\iota\colon B_{\st}^+\rightarrow\widehat{B}_{\st}^+$ sending $u_{\pi^\flat}$ to $\log ([\pi^\flat]/u)$. Moreover, $\iota$ is compatible with $\phi$, $N$, and $\Gal(\overline{K}/K)$-actions and identifies $B_{\st}^+$ with the subring $(\widehat{B}_{\st}^+)^{N\text{-nilp}}$ of $\widehat{B}_{\st}^+$ consisting of elements on which $N$ acts nilpotently. Finally, the diagram
\[
\xymatrix{
B_{\st}^+ \ar[r]^-\iota\ar[d]& \widehat{B}_{\st}^+\ar[d]\\
B_\dR^+ & A_{\logcris,K}[p^{-1}]\ar[l]
}
\]
is commutative and $\Gal(\overline{K}/K)$-equivariant. 
\end{prop}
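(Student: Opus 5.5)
The plan is to define $\iota$ via the explicit description $\widehat{A}_\st\cong A_\cris\langle v\rangle_\PD$ with $v=u/[\pi^\flat]-1$ from Definition~\ref{def:AlogcrisK-and-Asthat}(1). Put
\[
\iota(u_{\pi^\flat}) \coloneqq -\log(1+v)=\sum_{n\geq 1}(-1)^{n}(n-1)!\,v^{[n]},
\]
which converges in $\widehat{A}_\st$ because $(n-1)!\,v^{[n]}=v^n/n$ and these are $p$-adically small in the PD-algebra. Formally $-\log(1+v)=\log([\pi^\flat]/u)$. The map $\iota$ is then the $B_\cris^+$-algebra map $B_\cris^+[u_{\pi^\flat}]\rightarrow \widehat{B}_\st^+$ determined by this value.

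To see that $\iota$ is well defined, I will use that $u_{\pi^\flat}$ is transcendental over $B_\cris^+$ inside $B_\dR^+$ (Fontaine), so $B_\st^+$ is a polynomial ring. Injectivity then amounts to the algebraic independence of $\log(1+v)$ over $B_\cris^+$ in $\widehat{B}_\st^+\cong A_\cris\langle v\rangle_\PD[p^{-1}]$. This follows by looking at the lowest-order term in $v$: the expansion $\log(1+v)=v+\ldots$ has a nonzero leading coefficient, so no nontrivial polynomial relation over $B_\cris^+$ can hold.

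Next I will check the compatibilities on the generator.
\begin{itemize}
\item Frobenius: $\phi(u)=u^p$ and $\phi([\pi^\flat])=[\pi^\flat]^p$, so $\phi(\log([\pi^\flat]/u))=p\log([\pi^\flat]/u)$.
\item Monodromy: $N$ is $A_\cris$-linear with $N(u)=u$, since $\nabla u=u\,d\log u$. Hence $N(\log([\pi^\flat]/u))=-1$.

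This does not match $N(u_{\pi^\flat})=-1/e$ directly, so a normalization issue has to be resolved here. The $N$ on $\widehat{A}_\st$ must be matched with $e\cdot N_\pi$, or $d\log u$ must be compared with $d\log\pi$, which differ by the factor $e$ on the base $\calO_K$. I will follow Remark~\ref{rem:semistable-period-ring-convention} and fix the convention so that the two agree.
\item Galois: $g([\pi^\flat])=[\epsilon]^{c(g)}[\pi^\flat]$ and $g(u)=u$, so $g(\iota(u_{\pi^\flat}))=\iota(u_{\pi^\flat})+c(g)t$. This matches the action on $B_\st^+\subset B_\dR^+$.
\end{itemize}

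For the identification with the $N$-nilpotent part, I will write $w=\iota(u_{\pi^\flat})$, so that $\widehat{B}_\st^+=A_\cris\langle w\rangle_\PD[p^{-1}]$, since $v\mapsto w$ is a PD-automorphism-type change of variable. In this variable $N$ acts as the derivation $-d/dw$ up to the scalar fixed above. An element $\sum a_n w^{[n]}$ is killed by some power of $N$ if and only if $a_n=0$ for $n\gg0$, i.e., exactly when it lies in $B_\cris^+[w]$. This gives $(\widehat{B}_\st^+)^{N\text{-nilp}}=\iota(B_\st^+)$.

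For the commutative diagram, the map $\widehat{B}_\st^+\rightarrow A_{\logcris,K}[p^{-1}]\rightarrow B_\dR^+$ sends $u\mapsto\pi$. Therefore $\log([\pi^\flat]/u)\mapsto\log([\pi^\flat]/\pi)=u_{\pi^\flat}$. This is compatible with $B_\cris^+$, and it is Galois-equivariant by the computation above.

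The main obstacle I expect is the convergence of the logarithm series together with the claim that $w$ is a valid PD-coordinate. Concretely, this requires $A_\cris\langle v\rangle_\PD[p^{-1}]=A_\cris\langle w\rangle_\PD[p^{-1}]$, which I would first try to get from $w=-v\cdot(\text{unit after inverting }p)$. Getting the normalization of $N$ consistent is the other delicate point.
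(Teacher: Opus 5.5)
Your direct route differs from the paper's, which simply transports \cite[Prop.~4.1.3]{Tsuji-Cst} along the identification of $\widehat{A}_\st$ with Tsuji's $\varprojlim_n P_n$ in Remark~\ref{rem:Asthat-and-Tsuji'sring}. Most of your steps are sound:
\begin{itemize}
\item the series $-\log(1+v)=\sum_{n\geq 1}(-1)^n(n-1)!\,v^{[n]}$ converges;
\item the lowest-order-term argument gives injectivity, since the $v^{[n]}$-expansion of an element of $A_\cris\langle v\rangle_\PD[p^{-1}]$ is unique;
\item the $\phi$-, Galois- and diagram checks are correct.
\end{itemize}
Your worry about the factor $e$ is justified. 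With $N$ on $\widehat{A}_\st$ defined by $\nabla=N\,d\operatorname{log}u$ as in Remark~\ref{rem:Asthat-and-Tsuji'sring}, one gets $N\circ\iota=e\,\iota\circ N$. So ``compatible with $N$'' has to be read with $N$ on $\widehat{B}_\st^+$ rescaled by $e^{-1}$; this is what Proposition~\ref{prop:Beilinson-epsilon-functor-properties}(5) implicitly does via $\nabla=e\calN\,d\operatorname{log}u$. This rescaling is harmless for the $N$-nilpotent statement.

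The step that fails is the identification $\widehat{B}_\st^+=A_\cris\langle w\rangle_\PD[p^{-1}]$. The map $A_\cris\langle W\rangle_\PD\rightarrow\widehat{A}_\st$, $W\mapsto w$, is well defined and injective. It is not surjective, even after inverting $p$. Its image is contained in the set of elements whose expansion in $B_\cris^+[[v]]=B_\cris^+[[w]]$ has the form $\sum_n b_nw^n/n!$ with $b_n\rightarrow 0$. But the $w$-expansion of $1+v=u/[\pi^\flat]$ is $\exp(-w)=\sum_n(-1)^nw^n/n!$, with $b_n=(-1)^n\not\rightarrow 0$. A factorization $w=-v\cdot(\text{unit})$ says nothing uniform in $n$ about the divided powers $w^{[n]}$, so it cannot repair this. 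Consequently your description of $\ker N^m$ as $\{\sum a_nw^{[n]}: a_n=0 \text{ for } n\geq m\}$ only applies to a proper subring. It does not rule out $N$-nilpotent elements outside that subring.

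The fix avoids any change of PD-coordinate. The map $A_\cris\langle v\rangle_\PD[p^{-1}]\rightarrow B_\cris^+[[v]]$, $v^{[n]}\mapsto v^n/n!$, is injective. It carries $N$ to the derivation $(1+v)\frac{d}{dv}=-\frac{d}{dw}$ of $B_\cris^+[[v]]=B_\cris^+[[w]]$. Since $B_\cris^+$ is a $\Q$-algebra, the kernel of $(d/dw)^m$ there consists of the polynomials in $w$ of degree $<m$ over $B_\cris^+$, all of which lie in $\iota(B_\st^+)$. Hence $(\widehat{B}_\st^+)^{N\text{-nilp}}=B_\cris^+[w]=\iota(B_\st^+)$.
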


\begin{proof}
This follows from \cite[Prop.~4.1.3]{Tsuji-Cst} and Remark~\ref{rem:Asthat-and-Tsuji'sring}.
\end{proof}

\begin{prop}\label{prop:Beilinson-epsilon-functor-properties}
Keep the notation as above.
Take any $V\in D_{\phi, N}(K_0)^{\mathrm{eff}}$ and set $\calE_V\coloneqq i_\CRIS^{0,\ast}\epsilon(V) \in D_{\phi}^{\mathrm{pcr}}(Y_1, M_{Y_1})^{\mathrm{nd}}_{\Q}$.
The following properties hold.
\begin{enumerate}
 \item 
 $V\cong (i_\CRIS^\ast \calE_V)(W(k))$ as $\phi$-modules in perfect $K_0$-complexes, where $W(k)$ comes with PD-ideal $(p)$ and log structure associated to $\N\rightarrow W(k)$, $1\mapsto 0$.
 \item $V\otimes_{K_0}^L S_K[p^{-1}]\cong \calE_V(S_K)$ as $\phi$-modules in perfect $S_K[p^{-1}]$-complexes;
 if V is a $(\phi,N)$-module so that $\calE_V$ is a quasi-coherent crystal and  $\calE_V(S_K)$ comes with a log connection $\nabla$, then via the above isomorphism, $\nabla$ corresponds to 
 \[
 \nabla(v\otimes x)=eN(v)\otimes x\,d\operatorname{log} u+v\otimes dx.
 \]
  \item Under (2), the map $S_K\rightarrow \calO_K$, $u\mapsto\pi$, yields
 \[
 \rho_\pi\colon V\otimes_{K_0}^LK\cong \calE_V(\calO_K).
 \]
as perfect $K$-complexes. 
For another uniformizer $\pi'=a\pi$ ($a\in \calO_K^\times$), we have 
\[
\rho_{\pi'}=\rho_{\pi}\circ \exp(\operatorname{log}(a) eN)\quad\text{with}\quad
\exp(\operatorname{log}(a) eN)\coloneqq \sum_{n\geq 0}\frac{(\operatorname{log}(a) e)^n}{n!}N^n.
\]
 \item $h_{\CRIS}^\ast \calE_V=\epsilon_{i^0\circ h}(V)$, and $(h_{\CRIS}^\ast \calE_V)(A_\cris)$ is a perfect complex in $B_\cris^+$-modules with commuting $\phi$ and $\Gal_K$-actions.

 \item  We have isomorphisms
 \[
 V\otimes_{K_0}^L\widehat{B}_\st^+\cong (h_{\CRIS}^\ast \calE_V)(\widehat{A}_\st)\cong (h_{\CRIS}^\ast \calE_V)(A_\cris)\otimes_{B_\cris^+}^L\widehat{B}_\st^+
 \]
 of perfect complexes in $\widehat{B}_\st^+$-modules compatible with $\phi$, $N$, and $\Gal(\overline{K}/K)$-actions, where $N$ on the LHS is $N\otimes 1+1\otimes N$ and $N$ on the RHS is $1\otimes N$.
 This induces an isomorphism of perfect complexes in $B_\st^+$-modules
 \[
\rho_{\st}\colon V\otimes_{K_0}^L B_\st^+\cong  (h_{\CRIS}^\ast \calE_V)(A_\cris)\otimes_{B_\cris^+}^L B_\st^+
 \]
 compatible with $\phi$, $N$, and $\Gal(\overline{K}/K)$-actions.
 \item We have isomorphisms
\[
\calE_V(\calO_K)\otimes_K^L A_{\logcris,K}[p^{-1}]\cong (h_{\CRIS}^\ast \calE_V)(A_{\logcris,K})\cong (h_{\CRIS}^\ast \calE_V)(A_\cris)\otimes_{B_\cris^+}^L A_{\logcris,K}[p^{-1}].
\]
  The map $S_K\rightarrow \calO_K$ sending $u$ to $\pi$ yields $B_\st^+\rightarrow \widehat{B}_\st^+\rightarrow A_{\logcris,K}[p^{-1}]$, which gives the commutative diagram
\[
\xymatrix{
V\otimes_{K_0}^L A_{\logcris,K}[p^{-1}]\ar[d]^-{\rho_\pi\otimes A_{\logcris,K}[p^{-1}]}\ar[r]^-{\overset{\rho_\st\otimes A_{\logcris,K}[p^{-1}]}{\phantom{a}}}& (h_{\CRIS}^\ast \calE_V)(A_\cris)\otimes_{B_\cris^+}^L A_{\logcris,K}[p^{-1}]\\
\calE_V(\calO_K)\otimes_K^L A_{\logcris,K}[p^{-1}].\ar[ur]&
}
\]
\end{enumerate}
\end{prop}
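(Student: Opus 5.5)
The plan is to reduce everything to the case of a single effective $(\phi,N)$-module $V$ and then pass to bounded complexes. Remark~\ref{rem:Beilinson-epsilon-functor} shows that $\epsilon_f$ is computed termwise on a representing complex. All the evaluation functors $\calK_\Q\mapsto\calK_\Q(A)$ are exact triangulated functors. So once an isomorphism is constructed naturally in the module $V$, it extends to $D_{\phi,N}(K_0)^{\mathrm{eff}}$ by applying it termwise. Compatibility with $\phi$, $N$ and Galois is likewise checked termwise. I will therefore assume that $V$ is a $(\phi,N)$-module. By Remark~\ref{rem:Beilinson-epsilon-functor}, $\epsilon(V)$ is then a finite locally free $F$-isocrystal, and I will make Beilinson's construction explicit in that case.

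\textbf{Step 1: explicit model of $\epsilon(V)$.} On the PD-thickening $S_K$ of $Y_1$, and on the canonical lift $W(k)\langle u\rangle$-type object over $Y_1^0$, I will describe $\epsilon(V)$ as the module $V\otimes_{K_0}S_K[p^{-1}]$ carrying:
\begin{itemize}
\item the log connection $\nabla(v\otimes x)=eN(v)\otimes x\,d\log u+v\otimes dx$;
\item the Frobenius $\phi_V\otimes\phi$.
\end{itemize}
The factor $e$ comes from $d\log u$ versus $d\log$ of the uniformizer of $W(k)$ with its canonical log structure, since $u^e\sim\pi$ up to units. I will verify horizontality of $\phi$ from $N\phi=p\phi N$ together with $\phi(d\log u)=p\,d\log u$. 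This is Beilinson's construction in \cite[\S~1.15]{Beilinson-crystalline-period-map-arXiv}, and it gives (2).

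Statement (1) follows by evaluating along $S_K\to W(k)$, $u\mapsto0$, which is the restriction to $Y_0$ via $i$.

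Statement (3) is obtained by specializing along $u\mapsto\pi$. For another uniformizer $\pi'=a\pi$, the two specializations of the crystal differ by parallel transport along the PD-path from $u=\pi$ to $u=\pi'$. Transport of a horizontal section is $\exp(\log(u'/u)\,eN)$ acting on $V$, which gives the formula for $\rho_{\pi'}$. The series converges because $N$ is nilpotent and $\log a$ is defined.

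\textbf{Step 2: the pullback to $\overline{Y}_1$.} The identity $h_\CRIS^\ast\calE_V=\epsilon_{i^0\circ h}(V)$ is the base change compatibility of Theorem~\ref{thm: Beilison-HK-equiv}. By Lemma~\ref{lem:crystalline-interpretation-Asthat}(1), $A_\cris$ is initial, so $(h^\ast_\CRIS\calE_V)(A_\cris)$ is a perfect $B_\cris^+$-complex. It inherits $\phi$ from the $F$-structure and a Galois action from functoriality of the initial object; these commute. This proves (4).

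For (5), I evaluate the crystal on $\widehat{A}_\st$, viewed as an object over $S_K$ via Lemma~\ref{lem:crystalline-interpretation-Asthat}(2). Crystal base change from $S_K$ gives $V\otimes\widehat{B}_\st^+$, and base change from the initial object $A_\cris$ gives $(h^\ast\calE_V)(A_\cris)\otimes\widehat{B}_\st^+$. The connection on $\widehat{A}_\st$ relative to $A_\cris$ (Remark~\ref{rem:Asthat-and-Tsuji'sring}) induces $N\otimes1+1\otimes N$ on the left and $1\otimes N$ on the right; Galois and $\phi$ compatibility are clear. Taking $N$-nilpotent parts and using Proposition~\ref{prop:Bst-vs-Asthat} yields $\rho_\st$. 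Explicitly, $\rho_\st$ sends $v\mapsto\exp(-u_{\pi^\flat}N)v$ with the normalization $N(u_{\pi^\flat})=-1/e$, and $V\otimes B_\st^+$ is the $N$-nilpotent part because $N$ on $V$ is nilpotent.

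\textbf{Step 3: the diagram in (6).} I will use Lemma~\ref{lem:crystalline-interpretation-Asthat}(3) and base change along $\widehat{A}_\st\to A_{\logcris,K}=\widehat{A}_\st\otimes_{S_K}\calO_K$. Commutativity follows from transitivity of crystal transition maps for $S_K\to\widehat{A}_\st\to A_{\logcris,K}$ and $A_\cris\to A_{\logcris,K}$.

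The main obstacle is Step 1: pinning down Beilinson's $\epsilon$ concretely enough to read off the factor $e$ and the sign and normalization of $N$. The uniformizer-change formula and the normalization in (5) both hinge on this.
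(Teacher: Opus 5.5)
Your treatment of (3)--(6) is essentially the paper's. You reduce to an effective $(\phi,N)$-module, get the uniformizer-change formula from a transport computation in a self-product, read $N$ off the connection on $\widehat{A}_\st$ using initiality of $A_\cris$, pass to $N$-nilpotent parts via Proposition~\ref{prop:Bst-vs-Asthat}, and obtain (6) from transitivity of crystal transition maps.

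The gap is Step~1, on which (1), (2) and all later normalizations rest. You assert that $\calE_V(S_K)$ is $V\otimes_{K_0}S_K[p^{-1}]$ with $\phi_V\otimes\phi$ and $eN\,d\operatorname{log}u$, and call this Beilinson's construction. But Beilinson's explicit model is not on $S_K$. It describes $\epsilon_t(V)$ on a log point over $k$ with parameter $t$, realized on $W(k)\{s\}_{\PD}$ with $s\leftrightarrow t$, whereas $\epsilon(V)$ lives on $Y_1^0$ with parameter $\overline{p}$. Checking horizontality of your data only shows that it is \emph{some} $F$-isocrystal on $Y_1$. Identifying it with $i^{0,\ast}_\CRIS\epsilon(V)$ is the real content, and the naive comparison fails:
\begin{enumerate}
\item Under $i^0$ the parameter $\overline{p}$ becomes (the image of) $c\,\overline{\pi}^e$ with $c=p/\pi^e\in\calO_K^\times$. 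So it is $u^e$, not $\pi$, that matches $p$ up to a unit.
\item A map of thickenings $W(k)\{s\}_{\PD}\to S_K$ over $i^0$ must therefore send $s\mapsto\tilde c\,u^e$ with $\tilde c$ lifting $c$.
\item Since $c \bmod p$ is in general not a Teichm\"uller constant, no such $\tilde c$ is both horizontal and $\phi$-compatible.
\item Consequently, in the basis $V\otimes 1$ the pulled-back connection acquires a $d\operatorname{log}\tilde c$ term, and the Frobenius is not $\phi_V\otimes\phi$.
\end{enumerate}
The trivialization you write down has to be produced by a separate argument.

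The missing idea is to pass to $Y_0$ and come back by Dwork's trick. On $Y_0$ one has $i^\ast_\CRIS i^{0,\ast}_\CRIS\epsilon=\epsilon_{a\overline{\pi}^e}$ with $a\in k^\times$. The Teichm\"uller lift $[a]$ is horizontal and $\phi$-compatible, so $\epsilon_{a\overline{\pi}^e}(V,\phi,N)=\epsilon_{\overline{\pi}}(V,\phi,eN)$; this is the real source of the factor $e$. The paper then proceeds as follows:
\begin{enumerate}
\item Write $i^{0,\ast}_\CRIS\epsilon=(i^\ast_\CRIS)^{-1}\circ(i^\ast_\CRIS i^{0,\ast}_\CRIS\epsilon)$, with $(i^\ast_\CRIS)^{-1}$ given by Dwork's trick.
\item Pull Beilinson's model on $W(k)\{s\}_{\PD}$ back along $\phi_m\colon s\mapsto u^{p^m}$, which computes $(F^\ast_{Y_1,\CRIS})^m\calE_V$ at $S_K$.
\item Compare $\phi_m^\ast d\operatorname{log}s=p^m d\operatorname{log}u$ with $(\phi^m)^\ast d\operatorname{log}u=p^m d\operatorname{log}u$.
\end{enumerate}
In particular, the $\phi$-isomorphism in (2) is Dwork's trick for the $\phi$-equivariant surjection $S_K\to W(k)$, applied to (1), which is quoted from Beilinson. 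Deducing (1) from (2), as you do, is fine only once (2) is genuinely established.

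You could also repair Step~1 directly. Restrict your explicit $F$-isocrystal along the $\phi$-equivariant PD-morphism $S_K\to W(k)\{s\}_{\PD}$, $u\mapsto s$, which lies over $i$. The result is $\epsilon_{\overline{\pi}}(V,\phi,eN)=i^\ast_\CRIS\calE_V$, and you conclude by the full faithfulness of $i^\ast_\CRIS$ in Theorem~\ref{thm: Beilison-HK-equiv}.

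A minor slip: with $N(u_{\pi^\flat})=-1/e$, the elements of $V\otimes B_\st^+$ killed by $N\otimes1+1\otimes N$ are the $\exp(e\,u_{\pi^\flat}N)v$. Your explicit formula for $\rho_\st$ is therefore missing the factor $e$.
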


\begin{proof}
(1) This is \cite[\S~1.15, Rem.~(iv)]{Beilinson-crystalline-period-map-arXiv}.

(2) The first part follows from Dwork's trick applied to the $\phi$-equivariant surjection $S_K\rightarrow W$; see \cite[\S~1.13, Prop.]{Beilinson-crystalline-period-map-arXiv}.
The second part follows from \cite[\S~1.15, Thm.-Const.]{Beilinson-crystalline-period-map-arXiv} and its proof. To explain this, we freely use the notation therein. 
First we remark that $i_\CRIS^{0,\ast}\epsilon$ is also given by $(i_\CRIS^\ast)^{-1}\circ(i_\CRIS^\ast i_\CRIS^{0,\ast}\epsilon)$, where $(i_\CRIS^\ast)^{-1}$ is given by Dwork's trick for $F$-crystals (cf.~Proof (iii)(b) therein). We now unwind the latter description as it is compatible with the isomorphism in the first part.

Let $\overline{p}\in \Gamma(Y_1^0,M_{Y_1^0})$ be the image of $p\in M_\can$ on $W(k)$ and let $\overline{\pi}\in \Gamma(Y_0,M_{Y_0})$ be the image of $\pi\in M_\can$ on $\calO_K$. Write $(i^0\circ i)^{-1}(\overline{p})=a\overline{\pi}^e$ ($a\in k^\times$). Then $i_\CRIS^\ast i_\CRIS^{0,\ast}\epsilon$ is $\epsilon_{a\overline{\pi}^e}$ by construction, and $\epsilon_{a\overline{\pi}^e}(V,\phi,N)=\epsilon_{\overline{\pi}}(V,\phi,eN)$.

To describe $\epsilon_{\overline{\pi}}$ and $(i_\CRIS^\ast)^{-1}$, let $S_{K_0}=W(k)\{ s\}_\mathrm{PD}$ denote the $p$-adically completed PD-polynomial algebra over $W(k)$ with log structure given by $\N\rightarrow S_{K_0}$ sending $1$ to $s$. Then the $F$-isocrystal $\calE\coloneqq \epsilon_{\overline{\pi}}(V,\phi,eN)$ is given by an $S_{K_0}[p^{-1}]$-module $V\otimes_{K_0}S_{K_0}[p^{-1}]$ with Frobenius $\phi\otimes \phi_{S_{K_0}[p^{-1}]}$ and topologically nilpotent log connection $\nabla$ given by $\nabla(v\otimes x)=eN(v)\otimes x\,d\operatorname{log} s+v\otimes dx$ (cf.~Proof (iii)(a)).
Now take $m\gg0$ so that we have a map $\phi_m\colon S_{K_0}\rightarrow S_K$ sending $s$ to $u^{p^m}$. Then the log connection $\phi_m^{\ast}(V\otimes_{K_0}S_{K_0}[p^{-1}],\nabla)$ on $S_K[p^{-1}]$ represents the $F$-isocrystal $(F_{Y_1,\CRIS}^\ast)^m ((i_\CRIS^\ast)^{-1}\calE)$ whose underlying $S_K[p^{-1}]$-module given by \cite[\S~1.13, Prop.]{Beilinson-crystalline-period-map-arXiv} is $(\phi_{K_0}^\ast)^mV\otimes_{K_0}S_K[p^{-1}]$. Here $F_{Y_1,\CRIS}^\ast$ corresponds to the pullback along $\phi_{S_K[p^{-1}]}$ on log connections. We deduce the second assertion from  $\phi_m^{\ast}d\operatorname{\log}s=p^md\operatorname{\log}u$ and $(\phi_{S_K[p^{-1}]}^m)^{\ast}d\operatorname{\log}u=p^md\operatorname{\log}u$.

(3) The first part follows from the crystal property of $\calE_V$. For the second part, we may assume that $V$ is an effective $(\phi,N)$-module and $\calE_V$ is an $F$-isocrystal.
In this case, the formula follows from (2) and a standard computation involving the self-product $S_K^{(1)}$ of $S_K$; see \cite[Thm.~5.1]{hyodo-kato} or \cite[Rem.~4.4.18]{Tsuji-Cst} for a relevant computation.

(4) This is obvious.

(5) We may assume that $V$ is an effective $(\phi,N)$-module by Remark~\ref{rem:Beilinson-epsilon-functor}. The isomorphisms in the first part are given by the crystal property of $h_\CRIS^\ast\calE$ and (2). By construction, these isomorphisms are compatible with $\phi$ and $\Gal(\overline{K}/K)$-actions. So it remains to show that they are also compatible with monodromy operators.
From Lemma~\ref{lem:crystalline-interpretation-Asthat}(2) and a similar argument, we see that $\widehat{A}_\st$ is a weakly initial ind-object in $(\overline{Y}_1,M_{\overline{Y}_1})_\CRIS^\op$ and its self-coproduct is given by $(h_{\CRIS,\ast}\calO_{\overline{Y}_1/\Z_p})(S_K^{(1)})$.
We see from these and a standard argument that the category of quasi-coherent $\calO_{\overline{Y}_1/\Z_p}$-modules is equivalent to the category consisting of pairs $(E,\nabla)$ where $E$ is a $p$-adically complete $\widehat{A}_\st$-module and $\nabla\colon E\rightarrow E\,d\operatorname{log}u$ is a topologically nilpotent log connection compatible with $\nabla_{\widehat{A}_\st}$. Let $\nabla$ denote the log connection on the $\widehat{B}_\st^+$-module $(h_{\CRIS}^\ast \calE_V)(\widehat{A}_\st)$ given by the isocrystal $h_{\CRIS}^\ast \calE_V$ and define an endomorphism $\calN$ on $(h_{\CRIS}^\ast \calE_V)(\widehat{A}_\st)$ by $\nabla(v)=e\calN(v)\,d\operatorname{log}u$. Since $\nabla$ is the pullback of the log connection in (2), we see that the identification $ V\otimes_{K_0}\widehat{B}_\st^+\cong (h_{\CRIS}^\ast \calE_V)(\widehat{A}_\st)$ sends $N\otimes 1+1\otimes N$ to $\calN$. Since $A_\cris$ is the initial ind-object in $(\overline{Y}_1,M_{\overline{Y}_1})_\CRIS^\op$ (Lemma~\ref{lem:crystalline-interpretation-Asthat}(1)), we deduce that the identification $(h_{\CRIS}^\ast \calE_V)(\widehat{A}_\st)\cong (h_{\CRIS}^\ast \calE_V)(A_\cris)\otimes_{B_\cris^+}\widehat{B}_\st^+$ sends $\calN$ to $1\otimes N$. The second part follows from the first, Proposition~\ref{prop:Bst-vs-Asthat}, and the nilpotence of $N$ on $V$.

(6) The isomorphisms in the first part are given by the crystal property of $h_\CRIS^\ast\calE$ and (3). The second assertion follows from the construction in (2), (3), (5) using the crystal property and Dwork's trick, and the following obvious commutative diagram
\[
\xymatrix{
W(k)\ar@<.5ex>[r]& S_K \ar[r]\ar@<.5ex>[d]\ar[l] & \widehat{A}_\st\ar[d] & A_\cris\ar[l]\ar[dl]\\
& \calO_K\ar[r] & A_{\logcris,K}.&
}
\]
\end{proof}

\begin{rem}\label{rem:another-definition-of-N-in-Beilinson-Hyodo-Kato}
Part (2) of Proposition~\ref{prop:Beilinson-epsilon-functor-properties} is generalized as follows. Let $p_0,p_1\colon S_K\rightarrow S_K^{(1)}$ denote the projections. Then $p_1 \colon S_K\rightarrow S_K^{(1)}$ extends to an $S_K$-algebra isomorphism $S_K\{v\}_\mathrm{PD}\xrightarrow{\cong}S_K^{(1)}$ where $S_K\{v\}_\mathrm{PD}$ denotes the $p$-completed PD-polynomial algebra in $v$, and the map is defined by sending $v$ to $p_0(u)/p_1(u)-1$. Composing the inverse of this isomorphism with the $S_K$-linear projection $S_K\{v\}_\mathrm{PD}=\widehat{\oplus}_{n\geq 0}S_K v^{[n]}\rightarrow S_K v^{[1]}=S_K$ and inverting $p$, we obtain an $S_K[p^{-1}]$-linear map $f\colon S_K^{(1)}[p^{-1}]\rightarrow S_K[p^{-1}]$.

Let $V$ and $\calE_V$ be as in Proposition~\ref{prop:Beilinson-epsilon-functor-properties}. Consider the $K_0$-linear map
\[
e\calN\colon \calE_V(S_K)\rightarrow \calE_V(S_K)\otimes_{S_K,p_0}^LS_K^{(1)}\cong \calE_V(S_K^{(1)})\cong \calE_V(S_K)\otimes_{S_K,p_1}^LS_K^{(1)}\rightarrow \calE_V(S_K),
\]
where the first map (resp.~the last map) is induced from $p_0$ (resp.~$f$).
Under the isomorphism $V\cong \calE_V(S_K)\otimes_{S_K[p^{-1}]}^LK_0$ given by (2), the monodromy operator $N$ on $V$ corresponds to $\calN\otimes \id_{K_0}$; for this, we may assume that $V$ is an effective $(\phi,N)$-module, in which case the claim follows from the second part of (2) and the fact that $d\operatorname{log} u$ corresponds to the image of $p_0(u)/p_1(u)-1$ in $J/J^{[2]}$ where $J=\Ker(S_K^{(1)}\rightarrow S_K)$.
\end{rem}

\subsection{The Hyodo--Kato cohomology} \label{subsec: hyodo-kato-cohom}

In this subsection, we apply the Hyodo--Kato theory in \S~\ref{subsec: hyodo-kato-theory} to the cohomology of the crystalline realization of a completed prismatic $F$-crystal. For this, we first need to take a detour.

Let $(Y_1, M_{Y_1})\coloneqq (\Spf \calO_K, M_{\can})\times_{\Z} \Z/p$ as in \S~\ref{subsec: hyodo-kato-theory}. Write $\calD_{\perf}((Y_1, M_{Y_1})_{\CRIS})$ for the derived $\infty$-category of perfect complexes of $\calO_{Y_1}/\Z_p$-modules, i.e., the full subcategory of $\calD((Y_1, M_{Y_1})_{\CRIS}, \calO_{Y_1}/\Z_p)$ consisting of perfect complexes. Let $\mathcal{D}_{\perf}^{\phi}((Y_1, M_{Y_1})_{\CRIS})$ be the $\infty$-category of pairs $(\mathcal{K}, \phi_{\mathcal{K}})$ where $\mathcal{K} \in \mathcal{D}_{\perf}((Y_1, M_{Y_1})_{\CRIS})$ and $\phi_{\mathcal{K}}\colon LF_{Y_1, \CRIS}^*\mathcal{K} \stackrel{\cong}{\rightarrow} \mathcal{K}$ is an isomorphism in $\mathcal{D}_{\perf}((Y_1, M_{Y_1})_{\CRIS})_{\mathbf{Q}}\coloneqq \calD_{\perf}((Y_1, M_{Y_1})_{\CRIS})\otimes_{\calD(\Z)} \calD(\Q)$. 
Note that the homotopy category of $\mathcal{D}_{\perf}((Y_1, M_{Y_1})_{\CRIS})$ is identified with $D_{\phi}^{\mathrm{pcr}}(Z, M_Z)$ in \S~\ref{subsec: hyodo-kato-theory}.
By $p$-completely faithfully flat descent, we have a natural equivalence
\begin{equation} \label{eq: F-isocrys-perfect-cx-as-lim}
\cal{D}_{\perf}^{(\phi)}((Y_1, M_{Y_1})_{\CRIS})\cong \lim_{[n] \in \Delta} \calD_{\perf}^{(\phi)}(S_K^{[n]}),
\end{equation}
where $S_K^{[n]}$ is the $(n+1)$-st self-coproduct of the Breuil log prism (cf.~Theorem~\ref{thm: general-base-change-BK-cohom}).

Let $(X, M_X)$ be a \emph{proper} semistable $p$-adic formal scheme over $\Spf \calO_K$. 
 and set $(X_i, M_{X_i})\coloneqq (X, M_{X})\times_{(\calY, M_{\calY})} (Y_i, M_{Y_i})$ for $i=0,1$. Let $\calE_{\Prism} \in \Vect^{\an, \phi}((X,M_X)_\Prism)$ be an analytic prismatic $F$-crystal on $(X, M_X)_{\Prism}$ and write $(\calE_{\cris,\Q},\phi_{\calE_{\cris,\Q}})$ for the crystalline realization of $j_{\ast}\calE_{\Prism}$ as in Definition~\ref{def:crystalline-realization}.

\begin{construction} \label{const: F-isocrys-perfect-cx}
Keep the set-up as above. Recall that $R\Gamma_{\Br}(X, i(\calE_{\Prism}))$ satisfies the following properties:
\begin{itemize}
  \item $R\Gamma_{\Br}(X, i(\calE_{\Prism}))$ lies in $\calD_{\perf}(S_K)$ and satisfies the Frobenius isogeny property over $S_K[p^{-1}]$ (Theorem~\ref{thm: BK-cohom-perfect-cx}, Proposition~\ref{prop: BK-cohom}, Theorems~\ref{thm: general-base-change-BK-cohom}(1) and \ref{thm: pris-cris-comparison-over-Breuil S});
  \item for any projection map $S_K \rightarrow S_K^{[n]}$, the $\phi$-equivariant map
\[
R\Gamma_{\Br}(X, i(\calE_{\Prism}))\otimes^L_{S_K} S_K^{[n]} \rightarrow R\Gamma((X_{S_K^{[n]}/p}/(S_K^{[n]}, M_{\Spf S_K^{[n]}}))_{\Prism}, i(\calE_{\Prism}))
\]
is an isomorphism (Theorem~\ref{thm: general-base-change-BK-cohom}(1)).
\end{itemize}

Via the equivalence \eqref{eq: F-isocrys-perfect-cx-as-lim}, this gives an $F$-isocrystal in perfect complexes $(\mathcal{K}_\cris\coloneqq \mathcal{K}_\cris(i(\calE_{\Prism})),\phi_{\calK_\cris})\in \cal{D}_{\perf}^{\phi}((Y_1, M_{Y_1})_{\CRIS})$ satisfying 
\[
\mathcal{K}_\cris(i(\calE_{\Prism}))(S_K^{[n]}) \cong R\Gamma((X_{S_K^{[n]}/p}/(S_K^{[n]}, M_{\Spf S_K^{[n]}}))_{\Prism}, i(\calE_{\Prism})).
\]
By unwinding the construction, we see that there exists $m\geq 0$ such that $(\calK_\cris,p^m\phi_{\calK_\cris})$ lies in $D_{\phi}^{\mathrm{pcr}}(Z, M_Z)^{\mathrm{nd}}\subset D_{\phi}^{\mathrm{pcr}}(Z, M_Z)$.
So by applying Theorem~\ref{thm: Beilison-HK-equiv} to $(\calK_\cris,p^m\phi_{\calK_\cris})_\Q\in D_{\phi}^{\mathrm{pcr}}(Z, M_Z)^{\mathrm{nd}}_\Q$ and twisting back the Frobenius by $p^{-m}$, we obtain $(V,\phi,N)\in D_{\phi, N}(K_0)$.
\end{construction}

\begin{thm}\label{thm:Hyodo-Kato-cohomology}
The above $(V,\phi,N)\in D_{\phi, N}(K_0)$ satisfies the following properties.
\begin{enumerate}
 \item As $\phi$-modules in perfect $K_0$-complexes,
 we have
 \[
 V\cong R\Gamma(((X_0, M_{X_0})/(W(k), M_0))_{\CRIS}, \calE_{\cris, \Q}).
 \] 
 \item As $\phi$-modules in perfect $S_K[p^{-1}]$-complexes, we have
 \[
V\otimes_{K_0}^L S_K[p^{-1}] \cong R\Gamma(((X_1, M_{X_1})/(S_K, M_{S_K}))_{\CRIS}, \calE_{\cris, \Q}).
 \]
  \item Each uniformizer $\pi\in\calO_K$ yields an isomorphism of perfect $K$-complexes
 \[
 \rho_\pi\colon V\otimes_{K_0}^LK\cong R\Gamma(X, (E\otimes_{\calO_X}\omega^{\bullet}_{X/\calO_K}, \nabla_E))\otimes_{\calO_K}^L K,
 \]
 where $(E, \nabla_E)$ is the integrable log connection on $X$ associated to $\calE_{\cris}$ in Definition~\ref{def:crystalline-realization}.
For $\pi'=a\pi$ ($a\in \calO_K^\times$), we have $\rho_{\pi'}=\rho_{\pi}\circ \exp(\operatorname{log}(a) eN)$.
\item  We have an isomorphism of perfect complexes in $B_\st^+$-modules
 \[
\rho_{\st}\colon V\otimes_{K_0}^L B_\st^+\cong  R\Gamma(((X_{\calO_C/p}, M_{X_{\calO_C/p}})/(A_{\cris}, M_{A_\cris}))_{\CRIS}, \calE_{\cris, \Q})\otimes_{B_\cris^+}^L B_\st^+
 \]
 compatible with $\phi$, $N$, and $\Gal(\overline{K}/K)$-actions.
 \item We have isomorphisms
\begin{align*}
R\Gamma&(X, (E\otimes_{\calO_X}\omega^{\bullet}_{X/\calO_K}, \nabla_E))\otimes_{\calO_K}^L A_{\logcris,K}[p^{-1}]\\
&\cong
R\Gamma(((X_{\calO_C/p}, M_{X_{\calO_C/p}})/(A_{\logcris, K}, M_{A_{\logcris, K}}))_{\CRIS}, \calE_{\cris, \Q})\\
&\cong R\Gamma(((X_{\calO_C/p}, M_{X_{\calO_C/p}})/(A_{\cris}, M_{A_\cris}))_{\CRIS}, \calE_{\cris, \Q})\otimes_{B_\cris^+}^L A_{\logcris,K}[p^{-1}].    
\end{align*}
 The map $S_K\rightarrow \calO_K$ sending $u$ to $\pi$ yields $B_\st^+\rightarrow \widehat{B}_\st^+\rightarrow A_{\logcris,K}[p^{-1}]$, which identifies the above composite with $(\rho_\st\otimes A_{\logcris,K}[p^{-1}])\circ (\rho_\pi^{-1}\otimes A_{\logcris,K}[p^{-1}])$.
\end{enumerate}
We write $R\Gamma_{\HK}((X_0, M_{X_0}), \calE_{\cris, \Q}))$ for $(V,\phi,N)$ and call it the \emph{Hyodo--Kato complex} associated to $\calE_{\cris, \Q}$.
\end{thm}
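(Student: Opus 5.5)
The plan is to apply Proposition~\ref{prop:Beilinson-epsilon-functor-properties} to $V$, using $\calE_V=i^{0,\ast}_\CRIS\epsilon(V)\cong(\calK_\cris,p^m\phi_{\calK_\cris})_\Q$ (Frobenius twisted back by $p^{-m}$). This identification holds because Theorem~\ref{thm: Beilison-HK-equiv} says $\epsilon$ and $i^{0,\ast}_\CRIS$ are equivalences. The remaining task is to identify each evaluation of $\calK_{\cris,\Q}$ with the relevant crystalline cohomology of $\calE_{\cris,\Q}$, and this comes from the comparison and base change results of \S\S~\ref{subsec: BK-cohom}--\ref{subsec: Frobenius-isogeny-property}.

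\textbf{Evaluations at $S_K$ and $W(k)$.} By construction, $\calK_\cris(S_K)=R\Gamma_{\Br}(X,i(\calE_\Prism))$. After inverting $p$, this becomes $R\Gamma_{\Br}(X,j_\ast\calE_\Prism)[p^{-1}]$, using Proposition~\ref{prop: BK-cohom} together with Theorem~\ref{thm: general-base-change-BK-cohom}(1)(2) for $A=S_K$. By Theorem~\ref{thm: pris-cris-comparison-over-Breuil S}, that equals $R\Gamma(((X_1,M_{X_1})/(S_K,M_{S_K}))_\CRIS,\calE_{\cris,\Q})$, compatibly with Frobenius. Combined with Proposition~\ref{prop:Beilinson-epsilon-functor-properties}(2), this gives (2).

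For (1), evaluate $i^\ast_\CRIS\calE_V$ at $W(k)$, which is a crystal evaluation along $S_K\to W(k)$, $u\mapsto0$. The crystal property of $\calK_\cris$ identifies this with the base change $R\Gamma_{\Br}(X,i(\calE_\Prism))\otimes^L_{S_K}W(k)$. By Theorem~\ref{thm: general-base-change-BK-cohom}, this agrees with the $W(k)$-prismatic cohomology up to the Frobenius twist. Theorem~\ref{thm: pris-cris-comparison-over-Breuil S}(1) then identifies it with the crystalline cohomology over $(W(k),M_0)$. One should check that the map $S_K\to W(k)$ is compatible with the relevant log prisms; both are reached from $\fkS_K$ via $\phi$.

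\textbf{Evaluations at $\calO_K$ and $A_\cris$.} For (3), evaluate at $\calO_K$ through $S_K\to\calO_K$, $u\mapsto\pi$. Since $S_K\to\calO_K$ has finite Tor amplitude, the crystal property and base change in crystalline cohomology give
\[
\calE_V(\calO_K)\cong R\Gamma(((X_1,M_{X_1})/(S_K,M_{S_K}))_\CRIS,\calE_{\cris,\Q})\otimes^L_{S_K}\calO_K\cong R\Gamma(((X_1,M_{X_1})/(\calO_K,M_\can))_\CRIS,\calE_{\cris})_\Q .
\]
The second isomorphism follows from Proposition~\ref{prop:comparison-delta-log-cris-log-cris}-type \v{C}ech computations with the nerve of Construction~\ref{const: Cech-nerve-cristalline-site-over-SK} and Lemma~\ref{lem:algebraic-lemmas-for-base-change}(1). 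Theorem~\ref{thm: log-dR-cris-comparison} then converts the result to log de Rham cohomology. The change-of-uniformizer formula is Proposition~\ref{prop:Beilinson-epsilon-functor-properties}(3).

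For (4), Proposition~\ref{prop:Beilinson-epsilon-functor-properties}(5) gives $\rho_\st$ with target $(h^\ast_\CRIS\calE_V)(A_\cris)\otimes B^+_\st$. We identify $(h^\ast_\CRIS\calE_V)(A_\cris)$ with $\calK_\cris(A_\cris)[p^{-1}]$ through the map $S_K\to A_\cris$ and the crystal property. By Theorem~\ref{thm: general-base-change-BK-cohom} and Theorem~\ref{thm: pris-cris-comparison-over-Breuil S}(3), this is the crystalline cohomology over $A_\cris$. The Galois-compatibility holds because $A_\cris$ is the initial object (Lemma~\ref{lem:crystalline-interpretation-Asthat}(1)), so the Galois action on the prismatic side transports. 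Part (5) follows in the same way from Proposition~\ref{prop:Beilinson-epsilon-functor-properties}(6), evaluating at $A_{\logcris,K}$ via both $\iota_1$ and $\iota_2$.

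\textbf{Main obstacle.} I expect the main difficulty to be the following. $\calK_\cris$ is a crystal on the small site of $Y_1$, whose values at other PD-thickenings (such as $A_\cris$ and $A_{\logcris,K}$ over $\overline Y_1$) are defined only abstractly through descent \eqref{eq: F-isocrys-perfect-cx-as-lim}. So I must show that these values equal the relative crystalline cohomology $Rf_{\CRIS,\ast}$ of $\calE_{\cris,\Q}$ evaluated there, compatibly with Frobenius and with the connection giving $N$.

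My first attempt will be to build a map $\calK_{\cris,\Q}\to Rf_{1,\CRIS,\ast}\calE_{\cris,\Q}$ of crystals on $(Y_1,M_{Y_1})_\CRIS$ from the compatible isomorphisms on the \v{C}ech nerve $S_K^{[\bullet]}$ (Theorem~\ref{thm: pris-cris-comparison-over-Breuil S}(2)). I will then prove that $Rf_{1,\CRIS,\ast}\calE_{\cris,\Q}$ satisfies base change for arbitrary thickenings, using that it is computed by a \v{C}ech nerve of Breuil-type envelopes, which are flat over the base. Once this identification of crystals is in hand, all five statements reduce to evaluations of a single object.
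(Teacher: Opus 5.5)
Your outline for (1), (2) and (4) matches the paper's: apply Proposition~\ref{prop:Beilinson-epsilon-functor-properties} to $\calK_{\cris,\Q}$, then identify its evaluations via Proposition~\ref{prop: BK-cohom}, Theorem~\ref{thm: general-base-change-BK-cohom} and Theorem~\ref{thm: pris-cris-comparison-over-Breuil S}. Part (3), however, has a genuine gap in the second isomorphism
\[
R\Gamma(((X_1,M_{X_1})/(S_K,M_{S_K}))_\CRIS,\calE_{\cris,\Q})\otimes^L_{S_K}\calO_K\cong R\Gamma(((X_1,M_{X_1})/(\calO_K,M_\can))_\CRIS,\calE_{\cris})_\Q .
\]
You justify it by saying that $S_K\to\calO_K$, $u\mapsto\pi$, has finite Tor amplitude, so that Lemma~\ref{lem:algebraic-lemmas-for-base-change}(1) lets $-\otimes^L_{S_K}\calO_K$ commute with the totalization of the \v{C}ech--Alexander complex on $S^{\rel,[\bullet]}_\Lambda$. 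That premise is false. One has $S_K/p\cong k[u]/(u^{ep})\otimes_k\bigotimes_{i\geq1}k[t_i]/(t_i^p)$, with $t_i$ the image of $\gamma_{p^i}(E)$. Every $t_i$ maps to $0$ in $\calO_K/p=k[u]/(u^e)$, so $\calO_K/p$ has infinite Tor dimension over $S_K/p$ (already $k$ over $k[t_1]/(t_1^p)$ does). Flatness of the \v{C}ech terms over $S_K$ does not compensate, because the problem is exactly commuting a base change of unbounded Tor amplitude with an infinite totalization. For the same reason, the plan in your last paragraph (base change of $Rf_{1,\CRIS,\ast}\calE_{\cris,\Q}$ along arbitrary thickenings, deduced from flatness of the \v{C}ech nerve) does not work as stated. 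In (1) you escaped the same problem for $S_K\to W(k)$ only by routing through $\fkS_K$, where $u\mapsto 0$ has finite Tor amplitude and $R\Gamma_{\BK}(X,i(\calE_\Prism))$ is perfect. There is no such route to $\calO_K$: $(\calO_K,(p))$ is not a prism in general, so Theorems~\ref{thm: general-base-change-BK-cohom} and \ref{thm: prism-cryst-comparison-semist} say nothing over it. Part (5), including the identification of its composite with $(\rho_\st\otimes A_{\logcris,K}[p^{-1}])\circ(\rho_\pi^{-1}\otimes A_{\logcris,K}[p^{-1}])$, rests on the same identification of $\calE_V(\calO_K)$ and inherits the gap.

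The missing idea, which is Proposition~\ref{prop: base-change-cris-cohom-Acris}(2), is to detour through $A_{\logcris,K}$ and then descend. The argument runs as follows.
\begin{enumerate}
\item The map $\iota_1\colon S_K\to A_\cris\to A_{\logcris,K}$ does have finite $p$-complete Tor amplitude (Lemmas~\ref{lem: finite-p-compl-Tor-amp} and \ref{lem: Acris-to-AstK-finite-Tor-amp}; the latter is a real computation with the log PD-envelope). So your \v{C}ech argument works along $\iota_1$.
\item The crystal property of $\calK_\cris$ gives $\calK_\cris(S_K)\otimes^L_{S_K,\iota_1}A_{\logcris,K}\cong\calK_\cris(S_K)\otimes^L_{S_K,\iota_2}A_{\logcris,K}$, and $\iota_2$ factors through $\calO_K$.
\item On the other side, $\calO_K\to A_{\logcris,K}$ is flat, so the \v{C}ech argument does base change the $\calO_K$-crystalline cohomology to $A_{\logcris,K}$. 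Perfectness over $\calO_K$ (Theorem~\ref{thm: log-dR-cris-comparison}) removes the completion.
\item Hence the base change map $R\Gamma(((X_1,M_{X_1})/(S_K,M_{S_K}))_\CRIS,\calE_{\cris,\Q})\otimes^L_{S_K[p^{-1}]}K\to R\Gamma(((X_1,M_{X_1})/(\calO_K,M_\can))_\CRIS,\calE_{\cris,\Q})$ becomes an isomorphism after $-\otimes^L_K A_{\logcris,K}[p^{-1}]$. It is therefore an isomorphism, since $K\to A_{\logcris,K}[p^{-1}]$ is faithfully flat.
\end{enumerate}
This gives (3), and together with Proposition~\ref{prop: base-change-cris-cohom-Acris}(1),(3) it also gives (5).

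A smaller point in (4): the initial object $A_\cris$ of $(\overline{Y}_1,M_{\overline{Y}_1})_\CRIS$ carries the log structure from $\calO_{C^\flat}\smallsetminus\{0\}$, not the one given by $1\mapsto[\pi^\flat]$. Relating $(h^\ast_\CRIS\calK_\cris)(A_\cris)$ to $\calK_\cris(A_\cris)$ therefore needs \cite[Prop.~4.4]{du-moon-shimizu-cris-pushforward}, not only the crystal property.
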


We refer the reader to Theorem~\ref{thm: pris-cris-comparison-over-Breuil S} (and the paragraph that follows) and Proposition~\ref{prop: base-change-cris-cohom-Acris} below for the notation in the RHS in the above statements.

\begin{proof}
We use Proposition~\ref{prop:Beilinson-epsilon-functor-properties}.
Part (2) is given by the isomorphisms
 \begin{align*}
V\otimes_{K_0}^L S_K[p^{-1}]&\cong R\Gamma_{\Br}(X, i(\calE_{\Prism}))\otimes_{S_K}^LS_K[p^{-1}]\cong R\Gamma_{\Br}(X, j_\ast\calE_{\Prism})\otimes_{S_K}^LS_K[p^{-1}]\\
&\cong R\Gamma(((X_1, M_{X_1})/(S_K, M_{S_K}))_{\CRIS}, \calE_{\cris, \Q})
 \end{align*}
where the first isomorphism follows from the construction and Proposition~\ref{prop:Beilinson-epsilon-functor-properties}(2), the second from Theorem~\ref{thm: general-base-change-BK-cohom} ($A=S_K$), and the third from Theorem~\ref{thm: pris-cris-comparison-over-Breuil S}. 
Using (2), part (1) also follows from Proposition~\ref{prop:Beilinson-epsilon-functor-properties}(1), Theorems~\ref{thm: general-base-change-BK-cohom} ($A=W(k)$) and \ref{thm: pris-cris-comparison-over-Breuil S}(1).
Similarly, for $h\colon (\overline{Y}_1,M_{\overline{Y}_1})\coloneqq(\Spec \calO_C/p,(\calO_{C}\smallsetminus\{0\})^a)\rightarrow (Y_1,M_{Y_1})$, part (2), Theorems~\ref{thm: general-base-change-BK-cohom} ($A=A_\cris$) and \ref{thm: pris-cris-comparison-over-Breuil S}(3) give an isomorphism of perfect complex in $B_\cris^+$-modules $(h_\CRIS^\ast \calK_\cris)(A_\cris)\cong R\Gamma(((X_{\calO_C/p}, M_{X_{\calO_C/p}})/(A_{\cris}, M_{A_\cris}))_{\CRIS}, \calE_{\cris, \Q})$; here we have used the identification given by \cite[Prop.~4.4]{du-moon-shimizu-cris-pushforward}:
\[
(h_\CRIS^\ast \calK_\cris)(A_\cris,(\calO_{C^\flat}\smallsetminus\{0\}\rightarrow A_\cris)^a)=\calK_\cris(A_\cris,(\N\rightarrow A_\cris, 1\mapsto [\pi^\flat])^a).
\]
Hence (4) follows from Proposition~\ref{prop:Beilinson-epsilon-functor-properties}(5).
Part (3) follows from (2), Theorem~\ref{thm: log-dR-cris-comparison}, Propositions~\ref{prop:Beilinson-epsilon-functor-properties}(3) and \ref{prop: base-change-cris-cohom-Acris}(2) below. 
Part (5) follows from (2), Propositions~\ref{prop:Beilinson-epsilon-functor-properties}(6) and \ref{prop: base-change-cris-cohom-Acris}(1), (3) below. 
\end{proof}

\begin{prop} \label{prop: base-change-cris-cohom-Acris}
Keep the set-up as above. 
\begin{enumerate}
    \item With the notation as in Definition~\ref{def:AlogcrisK-and-Asthat}(2), the maps
    \begin{align*}
     R\Gamma(((X_1, M_{X_1})&/(S_K, M_{S_K}))_{\CRIS}, \calE_{\cris, \Q})\otimes_{S_K[p^{-1}]}^L A_{\logcris, K}[p^{-1}] \\
    &  \rightarrow R\Gamma(((X_{\calO_C/p}, M_{X_{\calO_C/p}})/(A_{\logcris, K}, M_{A_{\logcris, K}}))_{\CRIS}, \calE_{\cris, \Q})
    \end{align*}
    induced by $\iota_1, \iota_2\colon S_K \rightarrow A_{\logcris, K}$ are isomorphisms in $D(A_{\logcris, K}[p^{-1}])$.

    \item The map
    \begin{align*}
     R\Gamma(((X_1, M_{X_1})&/(S_K, M_{S_K}))_{\CRIS}, \calE_{\cris, \Q})\otimes_{S_K[p^{-1}]}^L K \\
    &  \rightarrow R\Gamma(((X_1, M_{X_1})/(\calO_K, M_\can))_{\CRIS}, \calE_{\cris, \Q})
    \end{align*}
    given by $S_{K} \rightarrow \calO_K$ ($u \mapsto \pi$) is an isomorphism in $D(K)$.

    \item The base change
    \begin{align*}
    & R\Gamma(((X_{\calO_C/p}, M_{X_{\calO_C/p}})/(A_{\cris}, M_{A_\cris}))_{\CRIS}, \calE_{\cris, \Q})\otimes_{B_{\cris}^+}^L A_{\logcris, K}[p^{-1}] \\
    & ~~ \rightarrow R\Gamma(((X_{\calO_C/p}, M_{X_{\calO_C/p}})/(A_{\logcris, K}, M_{A_{\logcris, K}}))_{\CRIS}, \calE_{\cris, \Q})
    \end{align*}
    is an isomorphism in $D(A_{\logcris, K}[p^{-1}])$.
\end{enumerate}
\end{prop}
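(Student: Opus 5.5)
The common strategy is to pass through the crystalline realization $\calK\coloneqq i(\calE_\Prism)_\cris$ of Construction~\ref{const: cryst-realization-perfect-cx} rather than through $\calE_{\cris}$ itself. The reason is that $\calK$ is a perfect complex on $(X_1,M_{X_1})_\CRIS$, so the base change is a statement about a perfect crystal. Its rationalization computes the same cohomology as $\calE_{\cris,\Q}$ over any base, by Proposition~\ref{prop:comparison-of-cohomology-of-crystalline-realization} and its proof, whose Čech argument works verbatim over $S_K$, $\calO_K$, $A_\cris$ and $A_{\logcris,K}$. So in each of (1)--(3) I will replace $\calE_{\cris,\Q}$ by $\calK_\Q$.

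For each base I will compute both sides by a Čech--Alexander nerve. Take a finite étale cover $\{X_\lambda\}$ by small affines. Over $S_K$ the nerve is $S^{\rel,[\bullet]}_\Lambda$ from Construction~\ref{const: Cech-nerve-cristalline-site-over-SK}. Over a target base $A$ with a map $S_K\to A$ of log PD-rings, I claim the nerve is the $p$-completed base change $S^{\rel,[\bullet]}_\Lambda\widehat\otimes_{S_K}A$, with the induced log and PD structures. The cases are:
\begin{itemize}
\item $A=A_{\logcris,K}$ via $\iota_1$ or $\iota_2$ for (1);
\item $A=\calO_K$ via $u\mapsto\pi$ for (2);
\item $A=A_{\logcris,K}$ over $A_\cris$ for (3), where the nerve is the base change along $A_\cris\to A_{\logcris,K}$ of the $A_\cris$-nerve $S_\Lambda^{\rel,[\bullet]}\widehat\otimes_{S_K}A_\cris$.
\end{itemize}
To justify the claim, note that $S_K\to S_{\square_\lambda}$ is $p$-completely flat, since it is the base change of the classically flat $\fkS_K\to\fkS_{\square}$ along $\phi$. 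Together with \cite[Cor.~2.39]{bhatt-scholze-prismaticcohom}, this should identify the base-changed rings with the log PD-envelopes computing the crystalline site over $A$, exactly as in Lemma~\ref{lem: Breuil-prism-self-products-relative}. This identification is the step I expect to be the main obstacle. I will check it first for a single small affine and then pass to coproducts.

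Once the nerves are identified, the crystal property of the perfect complex $\calK$ gives $\calK(S^{\rel,[n]}_\Lambda\widehat\otimes A)\cong\calK(S^{\rel,[n]}_\Lambda)\widehat\otimes^L_{S_K}A$. The terms have uniformly bounded amplitude, since $\Lambda$ is finite. It then remains to commute $\widehat\otimes^L_{S_K}A$ with $\Tot$ via Lemma~\ref{lem:algebraic-lemmas-for-base-change}(1), which needs finite $p$-complete Tor amplitude of $S_K\to A$:
\begin{itemize}
\item $S_K\to\calO_K$ is $S_K\to S_K/(u-\pi)$ followed by dividing out the divided powers of $E$. I will instead factor it through $S_K\to W(k)$ with $u\mapsto 0$. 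Alternatively, I will use that $\calO_K$ is $p$-completely of finite Tor dimension over $S_K$ because it is obtained from the PD-polynomial description $S_K\simeq W\langle u\rangle$-envelope; I expect to cite the Cais--Liu style computation or simply verify directly with the Koszul-type resolution.
\item $S_K\to A_{\logcris,K}$ factors as $S_K\to\widehat A_\st\to A_{\logcris,K}$. Here $\widehat A_\st=A_\cris\langle v\rangle_{\PD}$ is $p$-completely flat over $A_\cris$, hence over $S_K$ by Lemma~\ref{lem: finite-p-compl-Tor-amp}(2), and $\widehat A_\st\to A_{\logcris,K}=\widehat A_\st\otimes_{S_K}\calO_K$ is again handled by the $\calO_K$ case.
\item $A_\cris\to A_{\logcris,K}$ is the base change of $W(k)\to\calO_K$, which is finite flat, so (3) is immediate.
\end{itemize}
Inverting $p$ then gives all three isomorphisms. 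If the Tor-amplitude route fails for $\iota_2$, my fallback for (1) and (2) is to use Theorem~\ref{thm: log-dR-cris-comparison} together with the de Rham description. In that approach, both sides become $R\Gamma(X,E\otimes\omega^\bullet)$ base changed along the flat map $\calO_K\to A_{\logcris,K}[p^{-1}]$, and the claim follows from the explicit PD-polynomial structure of $A_{\logcris,K}$ over $\calO_K$.
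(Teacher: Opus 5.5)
The Tor-amplitude step breaks down, and for $\iota_2$ and part~(2) your strategy itself fails.

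\textbf{Failure for $\iota_2$ and (2).} The map $S_K\to\calO_K$, $u\mapsto\pi$, does \emph{not} have finite $p$-complete Tor amplitude. Indeed, $S_K/p\cong k[u]/(u^{ep})\otimes_k k[t_1,t_2,\ldots]/(t_1^p,t_2^p,\ldots)$, with $t_i$ the image of $E^{[p^i]}$. Moreover $\calO_K/p=k[u]/(u^e)$ is the quotient of $S_K/p$ by $u^e$ and all $t_i$. Hence $\operatorname{Tor}_j^{S_K/p}(\calO_K/p,k)\neq 0$ for every $j$; already $k[u]/(u^e)$ has an infinite periodic resolution over $k[u]/(u^{ep})$.
\begin{itemize}
\item Since $\iota_2$ is this map followed by the faithfully flat $\calO_K\to A_{\logcris,K}$, $\iota_2$ also has infinite $p$-complete Tor amplitude.
\item So Lemma~\ref{lem:algebraic-lemmas-for-base-change}(1) is unavailable for both $\iota_2$ and (2), and nothing else in your argument commutes $-\otimes^L_{S_K}\calO_K$ with $\Tot$. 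Your proposed factorization through $u\mapsto 0$ is not even the right map.
\item Your fallback is circular. The de Rham description computes only the $\calO_K$-side, whereas (2) is precisely a statement about $R\Gamma(((X_1,M_{X_1})/(S_K,M_{S_K}))_\CRIS,\calE_{\cris,\Q})\otimes^L_{S_K}K$.
\end{itemize}

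\textbf{Problems with $\iota_1$ and (3).} Your arguments here are also incorrect as written. $A_{\logcris,K}$ is not $A_\cris\otimes_{W(k)}\calO_K$: the log PD-envelope makes $\pi/[\pi^\flat]$ a unit and adjoins the divided powers of $\pi/[\pi^\flat]-1$. Also, your route for $\iota_1$ passes through $\widehat A_\st\to A_{\logcris,K}$, which is a base change of the bad map $S_K\to\calO_K$.

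\textbf{Fix for $\iota_1$ and (3).} The correct input is Lemma~\ref{lem: Acris-to-AstK-finite-Tor-amp}: $A_\cris\to A_{\logcris,K}$ has finite $p$-complete Tor amplitude. It is proved by exhibiting $A_{\logcris,K}/p^n$ as a flat base change of $(S_K\otimes_{W(k)}\calO_K)\{(u-\pi)/\pi\}_{\PD}/p^n$, and resolving the latter via multiplication by $T-(u-\pi)/\pi$ on a PD-polynomial algebra. With Lemma~\ref{lem: finite-p-compl-Tor-amp}(2) and the fact that $\iota_1$ factors through $A_\cris$, your \v{C}ech/$\Tot$ argument then works. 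The nerve identification you flagged as the main obstacle is routine.

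\textbf{Missing idea for $\iota_2$.} The cohomology over $S_K$ is itself a crystal over the \emph{base}. Construction~\ref{const: F-isocrys-perfect-cx} builds, from $R\Gamma_{\Br}(X,i(\calE_\Prism))$, a perfect complex $\calK_\cris$ on $(Y_1,M_{Y_1})_\CRIS$ with $\calK_\cris(S_K)[p^{-1}]\cong R\Gamma(((X_1,M_{X_1})/(S_K,M_{S_K}))_\CRIS,\calE_{\cris,\Q})$. This, rather than $i(\calE_\Prism)_\cris$ on $X_1$, is the perfect complex that helps. Since $\iota_1$ and $\iota_2$ are both morphisms in that site, the crystal property gives
\[
\calK_\cris(S_K)\otimes^L_{S_K,\iota_1}A_{\logcris,K}\cong\calK_\cris(A_{\logcris,K})\cong\calK_\cris(S_K)\otimes^L_{S_K,\iota_2}A_{\logcris,K},
\]
which transfers the $\iota_1$ case to $\iota_2$.

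\textbf{Deducing (2).} Test the map after $-\otimes^L_KA_{\logcris,K}[p^{-1}]$. Flat base change along $\calO_K\to A_{\logcris,K}$ identifies the base-changed target with the $A_{\logcris,K}$-cohomology; perfectness from Theorem~\ref{thm: log-dR-cris-comparison} removes the completion. The map then becomes (1) for $\iota_2$, and faithful flatness of $\calO_K\to A_{\logcris,K}$ descends the isomorphism.
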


\begin{proof}
The assertions are formulated in terms of the crystalline site in the sense of Definition~\ref{def: Koshikawa-crystalline-site}, but we use the one in Definition~\ref{def:DMS-crystalline-site} in the proof via Proposition~\ref{prop:comparison of crystals in two crystalline sites}, as in Proposition~\ref{prop:comparison-of-cohomology-of-crystalline-realization}.

We use the \v{C}ech nerve $(X_1, \Spf S_{\Lambda}^{\rel, [\bullet]})$ and the isomorphisms
\[
i(\calE_{\Prism})(S_{\Lambda}^{\rel, [\bullet]})\otimes_{S_{\Lambda}^{\rel, [\bullet]}}^L S_{\Lambda}^{\rel, [\bullet]}[p^{-1}] \cong j_{\ast}\calE_{\Prism}(S_{\Lambda}^{\rel, [\bullet]})\otimes_{S_{\Lambda}^{\rel, [\bullet]}}^L S_{\Lambda}^{\rel, [\bullet]}[p^{-1}] \cong  \calE_{\cris, \Q}(S_{\Lambda}^{\rel, [\bullet]})
\]
in Construction~\ref{const: Cech-nerve-cristalline-site-over-SK} and the proof of Proposition~\ref{prop:comparison-of-cohomology-of-crystalline-realization}.

(1) We have
\[
R\Gamma(((X_1, M_{X_1})/(\Spf S_K)^\sharp)_{\CRIS}, \calE_{\cris, \Q}) \cong \Tot(\calE_{\cris, \Q}(S_{\Lambda}^{\rel, [\bullet]})).
\]
Similarly, considering the \v{C}ech nerve on $((X_{\calO_C/p}, M_{X_{\calO_C/p}})/(\Spf A_{\logcris, K})^\sharp)_\CRIS$ given by $(X_{\calO_C/P}, \Spf S_{\Lambda}^{\rel, [\bullet]}\widehat{\otimes}_{S_K, \iota_1} A_{\logcris, K})$, we have an isomorphism
\[
R\Gamma(((X_{\calO_C/p}, M_{X_{\calO_C/p}})/(\Spf A_{\logcris, K})^\sharp)_{\CRIS}, \calE_{\cris, \Q}) \cong \Tot(\calE_{\cris, \Q}(S_{\Lambda}^{\rel, [\bullet]}\widehat{\otimes}_{S_K, \iota_1} A_{\logcris, K})).
\]
Since $\iota_1\colon S_K \rightarrow A_{\logcris, K}$ has finite $p$-complete Tor amplitude by Lemmas~\ref{lem: finite-p-compl-Tor-amp} and \ref{lem: Acris-to-AstK-finite-Tor-amp} below, the base change given by $\iota_1$
\begin{align*}
     R\Gamma(((X_1, M_{X_1})&/(\Spf S_K)^\sharp)_{\CRIS}, \calE_{\cris, \Q})\otimes_{S_K, \iota_1}^L A_{\logcris, K}[p^{-1}] \\
    &  \rightarrow R\Gamma(((X_{\calO_C/p}, M_{X_{\calO_C/p}})/(\Spf A_{\logcris, K})^\sharp)_{\CRIS}, \calE_{\cris, \Q})
\end{align*}
is an isomorphism; use Lemma~\ref{lem:algebraic-lemmas-for-base-change} as in the proof of Theorem~\ref{thm: general-base-change-BK-cohom}. This gives (1) for $\iota_1$.

For $\iota_2$, recall the $F$-isocrystal in perfect complexes $\calK_\cris \in \cal{D}_{\perf}^{\phi}((Y_1, M_{Y_1})_{\CRIS})$ given in Construction~\ref{const: F-isocrys-perfect-cx}; we have
\[
\calK_\cris(S_K)\otimes_{S_K, \iota_1}^L A_{\logcris, K} \cong \calK_\cris(A_{\logcris, K}) \cong \calK_\cris(S_K)\otimes_{S_K, \iota_2}^L A_{\logcris, K}.
\]
Since $\calK_\cris(S_K)\otimes_{S_K}^L S_K[p^{-1}] \cong R\Gamma(((X_1, M_{X_1})/(\Spf S_K)^\sharp)_{\CRIS}, \calE_{\cris, \Q})$ by Proposition~\ref{prop: BK-cohom} and Theorems~\ref{thm: general-base-change-BK-cohom} and \ref{thm: pris-cris-comparison-over-Breuil S}, this shows (1) for $\iota_2$.

(2) We have
\[
R\Gamma(((X_1, M_{X_1})/(\Spf \calO_K)^\sharp)_{\CRIS}, \calE_{\cris}) \cong \Tot(\calE_{\cris}(S_{\Lambda}^{\rel, [\bullet]}\widehat{\otimes}_{S_K}\calO_K)).
\]
Since $\calO_K \rightarrow A_{\logcris, K}$ is flat, the base change
\begin{align*}
    (R\Gamma(((X_1, M_{X_1})&/(\Spf \calO_K)^\sharp)_{\CRIS}, \calE_{\cris})\widehat{\otimes}_{\calO_K}^L A_{\logcris, K})\otimes_{A_{\logcris, K}}^L A_{\logcris, K}[p^{-1}] \\
    &  \rightarrow R\Gamma(((X_{\calO_C/p}, M_{X_{\calO_C/p}})/(\Spf A_{\logcris, K})^\sharp)_{\CRIS}, \calE_{\cris, \Q})
\end{align*}
is an isomorphism by Lemma~\ref{lem:algebraic-lemmas-for-base-change} similarly as above. Note that 
\[
R\Gamma(((X_1, M_{X_1})/(\Spf\calO_K)^\sharp)_{\CRIS}, \calE_{\cris})\otimes_{\calO_K}^L A_{\logcris, K}
\]
is derived $p$-complete as $R\Gamma(((X_1, M_{X_1})/(\Spf \calO_K)^\sharp)_{\CRIS}, \calE_{\cris}) \in D_{\perf}(\calO_K)$ (Theorem~\ref{thm: log-dR-cris-comparison}) and $A_{\logcris, K}$ is $p$-complete. Thus, $\widehat{\otimes}_{\calO_K}^L=\otimes_{\calO_K}^L$ in the above, and the map
\begin{align*} 
    R\Gamma(((X_1, M_{X_1})/&(\Spf\calO_K)^\sharp)_{\CRIS}, \calE_{\cris, \Q})\otimes_{\calO_K}^L A_{\logcris, K}[p^{-1}] \\
    &   \rightarrow R\Gamma(((X_{\calO_C/p}, M_{X_{\calO_C/p}})/(\Spf A_{\logcris, K})^\sharp)_{\CRIS}, \calE_{\cris, \Q})
\end{align*}
is an isomorphism.

Now, consider the base change
\begin{align*}
     R\Gamma(((X_1, M_{X_1})/(\Spf S_K)^\sharp)_{\CRIS}, &\calE_{\cris, \Q})\otimes_{S_K}^L K \\
    &  \rightarrow R\Gamma(((X_1, M_{X_1})/(\Spf\calO_K)^\sharp)_{\CRIS}, \calE_{\cris, \Q}).
\end{align*}
This becomes an isomorphism after $-\otimes_K^L A_{\logcris, K}[p^{-1}]$ by (1) for $\iota_2$ and the above. Since $\calO_K \rightarrow A_{\logcris, K}$ is classically faithful flat, we deduce (2).

(3) Note that
\[
R\Gamma(((X_{\calO_C/p}, M_{X_{\calO_C/p}})/(\Spf A_{\cris})^\sharp)_{\CRIS}, \calE_{\cris, \Q}) \cong \Tot(\calE_{\cris, \Q}(S_{\Lambda}^{\rel, [\bullet]}\widehat{\otimes}_{S_K} A_{\cris})).
\]
Since $A_{\cris} \rightarrow  A_{\logcris, K}$ has finite $p$-complete Tor amplitude by Lemma~\ref{lem: Acris-to-AstK-finite-Tor-amp} below, (3) follows from Lemma~\ref{lem:algebraic-lemmas-for-base-change} similarly as in (1).
\end{proof}

\begin{lem} \label{lem: Acris-to-AstK-finite-Tor-amp}
The map $A_{\cris} \rightarrow A_{\logcris, K}$ has finite $p$-complete Tor amplitude.    
\end{lem}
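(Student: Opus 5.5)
The plan is to describe $A_{\logcris,K}$ explicitly as an $A_\cris$-algebra and read off the Tor amplitude from that description. By Definition~\ref{def:AlogcrisK-and-Asthat}(2), $A_{\logcris,K}\cong \widehat{A}_\st\otimes_{S_K}\calO_K$, where $S_K\rightarrow\calO_K$ sends $u\mapsto\pi$. By Definition~\ref{def:AlogcrisK-and-Asthat}(1), $\widehat{A}_\st\cong A_\cris\langle v\rangle_{\PD}$ with $v=u/[\pi^\flat]-1$, i.e.\ $u=[\pi^\flat](1+v)$. Consequently $A_{\logcris,K}$ should be the quotient of $A_\cris\langle v\rangle_\PD$ by the closed ideal generated by $E(u)=E([\pi^\flat](1+v))$, together with its divided powers, since $\Ker(S_K\to\calO_K)$ is the PD-ideal generated by $E$.

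First I will factor the map as $A_\cris\rightarrow A_\cris\langle v\rangle_\PD\rightarrow A_{\logcris,K}$. The first map is $p$-completely free, hence $p$-completely flat, so it has Tor amplitude $[0,0]$. It therefore suffices to bound the $p$-complete Tor amplitude of the second map over $\widehat{A}_\st$.

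For the second map I will use $S_K$ as a model. The map $S_K\rightarrow\widehat{A}_\st$ should be $p$-completely flat; I would justify this by noting that $\widehat{A}_\st$ is the $p$-completed log PD-envelope of $A_\inf\otimes_W S_K$, and by comparing with Lemma~\ref{lem: finite-p-compl-Tor-amp}(2), via $S_K\langle w\rangle_\PD$-type descriptions. Granting this, $A_{\logcris,K}=\widehat{A}_\st\widehat\otimes_{S_K}\calO_K$, and it suffices to bound the $p$-complete Tor amplitude of $S_K\rightarrow\calO_K$. To do that, I write $S_K\cong W\langle E\rangle_\PD$-completed over $W[u]$. Modulo $p$, $S_K/p\cong (k[u]/u^{ep})\otimes_k k[x_1,x_2,\dots]/(x_i^p)$ with $x_i$ the divided powers $\gamma_{p^i}(u^e)$ suitably normalized. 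The target $\calO_K/p=k[u]/u^e$ is obtained by killing $u^e$ and all the $x_i$.

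The main obstacle is that this naive mod-$p$ description has infinitely many generators $x_i$, each killed via $x_i^p=0$. Each quotient $k[x]/x^p\to k$ has infinite Tor amplitude, so the $S_K\to\calO_K$ route may actually fail to give finite amplitude. If that happens, I will instead work directly with $A_\cris\rightarrow A_{\logcris,K}$. The idea is to realise $A_{\logcris,K}$ as $A_\cris\widehat\otimes_{W}\calO_K$ modulo a single element $\xi'=1\otimes\pi-[\pi^\flat]\otimes1$ whose PD-envelope is adjoined. Then I would use that $A_\cris\otimes_W\calO_K$ is finite free over $A_\cris$ of rank $e$, and that the log PD-envelope of a regular element $\xi'$ with $\xi'^e\in pA$ is $p$-completely flat over it. That flatness is the point I would try to verify first, modulo $p$ via the explicit presentation $A_\cris/p\cong \calO_{C^\flat}/(\pi^\flat)^{ep}[x_i]/(x_i^p)$.

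If that verification works, the composite $A_\cris\rightarrow A_\cris\otimes_W\calO_K\rightarrow A_{\logcris,K}$ is finite free followed by $p$-completely flat, so its $p$-complete Tor amplitude is $[0,0]$, and in particular finite.
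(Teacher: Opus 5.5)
Your fallback depends on the claim that $A_\cris\otimes_W\calO_K\to A_{\logcris,K}$ is $p$-completely flat, which would make the composite $p$-completely flat. That claim is false. The error enters when you treat the log PD-envelope as the PD-envelope of the single element $\xi'=1\otimes\pi-[\pi^\flat]\otimes 1$.

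The exactification step of the log PD-envelope adjoins the unit $w=\pi/[\pi^\flat]$, and only then divided powers of $w-1=\xi'/[\pi^\flat]$. Equivalently, $[\pi^\flat](1+v)=\pi$ holds in $\widehat{A}_\st\otimes_{S_K}\calO_K$. Hence $[\pi^\flat]^e=\pi^e w^{-e}\in pA_{\logcris,K}$.

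For $e=1$ the difference is stark. There $\xi'=p-[p^\flat]$ generates $\Ker\theta$ and already has divided powers in $A_\cris$, so your envelope would be $A_\cris$ itself. Also $\xi'\notin pA_\cris$, so ``$\xi'^e\in pA$'' fails as well.

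Now set $t\coloneqq[\pi^\flat]^e$. It is nonzero in $R\coloneqq A_\cris/p\cong(\calO_{C^\flat}/(\pi^\flat)^{ep})[\delta_1,\delta_2,\ldots]/(\delta_i^p)$, with $\mathrm{Ann}_R(t)=t^{p-1}R$. On the other hand $t$ maps to $0$ in $S\coloneqq A_{\logcris,K}/p$, which is nonzero because it surjects onto $\calO_C/p$. If $S$ were flat over $R$, tensoring $0\to t^{p-1}R\to R\xrightarrow{t}R$ with $S$ would give $S=t^{p-1}S=0$, a contradiction. So no factorization of the form ``finite free, then $p$-completely flat'' exists. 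Your first route also fails, as you suspected, because $S_K\to\calO_K$ has infinite $p$-complete Tor amplitude.

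The same element $t$ shows that the statement itself fails, so no route can work. $A_{\logcris,K}$ is $p$-torsion free: it is flat over $\calO_K$, as used in the proof of Proposition~\ref{prop: base-change-cris-cohom-Acris}(2). Hence $A_{\logcris,K}\otimes^L_{A_\cris}A_\cris/p\simeq S$. The $2$-periodic free resolution $\cdots\xrightarrow{t}R\xrightarrow{t^{p-1}}R\xrightarrow{t}R\to R/tR\to 0$ has all differentials zero after $-\otimes_R S$. Therefore $\mathrm{Tor}_i^R(S,R/tR)\cong S\neq 0$ for every $i\geq 0$, so the $p$-complete Tor amplitude is infinite.

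The paper takes a different route. It bounds the amplitude of $S_K\to(S_K\otimes_W\calO_K)\{v/\pi\}_{\PD}$ using $P=(S_K\otimes_W\calO_K)[x]/(\pi x-v)$ and a PD-variable $T$ modulo $T-x$, then base-changes along the flat map $S_K/p^n\to A_\cris/p^n$. That argument breaks where $P\{T\}_{\PD}$ is realized inside $K[\![v,T]\!]$, because $P$ has $p$-torsion. For $e=1$, where $v=u-p=E(u)$, the element $v^{p-1}x-(p-1)!\,v^{[p]}$ is killed by $p$ but is nonzero. Passing to the $p$-torsion-free image of $P$ does not rescue it: the first amplitude bound then fails. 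For $e=1$, the map from $S_K$ modulo $p$ then factors through $S_K/(p,v^{[n]}:n\geq 1)=k$. So the uses of this lemma in Proposition~\ref{prop: base-change-cris-cohom-Acris}(1) and (3) need a different justification.
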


\begin{proof}
First consider $(S_K\otimes_{W(k)}\calO_K)[\frac{v}{\pi}]\coloneqq (S_K\otimes_{W(k)}\calO_K)[x]/(\pi x -v)$ where $v\coloneqq u-\pi$. Then the map $S_K \rightarrow (S_K\otimes_{W(k)}\calO_K)[\frac{v}{\pi}]$ has finite $p$-complete Tor amplitude. Let $(S_K\otimes_{W(k)}\calO_K)\{ \frac{v}{\pi} \}_{\PD}$ be the $p$-completed PD-envelope of the surjection $(S_K\otimes_{W(k)}\calO_K)[\frac{v}{\pi}]\rightarrow \calO_K$ sending $u$ to $\pi$ and $\frac{v}{\pi}$ to $0$. We claim that the sequence
\[
0\rightarrow
(S_K\otimes_{W(k)}\calO_K\bigl[\frac{v}{\pi}\bigr])\{ T\}_{\PD} \rightarrow (S_K\otimes_{W(k)}\calO_K\bigl[\frac{v}{\pi}\bigr])\{ T\}_{\PD} \rightarrow (S_K\otimes_{W(k)}\calO_K)\bigl\{ \frac{v}{\pi} \bigr\}_{\PD}    
\rightarrow 0
\]
is exact, 
where $(S_K\otimes_{W(k)}\calO_K[\frac{v}{\pi}])\{ T\}_{\PD}$ denotes the $p$-completed PD-polynomial algebra in $T$ over $(S_K\otimes_{W(k)}\calO_K)[\frac{v}{\pi}]$, the first injective map is the multiplication by $T-\frac{v}{\pi}$, and the second surjective map is given by $T^{[n]} \mapsto (\frac{v}{\pi})^{[n]}$; the exactness in the middle can be checked by realizing $(S_K\otimes_{W(k)}\calO_K[\frac{v}{\pi}])\{ T\}_{\PD}$ and $(S_K\otimes_{W(k)}\calO_K)\{ \frac{v}{\pi} \}_{\PD}$ as subrings of $K[\![v, T]\!]$ and $K[\![v]\!]$, respectively.
 Since $(S_K\otimes_{W(k)}\calO_K[\frac{v}{\pi}])\{ T\}_{\PD}$ is $p$-completely flat over $(S_K\otimes_{W(k)}\calO_K[\frac{v}{\pi}])$, we deduce that $S_K \rightarrow (S_K\otimes_{W(k)}\calO_K)\{ \frac{v}{\pi} \}_{\PD}$ has finite $p$-complete Tor amplitude.

Now, for each $n \geq 1$, the map $A_{\cris}/p^n \rightarrow A_{\logcris, K}/p^n$ is obtained from $S_K/p^n \rightarrow (S_K\otimes_{W(k)}\calO_K)\{ \frac{v}{\pi} \}_{\PD}/p^n$ by the base change $S_K/p^n \rightarrow A_{\cris}/p^n$ which is classically flat (Lemma~\ref{lem: finite-p-compl-Tor-amp}). Thus, $A_{\cris} \rightarrow A_{\logcris, K}$ has finite $p$-complete Tor amplitude. 
\end{proof}

\begin{rem}
Using Remark~\ref{rem:another-definition-of-N-in-Beilinson-Hyodo-Kato}, we can describe the monodromy operator $N$ on $R\Gamma_{\HK}((X_0, M_{X_0}), \calE_{\cris, \Q}))$ as follows.
As in Theorem~\ref{thm:Hyodo-Kato-cohomology}(2), 
we have an isomorphism
\[
\mathcal{K}_\cris(i(\calE_{\Prism}))(S_K^{(1)})\cong R\Gamma(((X_1, M_{X_1})/(S_K^{(1)}, M_{S_K^{(1)}}))_{\CRIS}, \calE_{\cris, \Q})
\]
by Theorems~\ref{thm: general-base-change-BK-cohom} ($A=S_K^{[1]}=S_K^{(1)}$) and \ref{thm: pris-cris-comparison-over-Breuil S}(2), and this isomorphism is compatible with $\mathcal{K}_\cris(i(\calE_{\Prism}))(S_K) \cong R\Gamma(((X_1, M_{X_1})/(S_K, M_{S_K}))_{\CRIS}, \calE_{\cris, \Q})$ under the base change along $p_0,p_1\colon S_K\rightarrow S_K^{(1)}$.
Define the $K_0$-linear map $e\calN$ as the composite
\begin{align*}
R\Gamma((X_1/S_K)_{\CRIS}, \calE_{\cris, \Q})
&\rightarrow R\Gamma((X_1/S_K)_{\CRIS}, \calE_{\cris, \Q})\otimes_{S_K[p^{-1}],p_0}^LS_K^{(1)}[p^{-1}]\\
\cong R\Gamma((X_1/S_K^{(1)})_{\CRIS}, \calE_{\cris, \Q})
&\cong R\Gamma((X_1/S_K)_{\CRIS}, \calE_{\cris, \Q})\otimes_{S_K[p^{-1}],p_1}^LS_K^{(1)}[p^{-1}]\\
&\rightarrow R\Gamma((X_1/S_K)_{\CRIS}, \calE_{\cris, \Q})
\end{align*}
as in Remark~\ref{rem:another-definition-of-N-in-Beilinson-Hyodo-Kato} (with the obvious simplification of notation).
Via the identification $R\Gamma_{\HK}((X_0, M_{X_0}), \calE_{\cris, \Q}))\cong R\Gamma((X_1/S_K)_{\CRIS}, \calE_{\cris, \Q})\otimes_{S_K[p^{-1}]}^LK_0$, $N$ corresponds to $\calN\otimes \id_{K_0}$.
When $j_\ast\calE_\Prism=\calO_\Prism$, our $N$ is consistent with those in \cite[(3.5)]{hyodo-kato} and \cite[\S~4.3]{Tsuji-Cst} up to the multiplication-by-$e$ (see the last paragraph of \cite[\S~4.3]{Tsuji-Cst}).
\end{rem}

\begin{rem}\label{rem:deRham-convergent}
Let $(E,\nabla_E)$ be a finitely generated $\calO_X$-module with log integrable connection on $X$ corresponding to the finitely generated $p$-adically completed crystal $\calE_\cris$ on $((X_1, M_{X_1})/(\calO_K, M_\can))_{\CRIS}$ by \cite[Prop.~6.18]{du-moon-shimizu-cris-pushforward} and Proposition~\ref{prop:comparison of crystals in two crystalline sites}, and write $(E_\Q,\nabla_{E_\Q})$ for the associated finite locally free isocoherent sheaf with integrable connection  on $X$ (see \cite[Def.~B.12]{du-liu-moon-shimizu-purity-F-crystal}).
Theorem~\ref{thm: log-dR-cris-comparison} gives the isomorphism 
\[
R\Gamma(((X_1, M_{X_1})/(\calO_K, M_\can))_{\CRIS}, \calE_{\cris,\Q}) \cong R\Gamma(X, (E_\Q\otimes_{\calO_X}\omega^{\bullet}_{X/\calO_K}, \nabla_{E_\Q})).
\]

Let $((X,M_X)/(\calO_K,M_\can))_\conv$ be the log convergent site as in \cite[Def.~B.27]{du-liu-moon-shimizu-purity-F-crystal}. 
By \cite[Prop.~B.32, B.30]{du-liu-moon-shimizu-purity-F-crystal}, we associate to the finite locally free $F$-isocrystal
$(\calE_{\cris, \Q},\phi_{\calE_{\cris, \Q}})$ on $(X_1, M_{X_1})_{\CRIS}$ an isocrystal $\calE_\conv$ on $((X,M_X)/(\calO_K,M_\can))_\conv$ and a vector bundle with integrable connection $(\mathbf{E}, \nabla_{\mathbf{E}})$ on the generic fiber $X_{K}$. We see from \cite[Rem.~B.33]{du-liu-moon-shimizu-purity-F-crystal} that $(\mathbf{E}, \nabla_{\mathbf{E}})$ is naturally identified with  $(E_\Q,\nabla_{E_\Q})$.
We also have isomorphisms
\begin{align*}
R\Gamma(X, (E_\Q\otimes_{\calO_X}\omega^{\bullet}_{X/\calO_K}, \nabla_{E_\Q}))
&\cong  R\Gamma_\dR(X_K, \mathbf{E})\coloneqq  R\Gamma(X_K, (\mathbf{E}\otimes \Omega^\bullet_{X_K/K}, \nabla_{\mathbf{E}}))\\
&\cong R\Gamma(((X,M_X)/(\calO_K,M_\can))_\conv, E_\conv),
\end{align*}
where the first one is easily deduced from the comparison of the cohomology of an isocoherent sheaf on $X$ with that of the associated coherent sheaf on $X_K$ by a spectral sequence argument, and the second one follows from \cite[Cor.~2.3.9, Def.~2.2.12]{Shiho-II} and \cite[Rem.~B.29(3)]{du-liu-moon-shimizu-purity-F-crystal}. According to Theorem~\ref{thm:Hyodo-Kato-cohomology}(3), each uniformizer $\pi\in\calO_K$ identifies $R\Gamma_{\HK}((X_0, M_{X_0}), \calE_{\cris, \Q}))\otimes_{K_0}^LK$ with any of the above mutually isomorphic complexes.
\end{rem}

\section{Application to the \texorpdfstring{$C_{\st}$}{Cst}-conjecture} \label{sec: Cst}

Throughout this section, let $(X, M_X)$ be a semistable $p$-adic formal scheme over $\Spf \calO_K$ such that $X \rightarrow \Spf \calO_K$ is proper. We study the $C_{\st}$-conjecture for $(X, M_X)$ with coefficients given by semistable local systems, by combining the prismatic-\'etale comparison theorem proved in \cite{Tian-prism-etale-comparison} and the results in previous sections.

\subsection{Semistable \texorpdfstring{$\Z_p$}{Zp}-local systems and analytic prismatic $F$-crystals} \label{subsec: semist-local-syst}

We first recall the definition of semistable $\Z_p$-local systems on $X_{K, \proet}$ via association studied in \cite{du-liu-moon-shimizu-purity-F-crystal} (which originates from the work of Faltings \cite[V.f)]{Faltings-CryscohandGalrep}). Write $\Perfd/X_{K,\proet}$ for the full subcategory of $X_{K,\proet}$ consisting of \emph{affinoid} perfectoids \emph{that contain all $p$-power roots of unity}, and equip $\Perfd/X_{K,\proet}$ with the induced topology. The inclusion induces an equivalence $\operatorname{Sh}(X_{K,\proet})\xrightarrow{\cong}\operatorname{Sh}(\Perfd/X_{K,\proet})$ of topoi. Let $\bA_\cris$, $\bB_\cris^+$, and $\bB_\cris$ denote the crystalline period sheaves on $X_{K,\proet}$ introduced in \cite[\S~2A]{Tan-Tong}.

\begin{construction}[{\cite[Const.~3.37, 3.38]{du-liu-moon-shimizu-purity-F-crystal}}]\label{const:DLMS-perfect-prism-attached-to-affinoid-perfectoid}
For $U\in \Perfd/X_{K,\proet}$, write $\widehat{U}=\Spa(A,A^+)$ for the associated affinoid perfectoid. Equip $\Spf A^+$ with the pullback log structure $M_{\Spf A^+}$ from $M_X$ via the map $\Spf A^+\rightarrow X$ of $p$-adic formal schemes. Since $A^+$ is (integral) perfectoid, the surjection $\theta\colon \A_{\mathrm{inf}}(A^+)\coloneqq W((A^+)^\flat)\rightarrow A^+$ gives a perfect prism over $X$. By \cite[Const.~3.37]{du-liu-moon-shimizu-purity-F-crystal}, this gives a log prism $(\Spf \Ainf(A^+),\Ker\theta, M_{\theta}) \in (X,M_X)_{\Prism}$.\footnote{In our work, $\Ainf(A^+)$ yields two log prisms depending whether we use $\Ker\theta$ or $\Ker(\theta\circ\phi^{-1})$. See Remark~\ref{rem:two-structure-map-for-Ainf} below for the reason.}

Let $\A_{\cris}(A^+)$ denote the $p$-completed (log) PD-envelope of the exact surjection $\theta$. Write $\B^+_{\cris}(A^+)\coloneqq \A_{\cris}(A^+)[p^{-1}]$ and $\B_{\cris}(A^+)\coloneqq \B_{\cris}^+(A^+)[t^{-1}]$ with $t \coloneqq \operatorname{log}[\epsilon]\in \A_{\cris}(A^+)$. This yields an ind-object $(\Spec (A^+/p), \Spf(\A_{\cris}(A^+)))$ in $(X_1,M_{X_1})_{\CRIS}^{\mathrm{aff}}$. We have a map $\A_{\cris}(A^+)\rightarrow \bA_{\cris}(U)$, which induces an isomorphism $\B_{\cris}(A^+) \xrightarrow{\cong} \bB_{\cris}(U)$ by \cite[Cor.~2.8(2)]{Tan-Tong}. When $A^+ = \calO_C$, we simply write $A_{\inf}$ for $\A_{\inf}(\calO_C)$, and similarly for $A_{\cris}$, $B_{\cris}$, etc..

For a finite locally free isocrystal $\calE_\Q$ on $(X_1,M_{X_1})_{\CRIS}$ (cf. \cite[Def.~B.15]{du-liu-moon-shimizu-purity-F-crystal}), define a sheaf $\calE_\Q(\bB_\cris^+)$ of $\bB^+_\cris$-modules on $X_{K,\proet}$ by
\[
\calE_\Q(\bB_\cris^+)(U)\coloneqq \calE_\Q(\A_{\cris}(A^+))\otimes_{\B_{\cris}^+(A^+)}\bB_\cris^+(U)
\]
where $\calE_\Q(\A_{\cris}(A^+))$ denotes the evaluation of $\calE_\Q$ on $(\Spec (A^+/p), \Spf(\A_{\cris}(A^+)))$ (as an ind-object; cf.~\cite[Const.~B.14]{du-liu-moon-shimizu-purity-F-crystal}), which is a finite projective $\B_{\cris}^+(A^+)$-module. Note that $\calE_\Q(\bB_\cris^+)$ satisfies the sheaf condition on $\Perfd/X_{K,\proet}$, by the isocrystal property and finite projectivity since $\bB_\cris^+$ is a sheaf. Finally, define
\[
\calE_\Q(\bB_\cris)\coloneqq \calE_\Q(\bB_\cris^+)\otimes_{\bB_\cris^+}\bB_\cris,
\]
which is a sheaf of $\bB_\cris$-modules on $X_{K,\proet}$. If $\calE_\Q$ is an $F$-isocrystal, then $\calE_\Q(\bB_{\cris})$ is equipped with the induced Frobenius $F$.
\end{construction}

\begin{defn}[Semistable local systems; cf.~{\cite[Def.~3.39]{du-liu-moon-shimizu-purity-F-crystal}}]
Let $\bL$ be a $\mathbf{Z}_p$-local system on $X_{K,\proet}$ and let $(\calE_\Q,\phi_{\calE_\Q})$ be an $F$-isocrystal on $(X_1,M_{X_1})_{\CRIS}$. We say that $\bL$ and $(\calE_\Q,\phi_{\calE_\Q})$ are \emph{associated} if there exists an isomorphism 
\[
\alpha_{\cris,\bL,\calE_\Q}\colon \calE_\Q(\bB_{\cris})\xrightarrow{\cong}\bL\otimes_{\Z_p}\bB_\cris
\]
as sheaves of $\bB_\cris$-modules on $X_{K, \proet}$ such that $\alpha_{\cris,\bL,\calE_\Q}$ is compatible with Frobenii.

Call $\bL$ \emph{semistable} if $\bL$ is associated to an $F$-isocrystal on $(X_1,M_{X_1})_{\CRIS}$. Let $\Loc_{\Z_p}^\st(X_K)$ denote the full subcategory of $\Loc_{\Z_p}(X_K)$ consisting of semistable ones.
\end{defn}

\begin{rem} \hfill
\begin{enumerate}
    \item Unlike the work of Faltings \cite{Faltings-CryscohandGalrep}, we do not require $\calE_\Q$ to be a \emph{filtered} $F$-isocrystal. See \cite[Prop.~3.45]{du-liu-moon-shimizu-purity-F-crystal} for canonical filtration structures. 
    \item When $X = \Spf\calO_K$, above definition of semistable local systems agrees with the classical notion of semistable representations by Fontaine using the period ring $B_{\st}$ (\cite[Thm.~4.1]{du-liu-moon-shimizu-purity-F-crystal}).
\end{enumerate}    
\end{rem}

Now, we recall the relation between semistable $\Z_p$-local systems and analytic prismatic $F$-crystals in \cite{du-liu-moon-shimizu-purity-F-crystal}. Keep the assumption that $(X, M_X)$ is a semistable $p$-adic formal scheme over $\calO_K$.

\begin{defn} \label{defn: Laurent-F-cryst}
A \emph{Laurent $F$-crystal} on $(X,M_X)_\Prism$ consists of a pair $(\mathscr{E},\phi_{\mathscr{E}})$ where $\mathscr{E}$ is a crystal in vector bundles on the ringed site $((X,M_X)_\Prism, \calO_\Prism[\calI_\Prism^{-1}]^\wedge_p)$ and $\phi_{\mathscr{E}}$ is an isomorphism $\phi_{\mathscr{E}}\colon \phi^\ast {\mathscr{E}}\xrightarrow{\cong}{\mathscr{E}}$. Write $\Vect((X,M_X)_\Prism,\calO_\Prism[\calI_\Prism^{-1}]^\wedge_p)^{\phi=1}$ for the category of Laurent $F$-crystals on $(X,M_X)_\Prism$.    
\end{defn}

Let $\Vect(A[I^{-1}]^\wedge_p)^{\phi=1}$ denote the category of \'etale $\phi$-modules over $A[I^{-1}]^\wedge_p$, i.e., pairs $(\mathcal{M},\phi_{\mathcal{M}})$ where $\mathcal{M}$ is a finite locally free $A[I^{-1}]^\wedge_p$-module, and $\phi_{\mathcal{M}}\colon \phi^\ast \mathcal{M} \xrightarrow{\cong} \mathcal{M}$ is an isomorphism of $A[I^{-1}]^\wedge_p$-modules. 

\begin{lem}[{\cite[Lem.~3.13]{du-liu-moon-shimizu-purity-F-crystal}}] \label{lem: Laurent-F-cryst-as-limit}
There is a natural equivalence of categories
\[
\Vect((X,M_X)_\Prism,\calO_\Prism[\calI_\Prism^{-1}]^\wedge_p)^{\phi=1}\cong
\lim_{(A,I,M_{\Spf A})\in (X,M_X)_\Prism^\op}\Vect(A[I^{-1}]^\wedge_p)^{\phi=1}.
\]
\end{lem}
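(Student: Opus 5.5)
The plan is to build the functor from left to right as a composite of three equivalences, and then identify the two sides object by object. First I would show that $\Vect((X,M_X)_\Prism,\calO_\Prism[\calI_\Prism^{-1}]^\wedge_p)^{\phi=1}$ is equivalent to the limit, over $(A,I,M_{\Spf A})\in (X,M_X)_\Prism^\op$, of the categories of crystals in vector bundles on the localized ringed sites $((X,M_X)_\Prism/h_A,\calO_\Prism[\calI_\Prism^{-1}]^\wedge_p)$, together with compatible Frobenius isomorphisms. This is formal: the topos is covered by the representables $h_A$, and the crystal condition is local.

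Next, for a fixed object $A$, I would identify crystals in vector bundles on the localized site with finite locally free $A[I^{-1}]^\wedge_p$-modules. The functor sends $\mathscr{E}$ to $\mathscr{E}(A)$. For the quasi-inverse, send $\mathcal{M}$ to the sheaf $B\mapsto \mathcal{M}\otimes_{A[I^{-1}]^\wedge_p}B[I^{-1}]^\wedge_p$ on $(X,M_X)_\Prism/h_A$. That this is a sheaf, and that vector bundles descend, is the key input: descent for finite projective modules along $A[I^{-1}]^\wedge_p\rightarrow B[I^{-1}]^\wedge_p$ whenever $A\rightarrow B$ is a $(p,I)$-completely faithfully flat map of prisms. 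I would reduce this to the non-log statement \cite[Prop.~2.7]{bhatt-scholze-prismaticFcrystal}. Two facts make this reduction work.
\begin{itemize}
\item Flat covers in $(X,M_X)_\Prism$ are strict.
\item Their self-fiber products are computed by classical completed tensor products, with the induced log structure \cite[Rem.~2.4]{du-liu-moon-shimizu-purity-F-crystal}.
\end{itemize}
Hence the \v{C}ech nerves agree with those of the underlying prisms. The Frobenius $\phi_{\mathscr{E}}$ evaluated at $A$ then gives exactly an étale $\phi$-module structure, because pulling back along $\phi$ commutes with evaluation: the crystal property gives $(\phi^\ast\mathscr{E})(A)=\phi_A^\ast\mathscr{E}(A)$.

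Finally, I would check that these identifications are compatible with the transition maps $A\rightarrow B$, which holds because of the crystal property. Taking limits then yields the stated equivalence. The main obstacle is the descent step for $A[I^{-1}]^\wedge_p$-vector bundles, since $p$-completion after inverting $I$ is not a flat base change in general. I would first try to deduce it modulo $p^n$, where $A[I^{-1}]/p^n$ is a flat localization and ordinary faithfully flat descent applies. I would then pass to the limit using the finite projectivity results for $p$-adically complete rings, as in \cite[Prop.~2.7]{bhatt-scholze-prismaticFcrystal}, and the repleteness in Corollary~\ref{cor:repleteness}.
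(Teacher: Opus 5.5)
Your reduction is the right one, and it matches how the cited \cite[Lem.~3.13]{du-liu-moon-shimizu-purity-F-crystal} is obtained. Flat covers in $(X,M_X)_\Prism$ are strict, and their \v{C}ech nerves are the $(p,I)$-completed \v{C}ech nerves of the underlying prisms \cite[Rem.~2.4]{du-liu-moon-shimizu-purity-F-crystal}. So everything reduces to the non-log descent statement behind \cite[Prop.~2.7]{bhatt-scholze-prismaticFcrystal}, and if you simply invoke that result, the proof is complete. The gap is in your own sketch of how to prove that descent step, which would fail as written.

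The failure is as follows.
\begin{enumerate}
\item Modulo $p^n$, the \v{C}ech nerve of a cover $A\rightarrow B$ consists of the rings $B^{(k)}/p^n$, where $B^{(1)}=(B\otimes_AB)^\wedge_{(p,I)}$. Thus $B^{(1)}/p^n$ is the $I$-adic completion of $(B/p^n)\otimes_{A/p^n}(B/p^n)$.
\item Completion does not commute with inverting $I$. So $(B^{(1)}/p^n)[I^{-1}]$ is in general different from $(B/p^n)[I^{-1}]\otimes_{(A/p^n)[I^{-1}]}(B/p^n)[I^{-1}]$.
\item Hence a descent datum for your crystal lives over the completed tensor product and is not a classical descent datum. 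Ordinary faithfully flat descent says nothing about its effectivity.
\item Besides, $A/p^n\rightarrow B/p^n$ is only $I$-completely faithfully flat, so you do not even know that $(A/p^n)[I^{-1}]\rightarrow (B/p^n)[I^{-1}]$ is faithfully flat.
\end{enumerate}
Modulo $(p,I)^n$ this discrepancy disappears, which is why vector bundles over $\calO_\Prism$ itself descend by the classical argument. Once $I$ is inverted, however, you can no longer reduce modulo powers of $I$.

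What is actually needed is a genuine descent theorem for vector bundles (or perfect complexes) on $\Spec(A/p^n)\smallsetminus V(I)$ along $I$-completely faithfully flat maps, with completed \v{C}ech nerves. Mathew's result \cite{mathew-descent} is such a theorem, and this paper uses it (as \cite[Thm.~7.8]{mathew-descent}) in the analogous situation for the analytic locus in Proposition~\ref{prop: relations-prism-crystals}. Your mod $p^n$ argument does prove something weaker: because localization is exact, it shows that $A\mapsto A[I^{-1}]^\wedge_p$ is a sheaf. That gives that objects of the limit define sheaves and that the comparison functor is fully faithful; only effectivity is missing. The repleteness in Corollary~\ref{cor:repleteness} plays no role in this step.
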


Furthermore, we have the following result from \cite{du-liu-moon-shimizu-purity-F-crystal} based on \cite[Cor.~3.7, 3.8]{bhatt-scholze-prismaticFcrystal} relating Laurent $F$-crystals and $\Z_p$-local systems:

\begin{thm}[cf. {\cite[Cor.~3.7, 3.8]{bhatt-scholze-prismaticFcrystal}, \cite[Thm.~3.14]{du-liu-moon-shimizu-purity-F-crystal}}] \label{thm: Laurent-F-cryst-loc-syst}
There is a natural equivalence of categories
\[
\Vect((X,M_X)_\Prism, \calO_\Prism[\calI_\Prism^{-1}]^\wedge_p)^{\phi=1}\cong \Loc_{\Z_p}(X_K).
\]    
\end{thm}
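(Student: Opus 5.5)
The plan is to reduce to the case of Bhatt--Scholze \cite[Cor.~3.7, 3.8]{bhatt-scholze-prismaticFcrystal}, using the limit description of Lemma~\ref{lem: Laurent-F-cryst-as-limit} and the perfect prisms of Construction~\ref{const:DLMS-perfect-prism-attached-to-affinoid-perfectoid}.

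\textbf{Step 1: reduce to the perfect log prismatic site.} Let $(X,M_X)_\Prism^{\perf}$ be the full subcategory of perfect log prisms. The first claim to establish is that restriction induces an equivalence
\[
\lim_{(A,I,M_{\Spf A})\in (X,M_X)_\Prism^\op}\Vect(A[I^{-1}]^\wedge_p)^{\phi=1}\xrightarrow{\cong}\lim_{(A,I,M_{\Spf A})\in (X,M_X)_\Prism^{\perf,\op}}\Vect(A[I^{-1}]^\wedge_p)^{\phi=1}.
\]
The argument: for any log prism $A$, the map $A\to A_\perf$ to its perfection is a $(p,I)$-completely faithfully flat cover after passing to a suitable flat cover. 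It remains a log prism over $(X,M_X)$, with the log structure pulled back along a strict map. Moreover, \'etale $\phi$-modules over $A[I^{-1}]^\wedge_p$ and over $A_\perf[I^{-1}]^\wedge_p$ are equivalent, by the standard Frobenius-descent argument of \cite[Lem.~3.8]{bhatt-scholze-prismaticFcrystal}.

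\textbf{Step 2: remove the log structure.} Show that the forgetful functor from perfect log prisms over $(X,M_X)$ to perfect prisms over $X$ is an equivalence of sites. On a perfect prism the monoid of the pulled-back log structure admits unique compatible $p$-th roots, so the $\delta_\log$-structure is uniquely determined. This is where the semistability (the explicit chart $\N^d$) and the strictness of the site are used. Combining the two steps identifies the source with the category of \'etale $\phi$-modules on the perfect prismatic site of $X$.

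\textbf{Step 3: apply Bhatt--Scholze.} By \cite[Cor.~3.7]{bhatt-scholze-prismaticFcrystal}, \'etale $\phi$-modules on the perfect prismatic site of $X$ are equivalent to $\Z_p$-local systems on $X_\eta=X_K$. Explicitly, for $U\in\Perfd/X_{K,\proet}$ the relevant prism is $(\Ainf(A^+),\Ker\theta)$, and one takes $\phi$-invariants of the evaluation on $W(A^\flat)$.

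\textbf{Main obstacle.} I expect the hardest step to be verifying in Step 1 that perfection stays inside the strict log site. One must show that the pulled-back log structure on $A_\perf$ agrees with the one coming from the chart, and that covers in the strict flat topology are preserved. I would first check this \'etale locally, using the Breuil--Kisin log prism (Lemma~\ref{lem: coprod with fkS is cover}) as a cover, and then descend. If the direct argument fails, the fallback is to cite \cite[Thm.~3.14]{du-liu-moon-shimizu-purity-F-crystal}, where exactly this reduction is carried out.
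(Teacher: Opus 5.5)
The paper gives no argument here beyond citing \cite[Cor.~3.7, 3.8]{bhatt-scholze-prismaticFcrystal} and \cite[Thm.~3.14]{du-liu-moon-shimizu-purity-F-crystal}. Your outline (pass to perfect objects, forget the log structure, apply Bhatt--Scholze) is a reasonable way to unpack that citation. However, the only genuinely logarithmic step, Step~2, rests on a false claim, and you have put the difficulty in the wrong step.

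\textbf{Step 2.} In the strict site, $M_{\Spf A}$ has the same characteristic monoid as the pullback of $M_X$ to $\Spf(A/I)$. That monoid is locally $\N^r$, so it is not $p$-divisible, whether or not $A$ is perfect. $p$-th roots only appear after perfecting the monoid itself, and that breaks strictness.

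The $\delta_{\log}$-structure is also not determined by the rest. Suppose a chart element $e_i$ maps to $0$, as happens on $(W(k),(p))$ over a $k$-point of $X_0$ with $T_1=0$. Then the axiom $\delta(\alpha(m))=\alpha(m)^p\delta_{\log}(m)$ imposes nothing on $\delta_{\log}(e_i)$. This is why Example~\ref{eg: examples-log-prisms}(3) must \emph{specify} $\delta_{\log}(1)=0$. So at best you get uniqueness up to isomorphism.

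Most importantly, you never address existence. To put a strict log-prism structure on a perfect prism $(A,I)$ over $X$, you need lifts $\tilde t_i\in A$ of the local chart elements $t_i\in A/I$ with $\phi(\tilde t_i)\in \tilde t_i^{p}(1+pA)$. The natural candidates $[s_i]\cdot(\text{unit})$ exist only when $t_i=s_i^\sharp\cdot(\text{unit})$ in $A/I$. That is not automatic for an arbitrary perfectoid ring over $X$. So the equivalence you assert between all perfect objects of $(X,M_X)_\Prism$ and all perfect prisms over $X$ is not established, and it is more than you need.

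\textbf{What is actually needed, and what the cited sources provide, is narrower.} Perfect prisms with $p\in I$ are irrelevant, because $A[I^{-1}]^\wedge_p=0$; since $T_1\cdots T_m=\pi$, these are the only ones on which some $T_i$ can vanish. Now take $(\Ainf(A^+),\Ker\theta)$ with $\Spa(A,A^+)$ affinoid perfectoid over $X_K$.
\begin{itemize}
\item The $t_i$ are nonzerodivisors there, since $\pi$ is invertible on the generic fiber.
\item This is what forces $\delta_{\log}$ via $\phi(\alpha(m))=\alpha(m)^p(1+p\delta_{\log}(m))$.
\item Existence and uniqueness of the strict log structure are \cite[Prop.~1.23]{Tian-prism-etale-comparison} and \cite[Const.~3.37]{du-liu-moon-shimizu-purity-F-crystal}. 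This is exactly the input the paper uses in Construction~\ref{const:DLMS-perfect-prism-attached-to-affinoid-perfectoid} and in the proof of Theorem~\ref{thm: comparison-maps-compatibility}.
\end{itemize}
You then compare the limit over perfect log prisms with $\Loc_{\Z_p}(X_K)$ by descent over $\Perfd/X_{K,\proet}$, not by an equivalence of sites.

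\textbf{Step 1.} This step is formal and needs no flatness. Your claim that $A\to A_\perf$ becomes a faithfully flat cover after a flat base change is unjustified and unnecessary. Pulling $M_{\Spf A}$ back along $A\to A_\perf$, with $\delta_{\log}$ composed, automatically gives a strict log prism. It is initial among maps from $(A,I,M_{\Spf A})$ to perfect objects. Combined with the invariance of \'etale $\phi$-modules under perfection, the restriction functor is an equivalence by a right Kan extension argument.
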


Note that for $(A, I, M_{\Spf A})\in (X,M_X)_\Prism$, restricting vector bundles over $\Spec(A)\smallsetminus V(p,I)$ to $\Spec(A)\smallsetminus V(I)$ and then extending the scalar along $A[I^{-1}]\to A[I^{-1}]^\wedge_p$ yields a functor $\Vect^{\an,\phi}(A,I) \to \Vect(A[I^{-1}]^\wedge_p)^{\phi=1}$. By Lemma~\ref{lem: Laurent-F-cryst-as-limit}, we obtain the scalar extension functor
\[
\Vect^{\an,\phi}((X,M_X)_\Prism) \to \Vect((X,M_X)_\Prism,\calO_\Prism[\calI_\Prism^{-1}]^\wedge_p)^{\phi=1}.
\]

\begin{defn}[\'Etale realization of analytic prismatic $F$-crystals] \label{def: etale-realization}

The \emph{\'etale realization} functor is defined to be the composite
\[
T=T_X\colon \Vect^{\an,\phi}((X,M_X)_\Prism)\rightarrow \Vect((X,M_X)_\Prism,\calO_\Prism[\calI_\Prism^{-1}]^\wedge_p)^{\phi=1}\xrightarrow{\cong} \Loc_{\Z_p}(X_{K}),
\]
where the second equivalence is given in Theorem~\ref{thm: Laurent-F-cryst-loc-syst}.    
\end{defn}

\begin{thm}[{\cite[Cor.~3.46, 5.2]{du-liu-moon-shimizu-purity-F-crystal}}]\label{thm:DLMS-association-via-prismatic-F-crystal}
The \'etale realization functor $T$ induces an equivalence of categories between $\Vect^{\an,\phi}((X,M_X)_\Prism)$ and $\Loc_{\Z_p}^\st(X_K)$. 

More precisely, for $\calE_\Prism\in \Vect^{\an,\phi}((X,M_X)_\Prism)$, $\bL=T_X(\calE_X)$ is associated with the crystalline realization $(\calE_{\cris,\Q},\phi_{\calE_{\cris,\Q}})$ of $j_\ast\calE_\Prism$;
moreover, $\bL$ is de Rham, and the associated filtered vector bundle with integrable connection $(D_\dR(\bL),\nabla_{D_\dR(\bL)},\Fil^\bullet D_{\dR}(\mathbb{L}))$ on $X_K$ is identified with the vector bundle with integrable connection $(\mathbf{E}, \nabla_{\mathbf{E}})$ on $X_{K}$ induced from $(\calE_{\cris,\Q},\phi_{\calE_{\cris,\Q}})$, together with the unique Griffiths-transverse filtration that makes $\alpha_{\cris,\bL,\calE_{\cris,\Q}}\otimes \mathbb{B}_\dR$ a filtered isomorphism.
\end{thm}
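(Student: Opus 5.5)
This theorem is cited from \cite[Cor.~3.46, 5.2]{du-liu-moon-shimizu-purity-F-crystal}, so the plan is mainly to assemble its ingredients from the constructions already recalled in this section.

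\emph{Fully faithful.} By Definition~\ref{def: etale-realization}, $T$ factors as the scalar extension $\Vect^{\an,\phi}((X,M_X)_\Prism)\rightarrow\Vect((X,M_X)_\Prism,\calO_\Prism[\calI_\Prism^{-1}]^\wedge_p)^{\phi=1}$ followed by the equivalence of Theorem~\ref{thm: Laurent-F-cryst-loc-syst}. It therefore suffices to show that the scalar extension is fully faithful. I would argue \'etale locally on small affine pieces, evaluating on the Breuil--Kisin log prism $\fkS_\square$ and its self-coproducts; these are faithfully flat by Lemma~\ref{lem: BK-prism-coface-map-faithfully flat}. There, an analytic $F$-crystal is a vector bundle on the punctured spectrum of the regular ring $\fkS_\square$. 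Using $j_\ast$ (Proposition~\ref{prop: right-adjoint-semistable-case}), it extends to a reflexive $\fkS_\square$-module with Frobenius, and the problem reduces to a Kisin-type full faithfulness statement: $\phi$-equivariant maps over $\fkS_\square[E^{-1}]^\wedge_p$ extend uniquely to the lattices. The standard height argument does this over the one-dimensional base and uses purity (Hartogs extension) in codimension two.

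\emph{Association and essential image.} Let $\calE_\Prism$ be given and put $\bL=T(\calE_\Prism)$. For an affinoid perfectoid $U$ with $\widehat U=\Spa(A,A^+)$, evaluate $j_\ast\calE_\Prism$ on the perfect log prism $\Ainf(A^+)$ of Construction~\ref{const:DLMS-perfect-prism-attached-to-affinoid-perfectoid}. After inverting $\mu$, the \'etale $\phi$-module over $\Ainf(A^+)[1/\mu]$ identifies the module with $\bL(U)\otimes\Ainf(A^+)[1/\mu]$. Pulling back along $\Ainf\rightarrow\A_\cris$ then gives the crystalline realization by the construction of $\calE_\cris$, and inverting $t$ produces $\alpha_{\cris}$. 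Compatibility with $\phi$ is automatic. One must also check that these identifications glue, which follows from crystal functoriality. Consequently $T$ lands in $\Loc^\st_{\Z_p}$.

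For essential surjectivity, start from a semistable $\bL$ associated to an $F$-isocrystal $\calE_\Q$. On each perfect prism $A$, construct the analytic crystal as the $\phi$-stable lattice obtained by intersecting $\bL\otimes\Ainf[1/\mu]$ with $\calE_\Q(\A_\cris)[1/t]$, i.e.\ a Beauville--Laszlo gluing away from $V(p,I)$. One then descends from perfect prisms to all of $(X,M_X)_\Prism$ using the fact that analytic crystals are determined by, and satisfy descent along, the quasi-syntomic perfectoid cover.

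\emph{de Rham part, and the main obstacle.} Base change $\alpha_\cris$ along $\bB_\cris\rightarrow\bB_\dR$. The connection $\nabla_{\mathbf E}$ on $\calE_\Q(\bB_\dR^+)$ is compatible with the isomorphism, so $D_\dR(\bL)\cong\mathbf E$ as bundles with connection. The filtration is then the unique one making $\alpha\otimes\bB_\dR$ filtered, and it exists because the Hodge--Tate weights are bounded, since $\phi_{\calE}$ has $E$-height bounded. Griffiths transversality follows from $\nabla(\Fil^i\bB_\dR)\subset\Fil^{i-1}$. I expect the main obstacle to be the essential surjectivity step, specifically the descent of the lattice from perfect prisms to analytic crystals on all log prisms (notably the Breuil--Kisin prism), which needs a purity argument for semistable local systems. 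This is exactly the content of \cite{du-liu-moon-shimizu-purity-F-crystal}.
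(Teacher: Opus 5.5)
Setting aside that your full faithfulness step is only gestured at (Kisin's height argument does not transfer verbatim to the higher-dimensional ring $\fkS_\square$), your treatment of full faithfulness and of the association $T(\calE_\Prism)\leftrightarrow(\calE_{\cris,\Q},\phi_{\calE_{\cris,\Q}})$ is broadly in line with the first half of the cited result (Cor.~3.46 of \cite{du-liu-moon-shimizu-purity-F-crystal}). The genuine gap is essential surjectivity, and both steps you propose there fail.

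First, the construction on a perfect log prism is ill-posed. An analytic crystal on $\Ainf(A^+)$ is a vector bundle on $\Spec\Ainf(A^+)[1/p]\cup\Spec\Ainf(A^+)[1/I]$. By contrast, $\calE_\Q(\A_\cris(A^+))$ only lives over $\B^+_\cris(A^+)$. Intersecting it with $\bL(U)\otimes\Ainf(A^+)[1/\mu]$ inside a $\B_\cris(A^+)$-module therefore yields no $\Ainf(A^+)[1/p]$-module, let alone a finite projective one. The modification of $\bL\otimes\Ainf(A^+)$ that is really at stake takes place along $I$ and its Frobenius translates, i.e.\ it is governed by a $\bB_\dR^+$-lattice.

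Second, and decisively, analytic $F$-crystals on $(X,M_X)_\Prism$ cannot be produced from perfect log prisms. Flat descent of analytic bundles along a perfectoid cover does hold, but the descent datum lives on non-perfect prisms. For instance, $\fkS_\square\rightarrow(\fkS_\square)_\perf$ is faithfully flat, while $(\fkS_\square)_\perf\widehat{\otimes}_{\fkS_\square}(\fkS_\square)_\perf$ is not perfect (its reduction mod $p$ is not reduced). Likewise, the \v{C}ech nerve of a quasi-syntomic perfectoid cover consists of quasiregular semiperfectoid rings whose prisms are not perfect. On such a prism you need an isomorphism of vector bundles on the analytic locus. Concretely, you must extend across $V(I)$ the isomorphism that the Laurent $F$-crystal supplies after inverting $I$. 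That extension is exactly where semistability has to be used, and your sketch contains no mechanism for it. Attributing this step to \cite{du-liu-moon-shimizu-purity-F-crystal} makes the argument circular.

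What is missing is an argument that works directly on the Breuil--Kisin log prisms. The relevant facts are these: $\fkS_\square$ is regular Noetherian, $V(p,E)$ has codimension two, and the coface maps to $\fkS_\square^{[n]}$ are classically flat (Lemma~\ref{lem: BK-prism-coface-map-faithfully flat}). Hence an analytic crystal there amounts to a $\phi$-stable reflexive lattice in the \'etale $\phi$-module of $\bL$ over $\fkS_\square[1/E]^\wedge_p$. This lattice must be locally free off $V(p,E)$, and its descent isomorphism must extend over the analytic locus of $\fkS^{[1]}_\square$ (compare the proof of Proposition~\ref{prop: relations-prism-crystals}). In the cited source this is supplied by the purity theorem behind Cor.~5.2. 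Roughly, semistability is used at the generic points of the special fibre, where the base becomes one-dimensional and Kisin-type lattices exist, and the result is then propagated by a Hartogs-type extension.

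Separately, your justification of the de Rham part is not right: bounded Hodge--Tate weights do not produce a filtration on $\mathbf E$ making $\alpha_{\cris,\bL,\calE_{\cris,\Q}}\otimes\bB_\dR$ filtered. What is needed is that $\alpha$, combined with the crystal property of $\calE_{\cris,\Q}$, gives a horizontal isomorphism $\bL\otimes\calO\bB_\dR\cong\mathbf E\otimes\calO\bB_\dR$. It follows that $\bL$ is de Rham with $D_\dR(\bL)\cong\mathbf E$, and the filtration is then transported from $D_\dR(\bL)$.
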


See Definition~\ref{def:crystalline-realization} for the crystalline realization and Remark~\ref{rem:deRham-convergent} for the construction of $(\mathbf{E}, \nabla_{\mathbf{E}})$.

\subsection{The \texorpdfstring{$C_{\st}$}{Cst}-conjecture} \label{subsec: Cst-conj}

Keep the assumption that $(X, M_X)$ is a semistable $p$-adic formal scheme over $\Spf \calO_K$ such that $X \rightarrow \Spf \calO_K$ is proper. We now study a comparison between the \'etale cohomology and Hyodo--Kato cohomology, with coefficients given by semistable $\Z_p$-local systems and associated $F$-isocrystals respectively.

Let $\bL$ be a semistable $\Z_p$-local system on $X_{K}$, and let $\calE_{\cris, \Q}$ be an $F$-isocrystal on $(X_1, M_{X_1})_{\CRIS}$ associated to $\bL$. By Theorem~\ref{thm:Hyodo-Kato-cohomology}, $\calE_{\cris, \Q}$ gives rise to the Hyodo--Kato complex $R\Gamma_{\HK}((X_0, M_{X_0}), \calE_{\cris, \Q}))\in D_{\phi, N}(K_0)$, which in fact depends only on $\calE_{\cris, \Q}|_{(X_0, M_{X_0})_{\CRIS}}$.

We let $(\mathbf{E}, \nabla_{\mathbf{E}},\Fil^\bullet \mathbf{E})$ denote the filtered vector bundle with integrable connection $(D_\dR(\bL),\nabla_{D_\dR(\bL)},\Fil^\bullet D_{\dR}(\mathbb{L}))$ on $X_K$ (cf.~Remark~\ref{rem:deRham-convergent} and Theorem~\ref{thm:DLMS-association-via-prismatic-F-crystal}).
Recall from \cite[Thm.~8.4]{scholze-p-adic-hodge} the \'etale-de Rham comparison 
\begin{equation}\label{eq:etale-deRham-comparison}
R\Gamma(X_{C, \et}, \bL)\otimes_{\Z_p}^L B_{\dR} \cong R\Gamma_{\dR}(X_K, \mathbf{E})\otimes_K^L B_{\dR},
\end{equation}
which is an isomorphism in $D(B_{\dR})$ compatible with $\Gal(\overline{K}/K)$-actions.
A choice of uniformizer $\pi\in K$ gives the Hyodo--Kato isomorphism
\[
\rho_\pi\colon R\Gamma_{\HK}((X_0, M_{X_0}), \mathcal{E}_{\cris, \Q})\otimes_{K_0}^LK\cong R\Gamma_{\dR}(X_K, \mathbf{E}).
\]
Combining this with the embedding $B_\st \rightarrow B_\dR$ given by the choice of $\pi$ (see Definition~\ref{def:Bst}), we obtain
\begin{equation}\label{eq:HT-to-dR-map}
R\Gamma_{\HK}((X_0, M_{X_0}), \mathcal{E}_{\cris, \Q})\otimes_{K_0}^L B_{\st}\rightarrow R\Gamma_{\dR}(X_K, \mathbf{E})\otimes_K^L B_{\dR}.
\end{equation}

We now give the precise statement of the $C_{\st}$-conjecture stated in Theorem~\ref{thm: Cst-intro}.

\begin{thm} \label{thm: Cst-conj-main}
Keep notations as above. There exists a commutative diagram
\[
\xymatrix{
R\Gamma(X_{C, \et}, \mathbb{L})\otimes_{\Z_p}^L B_{\st} \ar[r]^-{\alpha_{\st}}_-\cong \ar[d] & R\Gamma_{\HK}((X_0, M_{X_0}), \mathcal{E}_{\cris, \Q})\otimes_{K_0}^L B_{\st}\ar[d]^-{\eqref{eq:HT-to-dR-map}} \\
R\Gamma(X_{C, \et}, \bL)\otimes_{\Z_p}^L B_{\dR} \ar[r]^-{\eqref{eq:etale-deRham-comparison}} & R\Gamma_{\dR}(X_K, \mathbf{E})\otimes_K^L B_{\dR},
}
\]
where the top horizontal map is an isomorphism in $D(B_\st)$ that is compatible with Galois actions, Frobenii, and monodromy actions, and the left vertical map is induced by the embedding $B_\st \rightarrow B_\dR$.
\end{thm}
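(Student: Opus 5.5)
We will construct $\alpha_\st$ as the composite sketched in the introduction and then verify the commutativity of the square. Let $\calE_\Prism\in\Vect^{\an,\phi}((X,M_X)_\Prism)$ correspond to $\bL$ via Theorem~\ref{thm:DLMS-association-via-prismatic-F-crystal}, so that $\calE_{\cris,\Q}$ is the crystalline realization of $j_\ast\calE_\Prism$. Start from Tian's comparison (Theorem~\ref{thm: prism-etale-comparison-intro}) for $R\Gamma_{A_\inf}(X,j_\ast\calE_\Prism)$ after inverting $\mu$. Base change along $A_\inf\rightarrow A_\cris$ is controlled as follows. Theorem~\ref{thm: general-base-change-BK-cohom}(2) for $A=A_\inf$ and $A=A_\cris$, together with the perfectness in Theorem~\ref{thm: BK-cohom-perfect-cx}, identifies $R\Gamma_{A_\cris}(X,j_\ast\calE_\Prism)[p^{-1}]$ with $R\Gamma_{A_\inf}(X,j_\ast\calE_\Prism)\otimes^L_{A_\inf}B_\cris^+$. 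Since $\mu$ becomes invertible in $B_\cris=B_\cris^+[t^{-1}]$, this yields
\[
R\Gamma(X_{C,\et},\bL)\otimes^L_{\Z_p}B_\cris\cong R\Gamma_{A_\cris}(X,j_\ast\calE_\Prism)\otimes^L_{A_\cris}B_\cris.
\]
Theorem~\ref{thm: pris-cris-comparison-over-Breuil S}(3) then rewrites the right-hand side as crystalline cohomology over $A_\cris$. Tensoring with $B_\st$ and composing with $(\rho_\st\otimes B_\st)^{-1}$ from Theorem~\ref{thm:Hyodo-Kato-cohomology}(4) defines $\alpha_\st$. The Galois and Frobenius compatibilities are inherited from each step. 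Monodromy compatibility holds because $N$ acts on the étale side only through $B_\st$, and $\rho_\st$ intertwines $N\otimes1+1\otimes N$ with $1\otimes N$.

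The main work is the commutativity of the square with the Scholze comparison~\eqref{eq:etale-deRham-comparison}. Following \cite[\S~10]{GuoReinecke-Ccris}, we plan to pass through $A_{\logcris,K}$ and $B_\dR^+$. Theorem~\ref{thm:Hyodo-Kato-cohomology}(5) identifies, after base change to $A_{\logcris,K}[p^{-1}]$, the composite $\rho_\st\circ\rho_\pi^{-1}$ with the crystalline base change isomorphism from de Rham cohomology over $\calO_K$ to crystalline cohomology over $A_\cris$. Proposition~\ref{prop:Bst-vs-Asthat} makes this compatible with $B_\st\rightarrow B_\dR$ via $A_{\logcris,K}[p^{-1}]\rightarrow B_\dR^+$. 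It therefore suffices to show that the étale-to-crystalline isomorphism over $A_\cris$, base changed to $B_\dR$, agrees with Scholze's map composed with the crystalline-to-de Rham identification over $A_{\logcris,K}$.

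For this we intend to work locally on $X_{K,\proet}$ using affinoid perfectoids $U$ with the perfect prisms $\A_\inf(A^+)$ and $\A_\cris(A^+)$ of Construction~\ref{const:DLMS-perfect-prism-attached-to-affinoid-perfectoid}. On such $U$, the prismatic-étale comparison is induced by the association isomorphism $\alpha_{\cris,\bL,\calE_{\cris,\Q}}$. After $\otimes\bB_\dR$, Theorem~\ref{thm:DLMS-association-via-prismatic-F-crystal} shows that this isomorphism matches $\bL\otimes\bB_\dR$ with $\mathbf{E}\otimes\calO\bB_\dR$ compatibly with connections. This is precisely the input used in Scholze's proof via the de Rham Poincaré lemma, and the identification would be organized through infinitesimal cohomology over $B_\dR^+$. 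Globalizing amounts to checking that the Čech–Alexander descriptions used for prismatic, crystalline, and pro-étale cohomology are compatible.

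We expect this comparison to be the main obstacle: showing that Tian's isomorphism, localized on perfectoids, agrees with the crystalline period map built from $\alpha_{\cris,\bL,\calE_{\cris,\Q}}$. This requires tracking two different structure maps on $\A_\inf(A^+)$ ($\Ker\theta$ versus $\Ker(\theta\circ\phi^{-1})$), which differ by a Frobenius twist. Since all objects are isocrystals after inverting $p$, this twist should be absorbed by the Frobenius isogeny property (Theorem~\ref{thm:Frobenius-isogeny-property}).
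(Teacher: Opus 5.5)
Your construction of $\alpha_\st$ and its compatibilities with $\phi$, $N$ and Galois follow the paper. So does your reduction of the square through Theorem~\ref{thm:Hyodo-Kato-cohomology}(5), Proposition~\ref{prop:Bst-vs-Asthat}, $A_{\logcris,K}$ and infinitesimal cohomology over $B_\dR^+$. The gap is in the step you yourself flag as the main obstacle.

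What is needed there is an \emph{equality}, functorial in affinoid perfectoids $U=\Spa(A,A^+)$, of two isomorphisms
$T(\calE_\Prism)(U)\otimes\B_\cris(A^+)\cong\calE_{\cris,\Q}(\A_\cris(A^+))\otimes\B_\cris(A^+)$:
\begin{itemize}
\item Tian's local trivialization, which lives on the prism $(\A_\inf(A^+),\Ker\widetilde{\theta})$ with $\widetilde{\theta}=\theta\circ\phi^{-1}$, followed by the prismatic-crystalline map $\alpha$ used in Theorem~\ref{thm: pris-cris-comparison-over-Breuil S}(3);
\item the association isomorphism of \cite{du-liu-moon-shimizu-purity-F-crystal}, built on $(\A_\inf(A^+),\Ker\theta)$. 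This is the one Theorem~\ref{thm:DLMS-association-via-prismatic-F-crystal} refers to, and hence the one behind the identification of $D_\dR(\bL)$ with $\mathbf{E}$ and Scholze's map.
\end{itemize}
The Frobenius isogeny property (Theorem~\ref{thm:Frobenius-isogeny-property}) cannot supply this. It only says that the linearized Frobenius $L\phi^\ast\calM\rightarrow\calM$ of a global cohomology complex is an isomorphism, which tells you nothing about whether two given comparison maps coincide. Moreover, an identification correct only up to inserting a Frobenius is useless here. The square you must check lives over $B_\dR$, where $\phi$ does not extend, so such a twist cannot be carried through the passage to $B_\dR^+$ and the filtrations.

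The missing idea is Remark~\ref{rem:two-structure-map-for-Ainf}. The Frobenius $\phi_{\A_\inf(A^+)}$ is itself a morphism of log prisms $(\A_\inf(A^+),\Ker\theta,M_\theta)\rightarrow(\A_\inf(A^+),\Ker\widetilde{\theta},M_{\widetilde{\theta}})$ in $(X,M_X)_\Prism^\op$. The crystal property of $j_\ast\calE_\Prism$ along this morphism gives the isomorphism $\can\otimes\phi$. The \'etale realization is compatible with it: $\can\otimes\phi$ corresponds to $\id\otimes\phi$ on $T(\calE_\Prism)(U)\otimes\A_\inf(A^+)[1/\Ker\theta]^\wedge_p$.

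Consequently the association isomorphism equals $\alpha\circ(\can\otimes\phi\otimes\id)$ precomposed with the $\Ker\theta$-trivialization. That in turn coincides on the nose with $\alpha$ precomposed with Tian's $\Ker\widetilde{\theta}$-trivialization. Only the crystal property is used here, not the Frobenius structure of $\calE_\Prism$ and not any isogeny.

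With this equality, the triangle formed by $\gamma_\et$, the map from prismatic cohomology to the pro-\'etale cohomology of $\calE_{\cris,\Q}(\bB_\dR)$, and Scholze's map commutes exactly. The rest is the \v{C}ech--Alexander bookkeeping with enlarged framings and Proposition~\ref{prop: comparison-cris-cohom-inf-cohom}, as in Theorem~\ref{thm: comparison-maps-compatibility}.
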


Let us first construct $\alpha_{\st}$ (Definition~\ref{def:Cst-comparison-map} below).
By Theorem~\ref{thm:DLMS-association-via-prismatic-F-crystal}, there exists an analytic prismatic $F$-crystals $\calE_{\Prism} \in \Vect^{\an,\phi}((X,M_X)_\Prism)$ such that $T(\calE_{\Prism}) \cong \mathbb{L}$, and the $F$-isocrystal $\calE_{\cris, \Q}$ on $(X_1, M_{X_1})_{\CRIS}$ is given by the crystalline realization $\calE_{\cris}$ of the completed prismatic $F$-crystal $j_{\ast}\calE_{\Prism}$. 

Recall the notations as in \S~\ref{sec: semistable-case} (especially, Example~\ref{eg: examples-log-prisms}). Consider the map $\fkS_K \rightarrow A_{\inf}$ of log prisms in $(\Spf\calO_K, M_{\can})^{\op}$ given by $u\mapsto [\pi^{\flat}]^p$. Write $\mu\coloneqq [\epsilon]-1 \in A_{\inf}$. The following comparison between the prismatic cohomology and \'etale cohomology is proved in \cite{Tian-prism-etale-comparison}, which serves as a key input.

\begin{thm}[{\cite[Thm.~0.4(2), 5.6(2)]{Tian-prism-etale-comparison}}] \label{thm: pris-etale comparison}
Keep notations as above. The \'etale realization induces a Frobenius and $\Gal(\overline{K}/K)$-equivariant isomorphism
\[
 R\Gamma_{A_\inf}(X, j_{\ast}\calE_\Prism)\otimes_{A_{\inf}}^L A_{\inf}[\mu^{-1}]
 \cong R\Gamma(X_{C, \et}, T(\calE_{\Prism}))\otimes_{\Z_p}^L A_{\inf}[\mu^{-1}].
\]
\end{thm}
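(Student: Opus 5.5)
The statement is essentially \cite[Thm.~0.4(2), 5.6(2)]{Tian-prism-etale-comparison}, so the plan is not to reprove the comparison. Instead I would check that the objects appearing on both sides coincide with Tian's objects under the conventions of this paper. There are three things to match: the base log prism, the coefficient, and the étale realization.

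First, the base log prism. Tian works with the relative log prismatic site of $(X_{\calO_C},M_{X_{\calO_C}})$ over an $A_\inf$-log prism. I would identify his prism with $(A_\inf,\Ker(\theta\circ\phi^{-1}),M_{\Spf A_\inf})$ of Example~\ref{eg: examples-log-prisms}(4). Two points need checking. The ideal $\Ker(\theta\circ\phi^{-1})=(\phi(\xi))$ differs from $\Ker\theta$ by a Frobenius twist, and the log structure is generated by $[\pi^\flat]^p$. If Tian instead uses $\Ker\theta$ with $[\pi^\flat]$, the Frobenius $\phi\colon(A_\inf,\Ker\theta)\to(A_\inf,\Ker(\theta\circ\phi^{-1}))$ is an isomorphism of log prisms over $\calO_K$. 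It intertwines the two structure maps $\calO_K\to\calO_C$, and since $\phi$ is bijective on $A_\inf$ it identifies the two relative sites. Hence it identifies the two cohomologies $\phi$-semilinearly, and this twist is harmless after extending scalars to $A_\inf[\mu^{-1}]$, because $\phi(\mu)$ and $\mu$ generate the same ideal after inverting $\mu$ (indeed $\phi(\mu)=\mu\cdot(\text{unit in }A_\inf[\mu^{-1}])$). Galois equivariance is clear, since $\Gal(\overline{K}/K)$ acts on $A_\inf$ compatibly with both structure maps. The footnote in Example~\ref{eg: examples-log-prisms}(4) also guarantees that $M_{\Spf A_\inf}$ is independent of the choice of $\pi^\flat$.

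Second, the coefficient. Tian's coefficient is the completed prismatic $F$-crystal attached to $\calE_\Prism$ via Breuil--Kisin prisms, cf.~\cite[Prop.~6.8]{Tian-prism-etale-comparison}. By the uniqueness of the right adjoint in Proposition~\ref{prop: right-adjoint-semistable-case}, this coincides with $j_\ast\calE_\Prism$, compatibly with Frobenius. The cohomology is formed after restricting to the relative site as in Definition~\ref{def: BK-cohom}(4). Via Proposition~\ref{prop:prismatic-higher-direct-image}, this agrees with Tian's formulation in terms of $Rf_{\Prism,\ast}$ evaluated at the $A_\inf$-prism.

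Third, the étale realization. I would check that Tian's local system coincides with $T(\calE_\Prism)$ from Definition~\ref{def: etale-realization}. Both are defined through Laurent $F$-crystals, by Lemma~\ref{lem: Laurent-F-cryst-as-limit} together with the Bhatt--Scholze equivalence (Theorem~\ref{thm: Laurent-F-cryst-loc-syst}), so this reduces to unwinding definitions.

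The main obstacle is this bookkeeping of normalizations, specifically the Frobenius twist in the $A_\inf$-prism and the corresponding twist in $\mu$ versus $\phi^{-1}(\mu)$. I would handle it first by tracking the isomorphism $\phi$ of log prisms explicitly, and then by observing that both localizations $A_\inf[\mu^{-1}]$ and $A_\inf[\phi^{-1}(\mu)^{-1}]$ map to the same ring after the twist. Once these identifications are in place, the isomorphism is exactly Tian's result, and its Frobenius and Galois equivariance are inherited from it.
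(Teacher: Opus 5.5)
Your strategy is the paper's own. The paper gives no argument for this statement and simply imports Tian's theorem. It normalizes the base log prism in Example~\ref{eg: examples-log-prisms}(4), with ideal $\Ker(\theta\circ\phi^{-1})$ and log structure generated by $[\pi^\flat]^p$, so that it already is Tian's prism. The only convention mismatch, in the description of the étale realization, is reconciled in Remark~\ref{rem:two-structure-map-for-Ainf}. So no Frobenius twist of the base prism is actually needed.

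However, the twist argument you give for the contingency that Tian used $\Ker\theta$ rests on a false claim: $\phi(\mu)$ is \emph{not} $\mu$ times a unit of $A_{\inf}[\mu^{-1}]$.
\begin{itemize}
\item The quotient $\phi(\mu)/\mu=[p]_{[\epsilon]}=1+[\epsilon]+\cdots+[\epsilon]^{p-1}$ generates $\Ker(\theta\circ\phi^{-1})$.
\item On the other hand $(\theta\circ\phi^{-1})(\mu)=\epsilon_1-1\neq 0$, since $\epsilon_1$ is a primitive $p$-th root of unity.
\item Hence $\theta\circ\phi^{-1}$ extends to a ring map $A_{\inf}[\mu^{-1}]\to C$ that kills $[p]_{[\epsilon]}$, so $[p]_{[\epsilon]}$ is not a unit there.
\end{itemize}
Thus $A_{\inf}[\phi(\mu)^{-1}]$ is a strictly further localization of $A_{\inf}[\mu^{-1}]$. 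Transporting a $\Ker\theta$-comparison with $\mu$ inverted along $\phi$ would only give the desired statement with $\phi(\mu)$ inverted, which is weaker than what is claimed.

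The correct bookkeeping goes the other way. Write $\mu=\phi^{-1}(\mu)\cdot\xi$ with $\xi$ a generator of $\Ker\theta$; then $\phi^{-1}(\mu)$ is already a unit in $A_{\inf}[\mu^{-1}]$. So on the $\Ker\theta$ side the natural (stronger) statement inverts $\phi^{-1}(\mu)$. The isomorphism $\phi$ carries $A_{\inf}[\phi^{-1}(\mu)^{-1}]$ onto $A_{\inf}[\mu^{-1}]$, and this is exactly the descent of $\can\otimes\phi$ recorded in Remark~\ref{rem:two-structure-map-for-Ainf}.

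Your closing sentence ("both localizations map to the same ring after the twist") is also inaccurate. $\phi$ sends $A_{\inf}[\mu^{-1}]$ to $A_{\inf}[\phi(\mu)^{-1}]$ and $A_{\inf}[\phi^{-1}(\mu)^{-1}]$ to $A_{\inf}[\mu^{-1}]$, and these are different rings.

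With this correction, or simply by observing that the paper's normalization already matches Tian's, the rest of your matching is fine: the coefficient via uniqueness of $j_\ast$, the étale realization via the Laurent $F$-crystal equivalence, and Galois stability of $M_{\Spf A_{\inf}}$.
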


Recall $B_{\cris} = A_{\cris}[p^{-1}, t^{-1}]$. 

\begin{cor} \label{cor: semist-comparison-over-S}
Keep notations as above. We have an isomorphism
\[
R\Gamma(X_{C, \et}, \mathbb{L})\otimes_{\Z_p}^L B_{\cris} \cong R\Gamma(((X_{\calO_C/p}, M_{X_{\calO_C/p}})/(A_{\cris}, M_{A_{\cris}}))_{\CRIS}, \calE_{\cris, \Q})\otimes_{A_{\cris}}^L B_{\cris}
\]
compatible with the Frobenii and $\Gal(\overline{K}/K)$-actions. 
\end{cor}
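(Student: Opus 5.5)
We start from Theorem~\ref{thm: pris-etale comparison} and base change along $A_\inf\rightarrow A_\cris$, then use the prismatic-crystalline comparison over $A_\cris$.

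First, the map $\fkS_K\rightarrow A_\inf$ sending $u\mapsto[\pi^\flat]^p$ is classically faithfully flat (Lemma~\ref{lem: finite-p-compl-Tor-amp}(1), composed with the flat Frobenius of $\fkS_K$). Hence Theorem~\ref{thm: general-base-change-BK-cohom} applies to $A=A_\inf$ and to $A=A_\cris$. Together with Theorem~\ref{thm: BK-cohom-perfect-cx}, this shows that $R\Gamma_{A_\inf}(X,j_\ast\calE_\Prism)[p^{-1}]$ and $R\Gamma_{A_\cris}(X,j_\ast\calE_\Prism)[p^{-1}]$ are both base changes of the perfect complex $R\Gamma_{\BK}(X,j_\ast\calE_\Prism)$. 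Since the diagram \eqref{eq: diag-log-prism-site-O_K} commutes, we obtain a Frobenius- and Galois-equivariant isomorphism
\[
R\Gamma_{A_\inf}(X,j_\ast\calE_\Prism)\otimes^L_{A_\inf}A_\cris[p^{-1}]\cong R\Gamma_{A_\cris}(X,j_\ast\calE_\Prism)\otimes^L_{A_\cris}A_\cris[p^{-1}].
\]
Here the completed tensor products become ordinary ones after inverting $p$, because the complexes are base changes of a perfect complex.

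Next, $\mu$ becomes invertible in $B_\cris=A_\cris[p^{-1},t^{-1}]$: indeed $t=\log(1+\mu)=\mu\cdot u$, where $u$ is a unit in $A_\cris[p^{-1}]$ up to the usual argument that $t/\mu$ is invertible after inverting $p$ (equivalently, $\mu$ divides $t$ in $A_\cris$ and $t$ is inverted). So $A_\inf\rightarrow B_\cris$ factors through $A_\inf[\mu^{-1}]$. Applying $-\otimes^L_{A_\inf[\mu^{-1}]}B_\cris$ to Theorem~\ref{thm: pris-etale comparison} and using the identification above gives
\[
R\Gamma(X_{C,\et},\bL)\otimes^L_{\Z_p}B_\cris\cong R\Gamma_{A_\cris}(X,j_\ast\calE_\Prism)\otimes^L_{A_\cris}B_\cris .
\]
This isomorphism is compatible with $\phi$ and $\Gal(\overline{K}/K)$. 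Finally, Theorem~\ref{thm: pris-cris-comparison-over-Breuil S}(3), tensored up to $B_\cris$, identifies the right-hand side with the crystalline cohomology over $A_\cris$. That comparison is Frobenius-compatible, and it is Galois-compatible because the comparison in Theorem~\ref{thm: prism-cryst-comparison-semist} is functorial in the base $(A,(p),M_A)$ and Galois acts by automorphisms of that base.

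The main obstacle is a mismatch of normalizations. The $A_\inf$ prism uses the ideal $\Ker(\theta\circ\phi^{-1})$ with log structure $[\pi^\flat]^p$. The $A_\cris$ prism appearing in the prismatic-crystalline comparison is the Frobenius twist of the crystalline object in which $u\mapsto[\pi^\flat]$. The plan is to check that the map $A_\inf\rightarrow A_\cris$ in \eqref{eq: diag-log-prism-site-O_K} is exactly the one used in Theorem~\ref{thm: pris-cris-comparison-over-Breuil S}(3), including the log structures via the identification from \cite[Prop.~4.4]{du-moon-shimizu-cris-pushforward}. I would also check that Tian's Frobenius, which is normalized with $\phi^{-1}$ on the ideal, matches the crystalline Frobenius after inverting $p$.
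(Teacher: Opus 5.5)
Your proposal is correct and follows essentially the same route as the paper: base-change Tian's comparison along $A_\inf[\mu^{-1}]\rightarrow B_\cris$, identify $R\Gamma_{A_\inf}$ and $R\Gamma_{A_\cris}$ after inverting $p$ via Theorem~\ref{thm: general-base-change-BK-cohom} (using perfectness from Theorem~\ref{thm: BK-cohom-perfect-cx} and the commutative diagram \eqref{eq: diag-log-prism-site-O_K}), and conclude with Theorem~\ref{thm: pris-cris-comparison-over-Breuil S}(3). The paper leaves the following points implicit, and you supply them: $\mu$ is invertible in $B_\cris$ because $\mu$ divides $t$ in $A_\cris$ (you do not need $t/\mu$ to be a unit), and the Galois-compatibility follows from the functoriality of Theorem~\ref{thm: prism-cryst-comparison-semist} in the log prism; the normalization issue you flag is settled by the definitions in Example~\ref{eg: examples-log-prisms}(4)(5) and by Remark~\ref{rem:two-structure-map-for-Ainf}.
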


\begin{proof}
By Theorem~\ref{thm: pris-etale comparison}, we have an isomorphism 
\[
R\Gamma_{A_\inf}(X, j_{\ast}\calE_\Prism)\otimes_{A_{\inf}}^L B_{\cris} \cong R\Gamma(X_{C, \et}, \mathbb{L})\otimes_{\Z_p}^L B_{\cris}
\]
compatible with the Frobenii and $\Gal(\overline{K}/K)$-actions. On the other hand, Theorem~\ref{thm: pris-cris-comparison-over-Breuil S}(3) gives a $\phi$-equivariant isomorphism
\[
R\Gamma_{A_{\cris}}(X, j_{\ast}\calE_{\Prism})\otimes_{A_{\cris}}^L B_{\cris} \cong R\Gamma(((X_{\calO_C/p}, M_{X_{\calO_C/p}})/(A_{\cris}, M_{A_{\cris}}))_{\CRIS}, \calE_{\cris, \Q})\otimes_{A_{\cris}}^L B_{\cris}.
\]
The base change along $A_{\inf} \rightarrow A_{\cris}$ yields the isomorphism
\[
R\Gamma_{A_\inf}(X, j_{\ast}\calE_\Prism)\otimes_{A_{\inf}}^L B_{\cris} \cong R\Gamma_{A_{\cris}}(X, j_{\ast}\calE_{\Prism})\otimes_{A_{\cris}}^L B_{\cris}
\]
by Theorem~\ref{thm: general-base-change-BK-cohom} (cf.~\eqref{eq: diag-log-prism-site-O_K}), hence the statement.
\end{proof}

\begin{defn}\label{def:Cst-comparison-map}
Define $\alpha_{\st}$ to be the composite
\begin{align*}
R\Gamma(X_{C, \et}, \mathbb{L})\otimes_{\Z_p}^L B_{\st}
&\overset{\text{Cor.~\ref{cor: semist-comparison-over-S}}}{\underset{\cong}{\longrightarrow}} R\Gamma(((X_{\calO_C/p}, M_{X_{\calO_C/p}})/(A_{\cris}, M_{A_{\cris}}))_{\CRIS}, \calE_{\cris, \Q})\otimes_{A_{\cris}}^L B_{\st}\\
&\overset{\text{Thm.~\ref{thm:Hyodo-Kato-cohomology}(4)}}{\underset{\cong}{\longleftarrow}} R\Gamma_{\HK}((X_0, M_{X_0}), \mathcal{E}_{\cris, \Q})\otimes_{K_0}^L B_{\st}.
\end{align*}
Note that $\alpha_{\st,\pi}$ is an isomorphism compatible with Galois actions, Frobenii, and monodromy actions.
\end{defn}

\begin{proof}[Proof of Theorem~\ref{thm: Cst-conj-main}]
It remains to show the commutativity of the diagram, and we will verify it in \S~\ref{subsec: etale-deRham comparison}.
\end{proof}

\begin{rem}\label{rem:two-structure-map-for-Ainf}
Formally, there are two conventions for the \'etale realization (\cite{du-liu-moon-shimizu-completed-prismatic-F-crystal-loc-system,du-liu-moon-shimizu-purity-F-crystal} and \cite{GuoReinecke-Ccris, Tian-prism-etale-comparison}), but they are compatible in the following sense.
Take $U\in \Perfd/X_{K,\proet}$ with the affinoid perfectoid  $\widehat{U}=\Spa(A,A^+)$ as in Construction~\ref{const:DLMS-perfect-prism-attached-to-affinoid-perfectoid}. We have a morphism of log prisms 
\[
\phi\coloneqq \phi_{\Ainf(A^+)}\colon (\Ainf(A^+),\Ker\theta, M_{\theta}) \rightarrow (\Ainf(A^+),\Ker\widetilde{\theta}, M_{\widetilde{\theta}})
\]
in $(X,M_X)_{\Prism}^\op$, where $\widetilde{\theta}\coloneqq \theta\circ \phi^{-1}\colon \Ainf(A^+)\rightarrow A^+$ and  $M_{\widetilde{\theta}}$ is the pullback log structure of $M_{\theta}$ along $\phi$. This induces a map $\can\colon \calE_\Prism(\A_\inf(A^+),\Ker\theta)\rightarrow \calE_\Prism(\A_\inf(A^+),\Ker\widetilde{\theta})$, where we omit the log structure from the evaluation for simplicity.
The \'etale realization comes with the following commutative diagram of isomorphisms
\[
\xymatrix{
T(\calE_\Prism)(U)\otimes \A_\inf(A^+)[1/\Ker\theta]^\wedge_p\ar[d]_-{\id\otimes \phi}^-\cong\ar[r]^-\cong & j_\ast\calE_\Prism(\A_\inf(A^+),\Ker\theta)\otimes\A_\inf(A^+)[1/\Ker\theta]^\wedge_p \ar[d]_-{\can\otimes \phi}^-\cong\\
T(\calE_\Prism)(U)\otimes\A_\inf(A^+)[1/\Ker\widetilde{\theta}]^\wedge_p \ar[r]^-\cong& j_\ast\calE_\Prism(\A_\inf(A^+),\Ker\widetilde{\theta})\otimes\A_\inf(A^+)[1/\Ker\widetilde{\theta}]^\wedge_p,
}
\]
which is functorial in $U$.
The top horizontal isomorphism is the isomorphism used in \cite[Prop.~3.24]{du-liu-moon-shimizu-purity-F-crystal} to describe the \'etale realization, whereas the bottom horizontal isomorphism is the one used in \cite{GuoReinecke-Ccris, Tian-prism-etale-comparison} (see \cite[Lem.~5.5(1), Pf.]{Tian-prism-etale-comparison}). 
The right vertical isomorphism descends to an isomorphism $\can\otimes \phi \colon \calE_\Prism(\A_\inf(A^+),\Ker\theta)\otimes \A_\inf(A^+)[(\phi^{-1}(\mu))^{-1}] \xrightarrow{\cong}\calE_\Prism(\A_\inf(A^+),\Ker\widetilde{\theta})\otimes \A_\inf(A^+)[\mu^{-1}]$, which yields the following commutative diagram of isomorphisms
\[
\xymatrix{
j_\ast\calE_\Prism(\A_\inf(A^+),\Ker\theta)\otimes_{\A_\inf(A^+),\phi} \B_{\cris}(A^+) \ar[d]_-{\can\otimes \phi\otimes \id_{\B_{\cris}(A^+)}}^-\cong\ar[rd]^-{\alpha\circ (\can\otimes\phi\otimes \id)}_-\cong& \\
j_\ast\calE_\Prism(\A_\inf(A^+),\Ker\widetilde{\theta})\otimes_{\A_\inf(A^+)} \B_{\cris}(A^+)\ar[r]^-{\alpha}_-\cong & \calE_{\cris,\Q}(\A_{\cris}(A^+))\otimes_{\B^+_{\cris}(A^+)}\B_{\cris}(A^+).
}
\]
The isomorphism $\alpha\circ (\can\otimes\phi\otimes \id)$ agrees with the one in \cite[Rem.~3.41(2)]{du-liu-moon-shimizu-purity-F-crystal} and thus is used in the proof of Theorem~\ref{thm:DLMS-association-via-prismatic-F-crystal}, whereas $\alpha$ is used in the proof of Theorem~\ref{thm: pris-cris-comparison-over-Breuil S}(3). Via the above diagram, they are compatible, and we use this implicitly in the following arguments.
\end{rem}

\begin{rem}\label{rem:independence-of-Cst-comparison-map}
Recall that $B_\st\coloneqq B_{\st,\pi}$ is introduced in Definition~\ref{def:Bst} as a subring of $B_\dR$, which depends on the choice of a uniformizer $\pi$.
However, the map $\alpha_\st=\alpha_{\st,\pi}\colon R\Gamma(X_{C, \et}, \mathbb{L})\otimes_{\Z_p}^L B_{\st,\pi}\rightarrow R\Gamma_{\HK}((X_0, M_{X_0}), \mathcal{E}_{\cris, \Q})\otimes_{K_0}^L B_{\st,\pi}$ in Definition~\ref{def:Cst-comparison-map} is independent of the choice $\pi$ in the following sense. Recall the canonical semistable period ring $B_{\st,\mathrm{Fon}}\coloneqq B_{\st,\mathrm{Fon}}^+[t^{-1}]$ with an embedding $\iota_\pi\colon B_{\st,\mathrm{Fon}}\xrightarrow{\cong}B_{\st,\pi}\subset B_\dR$ given by $\pi$ (see Remark~\ref{rem:semistable-period-ring-convention}). Define $\alpha_\pi$ and $i_\pi$ to be the maps making the following diagram commutative: 
\[
\xymatrix{
R\Gamma(X_{C, \et}, \mathbb{L})\otimes_{\Z_p}^L B_{\st,\mathrm{Fon}} \ar@{..>}[r]^-{\alpha_{\pi}}_-\cong \ar[d]^-\cong  \ar@<-7ex>@/_30pt/[dd]_-{\id\otimes \iota_\pi}
& R\Gamma_{\HK}((X_0, M_{X_0}), \mathcal{E}_{\cris, \Q})\otimes_{K_0}^L B_{\st,\mathrm{Fon}}\ar[d]^-\cong \ar@<7ex>@/^60pt/[dd]^-{i_\pi} \\
R\Gamma(X_{C, \et}, \mathbb{L})\otimes_{\Z_p}^L B_{\st,\pi} \ar[r]^-{\alpha_{\st,\pi}}_-\cong \ar[d] & R\Gamma_{\HK}((X_0, M_{X_0}), \mathcal{E}_{\cris, \Q})\otimes_{K_0}^L B_{\st,\pi}\ar[d]^-{\eqref{eq:HT-to-dR-map}} \\
R\Gamma(X_{C, \et}, \bL)\otimes_{\Z_p}^L B_{\dR} \ar[r]^-{\eqref{eq:etale-deRham-comparison}} & R\Gamma_{\dR}(X_K, \mathbf{E})\otimes_K^L B_{\dR},
}
\]
We claim that $\alpha_\pi$ is indeed independent of $\pi$. For this, take any $c\in R\Gamma(X_{C, \et}, \mathbb{L})$\footnote{More precisely, after taking cohomology here and in what follows.} and we show that $\alpha_\pi(c\otimes 1)$ is independent of $\pi$. Since $\iota_\pi|_{B_\cris}$ and \eqref{eq:etale-deRham-comparison} are independent of $\pi$, so is $i_{\pi}(\alpha_\pi(c\otimes 1))\in R\Gamma_{\dR}(X_K, \mathbf{E})\otimes_K^L B_{\dR}$. Take another uniformizer $\pi'=a\pi$ ($a\in\calO_K^\times)$, and let us compare $i_\pi$ and $i_{\pi'}$.
For $x\otimes y\in R\Gamma_{\HK}((X_0, M_{X_0}), \mathcal{E}_{\cris, \Q})\otimes_{K_0}^L B_{\st,\mathrm{Fon}}$, we have $i_\pi(x\otimes y)=\rho_\pi(x)\otimes \iota_\pi(y)$ by definition. By Theorem~\ref{thm:Hyodo-Kato-cohomology}(3), we get $\rho_{\pi'}(x)=\sum_{n\geq 0}\frac{(\operatorname{log}(a) e)^n}{n!}\rho_{\pi}(N^n(x))$. Let $N_\pi$ denote the monodromy operator on $B_{\st,\mathrm{Fon}}$ induced from the one on $B_{\st,\pi}$. Then we know from \cite[(4.1.2)]{Tsuji-Cst} that $\iota_{\pi'}(y)=\sum_{n\geq 0}\frac{(\operatorname{log}(a) e)^n}{n!}\iota_{\pi}(N_\pi^n(y))$ (recall our normalization of $N_\pi$). Hence we deduce $i_{\pi'}(x\otimes y)=\sum_{n\geq 0}\frac{(\operatorname{log}(a) e)^n}{n!}i_{\pi}((N\otimes 1+1\otimes N_\pi)^n(x\otimes y))$. In particular, $i_\pi$ and $i_{\pi'}$ coincide on $\Ker(N\otimes 1+1\otimes N_\pi)$. Since $\alpha_{\st,\pi}$ is compatible with monodromy operators, we see that $\alpha_\pi(c\otimes 1)$ lies in $\Ker(N\otimes 1+1\otimes N_\pi)$ and thus equals $\alpha_{\pi'}(c\otimes 1)$.
\end{rem}

\subsection{\'Etale-de Rham comparison} \label{subsec: etale-deRham comparison}

In this subsection, we show the commutativity of the diagram in Theorem~\ref{thm: Cst-conj-main}. We mainly follow \cite[\S~10]{GuoReinecke-Ccris} based on the infinitesimal cohomology over $B_{\dR}^+$, which is introduced in \cite[\S~13]{bhatt-morrow-scholze-integralpadic} for the smooth case and generalized in \cite[\S~6]{Cesnavicius-Koshikawa} to the semistable case. As many arguments in this subsection are straightforward generalizations of those in \cite[\S~10]{GuoReinecke-Ccris}, we will emphasize additional necessary inputs (e.g. from \cite{Cesnavicius-Koshikawa}) and refer to \textit{loc. cit.} for details.

\subsubsection{Infinitesimal cohomology over \texorpdfstring{$B_{\dR}^+$}{BdR}} \label{subsec: inf-cohom}

Write $A_{\inf, K}\coloneqq A_{\inf}\otimes_{W(k)}\calO_K$. Choose a map $\calO_K \rightarrow B_{\dR}^+$ compatible with $K \rightarrow C$ under the surjection $B_{\dR}^+ \rightarrow C$. Let $\widetilde{\theta}\coloneqq \theta\circ \phi^{-1}\colon A_{\inf} \rightarrow \calO_C$, which extends to $\widetilde{\theta}_K\colon A_{\inf, K} \rightarrow \calO_C$ via $\calO_{K}\rightarrow \calO_C$. Note that the map $A_{\inf} \rightarrow A_{\inf, K}$ induces an isomorphism $(A_{\inf, K}[p^{-1}])^{\wedge}_{\Ker(\widetilde{\theta}_K)} \cong B_{\dR}^+$. For each positive integer $c$, write $B_{\dR, c}^+\coloneqq B_{\dR}^+/(\Ker(\widetilde{\theta}_K))^c$; it becomes an f-adic ring of which the image of $A_{\inf,L}$ (with $p$-adic topology) is a ring of definition.

Let $X$ be a semistable $p$-adic formal scheme over $\calO_K$ with generic fiber $X_K$. Denote $X_{B_{\dR, c}^+}$ for the base change of $X_K$ along $K \rightarrow A_{\inf, K}[p^{-1}] \rightarrow B_{\dR, c}^+$. 

\begin{defn}[{\cite[Def.~10.1]{GuoReinecke-Ccris}}] 
Define the infinitesimal site $X_{C, \et}/B_{\dR, \inf}^+$ as follows:
\begin{itemize}
    \item An object is a pair $(U, T)$, called an \emph{infinitesimal thickening of $X_C$}, where $U \in X_{C, \et}$ and $T$ is an adic space topologically of finite type over $B_{\dR, c}^+$ for some $c\geq 1$ together with a Zariski closed immersion $U \rightarrow T$ given by a nilpotent ideal.
    \item A morphism between objects $(U_1, T_1)\rightarrow (U_2, T_2)$ is given by a morphism $U_1 \rightarrow U_2$ in $X_{C, \et}$ and a compatible map of adic spaces $T_1\rightarrow T_2$ over $B_{\dR, c}^+$ for some $c$.
    \item A covering of $(U, T)$ is a family of morphisms $\{(U_i, T_i) \rightarrow (U, T)\}$ such that $\{U_i \rightarrow U\}$ and $\{T_i\rightarrow T\}$ are \'etale coverings.
\end{itemize}
The structure sheaf $\calO_{\inf}$ is given by the assignment $(U, T)\mapsto \calO_T(T)$. 

A \emph{crystal in vector bundles on $X_{C, \et}/B_{\dR, \inf}^+$} is a sheaf of $\calO_{\inf}$-modules $\calF$ such that $\calF(U, T)$ is a vector bundle over $\calO_T$ for each $(U, T)$ and satisfies the base change isomorphism for each $(U_1, T_1) \rightarrow (U_2, T_2)$. Write $\Vect(X_{C, \et}/B_{\dR, \inf}^+)$ for the category of crystals in vector bundles on $X_{C, \et}/B_{\dR, \inf}^+$.
\end{defn}

\begin{lem}\label{lem: inf-site}
The fiber products and equalizers exist in $X_{C, \et}/B_{\dR, \inf}^+$. Furthermore, the non-empty finite products are ind-representable in $X_{C, \et}/B_{\dR, \inf}^+$.    
\end{lem}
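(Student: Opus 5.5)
The plan is to follow \cite[Lem.~10.2]{GuoReinecke-Ccris} (and the analogous statement for the infinitesimal site over $B_{\dR}^+$ in \cite[\S~13]{bhatt-morrow-scholze-integralpadic}). The semistability of $X$ plays no real role: all the constructions take place on adic spaces over $B_{\dR,c}^+$ together with the \'etale site of $X_C$, and neither involves log structures. For \emph{fiber products}, take morphisms $(U_1,T_1)\rightarrow (U_3,T_3)\leftarrow (U_2,T_2)$. I would set $U\coloneqq U_1\times_{U_3}U_2$, which is a fiber product in $X_{C,\et}$. After enlarging $c$ so that all three $T_i$ are over the same $B_{\dR,c}^+$, I would set $T\coloneqq T_1\times_{T_3}T_2$, the fiber product of adic spaces topologically of finite type over $B_{\dR,c}^+$. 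This fiber product exists: affinoid-locally it is given by the completed tensor product of topologically finite type algebras, which is again topologically of finite type, and these local pieces glue. The natural map $U\rightarrow T$ is a Zariski closed immersion, being the composite of $U\rightarrow U_1\times_{T_3}U_2\hookrightarrow T$. For nilpotence of the ideal, note that $U$ is étale over $U_1\times_{U_3}U_2$, which is cut out in $T$ by ideals generated by the nilpotent ideals of $U_i\subset T_i$. The ideal of $U$ in $T$ is therefore generated by finitely many nilpotent ideals, so it is nilpotent. Checking the universal property is then formal. \emph{Equalizers} are handled the same way, as the fiber product with the diagonal.

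For \emph{non-empty finite products}, the difficulty is that $T_1\times_{B_{\dR,c}^+}T_2$ does not receive $U_1\times U_2$ as a nilpotent thickening. I would therefore construct the product as an ind-object. Set $U\coloneqq U_1\times_{X_C}U_2$ and $P\coloneqq T_1\times_{B_{\dR,c}^+}T_2$, and take the closed immersion $U\hookrightarrow U_1\times_C U_2\hookrightarrow P$ cut out by an ideal $\calJ$. For $n\geq 1$, let $T^{(n)}$ be the $n$-th infinitesimal neighborhood of $U$ in $P$, i.e.\ the Zariski closed subspace $V(\calJ^n)$ (locally $\calO_P/\calJ^n$, which is topologically of finite type). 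Then $(U,T^{(n)})$ lies in the site, and $\varinjlim_n (U,T^{(n)})$ is a candidate product. Here $c$ must also vary: one should really take $n$ and $c$ going to infinity together, using $B_{\dR,c}^+$ for growing $c$.

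The universal property runs as follows. Given $(V,S)$ with maps to both $(U_i,T_i)$, there are induced maps $V\rightarrow U$ and $S\rightarrow P$. The ideal of $V$ in $S$ is nilpotent of some order $m$, and $\calJ$ pulls back into that ideal, so $S\rightarrow P$ factors uniquely through $T^{(m)}$. This factorization is compatible in $m$, which gives the ind-representability. The main obstacle is the topological bookkeeping in this step: making sure that $V(\calJ^n)$ is an honest adic space topologically of finite type over some $B_{\dR,c}^+$. The issue is that $\Ker(\widetilde{\theta}_K)$ is nilpotent on $S$ but $S$ lives over a fixed $c$. I would handle this by working affinoid-locally with the ring of definition given by the image of $A_{\inf,K}$, noting that $\xi$ is killed to some power on $S$. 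I would then cite the corresponding argument in \cite[Lem.~10.2]{GuoReinecke-Ccris} verbatim for the rest.
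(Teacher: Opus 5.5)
Your proposal is correct and takes essentially the same route as the paper, whose proof simply defers to the proof of \cite[Lem.~2.1.4]{Guo-cris-cohom-rigid-analy-sp}: fiber products and equalizers are built directly as $(U_1\times_{U_3}U_2,\,T_1\times_{T_3}T_2)$ (resp.\ via the diagonal), and binary products are ind-represented by the infinitesimal neighborhoods of $U_1\times_{X_C}U_2$ in $T_1\times T_2$. Three small corrections: $U_1\times_{U_3}U_2\rightarrow U_1\times_{T_3}U_2$ is an isomorphism, not merely \'etale, because $U_3\hookrightarrow T_3$ is a monomorphism, and this is exactly what makes $U\rightarrow T$ a closed immersion with nilpotent ideal; there is no need to let $c$ vary, because any $(V,S)$ mapping to the $(U_i,T_i)$ already lives over the same $B^+_{\dR,c}$ as the $T_i$; and the closedness of $U_1\times_{X_C}U_2$ in $U_1\times_C U_2$ uses that $X_C$ is separated, which holds here since $X$ is proper.
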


\begin{proof}
This follows directly from the proof of \cite[Lem.~2.1.4]{Guo-cris-cohom-rigid-analy-sp}.    
\end{proof}

\begin{lem} \label{lem: inf-site-weakly-final-ind-syst}
Let $Z$ be a smooth adic space over $K$ with a closed immersion $X_C \rightarrow Z_C$.
\begin{itemize}
    \item Write $Z_{m, c}$ for the $m$-th infinitesimal neighborhood of $X_C$ in $Z_{B_{\dR, c}^+}$. Then the ind-system $D_Z(X)\coloneqq \{Z_{m, c}\}_{m, c}$ is a weakly final object in $X_{C, \et}/B_{\dR, \inf}^+$.
    \item The $(n+1)$-th self-product of the ind-system $\{Z_{m, c}\}_{m, c}$ is representable by the ind-system $\{Z_{m, c}^n\}_{m, c}$, where $Z^n$ denotes the $(n+1)$-th self-product of $Z$ over $K$ and $Z_{m, c}^n$ the $m$-th infinitesimal neighborhood of $X_C$ in $Z^n_{B_{\dR, c}^+}$.
\end{itemize}
\end{lem}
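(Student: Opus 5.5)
We argue as in the smooth case \cite[Lem.~2.1.4]{Guo-cris-cohom-rigid-analy-sp} (see also \cite[\S~10]{GuoReinecke-Ccris}), the point being that smoothness of $Z$ over $K$ gives the needed lifting property, while $X_C$ itself may be singular (semistable). For weak finality, take an object $(U,T)$ with $U\in X_{C,\et}$, $T$ topologically of finite type over $B_{\dR,c}^+$, and $U\hookrightarrow T$ a nilpotent Zariski closed immersion, say with ideal of nilpotence order $\le m$. We need to show that, after passing to an \'etale cover of $(U,T)$, there is a morphism $T\rightarrow Z_{m',c'}$ compatible with $U\rightarrow X_C\hookrightarrow Z_C$ for some $m',c'$. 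Since $Z_{B_{\dR,c}^+}\rightarrow \Spa B_{\dR,c}^+$ is smooth, the composite $U\rightarrow X_C\rightarrow Z_{B_{\dR,c}^+}$ extends, \'etale locally on $T$ (equivalently on $U$, since $U$ and $T$ share the same \'etale site), to a $B_{\dR,c}^+$-morphism $T\rightarrow Z_{B_{\dR,c}^+}$ by the infinitesimal lifting criterion applied to the square-zero filtration of $U\hookrightarrow T$. The ideal of $X_C$ in $Z_{B_{\dR,c}^+}$ pulls back into the nilpotent ideal of $U$ in $T$, so the map factors through the $m$-th infinitesimal neighborhood $Z_{m,c}$. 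Enlarging $c$ is harmless because $B_{\dR,c'}^+\rightarrow B_{\dR,c}^+$ is surjective with nilpotent kernel. Since maps to an ind-object only need to exist locally, this gives weak finality of $D_Z(X)=\{Z_{m,c}\}$.

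For the self-products, I would first use Lemma~\ref{lem: inf-site} to reduce to checking the universal property of $\{Z^n_{m,c}\}$ directly. A morphism from $(U,T)$ to the $(n+1)$-fold product of $\{Z_{m,c}\}$ consists of $n+1$ morphisms $T\rightarrow Z_{m_i,c_i}$ agreeing on $U$. Enlarging to common $m,c$, these are $n+1$ maps $T\rightarrow Z_{B_{\dR,c}^+}$ restricting to the same map on $U$, hence a single map $T\rightarrow Z^n_{B_{\dR,c}^+}$ sending $U$ into the diagonal copy of $X_C$. Nilpotence then makes it factor through some $Z^n_{m',c}$. Here $m'$ is bounded by $(n+1)m$, because the ideal of the diagonal $X_C$ in $Z^n$ is generated by the pullbacks of the ideals of $X_C\subset Z$ together with the ideals of the diagonals $Z\subset Z\times Z$, and both kinds of generators pull back into the ideal of $U$. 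Conversely, the projections $Z^n\rightarrow Z$ carry $Z^n_{m',c}$ into $Z_{m',c}$, and this gives the inverse bijection. Functoriality in $(U,T)$ is then straightforward, so $\{Z^n_{m,c}\}$ represents the product as an ind-object.

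The main obstacle is the lifting step in the first part. The infinitesimal lifting property for smooth morphisms of adic spaces only holds locally on affinoids, and $T$ lives over the non-noetherian-flavored base $B_{\dR,c}^+$. I would handle this by covering $Z$ by affinoids that are \'etale over closed polydiscs, so that lifting reduces to lifting coordinate functions, which is possible along any nilpotent thickening, plus the unique lifting of \'etale maps. This is exactly the argument of \cite[Lem.~2.1.4]{Guo-cris-cohom-rigid-analy-sp}, and the singularity of $X_C$ plays no role in it.
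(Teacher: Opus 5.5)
Your approach is essentially the paper's: the paper invokes Lemma~\ref{lem: inf-site} and refers to the proof of \cite[Lem.~10.3]{GuoReinecke-Ccris}, which is the smoothness-lifting argument for weak finality followed by the identification of the self-products. One step of your write-up is false as stated, though it is dispensable.

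\textbf{The bound $m'\le (n+1)m$ fails.} The generators $p_i^\ast\calI$ do pull back into ideals whose nilpotence is controlled by $m$. The diagonal generators $1\otimes f-f\otimes 1$, however, only pull back into the ideal $\calJ$ of $U$ in $T$, and the nilpotence order of $\calJ$ has nothing to do with $m$.

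For a counterexample, let $U\subset X_C$ be \'etale over a disc with coordinate $t$. Set $\calO_T=\calO_U[\epsilon]/(\epsilon^N)$, viewed over $B_{\dR,c}^+$ through $C$. Take the two maps $T\rightarrow U\subset X_C$ given by $t\mapsto t$ and $t\mapsto t+\epsilon$, the latter extended by \'etaleness. They agree on $U$ and both factor through $X_C\subset Z_{m,c}$ for every $m$. But the pulled-back ideal of the diagonal contains $\epsilon$, so the pair factors through $Z^1_{m',c}$ only for $m'$ growing with $N$.

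\textbf{Why it does not matter.} You never need a uniform bound. Representability of the product by $\{Z^n_{m,c}\}$ is the statement that
\[
\varinjlim_{m',c}\Hom((U,T),Z^n_{m',c})\rightarrow\bigl(\varinjlim_{m,c}\Hom((U,T),Z_{m,c})\bigr)^{n+1}
\]
is bijective, naturally in $(U,T)$; here you use that filtered colimits commute with finite products. For surjectivity, take $m'$ to be the nilpotence order of $\calJ$, which may depend on $(U,T)$. Injectivity holds because $Z^n_{m',c}\hookrightarrow Z^n_{B_{\dR,c}^+}$ is a monomorphism. So your argument goes through once you replace ``$m'\le(n+1)m$'' with ``$m'$ depending on $(U,T)$''.

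\textbf{A minor correction.} $X_C$ is not singular: it is the generic fiber of a semistable formal scheme, hence smooth, and the paper later applies the lemma with $Z=X_K$. This does not affect your argument, which never uses any property of $X_C$.
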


\begin{proof}
Using Lemma~\ref{lem: inf-site}, this is proved as in \cite[Lem.~10.3]{GuoReinecke-Ccris}.   
\end{proof}

For any topologically finite type $\calO_K$-algebra $A$, let $A_{B_{\dR, c}^+}$ denote the base change of $A$ along $\calO_K \rightarrow A_{\inf, K} \rightarrow B_{\dR, c}^+$ as f-adic rings, and set $A_{B_{\dR}^+}=\varprojlim_c A_{B_{\dR, c}^+}$.
The following examples of the above lemma will be used in computations.

\begin{eg}[cf. {\cite[Ex.~10.5]{GuoReinecke-Ccris}}] \label{eg: enlarged-framing-inf-site} 
Suppose $X = \Spf R$ admits an integral \emph{enlarged framing} $(\Psi, \Sigma)$ as in \cite[\S~6]{Cesnavicius-Koshikawa}, given by finite subsets $\Psi \subset R^{\times}$ and $\Sigma \subset R\cap R[p^{-1}]^{\times}$ such that the map $\calO_K\langle (T_{\psi}^{\pm 1})_{\psi \in \Psi}, (T_{\sigma})_{\sigma \in \Sigma} \rangle \rightarrow R$ (given by $T_{\psi} \mapsto \psi$, $T_{\sigma} \mapsto \sigma$) is surjective and that there exist subsets $\Psi' \subset \Psi$ and $\Sigma' \subset \Sigma$ with $\prod_{\sigma \in \Sigma'} T_{\sigma} = \pi$ in $R$ such that the induced map 
\[
X \rightarrow \Spf \calO_K\langle (T_{\psi}^{\pm 1})_{u \in \Psi'}, (T_{\sigma})_{\sigma \in \Sigma'}\rangle /(\prod_{\sigma \in \Sigma'} T_{\sigma} - \pi)
\]
is \'etale. Note that this holds \'etale locally on $X$. 
We equip $\Spf \calO_K\langle (T_{\psi}^{\pm 1})_{\psi \in \Psi}, (T_{\sigma})_{\sigma \in \Sigma} \rangle$ with the log structure $M_{\Psi, \Sigma}$ given by
\[
\N\times\N^{\Sigma'} \rightarrow \calO_K\langle (T_{\psi}^{\pm 1})_{\psi \in \Psi}, (T_{\sigma})_{\sigma \in \Sigma} \rangle, ~~ (1, 0)\mapsto \pi, ~~(0, e_{\sigma}) \mapsto T_{\sigma} ~~\text{for}~~ \sigma \in \Sigma'.
\]

Let $Z$ be the generic fiber of $\Spf \calO_K\langle (T_{\psi}^{\pm 1})_{\psi \in \Psi}, (T_{\sigma})_{\sigma \in \Sigma} \rangle$. Then the value of $\calO_{\inf}$ on the ind-system $\{Z_{m, c}\}_{m, c}$ is the formal completion ring $D_{\Psi, \Sigma}(R)$ of the surjection 
\[
\calO_K\langle (T_{\psi}^{\pm 1})_{\psi \in \Psi}, (T_{\sigma})_{\sigma \in \Sigma} \rangle_{B_{\dR}^+} \twoheadrightarrow R_C.
\]
Furthermore, for each $n \geq 1$, $D_{(\Psi, \Sigma)^n}(R)\coloneqq \calO_{\inf}(\{Z_{m, c}^n\}_{m, c})$ is equal to the formal completion of the surjection $(\calO_K\langle (T_{\psi}^{\pm 1})_{\psi \in \Psi}, (T_{\sigma})_{\sigma \in \Sigma} \rangle^{\widehat{\otimes}_{\calO_K}^{n+1}})_{B_{\dR}^+} \rightarrow R_C$.
\end{eg}

Let $\calF$ be a crystal in vector bundles over $X_{C, \et}/B_{\dR, \inf}^+$. We consider two ways to compute $R\Gamma(X_{C, \et}/B_{\dR, \inf}^+, \calF)$, one by \v{C}ech--Alexander complexes and another by de Rham complexes, as in \cite[Const.~10.6]{GuoReinecke-Ccris}. Let $Z$ be a smooth affinoid adic space over $K$, and assume there is a closed immersion $X_C \rightarrow Z_C$. The \v{C}ech--Alexander complex for the covering $D_Z(X)$ is the cosimplicial complex $\calF_{D_{Z^{\bullet}}(X)}$ with $\calF_{D_{Z^n}(X)} \coloneqq \lim_{m, c}\calF(Z_{m, c}^n)$. If $Z$ is given by an enlarged framing $(\Psi, \Sigma)$ as in Example~\ref{eg: enlarged-framing-inf-site}, then $\calF_{D_{Z^{\bullet}}(X)} = \calF_{D_{(\Psi, \Sigma)^{\bullet}}(R)}$. By Lemma~\ref{lem: inf-site-weakly-final-ind-syst} and the vanishing of higher cohomology of vector bundles over affinoid rigid spaces, we have a natural isomorphism
\[
R\Gamma(X_{C, \et}/B_{\dR, \inf}^+, \calF) \cong \Tot(\calF_{D_{Z^{\bullet}}(X)}).
\]

On the other hand, by \cite[Thm.~3.3.1]{Guo-cris-cohom-rigid-analy-sp}, the vector bundle $\calF_{D_Z(X)}$ over $\calO_{\inf}(D_Z(X))$ is equipped with a natural flat connection $\nabla\colon \calF_{D_Z(X)} \rightarrow \calF_{D_Z(X)}\otimes_{\calO_Z} \Omega_{Z/K}^1$. Write $\dR(\calF_{D_Z(X)})$ for the de Rham complex
\[
\dR(\calF_{D_Z(X)})\coloneqq \calF_{D_Z(X)} \rightarrow \calF_{D_Z(X)}\otimes_{\calO_Z} \Omega_{Z/K}^1 \rightarrow \cdots \rightarrow \calF_{D_Z(X)}\otimes_{\calO_Z} \Omega_{Z/K}^d 
\]
where $d$ is the dimension of $Z/K$ (since $Z$ is affinoid, we also regard this as a complex of modules). 
Note that when $Z$ is given by an enlarged framing $(\Psi, \Sigma)$ as in Example~\ref{eg: enlarged-framing-inf-site}, $\dR(\calF_{D_{\Psi, \Sigma}(R)})\coloneqq \dR(\calF_{D_Z(X)})$ is represented by the Koszul complex for the derivations $\frac{\partial}{\partial \log(T_{\lambda})}\coloneqq T_{\lambda}\frac{\partial}{\partial T_{\lambda}}$, $\lambda \in \Psi \cup \Sigma$ (cf. \cite[\S~6.3]{Cesnavicius-Koshikawa}). The de Rham complex also computes $R\Gamma(X_{C, \et}/B_{\dR, \inf}^+, \calF)$.

\begin{thm}[{\cite[Thm.~10.7]{GuoReinecke-Ccris}}] \label{thm: inf-cohom-double-cx}
Keep the notation as above. Let $\calF$ be a crystal in vector bundles over $X_{C, \et}/B_{\dR, \inf}^+$. Let $Z$ be a smooth affinoid space over $K$ with a closed immersion $X_K \rightarrow Z$. Then there is a natural double complex
\[
M^{a, b}\coloneqq \calF_{D_{Z^a}(X)}\otimes_{\calO_{Z^a}}\Omega_{Z^a/K}^b,
\]
which is functorial with respect to $\calF$ and the choice of $Z$ (and the choice of enlarged framing $(\Psi, \Sigma)$ in the case of Example~\ref{eg: enlarged-framing-inf-site}. It satisfies the following properties.
\begin{itemize}
    \item For each $b \geq 1$, the cosimplical complex $M^{\bullet, b}$ is acyclic.
    \item Any degeneracy map $[a] \rightarrow [0]$ induces an isomorphism of de Rham complexes $M^{0, \bullet} \rightarrow M^{a, \bullet}$.
\end{itemize}
In particular, the total complex of $M^{a, b}$ is isomorphic to both $M^{\bullet, 0} = \Tot(\calF_{D_{Z^{\bullet}}(X)})$ and $M^{0, \bullet} = \dR(\calF_{D_Z(X)})$. 
\end{thm}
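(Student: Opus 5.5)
The plan is to adapt the classical comparison between \v{C}ech--Alexander and de Rham computations, as in the proof of \cite[Thm.~10.7]{GuoReinecke-Ccris}. We first set up $M^{a,b}$ as a double complex. For each $a$, the vector bundle $\calF_{D_{Z^a}(X)}$ over $\calO_{\inf}(D_{Z^a}(X))$ carries a natural integrable connection relative to $K$ with values in $\Omega^1_{Z^a/K}$; this comes from \cite[Thm.~3.3.1]{Guo-cris-cohom-rigid-analy-sp} applied to $Z^a$, which is still smooth affinoid with $X_C\hookrightarrow Z^a_C$ via the diagonal. This gives the differential in the $b$-direction. Functoriality of Lemma~\ref{lem: inf-site-weakly-final-ind-syst} in $a$, together with the crystal property of $\calF$, makes each $[a]\mapsto M^{a,b}$ cosimplicial. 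The connections are compatible with the cosimplicial maps because they are defined intrinsically via the crystal structure on the first infinitesimal neighbourhood of the diagonal. Functoriality in $\calF$ and in $Z$ (or in $(\Psi,\Sigma)$) follows from the same intrinsic construction.

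For the second property, fix a degeneracy $Z^a\to Z$, which is induced by a projection after applying $\calO$. The key input is a formal Poincaré lemma. Locally, $D_{Z^a}(X)$ is a formal power series ring over $D_Z(X)$ in the coordinates $T_i^{(j)}-T_i^{(0)}$; in the enlarged-framing case we use the log coordinates $\log(T^{(j)}_\lambda/T^{(0)}_\lambda)$. This description follows from the infinitesimal-neighbourhood description in Example~\ref{eg: enlarged-framing-inf-site} and smoothness. The crystal property then gives
\[
\calF_{D_{Z^a}(X)}\cong \calF_{D_Z(X)}\widehat{\otimes}_{D_Z(X)}D_{Z^a}(X)
\]
compatibly with connections. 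Hence $M^{a,\bullet}$ is the completed tensor product of $M^{0,\bullet}$ with the de Rham complex of a formal power series ring over $B_{\dR}^+$-type coefficients. The formal Poincaré lemma in characteristic $0$ says that complex is quasi-isomorphic to the constants, so $M^{0,\bullet}\to M^{a,\bullet}$ is a quasi-isomorphism. I expect that ``isomorphism'' in the statement means quasi-isomorphism.

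For the first property, fix $b\ge1$. We write $\Omega^b_{Z^a/K}$ as a direct sum of pullbacks $\bigwedge$ of $\mathrm{pr}_j^\ast\Omega^1_{Z/K}$. The cosimplicial module $[a]\mapsto \calF_{D_{Z^a}(X)}\otimes \Omega^b_{Z^a/K}$ is then a cosimplicial module of the form considered in \cite[Lem.~2.17]{bhatt-deJong-crys-cohom-deRham-cohom} (cf.\ Bhatt--de Jong's argument for crystalline cohomology). It arises from a cosimplicial object in which some extra codegeneracy yields a contracting homotopy, so it is acyclic. Concretely, the pieces involving a fixed factor $\mathrm{pr}_j^\ast\Omega^1$ are ``co-induced'' along the \v{C}ech nerve, and the standard homotopy kills cohomology. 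This is the step I expect to be the main obstacle. One must check that the homotopy argument survives the completions ($\varprojlim_{m,c}$) and works with coherent-module coefficients on affinoids rather than discrete rings. I would handle this by proving acyclicity at each finite level $(m,c)$, where the objects are honest modules over f-adic rings, and then passing to the limit. That limit is harmless because the transition maps are surjective, so the $\varprojlim^1$ terms vanish.

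Finally, both filtrations of the total complex yield spectral sequences. The first property collapses the column-wise sequence, giving $\mathrm{Tot}\simeq M^{\bullet,0}=\Tot(\calF_{D_{Z^\bullet}(X)})$. The second property, after using that a cosimplicial object with all terms isomorphic via degeneracies is homotopically constant, gives $\mathrm{Tot}\simeq M^{0,\bullet}=\dR(\calF_{D_Z(X)})$.
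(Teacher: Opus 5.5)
Your proposal is correct and follows the same route the paper relies on. The paper gives no independent proof and simply cites \cite[Thm.~10.7]{GuoReinecke-Ccris}, whose argument is the Bhatt--de Jong double-complex method you outline: the rows $M^{\bullet,b}$ for $b\geq 1$ are homotopic to zero via the decomposition $\Omega^1_{Z^a/K}=\bigoplus_j \mathrm{pr}_j^\ast\Omega^1_{Z/K}$, the columns are compared by the formal Poincar\'e lemma in characteristic $0$, and one concludes with the two spectral sequences of the first-quadrant double complex. You are right that ``isomorphism'' in the second bullet means quasi-isomorphism, and the details you leave implicit are routine: passing the homotopies through $\varprojlim_{m,c}$, and gluing the local power-series description when $Z$ has no global coordinates.
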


We now study the relation between the log crystalline cohomology and infinitesimal cohomology. We refer to \cite[Def.~B.15]{du-liu-moon-shimizu-purity-F-crystal} for the notion of finite locally free isocrystals on $(X_1, M_{X_1})_{\CRIS}$.

\begin{prop}[cf. {\cite[Prop.~10.10]{GuoReinecke-Ccris}}] \label{prop: cris crystal-to-inf crystal}
Let $X$ be a separated semistable $p$-adic formal scheme over $\calO_K$. There is a natural functor from the category of finite locally free isocrystals on $(X_1, M_{X_1})_{\CRIS}$ to $\Vect(X_{C, \et}/B_{\dR, \inf}^+)$.
\end{prop}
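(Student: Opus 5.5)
Given a finite locally free isocrystal $\calE_\Q$ on $(X_1, M_{X_1})_{\CRIS}$, we construct its value $\calF$ on every infinitesimal thickening $(U,T)$, following \cite[Prop.~10.10]{GuoReinecke-Ccris} and adding the log structure input from \cite[\S~6]{Cesnavicius-Koshikawa}. The key observation is that $B_{\dR,c}^+$ receives a map from $A_{\logcris,K}$ (indeed from $A_\cris\otimes_{W(k)}\calO_K$, using the chosen $\calO_K\rightarrow B_\dR^+$). The reason is that $\Ker(\widetilde{\theta}_K)$ is nilpotent in $B_{\dR,c}^+$ after inverting $p$, so divided powers exist there. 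Consequently, every affinoid infinitesimal thickening $T=\Spa(D,D^+)$ of an affinoid $U=\Spa(R_C,R_C^+)$ with $R=\Spf$-model étale over $X$ admits, étale locally, a $p$-adic log PD-thickening structure on a ring of definition $D_0\subset D$. To get this, we take $D_0$ to be a $p$-adically complete $A_{\inf,K}$-subalgebra lifting an integral model and enlarge it by the divided powers of its kernel to $\calO_C/p$-level data. We equip it with the log structure pulled back from $M_X$ along a lift of the chart.

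The steps are as follows. \textbf{(1)} Reduce to $X=\Spf R$ admitting an integral enlarged framing $(\Psi,\Sigma)$ (Example~\ref{eg: enlarged-framing-inf-site}), by using separatedness, étale descent of both categories, and Lemma~\ref{lem: inf-site-weakly-final-ind-syst} to check crystals on the weakly final ind-system $D_{\Psi,\Sigma}(R)$ and its self-products. \textbf{(2)} Build a log PD-formal ring $P_{\Psi,\Sigma}$: the $p$-completed log PD-envelope of $A_{\logcris,K}\langle T_\psi^{\pm1},T_\sigma\rangle\rightarrow R_{\calO_C/p}$, log structure from $\N\times\N^{\Sigma'}$, giving an ind-object of $(X_1,M_{X_1})_{\CRIS}$. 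Then produce a natural map $P_{\Psi,\Sigma}[p^{-1}]\rightarrow D_{\Psi,\Sigma}(R)$: send $T_\lambda$ to $T_\lambda$, and extend divided powers of the kernel because that kernel maps into the topologically nilpotent ideal of the formal completion. Do the same compatibly for the $n$-fold products $(\Psi,\Sigma)^n$. \textbf{(3)} Define
\[
\calF_{D_{(\Psi,\Sigma)^n}(R)}\coloneqq \calE_\Q(P_{(\Psi,\Sigma)^n})\otimes_{P_{(\Psi,\Sigma)^n}[p^{-1}]}D_{(\Psi,\Sigma)^n}(R).
\]
The isocrystal property yields descent data over the cosimplicial system, hence a crystal in vector bundles. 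Finite projectivity transfers by base change. \textbf{(4)} Check independence of framing: a second framing is handled by the union framing $(\Psi\cup\Psi',\Sigma\cup\Sigma')$, which maps to both. Then glue étale locally, and verify functoriality in morphisms of isocrystals.

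The main obstacle is step (2) in the log setting. In $D_{\Psi,\Sigma}(R)$ the coordinates $T_\sigma$ for $\sigma\in\Sigma'$ multiply to $\pi$, not to a unit, so the log structure must be compatible on both sides. One needs the PD-envelope map to be log (exact) so that the evaluation of $\calE_\Q$ is meaningful. This should follow as in \cite[\S~6]{Cesnavicius-Koshikawa}: in $B_{\dR}^+$-coefficients, $\pi$ and $T_\sigma$ become units after inverting $p$, so the log structure is trivial generically. The only real content is that $u/[\pi^\flat]$-type ratios are handled by the $A_{\logcris,K}$ variables, which possess divided powers. I would first try to verify this directly on the Koszul coordinates $\log T_\lambda$.
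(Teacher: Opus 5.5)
Your proposal is essentially the paper's proof, which runs as follows. Reduce étale locally to an integral enlarged framing. Map the $p$-completed log PD-envelope of the framed Tate algebra and of its self-tensor products to $D_{(\Psi,\Sigma)^n}(R)$; the paper forms this envelope over $A_{\inf,K}$ with respect to the kernel of the surjection onto $R_{\calO_C}$, and it agrees with your envelope over $A_{\logcris,K}$ by transitivity of log PD-envelopes. Base change the evaluations of $\calE_{\cris,\Q}$ along these maps, and conclude via the weakly final ind-system of Lemma~\ref{lem: inf-site-weakly-final-ind-syst} together with functoriality in $R$ and the framing.

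Your opening claim that arbitrary affinoid infinitesimal thickenings carry log PD-structures on a ring of definition is never used in steps (1)--(4), and you can simply drop it.
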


\begin{proof}
We follow a similar construction as in the proof of \cite[Prop.~10.10]{GuoReinecke-Ccris}. Let $\calE_{\cris, \Q}$ be a finite locally free isocrystal on $(X_1, M_{X_1})_{\CRIS}$. To associate a crystal in vector bundles on $X_{C, \et}/B_{\dR, \inf}^+$ to $\calE_{\cris, \Q}$, we work \'etale locally on $X$ and consider integral enlarged framings as in Example~\ref{eg: enlarged-framing-inf-site}.

Assume $X = \Spf R$ with an integral enlarged framing $(\Psi, \Sigma)$. 
Consider the $A_{\inf,K}$-algebra $A_{\inf, K}\langle (T_\psi^{\pm 1})_{\psi \in \Psi}, (T_\sigma)_{\sigma\in \Sigma}\rangle$ equipped with the log structure given by $M_{A_{\inf}}$ in Theorem~\ref{thm: pris-cris-comparison-over-Breuil S} and $M_{\Psi, \Sigma}$ in Example~\ref{eg: enlarged-framing-inf-site}.
Let $J\coloneqq \Ker(A_{\inf, K}\langle (T_\psi^{\pm 1})_{\psi \in \Psi}, (T_\sigma)_{\sigma\in \Sigma}\rangle \twoheadrightarrow R_{\calO_C})$. Write $D_{\pd, \Psi, \Sigma}(R)$ for the $p$-completed log PD-envelope of $A_{\inf, K}\langle (T_\psi^{\pm 1})_{\psi \in \Psi}, (T_\sigma)_{\sigma\in \Sigma}\rangle$ with respect to $J$. The induced map $D_{\pd, \Psi, \Sigma}(R) \rightarrow A_{\inf, K}\langle (T_\psi^{\pm 1})_{\psi \in \Psi}, (T_\sigma)_{\sigma\in \Sigma}\rangle[p^{-1}]/J^m$ becomes surjective after inverting $p$. Taking the inverse limit over $m$, we obtain
\[
D_{\pd, \Psi, \Sigma}(R) \rightarrow D_{\Psi, \Sigma}(R) \cong (D_{\pd, \Psi, \Sigma}(R)[p^{-1}])^{\wedge}_J,
\]
which factors through $D_{\pd, \Psi, \Sigma}(R)\widehat{\otimes}_{A_{\inf, K}}B_{\dR}^+\coloneqq \varprojlim_m D_{\pd, \Psi, \Sigma}(R)[p^{-1}]/\Ker(\widetilde{\theta}_K)^m$.

Similarly, for each $n \geq 0$, let $D_{\pd, (\Psi, \Sigma)^n}(R)$ be the $p$-completed log PD-envelope of 
\[
A_{\inf, K}\langle (T_\psi^{\pm 1})_{\psi \in \Psi}, (T_\sigma)_{\sigma\in \Sigma}\rangle^{\widehat{\otimes}_{A_{\inf, K}}^{n+1}} \twoheadrightarrow R_{\calO_C}.
\]
Then we have a natural map $D_{\pd, (\Psi, \Sigma)^n}(R) \rightarrow D_{(\Psi, \Sigma)^n}(R)$.

For each $n$, consider the finite projective $D_{(\Psi, \Sigma)^n}(R)$-module
\[
\calE_{\cris, \Q}(D_{\pd, (\Psi, \Sigma)^n}(R))\otimes_{D_{\pd, (\Psi, \Sigma)^n}(R)} D_{(\Psi, \Sigma)^n}(R),
\]
where  the evaluation of $\calE_{\cris, \Q}$ at the ind-object $(\Spec R_{\calO_C/p}, \Spf D_{\pd, (\Psi, \Sigma)^n}(R))$ is denoted by $\calE_{\cris, \Q}(D_{\pd, (\Psi, \Sigma)^n}(R))$
as in Construction~\ref{const:DLMS-perfect-prism-attached-to-affinoid-perfectoid}.
By the crystal property of $\calE_{\cris, \Q}$ (see Lemma~\ref{lem:algebraic-lemmas-for-base-change}(2)), the corresponding cosimplicial object satisfies the natural base change isomorphism under any $[n] \rightarrow [m]$ in $\Delta$. By Lemma~\ref{lem: inf-site-weakly-final-ind-syst}, this yields a crystal in vector bundles $\calE_{\inf} \in \Vect(X_{C, \et}/B_{\dR, \inf}^+)$ such that $\calE_{\inf}(D_{(\Psi, \Sigma)^n}(R)) = \calE_{\cris}(D_{\pd, (\Psi, \Sigma)^n}(R))\otimes_{D_{\pd, (\Psi, \Sigma)^n}(R)} D_{(\Psi, \Sigma)^n}(R)$. 

Now, since the construction above is functorial with respect to $R$ and enlarged framing $(\Psi, \Sigma)$, it globalizes to a functor from the category of finite locally free isocrystals on $(X_1, M_{X_1})_{\CRIS}$ to $\Vect(X_{C, \et}/B_{\dR, \inf}^+)$. 
\end{proof}

\begin{prop}[cf. {\cite[Prop.~10.11]{GuoReinecke-Ccris}}] \label{prop: comparison-cris-cohom-inf-cohom}
Let $X$ be a proper semistable $p$-adic formal scheme over $\calO_K$. Let $\calE_{\cris, \Q}$ be a finite locally free isocrystal on $(X_1, M_{X_1})_{\CRIS}$ given by the crystalline realization $\calE_{\cris}$ of an analytic prismatic crystal on $(X, M_X)_{\Prism}$ (Definition~\ref{def:crystalline-realization}), and write $\calE_{\inf} \in \Vect(X_{C, \et}/B_{\dR, \inf}^+)$ for the associated crystal in vector bundles on $X_{C, \et}/B_{\dR, \inf}^+$ given by Proposition~\ref{prop: cris crystal-to-inf crystal}. Then we have a natural isomorphism in $D(B_{\dR}^+)$
\begin{align*}
R\Gamma(((X_{\calO_C/p}, M_{X_{\calO_C/p}})&/(A_{\logcris, K}, M_{A_{\logcris, K}}))_{\CRIS}, \calE_{\cris, \Q})\otimes_{A_{\logcris, K}[p^{-1}]}^L B_{\dR}^+\\ &\stackrel{\cong}{\rightarrow} R\Gamma(X_{C, \et}/B_{\dR, \inf}^+, \calE_{\inf}). 
\end{align*}
\end{prop}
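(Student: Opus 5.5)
Following the proof of \cite[Prop.~10.11]{GuoReinecke-Ccris}, we compare both sides via \v{C}ech--Alexander complexes built from the same local data. Since $X$ is proper, hence quasi-compact and separated, we take a finite $p$-completely \'etale cover $\{X_\lambda=\Spf R_\lambda\}_{\lambda\in\Lambda}$ such that each $X_\lambda$, and each finite fiber product of them, carries an integral enlarged framing $(\Psi,\Sigma)$ as in Example~\ref{eg: enlarged-framing-inf-site}. Taking products of the framings over the index tuples gives framings of the fiber products. This yields two cosimplicial objects: on the crystalline side, the ind-objects $(X_{\calO_C/p},\Spf D_{\pd,(\Psi,\Sigma)^\bullet})$ over $A_{\logcris,K}$; on the infinitesimal side, the ind-systems $D_{Z^\bullet}(X)$ with rings $D_{(\Psi,\Sigma)^\bullet}$. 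Here the PD-envelopes $D_{\pd}$ are formed over $A_{\logcris,K}$, the base change of the $A_{\inf,K}$-envelopes, and they come with maps $D_{\pd,(\Psi,\Sigma)^n}\to D_{(\Psi,\Sigma)^n}$ as in the proof of Proposition~\ref{prop: cris crystal-to-inf crystal}.

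The first step is to check that each family jointly covers the final object of its topos and forms a \v{C}ech nerve, with higher cohomology vanishing on every term. On the crystalline side this comes from log PD-envelopes of products and Lemma~\ref{lem: affine vanishing on crystalline site}. On the infinitesimal side it comes from Lemmas~\ref{lem: inf-site} and \ref{lem: inf-site-weakly-final-ind-syst} together with vanishing for vector bundles on affinoids. Lemmas~\ref{lem:CA method} and \ref{lem: CA-method-infinity-category}, with Remark~\ref{rem: CA-for-jointly-surjective-case}, whose proofs apply verbatim, then identify the crystalline cohomology with $\Tot(\calE_{\cris,\Q}(D_{\pd,\bullet}))$ and the infinitesimal cohomology with $\Tot(\calE_{\inf}(D_\bullet))$. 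By the construction in Proposition~\ref{prop: cris crystal-to-inf crystal}, $\calE_{\inf}(D_n)=\calE_{\cris,\Q}(D_{\pd,n})\otimes D_n$, so there is a natural cosimplicial map between the two, and it induces the desired map after base change to $B_\dR^+$. We must also check that the map is independent of the chosen cover and framings, by passing to common refinements.

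The second step, which I expect to be the main obstacle, is to prove that this map is a quasi-isomorphism. We do not compare term by term, since $D_{\pd}\otimes B_\dR^+$ and $D$ are genuinely different rings. Instead we pass to de Rham complexes. On the infinitesimal side, Theorem~\ref{thm: inf-cohom-double-cx} identifies $\Tot$ with the log de Rham (Koszul) complex of $\calE_{\inf}(D_{\Psi,\Sigma}(R))$, for the derivations $T_\lambda\partial/\partial T_\lambda$. On the crystalline side, the analogous double complex argument (the log crystalline Poincar\'e lemma used in Theorem~\ref{thm: log-dR-cris-comparison} and \cite[\S~6]{du-moon-shimizu-cris-pushforward}) identifies $\Tot$ with the log de Rham complex of $\calE_{\cris,\Q}(D_{\pd,\Psi,\Sigma}(R))$. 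This reduces the claim to showing that base change along $D_{\pd}[p^{-1}]\widehat{\otimes}B_\dR^+\to D_{\Psi,\Sigma}(R)$ is a quasi-isomorphism on log de Rham complexes. For that, we present both rings as completed PD- and formal power series algebras in the variables of the kernel over a common base, namely $R_{B_\dR^+}$ lifted \'etale-locally in the log sense. We then apply a filtered Poincar\'e lemma: after inverting $p$, divided powers become ordinary power series, and the relative de Rham complex of the extra variables is a resolution on both sides. The semistable case requires the log structure and the $\pi$-direction; here we follow \cite[\S~6]{Cesnavicius-Koshikawa}.

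Finally, the boundedness of the complexes (there are finitely many $\lambda$ and the Koszul complexes have bounded length), together with Lemma~\ref{lem:algebraic-lemmas-for-base-change}(1), lets $\otimes^L B_\dR^+$ commute with $\Tot$. Properness gives perfectness, which completes the argument.
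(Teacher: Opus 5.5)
Your \v{C}ech--Alexander construction of the comparison map is reasonable. The argument that this map is an isomorphism, however, has a genuine gap at the base-change step.

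The statement involves the \emph{uncompleted} derived tensor product $-\otimes^L_{A_{\logcris,K}[p^{-1}]}B_{\dR}^+$ of the global crystalline cohomology. Your local de Rham comparison only concerns the completed rings $D_{\pd}[p^{-1}]\widehat{\otimes}B_{\dR}^+$ and $D_{\Psi,\Sigma}(R)$. Lemma~\ref{lem:algebraic-lemmas-for-base-change}(1) does not connect the two, for two reasons. First, it requires finite Tor amplitude of $A_{\logcris,K}[p^{-1}]\rightarrow B_{\dR}^+$, which you do not establish. Second, even granting that, it only moves $\otimes^L B_{\dR}^+$ inside $\Tot$. It does not replace the termwise uncompleted base changes of the non-finite modules $\calE_{\cris,\Q}(D_{\pd,n})$ by the completed ones your Poincar\'e-lemma argument handles, and these need not agree termwise.

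One can only pass between the uncompleted and completed versions after taking global cohomology. For that you must know that the crystalline cohomology over $A_{\logcris,K}[p^{-1}]$ is a perfect complex compatible with base change. ``Properness gives perfectness'' does not supply this over the non-noetherian ring $A_{\logcris,K}[p^{-1}]$. It comes instead from Proposition~\ref{prop: base-change-cris-cohom-Acris}(1)(2), which identifies the crystalline cohomology over $A_{\logcris,K}$ with $R\Gamma(((X_1,M_{X_1})/(\calO_K,M_{\can}))_{\CRIS},\calE_{\cris,\Q})\otimes^L_{K} A_{\logcris,K}[p^{-1}]$ (using that $\iota_2$ factors through $\calO_K$). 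This is combined with the perfectness in Theorem~\ref{thm: log-dR-cris-comparison}.

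Once you make this reduction, the enlarged framings and the Poincar\'e-lemma comparison become unnecessary; this is how the paper argues. The left-hand side becomes $R\Gamma(((X_1,M_{X_1})/(\calO_K,M_{\can}))_{\CRIS},\calE_{\cris,\Q})\otimes^L_{K} B_{\dR}^+$. By Theorem~\ref{thm: log-dR-cris-comparison} and Remark~\ref{rem:deRham-convergent}, this is $R\Gamma_{\dR}(X_K,\mathbf{E})\otimes^L_{K} B_{\dR}^+$. On the infinitesimal side, since $X_K$ is smooth, one applies Theorem~\ref{thm: inf-cohom-double-cx} with $Z=X_K$ itself rather than a framing. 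Then $R\Gamma(X_{C,\et}/B_{\dR,\inf}^+,\calE_{\inf})$ is computed by the de Rham complex of $(\calE_{\inf})_{D_Z(X)}$, which by construction is $\mathbf{E}$ base-changed to $B_{\dR}^+$ with its connection. The comparison is then just base change of the de Rham cohomology of $\mathbf{E}$ along the flat map $K\rightarrow B_{\dR}^+$.

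A smaller point concerns your local comparison. After inverting $p$, a $p$-completed divided power polynomial algebra does not become a power series ring; it is a proper subring. Also, completing along $\Ker(\widetilde{\theta}_K)$ does not complete along the PD-variables. The PD Poincar\'e lemma holds integrally, so this is repairable, but your ``common base'' identification needs more care than you indicate.
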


\begin{proof}
By Proposition~\ref{prop: base-change-cris-cohom-Acris}(1)(2), it suffices to show a natural isomorphism
\[
(R\Gamma(((X_1, M_{X_1})/(\calO_K, M_\can))_{\CRIS}, \calE_{\cris, \Q})\otimes_K^L B_{\dR}^+ \stackrel{\cong}{\rightarrow} R\Gamma(X_{C, \et}/B_{\dR, \inf}^+, \calE_{\inf}).
\]
Recall the notation in Remark~\ref{rem:deRham-convergent} and  isomorphisms therein:
\begin{align*}
R\Gamma(((X_1, M_{X_1})/(\calO_K, M_\can))_{\CRIS}, \calE_{\cris, \Q}) &\cong R\Gamma(X, (E_{\Q}\otimes_{\calO_X}\omega^{\bullet}_{X/\calO_K}, \nabla_{E_{\Q}})) \\
    &\cong R\Gamma(X_K, (\mathbf{E}\otimes \Omega^\bullet_{X_K/K}, \nabla_{\mathbf{E}})).
\end{align*}

On the other hand, since $X_K$ is smooth over $K$, we see from applying Theorem~\ref{thm: inf-cohom-double-cx} for the identity map $X_K \rightarrow Z \coloneqq X_K$ that $R\Gamma(X_{C, \et}/B_{\dR, \inf}^+, \calE_{\inf})$ is computed by $\dR((\calE_{\inf})_{D_Z(X)})$. By the construction of $\calE_{\inf}$ in the proof of Proposition~\ref{prop: cris crystal-to-inf crystal}, we have $E\otimes_{\calO_K} B_{\dR}^+ \cong (\calE_{\inf})_{D_Z(X)}$, and have a natural isomorphism
\[
(\mathbf{E}\otimes \Omega^\bullet_{X_K/K}, \nabla_{\mathbf{E}})\otimes_{\calO_K} B_{\dR^+} \cong \dR((\calE_{\inf})_{D_Z(X)}). 
\]
\end{proof}

\subsubsection{Compatibility of comparison maps}

We proceed to proving the commutativity of the diagram in Theorem~\ref{thm: Cst-conj-main}. Similarly as in \cite[\S~10.2]{GuoReinecke-Ccris}, we show a commutative diagram \eqref{eq: summary-of-comparison-maps} of isomorphisms of cohomology complexes over $B_{\dR}$ in Theorem~\ref{thm: comparison-maps-compatibility} below. In the diagram, $\gamma_{\et}$ and $\gamma_{\cris}$ are given respectively by Theorem~\ref{thm: pris-etale comparison} and Theorem~\ref{thm: pris-cris-comparison-over-Breuil S}, and $\rho_{\st}$ and $\rho_{\pi}$ are given by Theorem~\ref{thm:Hyodo-Kato-cohomology}. The dotted isomorphisms in the diagram are given by \cite{scholze-p-adic-hodge}, so the commutativity in question follows from Theorem~\ref{thm: comparison-maps-compatibility}.  

Keep the notation as in \S~\ref{subsec: Cst-conj}: let $(X, M_X)$ be a proper semistable $p$-adic formal scheme over $\calO_K$. Let $\calE_{\Prism}$ be an analytic prismatic $F$-crystal on $(X, M_X)_{\Prism}$, and $\bL = T(\calE_{\Prism})$ be the corresponding semistable $\Z_p$-local system on $X_{K}$ associated to the finite locally free $F$-isocrystal $\calE_{\cris, \Q}$ on $(X_1, M_{X_1})_{\CRIS}$ given by the crystalline realization $\calE_{\cris}$ of $\calE_{\Prism}$. Write $(A, I, M_{\Spf A}) = (A_{\inf}, ([p]_{[\epsilon]}), M_{A_{\inf}}) \in (\Spf\calO_K, M_{\Spf\calO_K})_{\Prism}$, and make following abbreviations for the various cohomology theories involved:
\begin{itemize}
    \item $R\Gamma_{\Prism}\coloneqq R\Gamma_{A_{\inf}}(X, j_{\ast}\calE_{\Prism})$

    \item $R\Gamma_{\et}\coloneqq R\Gamma(X_{C, \et}, \bL)$

    \item $R\Gamma_{\cris}\coloneqq R\Gamma(((X_{\calO_C/p}, M_{X_{\calO_C/p}})/(A_{\cris}, M_{A_{\cris}}))_{\CRIS}, \calE_{\cris, \Q})$

    \item $R\Gamma_{\HK}\coloneqq R\Gamma_{\HK}((X_0, M_{X_0}), \calE_{\cris, \Q}))$.

    \item Let $\calE_{\inf}$ be the crystal in vector bundles on $X_{C, \et}/B_{\dR, \inf}^+$ associated to $\calE_{\cris, \Q}$ by Proposition~\ref{prop: cris crystal-to-inf crystal}. Write $R\Gamma_{\inf}\coloneqq R\Gamma(X_{C, \et}/B_{\dR, \inf}^+, \calE_{\inf})$

    \item We have the $\bB_{\dR}$-local system $\calE_{\cris, \Q}(\bB_{\dR})$ over the pro-\'etale site $X_{C, \proet}$ given by \cite[Const.~3.43]{du-liu-moon-shimizu-purity-F-crystal}. Write $R\Gamma_{\proet}\coloneqq R\Gamma(X_{C, \proet}, \calE_{\cris, \Q}(\bB_{\dR}))$.

    \item Let $(\mathbf{E}, \nabla_{\mathbf{E}})$ be the vector bundle with integral connection on $X_{K}/K$ associated to $\calE_{\cris, \Q}$, equipped with filtration $\Fil^{\bullet}\mathbf{E}$ (\cite[Rem.~B.33, Prop.~3.45]{du-liu-moon-shimizu-purity-F-crystal}). Write $R\Gamma_{\dR}\coloneqq R\Gamma_{\dR}(X_{K}/K, \mathbf{E})$.

    \item Let $\dR(\calE_{\cris, \Q}(\bB_{\dR})\otimes_{\bB_{\dR}} \calO\bB_{\dR}) \coloneqq \calE_{\cris, \Q}(\bB_{\dR})\otimes_{\bB_{\dR}} \dR(\calO\bB_{\dR}, d_{X_{K}/K})$ be the associated filtered de Rham complex equipped with the tensor product filtration. By \cite[Prop.~3.45]{du-liu-moon-shimizu-purity-F-crystal}, we have a natural filtered isomorphism
    \[
    \dR(\calE_{\cris, \Q}(\bB_{\dR})\otimes_{\bB_{\dR}} \calO\bB_{\dR}) \cong \dR(\mathbf{E}, \nabla_{X_{K}/K})\otimes_{\calO_{X_{K}}} \calO\bB_{\dR}. 
    \]
    Write $R\Gamma_{\proetdR}\coloneqq R\Gamma(X_{C, \proet}, \dR(\calE_{\cris, \Q}(\bB_{\dR})\otimes_{\bB_{\dR}} \calO\bB_{\dR}))$ for its filtered pro-\'etale cohomology.
\end{itemize}

\begin{thm}[cf. {\cite[Thm.~10.16]{GuoReinecke-Ccris}}] \label{thm: comparison-maps-compatibility}
We have a natural commutative diagram of isomorphisms of cohomology complexes over $B_{\dR}$ 
\begin{equation} \label{eq: summary-of-comparison-maps}
\xymatrix{
& R\Gamma_{\Prism}\otimes_{A_{\inf}}^L B_{\dR} \ar[ld]_{\gamma_{\et}} \ar[d] \ar[r]^{\gamma_{\cris}} & R\Gamma_{\cris}\otimes_{A_{\cris}}^L B_{\dR} \ar[d]\\
R\Gamma_{\et}\otimes_{\Z_p}^L B_{\dR} \ar@{.>}[r]
& R\Gamma_{\proet} \ar@{.>}[d] & R\Gamma_{\inf}\otimes_{B_{\dR}^+}^L B_{\dR} \ar@<-.5ex>[l] \ar@<.5ex>[l] & R\Gamma_{\HK}\otimes_{K_0}^L B_{\dR} \ar[ld]^{\rho_{\pi}\otimes B_{\dR}} \ar[lu]_{\rho_{\st}\otimes B_{\dR}}\\
& R\Gamma_{\proetdR} & R\Gamma_{\dR}\otimes_K^L B_{\dR} \ar@{.>}[l] \ar[u]
}    
\end{equation}
where the dotted arrows are filtered isomorphisms given in \cite{scholze-p-adic-hodge}.
\end{thm}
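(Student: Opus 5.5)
The plan is to follow \cite[\S~10.2]{GuoReinecke-Ccris} closely, reducing everything to local computations on small affine (or enlarged-framed) pieces. Since the dotted arrows are Scholze's isomorphisms and all other arrows were constructed earlier, the substance is to define the unlabelled maps and check that the three cells of \eqref{eq: summary-of-comparison-maps} commute.

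The unlabelled maps are defined as follows.
\begin{itemize}
\item The right vertical map $R\Gamma_{\cris}\otimes B_\dR\rightarrow R\Gamma_{\inf}\otimes B_\dR$ is the composite of the base change $A_\cris\rightarrow A_{\logcris,K}$ (Proposition~\ref{prop: base-change-cris-cohom-Acris}(3)) with Proposition~\ref{prop: comparison-cris-cohom-inf-cohom}.
\item The map $R\Gamma_{\dR}\otimes B_\dR\rightarrow R\Gamma_{\inf}\otimes B_\dR$ is the de Rham computation from Theorem~\ref{thm: inf-cohom-double-cx} with $Z=X_K$.
\item The pair of arrows $R\Gamma_{\inf}\otimes B_\dR\rightrightarrows R\Gamma_{\proet}$ come from evaluating on the pro-\'etale cover by affinoid perfectoids. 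Locally, $\calE_{\inf}$ evaluated on the framed infinitesimal envelope $D_{(\Psi,\Sigma)^\bullet}(R)$ maps to $\calE_{\cris,\Q}(\bB_\dR^+)$ through $\A_\cris(A^+)\rightarrow \bB_\dR^+$. This is exactly \cite[Const.~10.12--10.14]{GuoReinecke-Ccris}, using the log Poincar\'e lemma for $\calO\bB_\dR$ in the semistable case from \cite[\S~6]{Cesnavicius-Koshikawa}.
\item The middle vertical map from $R\Gamma_\Prism\otimes B_\dR$ to $R\Gamma_{\proet}$ is evaluation of $j_\ast\calE_\Prism$ at the perfect log prisms $\Ainf(A^+)$ of Construction~\ref{const:DLMS-perfect-prism-attached-to-affinoid-perfectoid}, followed by the \'etale realization isomorphism base changed to $\bB_\dR$.
\end{itemize}

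The triangle $\gamma_\et$ with the left dotted arrow should commute because Tian's comparison map, Theorem~\ref{thm: pris-etale comparison}, is itself constructed by this evaluation on perfectoids together with the primitive comparison theorem. One has to track the Frobenius twist between $\Ker\theta$ and $\Ker\widetilde\theta$ recorded in Remark~\ref{rem:two-structure-map-for-Ainf}.

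The square $\gamma_\cris$ with the middle column commutes for a local reason. Choose a \v{C}ech nerve of Breuil--Kisin log prisms $\fkS^{\rel,[\bullet]}_\Lambda$ base changed to $A_\inf$ and $A_\cris$ (Theorem~\ref{thm: general-base-change-BK-cohom}). On each term the maps to $\calE_{\cris,\Q}(\bB_\dR)$ on perfectoids factor through the same map $\A_\cris(A^+)\rightarrow\bB_\dR^+$. The crystalline realization is defined by the same values $j_\ast\calE_\Prism(S)$ (Theorem~\ref{thm: prism-cryst-comparison-semist}). Compatibility then reduces to the equality of the two routes $D_{\pd,(\Psi,\Sigma)^n}(R)\rightarrow D_{(\Psi,\Sigma)^n}(R)\rightarrow \bB_\dR^+$ and $D_{\pd}\rightarrow\A_\cris(A^+)\rightarrow\bB_\dR^+$. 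This follows from the universal property of PD-envelopes and the crystal property, after refining coverings so that framings and perfectoid covers are compatible.

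The right triangle, involving $\rho_\st$ and $\rho_\pi$, is exactly Theorem~\ref{thm:Hyodo-Kato-cohomology}(5) tensored up along $A_{\logcris,K}[p^{-1}]\rightarrow B_\dR^+$. One also needs the diagram in Proposition~\ref{prop:Bst-vs-Asthat}, which ensures that the embedding $B_\st^+\rightarrow B_\dR^+$ is the one through $A_{\logcris,K}$. The bottom square with the dotted arrows is Scholze's de Rham comparison combined with the filtered identification \cite[Prop.~3.45]{du-liu-moon-shimizu-purity-F-crystal}, arguing as in \cite[Prop.~10.15]{GuoReinecke-Ccris}.

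The main obstacle will be the cell involving $\gamma_\et$ and the prismatic-to-pro-\'etale map. Tian's isomorphism is built with different normalizations, namely $\Ker\widetilde\theta$ and the $\mu$-localization. Checking that it agrees with the association isomorphism $\alpha_{\cris,\bL,\calE}$ after base change to $B_\dR$ requires unwinding his construction. I would first reduce to the case where both sides are computed by the same perfectoid \v{C}ech nerve. I would then invoke Remark~\ref{rem:two-structure-map-for-Ainf} to identify the two realizations at the level of sheaves, so that commutativity holds termwise.
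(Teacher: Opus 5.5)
Your architecture matches the paper's proof. The right-hand part comes from Theorem~\ref{thm:Hyodo-Kato-cohomology}(3)(4)(5), base changed to $B_\dR$ and composed with Proposition~\ref{prop: comparison-cris-cohom-inf-cohom}. The $\gamma_\et$ triangle commutes because, by the proof of \cite[Thm.~5.6]{Tian-prism-etale-comparison}, Tian's comparison is induced by the functorial isomorphism of \cite[Lem.~5.5(1)]{Tian-prism-etale-comparison} on the perfect log prisms $\A_{\inf}(B^+)$, with Remark~\ref{rem:two-structure-map-for-Ainf} handling the Frobenius twist. In the paper this cell is the easy one rather than the main obstacle. The $\gamma_\cris$ square is checked on \v{C}ech--Alexander complexes over enlarged framings, and the rest is as in \cite{GuoReinecke-Ccris}.

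However, your reduction of the $\gamma_\cris$ square fails as written. You compare $D_{\pd,(\Psi,\Sigma)^n}(R)\to D_{(\Psi,\Sigma)^n}(R)\to\bB_\dR^+$ with a route $D_{\pd,(\Psi,\Sigma)^n}(R)\to\A_\cris(A^+)\to\bB_\dR^+$, but that second map does not exist. The ring $D_{\pd,(\Psi,\Sigma)^n}(R)$ is an $A_{\inf,K}$-algebra, and for $K\neq K_0$ the ring $\A_\cris(A^+)$ carries no compatible $\calO_K$-structure (already $K\not\subset B_\cris$ inside $B_\dR$). Your map $S^{\rel}_\square\widehat{\otimes}A_\cris\to\A_\cris(A^+)$ is fine; the break is exactly where the right vertical map enters, and it passes through $A_{\logcris,K}$ for this reason.

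The paper's fix is to factor $\gamma_\cris$ through $R\Gamma_{\logcris,K}$ using Proposition~\ref{prop: base-change-cris-cohom-Acris}(3). The common target is then $\A_{\logcris,K}(R^\bullet_{\infty,\calO_C})$, the $p$-completed log PD-envelope of $\A_{\inf}(R^\bullet_{\infty,\calO_C})\otimes_{W(k)}\calO_K\to R_{\infty,\calO_C}$. Here $R_\infty$ is obtained by adjoining $p$-power roots of the framing variables, so framing and perfectoid cover are compatible by construction and no refinement is needed. Both $S^{\rel,[\bullet]}_\square$ and $D_{\pd,(\Psi,\Sigma)^\bullet}(R)$ map to this target via $T_\lambda\mapsto[(T_\lambda^{1/p^m})_m]$.

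The extra input you need is that $\lim_{[n]\in\Delta}\calE_\cris(D_{\pd,(\Psi,\Sigma)^n}(R))\to\lim_{[n]\in\Delta}\calE_\cris(\A_{\logcris,K}(R^n_{\infty,\calO_C}))$ is an isomorphism. This holds because $R\to R_\infty$ is quasi-syntomic and both sides compute the same crystalline cohomology. All four maps of the square can then be written on these complexes, and commutativity is termwise.

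You should also say why your newly defined arrows are isomorphisms. The paper gets this for the prismatic-to-pro-\'etale map and for $R\Gamma_{\inf}\otimes B_\dR\to R\Gamma_{\proet}$ by two-out-of-three once the cells commute.

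Finally, $X_C$ is smooth, so no log Poincar\'e lemma is needed. What \cite{Cesnavicius-Koshikawa} supplies is the enlarged-framing (Koszul) description of the infinitesimal cohomology.
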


\begin{proof}
Write $R\Gamma_{\logcris, K}\coloneqq R\Gamma(((X_{\calO_C/p}, M_{X_{\calO_C/p}})/(A_{\logcris, K}, M_{A_{\logcris, K}}))_{\CRIS}, \calE_{\cris, \Q})$. Note first that we have a commutative diagram
\[
\xymatrix{
R\Gamma_{\HK}\otimes_{K_0}^L B_{\dR} \ar[r]^{\rho_{\st}\otimes B_{\dR}} \ar[d]_{\rho_{\pi}\otimes B_{\dR}} & R\Gamma_{\cris}\otimes_{A_{\cris}}^L B_{\dR} \ar[d]\\
R\Gamma_{\dR}\otimes_K^L B_{\dR} \ar[r] & R\Gamma_{\logcris, K}\otimes_{A_{\logcris, K}}^L B_{\dR} 
}
\]
given by Theorem~\ref{thm:Hyodo-Kato-cohomology}(3)(4)(5) and the base change along $A_{\logcris, K}[p^{-1}] \rightarrow B_{\dR}$, where all the maps are isomorphisms. The rightmost part of the diagram~\eqref{eq: summary-of-comparison-maps} is obtained by composing the above with the isomorphism in Proposition~\ref{prop: comparison-cris-cohom-inf-cohom}. 

For the other parts of the diagram~\eqref{eq: summary-of-comparison-maps}, the argument is very similar to the proof of \cite[Thm.~10.16]{GuoReinecke-Ccris}; for local computations we use enlarged framing as in Example~\ref{eg: enlarged-framing-inf-site} since $X$ is semistable. So we omit some details and refer to \textit{loc. cit.} for corresponding parts.

First consider the diagram
\begin{equation} \label{eq: diagram-gamma-et}
\xymatrix{
& R\Gamma_{\Prism}\otimes_{A_{\inf}}^L B_{\dR} \ar[ld]_{\gamma_{\et}} \ar[d]^{(1.a)}\\
R\Gamma_{\et}\otimes_{\Z_p}^L B_{\dR} \ar@{.>}[r]^{(1.b)}
& R\Gamma_{\proet}.
}    
\end{equation}

\noindent $\bullet$ \textbf{(1.a):} For any affinoid perfectoid space $\Spa(B, B^+)$ over $X_{C, \proet}$, consider the perfect prism $(\A_{\inf}(B^+), I)$. By \cite[Prop.~1.23]{Tian-prism-etale-comparison}, there is a unique $\delta_{\log}$-structure $M_{\Spf \A_{\inf}(B^+)}$ on $\Spf \A_{\inf}(B^+)$ such that $(\A_{\inf}(B^+), I, M_{\Spf \A_{\inf}(B^+)})$ becomes a log prism of $((X_C, M_{X_C})/(A, I, M_{\Spf A}))_{\Prism}^{\op}$. This defines a cocontinuous functor
\[
\Perfd / X_{C, \proet} \rightarrow ((X_{\calO_C}, M_{X_{\calO_C}})/(A, I, M_{\Spf A}))_{\Prism, \et}
\]
and thus gives a morphism of topoi
$\Sh(X_{C, \proet}) \rightarrow \Sh(((X_{\calO_C}, M_{X_{\calO_C}})/(A, I, M_{\Spf A}))_{\Prism, \et})$.
The right vertical map (1.a) is induced by this.\\ 

\noindent $\bullet$ \textbf{(1.b):} The map
$R\Gamma_{\et}\otimes_{\Z_p}^L B_{\dR} \rightarrow R\Gamma_{\proet}$ 
is defined in \cite[Thm.~8.8]{scholze-p-adic-hodge}, and is a filtered isomorphism. \\

\noindent $\bullet~\gamma_{\et}:$ The isomorphism 
$\gamma_{\et}\colon R\Gamma_{\Prism}\otimes_{A_{\inf}}^L B_{\dR} \rightarrow R\Gamma_{\et}\otimes_{\Z_p}^L B_{\dR}$
is given by the base change along $A_{\inf}[\mu^{-1}] \rightarrow B_{\dR}$ of the prismatic-\'etale comparison in \cite{Tian-prism-etale-comparison}: we have a natural isomorphism of $\A_{\inf}(B^+)$-algebras
\[
T(\calE_{\Prism})\otimes_{\Z_p}\A_{\inf}(B^+)[\mu^{-1}] \cong \calE_{\Prism}(\A_{\inf}(B^+), I, M_{\Spf \A_{\inf}(B^+)})[\mu^{-1}]
\]
functorial in $\Spa(B, B^+)$, by \cite[Lem.~5.5(1)]{Tian-prism-etale-comparison}. By the proof of \cite[Thm.~5.6]{Tian-prism-etale-comparison}, the comparison $\gamma_{\et}$ is given by the above functorial isomorphism so that the diagram \eqref{eq: diagram-gamma-et} is commutative.\\ 

Next, we consider the diagram
\[
\xymatrix{
R\Gamma_{\Prism}\otimes_{A_{\inf}}^L B_{\dR} \ar[d] \ar[r]^{\gamma_{\cris}} & R\Gamma_{\cris}\otimes_{A_{\cris}}^L B_{\dR} \ar[d]\\
R\Gamma_{\proet} & R\Gamma_{\inf}\otimes_{B_{\dR}^+}^L B_{\dR} \ar[l].
}    
\]
The right vertical map of the above diagram is given by Proposition~\ref{prop: comparison-cris-cohom-inf-cohom} together with the factorization
\[
R\Gamma_{\cris}\otimes_{A_{\cris}}^L B_{\dR} \rightarrow R\Gamma_{\logcris, K}\otimes_{A_{\logcris, K}}^L B_{\dR} \rightarrow R\Gamma_{\inf}\otimes_{B_{\dR}^+}^L B_{\dR},
\]
where the first map is an isomorphism by Proposition~\ref{prop: base-change-cris-cohom-Acris}(3). Consider the induced diagram
\begin{equation} \label{eq: diagram-gamma-cris-O_K}
\xymatrix{
R\Gamma_{\Prism}\otimes_{A_{\inf}}^L B_{\dR} \ar[d]^{(1.a)} \ar[r]^-{\gamma_{\cris, K}} & R\Gamma_{\logcris, K}\otimes_{A_{\logcris, K}}^L B_{\dR} \ar[d]^-{(2.a)}\\
R\Gamma_{\proet} & R\Gamma_{\inf}\otimes_{B_{\dR}^+}^L B_{\dR} \ar[l]^-{(2.b)}.
}    
\end{equation}

We have two \'etale local computations of $R\Gamma_{\logcris, K}$: let $\Spf R \in X_{\et}$ with an enlarged framing $(\Psi, \Sigma)$ as in Example~\ref{eg: enlarged-framing-inf-site}. In particular, we have an $\calO_K$-linear surjection $\calO_K\langle (T_\psi^{\pm 1})_{\psi \in \Psi}, (T_\sigma)_{\sigma \in \Sigma} \rangle \rightarrow R$. For each $\psi \in \Psi$ and $\sigma \in \Sigma$, choose compatible systems of $p$-power roots $T_\psi^{1/p^{\infty}}$ and $T_\sigma^{1/p^{\infty}}$, and let $R_{\infty}$ be the $p$-completed base change of $R$ along $\calO_K\langle (T_\psi^{\pm 1})_{\psi \in \Psi}, (T_\sigma)_{\sigma \in \Sigma} \rangle \rightarrow \calO_K\langle (T_\psi^{\pm 1/p^{\infty}})_{\psi \in \Psi}, (T_\sigma^{1/p^{\infty}})_{\sigma \in \Sigma} \rangle$. Then the map $R \rightarrow R_{\infty}$ is quasi-syntomic. Write $G = \prod_{\Psi} \Z_p(1)\times \prod_{\Sigma} \Z_p(1)$. Note that $\Spa(R_{\infty, C}, R_{\infty, \calO_C})$ is an affinoid perfectoid and the cover $\Spa(R_{\infty, C}, R_{\infty, \calO_C}) \rightarrow X_C$ is a $G$-torsor. For each $n \geq 0$, denote by $R^n$ (resp. $R_{\infty}^n$) the $p$-complete self-tensor product $R^{\widehat{\otimes}_{\calO_K}^{n+1}}$ (resp. $R_{\infty}^{\widehat{\otimes}_{\calO_K}^{n+1}}$). 

As in the proof of Proposition~\ref{prop: cris crystal-to-inf crystal}, let $D_{\pd, (\Psi, \Sigma)^n}(R)$ be the $p$-completed log PD-envelope of 
\[
A_{\inf, K}\langle (T_\psi^{\pm 1})_{\psi \in \Psi}, (T_\sigma)_{\sigma\in \Sigma}\rangle^{\widehat{\otimes}_{A_{\inf, K}}^{n+1}} \twoheadrightarrow R_{\calO_C}.
\]
Let $\A_{\logcris, K}(R_{\infty, \calO_C}^n)$ be the $p$-completed log PD-envelope of the map $\A_{\inf}(R_{\infty, \calO_C}^n)\otimes_{W(k)}\calO_K \twoheadrightarrow R_{\infty, \calO_C}$. By mapping $T_\psi$'s and $T_\sigma$'s respectively to $[(T_\psi^{1/p^m})_{m}]$ and $[(T_\sigma^{1/p^m})_{m}]$ in $A_{\inf}(R_{\infty, \calO_C})$, we get
\[
\lim_{[n] \in \Delta} \calE_{\cris}(D_{\pd, (\Psi, \Sigma)^n}(R)) \rightarrow \lim_{[n] \in \Delta} \calE_{\cris}(\A_{\logcris, K}(R_{\infty, \calO_C}^n)).   
\]
This map is an isomorphism since both compute the same crystalline cohomology $R\Gamma(((\Spf R_{\calO_C/p}, M_{\Spf R_{\calO_C/p}}) / (A_{\logcris, K}, M_{A_{\logcris, K}}))_{\CRIS}, \calE_{\cris})$ (note $R \rightarrow R_{\infty}$ is quasi-syntomic; see also \cite[Thm.~6.6]{du-moon-shimizu-cris-pushforward}).

Under this local situation, we have following descriptions of the maps in \eqref{eq: diagram-gamma-cris-O_K} in terms of \v{C}ech--Alexander complexes:\\

\noindent $\bullet$ \textbf{(2.a):} Recall from the proof of Proposition~\ref{prop: cris crystal-to-inf crystal} that 
\[
\calE_{\inf}(D_{(\Psi, \Sigma)^n}(R)) = \calE_{\cris}(D_{\pd, (\Psi, \Sigma)^n}(R))\otimes_{D_{\pd, (\Psi, \Sigma)^n}(R)} D_{(\Psi, \Sigma)^n}(R)
\]
and we have a natural map
\[
(\lim_{[n] \in \Delta} \calE_{\cris}(D_{\pd, (\Psi, \Sigma)^n}(R))\widehat{\otimes}_{A_{\logcris, K}}^L B_{\dR}^+ \rightarrow \lim_{[n] \in \Delta} \calE_{\inf}(D_{(\Psi, \Sigma)^n}(R)). 
\]
Since the second complex computes $R\Gamma_{\inf}$, the above map is an isomorphism by Proposition~\ref{prop: comparison-cris-cohom-inf-cohom}. The base change along $B_{\dR}^+ \rightarrow B_{\dR}$ yields the map (2.a).\\

\noindent $\bullet~\gamma_{\cris, K}:$ To describe the map $\gamma_{\cris, K}$ in \eqref{eq: diagram-gamma-cris-O_K}, note that for any choice of enlarged framing $(\Psi, \Sigma)$, we have subsets $\Psi' \subset \Psi$ and $\Sigma' \subset \Sigma$ with $\prod_{\sigma \in \Sigma'} T_\sigma = \pi$ in $R$ such that the induced map 
\[
\square\colon \calO_K\langle (T_\psi^{\pm 1})_{\psi \in \Psi'}, (T_\sigma)_{\sigma \in \Sigma'}\rangle /(\prod_{\sigma \in \Sigma'} T_\sigma - \pi) \rightarrow R
\]
is $p$-completely \'etale. So we have $\fkS_{\square} \rightarrow \A_{\inf}(R_{\infty, \calO_C})$ given by $T_\psi \mapsto [(T_\psi^{1/p^m})_{m}]$ for $\psi \in \Psi'$, $T_{\sigma} \mapsto [(T_\sigma^{1/p^m})_{m}]$ for $\sigma \in \Sigma'$, and $u\mapsto \prod_{\sigma \in \Sigma'} [(T_\sigma^{1/p^m})_{m}]$. This extends uniquely to $S_{\square} \rightarrow \A_{\logcris, K}(R_{\infty, \calO_C})$ and induces a map of cosimplicial rings 
\[
S_{\square}^{\rel, [\bullet]} \rightarrow \A_{\logcris, K}(R_{\infty, \calO_C}^{\bullet}),
\]
where $S_{\square}^{\rel, [\bullet]}$ is defined in Lemma~\ref{lem: Breuil-prism-self-products-relative} and the paragraph above. Recall that by Theorem~\ref{thm: general-base-change-BK-cohom} and its proof, we have
\[
R\Gamma_{\Prism}\otimes_{A_{\inf}}^L A_{\cris}[p^{-1}] \cong R\Gamma_{\Br}(X, j_{\ast}\calE_{\Prism})\widehat{\otimes}_{S_K}^L A_{\cris}[p^{-1}] \cong (\lim_{[n]\in \Delta} j_{\ast}\calE_{\Prism}(S_{\square}^{\rel, [\bullet]}))\widehat{\otimes}_{S_K}^L A_{\cris}[p^{-1}]. 
\]
Thus, by the proof of Theorem~\ref{thm: pris-cris-comparison-over-Breuil S}, the map $\gamma_{\cris, K}$ is given by
\begin{align*}
\gamma_{\cris, K}\colon R\Gamma_{\Prism}\otimes_{A_{\inf}}^L B_{\dR} & \cong (\lim_{[n]\in \Delta} j_{\ast}\calE_{\Prism}(S_{\square}^{\rel, [n]}))\widehat{\otimes}_{S_K}^L B_{\dR}\\
    & \rightarrow (\lim_{[n] \in \Delta} \calE_{\cris}(\A_{\logcris, K}(R_{\infty, \calO_C}^n))\widehat{\otimes}_{A_{\logcris, K}}^L B_{\dR}.    
\end{align*}

\noindent $\bullet$ \textbf{(1.a):} For the left vertical map of \eqref{eq: diagram-gamma-cris-O_K}, write $\Spa(R_{\infty, C}, R_{\infty, \calO_C})^{\bullet}$ for the \v{C}ech nerve of $\Spa(R_{\infty, C}, R_{\infty, \calO_C}) \rightarrow X_C$ in $X_{C, \proet}$. We have
\[
R\Gamma(X_{C, \proet}, \calE_{\cris, \Q}(\bB_{\dR})) \cong \lim_{[n] \in \Delta} \calE_{\cris, \Q}(\bB_{\dR})(\Spa(R_{\infty, C}, R_{\infty, \calO_C})^n).
\]
The map $\fkS_{R, \square} \rightarrow \A_{\inf}(R_{\infty, \calO_C})$ as above induces a map of cosimplicial rings  
\[
\fkS_{\square}^{\rel, [\bullet]} \rightarrow \bA_{\inf}(\Spa(R_{\infty, C}, R_{\infty, \calO_C})^{\bullet}).
\]
So the map (1.a) is given by 
\[
R\Gamma_{\BK}(X, j_{\ast}\calE_{\Prism})  \cong \lim_{[n] \in \Delta} j_{\ast}\calE_{\Prism}(\fkS_{\square}^{\rel, [n]})
  \rightarrow \lim_{[n] \in \Delta} j_{\ast}\calE_{\Prism}(\bA_{\inf}(\Spa(R_{\infty, C}, R_{\infty, \calO_C})^{n})),
\]
where the first isomorphism is given in the proof of Proposition~\ref{prop: BK-cohom}.

\noindent $\bullet$ \textbf{(2.b):} Similarly, $T_{\lambda}\mapsto [(T_{\lambda}^{1/p^m})_{m}]$ for $\lambda \in \Psi \cup \Sigma$ gives
\[
\lim_{[n] \in \Delta} \calE_{\inf}(D_{(\Psi, \Sigma)^n}(R)) \rightarrow \lim_{[n] \in \Delta} \calE_{\cris, \Q}(\bB_{\dR})(\Spa(R_{\infty, C}, R_{\infty, \calO_C})^n),
\]
which yields the bottom map in \eqref{eq: diagram-gamma-cris-O_K}.\\

From the above explicit descriptions in terms of \v{C}ech--Alexander complexes, it follows that the diagram~\eqref{eq: diagram-gamma-cris-O_K} commutes and its local constructions are functorial with respect to $\Spf R \in X_{\et}$ and integral enlarged framing $(\Psi, \Sigma)$. Furthermore, the map (2.b) is a quasi-isomorphism globally, i.e., after passing to global sections on $X$, since the other maps in the diagram are so.

The remaining parts of the diagram~\eqref{eq: summary-of-comparison-maps} are given by essentially the same argument as in the corresponding parts of the proof of \cite[Thm.~10.16]{GuoReinecke-Ccris} (using enlarged framing in Example~\ref{eg: enlarged-framing-inf-site} for our case), from which the commutativity of the entire diagram also follows.
\end{proof}

\appendix

\section{Complements on log formal schemes} \label{sec:log-formal-scheme}

Let $X$ be a $p$-adic formal scheme. For each $n\geq 1$, write $X_n$ for the scheme $(\lvert X\rvert, \calO_X/p^n)$ and let $i_n\colon X_{n,\et}\rightarrow X_\et$ denote the induced morphism of small \'etale sites. Continue to write $\calO_X$ for the structure sheaf on $X_\et$. 
Recall $\calO_X\xrightarrow{\cong}\varprojlim_ni_{n,\ast}\calO_{X_n}$ by \cite[Lem.~I.6.2.11]{fujiwara-kato}.

\begin{prop}\label{lem:log structure on formal scheme as inverse limit}
Let $M_X$ be a log structure on $X$ and write $M_{X_n}$ for the log structure on $X_n$ attached to the prelog structure $i_n^{-1}M_X\rightarrow \calO_{X_n}$.
The maps 
\[
M_X\rightarrow \varprojlim_n i_{n,\ast}M_{X_n}\quad\text{and} \quad\Gamma(X_\et,M_X) \rightarrow \varprojlim_n\Gamma(X_{n,\et},M_{X_n})
\]
are isomorphisms.
\end{prop}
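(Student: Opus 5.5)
The plan is to reduce the statement about monoid sheaves to a statement about units, using the standard fact that a log structure $M$ on $\calO$ sits in a pushout description: $M$ is the pushout of $\alpha^{-1}\calO^\times\rightarrow \calO^\times$ along $\alpha^{-1}\calO^\times\hookrightarrow M$, and $M$ is an $\calO^\times$-torsor over $\overline{M}=M/\calO^\times$ in the sense that $M\rightarrow \overline{M}$ is surjective with fibers $\calO^\times$-orbits acting freely. So the first step is to identify the characteristic sheaves: since $X_{n,\et}\simeq X_\et$ topologically (same underlying site via $i_n$), and $\calO_{X_n}^\times=i_n^{-1}\calO_X^\times/(1+p^n\calO_X)$-image is the quotient $\calO_X^\times\rightarrow \calO_{X_n}^\times$ (surjective because $p$ is topologically nilpotent, so units lift), I would show that $\overline{M}_{X_n}\cong i_n^{-1}\overline{M}_X$. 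Indeed $M_{X_n}=i_n^{-1}M_X\oplus_{i_n^{-1}\alpha^{-1}\calO_{X_n}^\times}\calO_{X_n}^\times$, and the preimage of $\calO_{X_n}^\times$ in $M_X$ is exactly $\alpha^{-1}(\calO_X^\times)=\calO_X^\times$ (an element whose image in $\calO_X$ is a unit mod $p^n$ is a unit, again by completeness). Hence $M_{X_n}=i_n^{-1}M_X/(1+p^n\calO_X)$, i.e. $M_{X_n}$ is the quotient of $M_X$ by the free action of the subgroup $1+p^n\calO_X\subset\calO_X^\times$.

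Second step: with this explicit description, the map $M_X\rightarrow\varprojlim_n i_{n,\ast}M_{X_n}$ fits into a map of exact sequences (of sheaves of sets with group actions) $1\rightarrow\calO_X^\times\rightarrow M_X\rightarrow\overline{M}_X\rightarrow 1$ versus $1\rightarrow\varprojlim\calO_{X_n}^\times\rightarrow\varprojlim M_{X_n}\rightarrow\varprojlim\overline{M}_{X_n}$. The outer terms agree: $\calO_X^\times\cong\varprojlim\calO_{X_n}^\times$ follows from $\calO_X\cong\varprojlim i_{n,\ast}\calO_{X_n}$ (cited from Fujiwara--Kato), and $\overline{M}_{X_n}=\overline{M}_X$ is constant in $n$. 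Injectivity of $M_X\rightarrow\varprojlim M_{X_n}$ follows since two sections with the same image differ (torsor) by a unit in $\bigcap_n(1+p^n\calO_X)=1$. For surjectivity, given a compatible system $(m_n)$, work étale locally: locally lift $m_1$ to $m\in M_X$; then $m_n=m\cdot u_n$ with $u_n\in\calO_{X_n}^\times$ compatible (uniquely, by freeness of the action on $M_{X_n}$), so $(u_n)$ defines $u\in\calO_X^\times$ and $mu$ maps to $(m_n)$. Uniqueness glues the local lifts. The main subtlety here is checking freeness of the $\calO_{X_n}^\times$-action on $M_{X_n}$, which holds for any log structure since $\alpha^{-1}\calO^\times\cong\calO^\times$; I do not need integrality.

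Finally, the statement on global sections follows formally: $\Gamma(X_\et,-)$ commutes with limits and $\Gamma(X_\et,i_{n,\ast}M_{X_n})=\Gamma(X_{n,\et},M_{X_n})$, so applying $\Gamma$ to the first isomorphism gives the second. The step I expect to need most care is the identification $M_{X_n}\cong i_n^{-1}M_X/(1+p^n\calO_X)$, in particular that the pushout does not collapse further, which rests on the surjectivity of $\calO_X^\times\rightarrow\calO_{X_n}^\times$ and the fact that sections of $\calO_X$ invertible mod $p^n$ are invertible.
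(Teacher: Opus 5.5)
Your strategy matches the paper's. Both arguments identify $\overline{M}_{X_n}=i_n^{-1}\overline{M}_X$: you do it through the explicit description $M_{X_n}=i_n^{-1}M_X/(1+p^n\calO_X)$, which is correct for any log structure, and the paper cites Ogus. Both then compare the sequences $1\to\calO^\times\to M\to\overline{M}\to 1$ using $\calO_X^\times\cong\varprojlim_n i_{n,\ast}\calO_{X_n}^\times$.

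The step that fails is your claim that $\calO^\times$ acts freely on \emph{any} log structure because $\alpha^{-1}\calO^\times\cong\calO^\times$. That condition only identifies the units of $M$. It says nothing about stabilizers of non-unit sections: for the hollow log structure $\calO^\times\sqcup\{\infty\}$ with $\alpha(\infty)=0$, every unit fixes $\infty$. Your argument relies on freeness twice. Injectivity needs the unit relating $m$ and $m'$ to be independent of $n$, and surjectivity needs the $u_n$ to be unique and compatible.

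Without freeness the proposition is actually false. On $X=\Spf\Z_p$, take the prelog structure $P=\Z\cup\{\infty\}\to\Z_p$ with $\infty$ absorbing, $k\mapsto(1+p)^k$ and $\infty\mapsto 0$. The associated log structure is $M_X=\calO_X^\times\sqcup\calO_X^\times/H$ with $H=(1+p)^{\Z}$, so units act on the second piece with stabilizer $H$. By your own formula, $M_{X_n}=\calO_{X_n}^\times\sqcup\calO_X^\times/H(1+p^n\calO_X)$. Pick $v\in(1+p)^{\Z_p}\smallsetminus(1+p)^{\Z}$. The classes of $1$ and $v$ in $\calO_X^\times/H$ are distinct global sections of $M_X$. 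However, $v\in H(1+p^n\Z_p)$ for every $n$, because an open subgroup containing $H$ contains its closure. So the two classes have the same image in every $M_{X_n}$, and neither map in the statement is injective.

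You therefore need a hypothesis. It suffices that $\calO_X^\times$ acts freely on $M_X$, which holds for instance when $M_X$ is integral, by cancellation. Freeness then passes to $M_{X_n}=M_X/(1+p^n\calO_X)$: if $um=km$ with $k\in 1+p^n\calO_X$, then $k^{-1}u$ fixes $m$, so $u=k$. With this hypothesis your argument is complete. The paper's exact-sequence argument tacitly uses the same torsor property, and the proposition is only applied to integral log structures in the descent lemma that follows it. So state integrality (or freeness) explicitly instead of asserting it is unnecessary.
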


\begin{proof}
Consider the short exact sequence of monoid sheaves
\[
1\rightarrow \calO_X^\times \rightarrow M_X \rightarrow \overline{M}_X\rightarrow 1
\quad\text{and}\quad
1\rightarrow \calO_{X_n}^\times \rightarrow M_{X_n} \rightarrow \overline{M}_{X_n}\rightarrow 1,
\]
where $\overline{M}_X\coloneqq M_X/\calO_X^\times$ and $\overline{M}_{X_n}\coloneqq M_{X_n}/\calO_{X_n}^\times$. By (a formal scheme version of) \cite[Rem.~III.1.1.6]{Ogus-log}, we have $\overline{M}_{X_n}=i_n^{-1}(\overline{M}_X)$. So we obtain a short exact sequence
\[
1\rightarrow i_{n,\ast}\calO_{X_n}^\times \rightarrow i_{n,\ast}M_{X_n} \rightarrow i_{n,\ast}i_n^{-1}(\overline{M}_X)\rightarrow 1
\]
with $i_{n,\ast}i_n^{-1}(\overline{M}_X)=\overline{M}_X$. Taking the inverse limit yields a commutative diagram with exact rows:
\[
\xymatrix{
1\ar[r] & \calO_X^\times \ar[r]\ar[d] &M_X \ar[r]\ar[d] & \overline{M}_X \ar[r]\ar@{=}[d] &1.\\
1\ar[r] & \varprojlim i_{n,\ast}\calO_{X_n}^\times \ar[r] & \varprojlim i_{n,\ast}M_{X_n} \ar[r] & \overline{M}_X  & 
}
\]
Since $\calO_X^\times \xrightarrow{\cong}\varprojlim i_{n,\ast}\calO_{X_n}^\times$, we conclude $M_X\xrightarrow{\cong} \varprojlim_n i_{n,\ast}M_{X_n}$.
The second assertion follows from the first by taking the global sections.
\end{proof}

\begin{lem}\label{lem:ff descent of log str}
Let $X'$ be a $p$-adic formal scheme and $f\colon X'\rightarrow X$ a quasi-compact adically faithfully flat morphism. Set $g\colon X''\coloneqq X'\times_XX' \rightarrow X$. Let $M_X$ be an integral log structure that admits a chart Zariski locally and let $M_{X'}$ (resp.~$M_{X''}$) denote the pullback log structure on $X'$ (resp.~$X''$). Then the diagram of sheaves of monoids
\[
M_X\rightarrow f_\ast M_{X'}\rightrightarrows g_\ast M_{X''}
\]
is exact.
\end{lem}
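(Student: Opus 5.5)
The plan is to reduce the statement to the known exactness for the unit sheaves and for the characteristic sheaf $\overline{M}_X=M_X/\calO_X^\times$, using the short exact sequences
\[
1\rightarrow \calO_X^\times\rightarrow M_X\rightarrow \overline{M}_X\rightarrow 1
\]
on $X$, $X'$, $X''$. Since pullback of log structures satisfies $\overline{M}_{X'}=f^{-1}\overline{M}_X$ and $\overline{M}_{X''}=g^{-1}\overline{M}_X$ (as in the proof of Proposition~\ref{lem:log structure on formal scheme as inverse limit}, via \cite[Rem.~III.1.1.6]{Ogus-log}), applying $f_\ast$ and $g_\ast$ gives a commutative diagram of three equalizer diagrams. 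The question is local on $X_\et$, so I will work on an affine $X$ with a chart $P\rightarrow \Gamma(X,M_X)$ and check exactness on sections over each \'etale $U\rightarrow X$; base-changing to $U$ preserves all hypotheses.

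First I will check the units row. Adically faithfully flat descent for $p$-adic formal schemes, reduced to faithfully flat descent for $X_n'\rightarrow X_n$ together with $\calO_X\cong\varprojlim_n i_{n,\ast}\calO_{X_n}$, gives exactness of $\calO_X\rightarrow f_\ast\calO_{X'}\rightrightarrows g_\ast\calO_{X''}$. Restricting to invertible elements yields the same for $\calO^\times$, since a section of $\calO_X$ that becomes a unit after faithfully flat pullback is already a unit.

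Next I will check the characteristic row: $\overline{M}_X\rightarrow f_\ast f^{-1}\overline{M}_X\rightrightarrows g_\ast g^{-1}\overline{M}_X$ should be exact. This is where the chart enters. $\overline{M}_X$ is a quotient of the constant sheaf $\underline{P}$, and $f^{-1}$ of an \'etale sheaf on $X$ satisfies descent along the fpqc-type map $f$ in the sense that sections of $f^{-1}\mathcal{G}$ that agree on $X''$ come from $\mathcal{G}$. For a constructible-type sheaf such as $\overline{M}_X$ (a quotient of a constant sheaf), I expect this to follow by checking on geometric points: $f$ is surjective, and the two pullbacks to $X''$ agree at points lying over the same point of $X$. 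This step is the main obstacle. I would first try to reduce it to the analogous statement for the schemes $X_1'\rightarrow X_1$, where $\overline{M}$ is unchanged, and then invoke fpqc descent of \'etale sheaves. That requires quasi-compactness of $f$ and exploits that $\overline{M}_X$ is the associated \'etale sheaf of a quotient of $\underline{P}$.

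Finally I will do a diagram chase. Injectivity of $M_X\rightarrow f_\ast M_{X'}$ follows from injectivity on units and on $\overline{M}$, together with integrality: two sections with the same image in $\overline{M}$ differ by a unit, and integrality lets us cancel. For the equalizer, take $m'\in M_{X'}(X'_U)$ with equal pullbacks. Its image in $\overline{M}$ descends to $\bar m\in\overline{M}_X(U)$; after \'etale localization on $U$, lift $\bar m$ to $m\in M_X$. Then $m'=f^\ast(m)\cdot v$ for a unique unit $v$ by integrality, and $v$ has equal pullbacks, so it descends by the units step. By uniqueness, the local descents glue.
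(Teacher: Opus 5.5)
Your argument is correct, but it is organized differently from the paper's proof.

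The paper works with the whole log structure at once. It reduces modulo $p^n$, applies fpqc descent of integral log structures on the schemes $X'_n\to X_n$ (citing \cite{Ogus-log}, Prop.~II.1.1.8.3 and Lem.~III.1.4.2), and then passes to the limit using $M_X\cong\varprojlim_n i_{n,\ast}M_{X_n}$ from Proposition~\ref{lem:log structure on formal scheme as inverse limit}.

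You instead argue by d\'evissage along $1\to\calO^\times\to M\to\overline{M}\to 1$:
\begin{itemize}
\item The unit row needs only the limit formula for $\calO_X$.
\item The characteristic row needs only $n=1$, because $X_{\et}=X_{1,\et}$, $X'_{\et}=X'_{1,\et}$, $X''_{\et}=X''_{1,\et}$ with $X''_1=X'_1\times_{X_1}X'_1$, and $\overline{M}$ is insensitive to the nilpotent thickening.
\item Integrality (cancellation in $M_{X'}$ and $M_{X''}$) glues the two rows in your diagram chase.
\end{itemize}
In effect, you reprove Ogus' descent lemma directly in the formal setting. This avoids Proposition~\ref{lem:log structure on formal scheme as inverse limit} for $M$ itself.

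One point to tighten. In the characteristic step, drop the geometric-point heuristic. On its own it gives injectivity and a compatible family of stalks, but not an actual section of $\overline{M}_X$ over $U$. Rely instead on your fallback: fpqc descent for \'etale sheaves along the quasi-compact faithfully flat morphism $X'_1\to X_1$ (SGA~4, Exp.~VIII, \S9). This yields exactness of $\overline{M}_X\to f_\ast f^{-1}\overline{M}_X\rightrightarrows g_\ast g^{-1}\overline{M}_X$ for any \'etale sheaf, with no appeal to its being a quotient of $\underline{P}$.

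With that citation in place, the Zariski-local chart is never used. So your route shows the statement holds for any integral log structure. The paper's route is shorter, given the citations to Ogus.
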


\begin{proof}
Set $X'_n\coloneqq X'\times_XX_n$ and $X''_n\coloneqq X''\times_XX_n$. Then they are schemes and $X''_n$ is identified with $X'_n\times_{X_n}X'_n$. By assumption, the induced map $f_n\colon X'_n\rightarrow X_n$ is quasi-compact faithfully flat.
By \cite[Prop.~II.1.1.8.3, Lem.~III.1.4.2]{Ogus-log}, $M_{X_n}\rightarrow f_{n,\ast} M_{X'_n}\rightrightarrows g_{n, \ast} M_{X''_n}$ is exact where $M_{Y_n}$ denotes the pullback log structure of $M_Y$ on $Y_n$ and $g_n$ denotes the induced map $X''_n\rightarrow X_n$. Now the lemma follows from Lemma~\ref{lem:log structure on formal scheme as inverse limit}.
\end{proof}

\section{Variants of crystalline sites} \label{sec: cryst-sites-variants}

\subsection{Crystalline sites}\label{subsec:crystalline-sites}

We introduce variants of crystalline sites used in \cite{koshikawa}, which we will use to establish the prismatic-crystalline comparison theorem in \S~\ref{sec: prismatic-crystalline comparison}. All the formal schemes in this subsection are $p$-adic.

Here is the set-up of this subsection: let $A$ be a \emph{$p$-torsion free} $p$-complete ring, and let $M_A\rightarrow A$ be a prelog structure on $A$ such that $M_A$ is integral. Let $I$ be a $p$-complete PD-ideal of $A$ containing $p$ with PD-structure $\gamma_A$. Let $(Y, M_Y)$ be a log $p$-adic formal scheme defined over $(A,M_A)$ such that $M_Y$ is integral and quasi-coherent.

\begin{defn}[{\cite[Rem.~6.6]{koshikawa}}]\label{def: Koshikawa-crystalline-site}
Define $((Y,M_Y)/(A,M_A))_\CRIS$ to be a site whose opposite category is given as follows:
an object is a quadruple $(B,M_{\Spf B},J,f)$ (or written as $(B,J)$ or $(B,B/J)$ for simplicity) where
\begin{itemize}
 \item  $(\Spf B,M_{\Spf B})$ is a log formal scheme associated with a \textit{prelog ring} $(B,M_B)$ \textit{over} $(A,M_A)$ such that $B$ is a \emph{$p$-torsion free} $p$-complete ring and $M_{\Spf B}$ is integral;
 $J$ is a $p$-complete PD-ideal of $B$ and that $B/J$ is $p$-complete;
 \item $f$ is a morphism $\Spf B/J\rightarrow Y$ of $p$-adic formal schemes;
 \item an exact closed immersion $(\Spf B/J,f^\ast M_Y)\hookrightarrow (\Spf B, M_{\Spf B})$.
\end{itemize}
A morphism from $(B,M_{\Spf B},J,f)$ to $(B',M_{\Spf B'},J',f')$ is a ring homomorphism $g\colon B\rightarrow B'$ that is compatible with the other structures. We use the same symbol $g$ to denote the induced morphism $(\Spf B',M_{\Spf B'})\rightarrow (\Spf B,M_{\Spf B})$.
We equip $((Y,M_Y)/(A,M_A))_\CRIS$ with the strict \'etale topology: $\{g_i\colon (B,M_{\Spf B},J,f)\rightarrow (B_i,M_{\Spf B_i},J_i,f_i) \}$ is a covering if $g_i\colon (\Spf B_i,M_{\Spf B_i})\rightarrow (\Spf B,M_{\Spf B})$ is $p$-completely \'etale and strict and jointly surjective, and if $J_i=(JB_i)^\wedge_p$ for each $i$.
To see that this defines a site, observe that the pushout exists for a diagram
\[
(B_1,M_{\Spf B_1},J_1,f_1)\xleftarrow{g_1}(B,M_{\Spf B},J,f)\xrightarrow{g_2}(B_2,M_{\Spf B_2},J_2,f_2)
\]
 in $((Y,M_Y)/(A,M_A))_\CRIS^\op$ if $g_1$ is $p$-completely \'etale and strict and if $J_1=(JB_1)^\wedge_p$, and in this case, the pushout $(B',M_{\Spf B'},J',f')$ is given by $B'=B_1\widehat{\otimes}_BB_2\coloneqq(B_1\otimes_BB_2)^\wedge_p$ and $J'=(J_2B')^\wedge_p$.

The association $(B,M_{\Spf B},J,f)\mapsto B$ defines 
a sheaf $\calO_\CRIS$ of rings by $p$-completely faithfully flat descent. 
\end{defn}

The notation $((Y,M_Y)/(A,M_A))_\CRIS$ is an abbreviation of $((Y,M_Y)/(A,(p), M_A))_\CRIS$ as the PD-structure on $J\subset B$ is compatible with the canonical PD-structure on $pB$ (see Remark~\ref{rem: PD-struct-compatible}).
When $(A,M_A)$ is $\Z_p$ with the trivial log structure, we will simply use $(Y,M_Y)_\CRIS$ to denote $((Y,M_Y)/(A,M_A))_\CRIS$ and call it the \textit{absolute crystalline site}.

\begin{rem}[{\cite[Rem.~6.5]{koshikawa}}] \label{rem: PD-struct-compatible}
For every $(B,J)\in ((Y,M_Y)/(A,M_A))_\CRIS^\op$, the PD-structures on $I$ and $J$ are compatible since $B$ is $p$-torsion free.
\end{rem}

\begin{rem}\label{rem: two absolute crystalline sites}
Strictly speaking, the above $(Y,M_Y)_\CRIS$ is different from the absolute crystalline site $(Y,M_Y)_\CRIS\coloneqq ((Y,M_Y)/\Z_p^\sharp)_\CRIS$ in \cite[Def.~10.1]{du-moon-shimizu-cris-pushforward}. However, we have comparison results as in Proposition~\ref{prop:comparison of crystals in two crystalline sites} below. So we often ignore the difference.
\end{rem}

For a morphism $h\colon (Y',M_{Y'})\rightarrow (Y,M_Y)$ of integral and quasi-coherent log $p$-adic formal schemes over $(A,M_A)$, one can define a morphism of topoi 
\[
h_\CRIS\colon ((Y',M_{Y'})/(A,M_A))_\CRIS\rightarrow ((Y,M_Y)/(A,M_A))_\CRIS
\]
in a way similar to \cite[\S~4]{du-moon-shimizu-cris-pushforward}; we leave the detail to the reader.

\begin{prop}\label{prop: proj to etale for cris topos}
There exists a morphism of topoi (called the \emph{projection from crystalline topos to \'etale topos})
\[
u_{Y} \coloneqq u_{Y/(A,M_A)}\colon \Sh(((Y,M_Y)/(A,M_A))_{\CRIS})\rightarrow \Sh(Y_\et)
\]
given by
\[
(u_{Y/(A,M_A),\ast}\calF)(V)=\Gamma(((V,M_V)/(A,M_A))_\CRIS,\calF|_{((V,M_V)/(A,M_A))_\CRIS})
\]
(where $M_V$ is the pullback log structure from $M_Y$) and
\[
(u_{Y/(A,M_A)}^\ast\calG)(B,M_{\Spf B},J,f)=(f_\et^\ast\calG)(\Spf B/J).
\]
\end{prop}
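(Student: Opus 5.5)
The plan is to argue exactly as in the classical construction of the projection from the crystalline topos to the étale topos (Berthelot, Kato; cf.~\cite[\S~4]{du-moon-shimizu-cris-pushforward}): define the pair $(u_\ast,u^\ast)$, show that both land in sheaves, prove adjunction, and check that $u^\ast$ is left exact.

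\textbf{Step 1: $u_\ast$ is a sheaf.} For $V\in Y_\et$ (with pullback log structure $M_V$), the restriction functor $((V,M_V)/(A,M_A))_\CRIS\rightarrow((Y,M_Y)/(A,M_A))_\CRIS$ identifies the source with the localized site at the sheaf $h_V$ given by $(B,J,f)\mapsto \Hom_Y(\Spf B/J,V)$. Indeed, an étale map $\Spf B/J\rightarrow V$ over $Y$ lifts uniquely along $p$-completely étale strict maps by topological invariance of the étale site for $p$-adic thickenings. Hence $u_\ast\calF(V)=\Hom(h_V,\calF)$. For an étale cover $\{V_i\rightarrow V\}$, the family $\{h_{V_i}\rightarrow h_V\}$ is an epimorphic family of sheaves, because étale covers of $\Spf B/J$ lift to strict $p$-completely étale covers of $(\Spf B,M_{\Spf B})$ with $J_i=(JB_i)^\wedge_p$. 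Moreover $h_{V_i}\times_{h_V}h_{V_j}=h_{V_i\times_VV_j}$. These two facts give the sheaf property.

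\textbf{Step 2: $u^\ast$ is a sheaf and is left adjoint to $u_\ast$.} For a covering $\{(B,J)\rightarrow(B_i,J_i)\}$, the maps $\Spf B_i/J_i\rightarrow\Spf B/J$ form an étale cover, so $(B,J)\mapsto(f_\et^\ast\calG)(\Spf B/J)$ is a sheaf because $f_\et^\ast\calG$ is an étale sheaf on $\Spf B/J$. Compatibility with pullback along morphisms $g$ follows from $(f\circ \bar g)^\ast=\bar g^\ast f^\ast$. For the adjunction $\Hom(u^\ast\calG,\calF)=\Hom(\calG,u_\ast\calF)$, a map $\calG\rightarrow u_\ast\calF$ amounts to compatible maps $\calG(V)\rightarrow\calF(B,J)$ for each $(B,J)$ with $f$ factoring through $V$. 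Using the description of $f_\et^\ast\calG(\Spf B/J)$ as sections over $\Spf B/J$ of a sheafified colimit over such factorizations, together with the identification of $(B,J)$ with étale localizations of $\Spf B/J$ from Step 1, both sides agree. This is the standard computation of \cite[\S~4]{du-moon-shimizu-cris-pushforward}, and I expect it to transfer verbatim.

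\textbf{Step 3: $u^\ast$ is exact.} This is immediate from the formula, since $f_\et^\ast$ is exact and evaluation on each object commutes with finite limits once we know the presheaf is already a sheaf.

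The main obstacle is the lifting statement in Step 1 in this non-Noetherian, $p$-torsion-free, $p$-completed log setting. One must check that the lifted $B_i$ is again $p$-torsion free and $p$-complete, that $J_i=(JB_i)^\wedge_p$ carries the induced PD-structure, and that exactness of the closed immersion is preserved. I would handle this via the unique deformation of $p$-completely étale algebras along the nilpotent-mod-$p^n$ thickenings $B/p^n\rightarrow B/(J,p^n)$, passing to the limit, with strictness making the log structures automatic.
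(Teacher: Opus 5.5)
Your proposal is correct and is essentially the paper's approach: the paper simply notes that the standard argument of \cite[Prop.~5.1]{du-moon-shimizu-cris-pushforward} carries over, and your Steps 1--3 spell out that argument. One small precision: identifying $((V,M_V)/(A,M_A))_\CRIS$ with the localization at $h_V$ is purely formal and needs no lifting. Lifting étale maps is needed only for the epimorphism $\coprod h_{V_i}\to h_V$ and for the adjunction, and there the thickenings $B/p^n\to B/(J+p^nB)$ are nil rather than nilpotent (each $x\in J$ has $x^p=p!\,\gamma_p(x)\in pB$), which still suffices for topological invariance of the étale site.
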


Note that there is an obvious restriction functor $\Sh(((Y,M_Y)/(A,M_A))_\CRIS)\rightarrow\Sh(((V,M_V)/(A,M_A))_\CRIS)$ for $V\in Y_\et$.
\begin{proof}
The proof of \cite[Prop.~5.1]{du-moon-shimizu-cris-pushforward} also works in this situation.
\end{proof}

\begin{defn}\label{defn:p-complete crystals over OCRIS}
An $\calO_\CRIS$-module $\calE$ is called a \emph{$p$-adically completed crystal} if 
\begin{enumerate}
  \item for each $(B,J)\in ((Y,M_Y)/(A,M_A))_\CRIS^\op$, $\calE(B,J)$ is a $p$-adically complete $B$-module, and if
  \item for each map $(B,J)\rightarrow (B',J')$ in $((Y,M_Y)/(A,M_A))_\CRIS^\op$, the induced map
\[
\calE(B,J)\widehat{\otimes}_BB'\coloneqq \varprojlim_n (\calE(B,J)\otimes_BB')/p^n(\calE(B,J)\otimes_BB') \rightarrow \calE(B',J')
\]
is an isomorphism.
\end{enumerate}
\end{defn}

\begin{lem}\label{lem: affine vanishing on crystalline site}
Let $\calE$ be a $p$-adically completed crystal on $((Y,M_Y)/(A,M_A))_\CRIS$.
\begin{enumerate}
  \item For each $n\geq 1$, the presheaf $\calE_n$ given by
$\calE_n(B,J)\coloneqq \calE(B,J)/p^n\calE(B,J)$ is a sheaf and satisfies $H^q((B,J),\calE_n)=0$ for every $(B,J)$ and $q>0$.
  \item The map $\calE\rightarrow \varprojlim_n \calE_n$ is an isomorphism, and $R^q\varprojlim_n \calE_n=0$ for every $q>0$. Moreover, $H^q((B,J),\calE)=0$ for every $(B,J)$ and $q>0$.
\end{enumerate}
\end{lem}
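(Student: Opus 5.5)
We argue by reduction modulo $p^n$, following the standard proof of affine vanishing for quasi-coherent crystals on a crystalline site, combined with repleteness.

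For (1), fix $(B,J)$ and a covering $\{(B,J)\rightarrow (B_i,J_i)\}$ in $((Y,M_Y)/(A,M_A))_\CRIS^\op$. Each $B\rightarrow B_i$ is $p$-completely \'etale and strict, and $J_i=(JB_i)^\wedge_p$. Since $B$ and $B_i$ are $p$-torsion free and $p$-complete, $B\rightarrow B_i$ is $p$-completely flat. Hence $B/p^n\rightarrow B_i/p^n$ is \'etale, and $\{B/p^n\rightarrow B_i/p^n\}$ is faithfully flat after passing to the finite product. By the crystal property and the fact that $(-)\widehat{\otimes}_B B_i$ agrees with $\otimes_B B_i$ modulo $p^n$, we have
\[
\calE_n(B_i,J_i)=\calE(B,J)/p^n\otimes_{B/p^n}B_i/p^n,
\]
and similarly on the pushouts $B_i\widehat{\otimes}_B B_j$, whose reductions mod $p^n$ are $B_i/p^n\otimes_{B/p^n}B_j/p^n$. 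The \v{C}ech complex of $\calE_n$ for this covering is therefore the Amitsur complex of the quasi-coherent module $\calE(B,J)/p^n$ along a faithfully flat map, so it is exact in positive degrees and equals $\calE_n(B,J)$ in degree $0$. This gives the sheaf property together with the vanishing of \v{C}ech cohomology for every covering of every object. By the usual Cartan criterion (e.g.\ \cite[03F9]{stacks-project}), this yields $H^q((B,J),\calE_n)=0$ for $q>0$.

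For (2), the isomorphism $\calE\cong\varprojlim_n\calE_n$ is checked sectionwise, using that $\calE(B,J)$ is $p$-adically complete. The transition maps $\calE_{n+1}\rightarrow\calE_n$ are surjective as presheaves, hence as sheaves. To obtain $R^q\varprojlim_n\calE_n=0$ and the vanishing of $H^q((B,J),\calE)$ for $q>0$, we apply \cite[Lem.~3.18]{scholze-p-adic-hodge} (or \cite[0D6K]{stacks-project}) to the inverse system $(\calE_n)_n$ on the localized site. The hypotheses are the vanishing $H^q((B',J'),\calE_n)=0$ for all objects and $q>0$ from (1), together with the surjectivity of $\calE_{n+1}(B',J')\rightarrow\calE_n(B',J')$ on every object $(B',J')$, which is clear from the definition. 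The lemma then gives $R\varprojlim_n\calE_n\cong\varprojlim_n\calE_n=\calE$. Moreover,
\[
R\Gamma((B,J),\calE)\cong R\varprojlim_n R\Gamma((B,J),\calE_n)=R\varprojlim_n\calE_n(B,J),
\]
and this has no higher cohomology because the system $\calE_n(B,J)$ is Mittag-Leffler with surjective transitions.

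The main obstacle is the descent step in (1). One has to make sure the fiber products in the site are really the $p$-completed tensor products, which is stated in Definition~\ref{def: Koshikawa-crystalline-site}, and that $p$-torsion freeness gives faithful flatness mod $p^n$. Everything after that is formal.
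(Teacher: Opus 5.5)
Your proposal is correct and follows essentially the same route as the paper. The paper reduces modulo $p^n$, uses the crystal property to identify the \v{C}ech complex of $\calE_n$ with an Amitsur complex for the faithfully flat map $B/p^n\rightarrow B'/p^n$ (after refining to a single $p$-completely \'etale faithfully flat cover), and then concludes (1) via \cite[03F9]{stacks-project} and (2) via \cite[Lem.~3.18]{scholze-p-adic-hodge}.
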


\begin{proof}
Fix $(B,J)\in ((Y,M_Y)/(A,M_A))_\CRIS^\op$ and set $B_n=B/p^n$. 
Note that every covering of $(B,J)$ is refined to a covering of the form $g\colon (B,J)\rightarrow (B',J')$ with $g$ being $p$-completely \'etale and faithfully flat and $J'=(JB')^\wedge_p$, which we also fix. Since $\calE$ is a $p$-adically completed crystal, we have $\calE_n(B'',J'')\cong \calE_n(B,J)\otimes_{B_n}B_n''$ for every $(B,J)\rightarrow (B'',J'')$. In particular, the \v{C}ech cohomology $\check{H}^q(\{(B,J)\rightarrow (B',J')\},\calE_n)$ is $\calE_n(B,J)$ if $q=0$ and zero if $q>0$ since $B_n\rightarrow B_n'$ is faithfully flat.
This implies (1) by \cite[03F9]{stacks-project} and (2) by \cite[Lem.~3.18]{scholze-p-adic-hodge}.
\end{proof}

Let us also recall the big crystalline site studied in \cite[\S~10]{du-moon-shimizu-cris-pushforward}.

\begin{defn}[{See \cite[Def.~10.1]{du-moon-shimizu-cris-pushforward} for the detail}]\label{def:DMS-crystalline-site}
Keep the set-up as before and assume further that $(Y,M_Y)$ is a log (formal) scheme over $A/I$.
Let $\calS^\sharp=(\calS,M_\calS,\calJ_\calS,\gamma_\calS)$ be the $p$-adic log PD-formal scheme associated to $(A,M_A,I,\gamma_A)$ and write $S^\sharp_n$ for the log PD-scheme $\calS^\sharp \times_{\Spf \Z_p} \Spec\Z_p/p^n$. 

Let $((Y,M_Y)/\calS^\sharp)_{\CRIS}$ denote the \textit{big relative logarithmic crystalline site}, which is the colimit of $((Y,M_Y)/S_n^\sharp)_{\CRIS}$: concretely, an object is a tuple $(U,T,M_T, f, i,\gamma)$ where $f\colon U\rightarrow Y$ is a morphism of schemes over $S_1$, $(T,M_T)$ is a log $S_n^\sharp$-scheme with $M_T$ integral and quasi-coherent for some $n\geq 1$, $i$ is an exact closed immersion $(U,M_U\coloneqq f^\ast M_Y)\rightarrow (T,M_T)$ over $(S_n,M_{S_n})$, and $\gamma$ is a PD-structure on the ideal of $\calO_T$ defining $U$ compatible with the PD-structure on $\calJ_{S_n}\calO_T$. We often abbreviate it as $(U,T)$. The morphisms are the obvious ones and we consider the strict \'etale topology. The association sending $(U,T)$ to $\calO_T(T)$ defines the structure sheaf $\calO_{Y/\calS}$, and we have the projection morphism of topoi $u_{Y,\calS}\colon \Sh(((Y,M_Y)/\calS^\sharp)_{\CRIS})\rightarrow \Sh(Y_\et)$.
When $\calS=\Spf \Z_p$ with trivial log structure and canonical PD-ideal $(p)$, we write $(Y,M_Y)_\CRIS$ for $((Y,M_Y)/\calS^\sharp)_{\CRIS}$.

We define the additive functor
\[
\alpha\colon \Ab(((Y,M_Y)/\calS^\sharp)_{\CRIS})\rightarrow \Ab(((Y,M_Y)/(A,M_A))_{\CRIS})
\]
as in \cite[Construction~10.6]{du-moon-shimizu-cris-pushforward}. 
For $(B,M_{\Spf B},J,f)\in ((Y,M_Y)/(A,M_A))_{\CRIS}^\op$, let 
$\overline{B}\coloneqq B/J$ and $B_n\coloneqq B/p^nB$. Then $(\Spec \overline{B}, \Spec B_n, M_{\Spec B_n}, f, \Spec \overline{B} \hookrightarrow \Spec B_n,\gamma)$ gives an object of $((Y,M_Y)/S_n^\sharp)_{\CRIS}$, for which we write $(\Spec \overline{B},\Spec B_n)$ for simplicity. We let $(\Spec \overline{B},\Spf B)\in \Sh(((Y,M_Y)/\calS^\sharp)_{\CRIS})$ denote the colimit over $n$ of the sheaves represented by $(\Spec \overline{B},\Spec B_n)$. For $\calF\in \Ab(((Y,M_Y)/\calS^\sharp)_{\CRIS})$, define $\alpha(\calF)$ by
\[
\alpha(\calF)\colon (B,M_{\Spf B},J,f)\mapsto \calF(\Spec \overline{B},\Spf B)=\varprojlim_n \calF(\Spec \overline{B},\Spec B_n).
\]
Since $\alpha(\calO_{Y/\calS})=\calO_\CRIS$, it yields $\alpha\colon \Mod(\calO_{Y/\calS})\rightarrow \Mod(\calO_\CRIS)$. Obviously, $\alpha$ is compatible with \'etale localization on $Y$.
\end{defn}

\begin{prop}\label{prop:comparison of crystals in two crystalline sites}
Assume that $M_A$ is fine and $\pi\colon (Y,M_Y)\rightarrow (\Spec A/I, M_A^a)$ is a smooth and integral morphism between fine log schemes. 
Then $\alpha$ induces an equivalence between the category of quasi-coherent $\calO_{Y/\calS}$-modules on $((Y,M_Y)/\calS^\sharp)_\CRIS$ and that of $p$-adically completed crystals on $((Y,M_Y)/(A,M_A))_\CRIS$.
Moreover, for a quasi-coherent $\calO_{Y/\calS}$-module $\calE$, there are functorial isomorphisms $Ru_{Y/\calS,\ast}\calE\cong Ru_{Y,\ast}\alpha$ in $D(Y_\et,A)$ and $R\Gamma(((Y,M_Y)/\calS^\sharp)_\CRIS,\calE)\cong R\Gamma(((Y,M_Y)/(A,M_A))_\CRIS,\calE)$ in $D(A)$
\end{prop}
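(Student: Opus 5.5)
The plan is to build a dictionary between the two sites in the affine, charted case, deduce the equivalence of crystal categories from it, and then compare cohomology via \v{C}ech--Alexander complexes. The statement is \'etale local on $Y$: both categories of crystals satisfy \'etale descent, $\alpha$ is compatible with \'etale localization, and both $Ru_{Y/\calS,\ast}$ and $Ru_{Y,\ast}$ can be computed \'etale locally. So I first reduce to $Y=\Spec R$ affine with a fine chart. Smoothness and integrality of $\pi$ then give an exact surjection $(B_0,M_{B_0})=(A\langle \N^T\rangle, M_A\oplus\N^T)\twoheadrightarrow$ (a smooth lift of) $(R,P)$, as in the proof of Proposition~\ref{prop:comparison-delta-log-cris-log-cris}. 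Let $(D,J_D)$ be its $p$-completed log PD-envelope, with \v{C}ech nerve $D^\bullet$. By \cite[Lem.~2.43]{bhatt-scholze-prismaticcohom}, each $D^n$ is $p$-completely flat over the $p$-torsion free ring $A$, hence $p$-torsion free. Thus $(D^n,J_{D^n})$ is an object of $((Y,M_Y)/(A,M_A))_\CRIS^\op$, and $(\Spec R,\Spf D^n)$ is the corresponding ind-object of the big site.

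The first step is to show that $(\Spec R,\Spf D)$ is weakly final in the big site and $D$ is weakly final in the small site. For a big-site object $(U,T)$ with $T$ affine over $S_n$, the map $\Z_p$-lifts from the smoothness of $B_0$ over $A$. One then extends over the PD-envelope by its universal property; the log structure is handled using exactness of the surjection, as in \cite[Prop.~6.2]{Kato-log}. The same argument works in the small site, and $D^n$ represents the products there.

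Next I identify both crystal categories with descent data. A quasi-coherent $\calO_{Y/\calS}$-module is the same as a compatible system of $D^\bullet/p^m$-modules with cocycle isomorphisms, namely quasi-coherent crystals on each $S_m$. Taking the limit over $m$ gives a $p$-adically complete $D$-module $E$ with a stratification over $D^1$. A $p$-adically completed crystal likewise yields $\calE(D)$ with $\calE(D)\widehat{\otimes}D^1\cong\calE(D^1)$ and the cocycle condition. Then $\alpha$ sends the former datum to the latter, since $\alpha(\calE)(D)=\varprojlim_m\calE(\Spec R,\Spec D/p^m)$, and the inverse functor is $E\mapsto (E/p^m)_m$. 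To prove full faithfulness and essential surjectivity, I compare both categories with the category of stratified $p$-complete $D$-modules. The step I expect to be the main obstacle is showing that reductions mod $p^m$ of a $p$-completed crystal recover the crystal. This means $(\calE(B)\widehat\otimes B')/p^m=\calE(B)\otimes B'/p^m$, and it holds because $\widehat\otimes$ is the classical completion, as in Remark~\ref{rem:completed crystal as limit}(1). One also has to check that every small-site object descends its values from $D$ via the weak finality.

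For cohomology, Lemma~\ref{lem: affine vanishing on crystalline site} together with Lemma~\ref{lem:CA method} and its crystalline analogue computes $R\Gamma(((Y,M_Y)/(A,M_A))_\CRIS,\alpha\calE)$ as $\Tot(\alpha\calE(D^\bullet))$. On the big site, \cite[Prop.~10.5, 10.7]{du-moon-shimizu-cris-pushforward} gives $R\Gamma((\Spec R,\Spf D^n),\calE)=R\varprojlim_m\calE(D^n/p^m)$, which is concentrated in degree $0$ because the transition maps are surjective. Since $(\Spec R,\Spf D)$ covers the final object, \v{C}ech descent then yields $R\Gamma=\Tot(\varprojlim_m\calE(D^\bullet/p^m))$, and these two totalizations are the same complex. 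Functoriality in $V\in Y_\et$, obtained by choosing compatible embeddings, gives $Ru_{Y/\calS,\ast}\calE\cong Ru_{Y,\ast}\alpha\calE$, and taking global sections gives the second isomorphism.
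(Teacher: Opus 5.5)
Your local analysis is sound and runs parallel to the paper's. You embed $Y$ into the free log ring $A\langle\N^T\rangle$ and use the $p$-completed log PD-envelope $D$ and its \v{C}ech nerve $D^\bullet$. The paper instead takes a smooth lift $\calY$ of $(Y,M_Y)$, the PD-envelopes $B^{[m]}$ of $Y\hookrightarrow\calY^{[m]}$, and HPD-stratifications. This difference is inessential. Your identification of both crystal categories with stratified $p$-complete $D$-modules is fine. So is your observation that, for affine charted $Y$, both cohomologies are $\Tot$ of the same cosimplicial module $\alpha(\calE)(D^\bullet)=\varprojlim_m\calE(\Spec R,\Spec D^\bullet/p^m)$. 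One small slip: $A\langle\N^T\rangle\to(\widetilde R,\widetilde P)$ is surjective but not exact. The log PD-envelope involves an exactification, and lifting the monoid part uses freeness of $\N^T$ together with \'etale-local surjectivity of $M_T\to M_U$, not exactness.

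The gap is in your last step. An isomorphism $Ru_{Y/\calS,\ast}\calE\cong Ru_{Y,\ast}\alpha(\calE)$ in $D(Y_\et,A)$ cannot be assembled from isomorphisms over affine charted $V\in Y_\et$ that depend on chosen embeddings.
\begin{itemize}
\item Morphisms in the derived category do not glue.
\item A non-affine $Y$ carries no global system of ``compatible embeddings''.
\item You have not shown that your identification is independent of the choice of $B_0$.
\end{itemize}
What is missing is a canonical global comparison morphism; once it exists, the \'etale-local check suffices.

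The paper builds this morphism at finite level. For each $n$, the functor $(B,J)\mapsto(\Spec B/J,\Spec B/p^n)$ into $((Y,M_Y)/\calS_n^\sharp)^{\aff}_\CRIS$ is continuous and cocontinuous. It therefore gives a morphism of topoi $\beta_n$ with $u_{Y/\calS_n}\circ\beta_n=u_Y$ and $\beta_n^\ast i_n^\ast\calE=\alpha(\calE)/p^n$. The map is then assembled from three inputs:
\begin{itemize}
\item $Ru_{Y/\calS,\ast}\calE\cong R\varprojlim_n Ru_{Y/\calS_n,\ast}i_n^\ast\calE$ from \cite[Prop.~10.5]{du-moon-shimizu-cris-pushforward};
\item the adjunction $i_n^\ast\calE\to R\beta_{n,\ast}\beta_n^\ast i_n^\ast\calE$;
\item $\alpha(\calE)\cong R\varprojlim_n\alpha(\calE)/p^n$ from Lemma~\ref{lem: affine vanishing on crystalline site}.
\end{itemize}
It then suffices to show that each $Ru_{Y/\calS_n,\ast}i_n^\ast\calE\to Ru_{Y,\ast}\beta_n^\ast i_n^\ast\calE$ is an isomorphism. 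That is \'etale local, and it is exactly your \v{C}ech--Alexander computation, run with $D^\bullet/p^n$ in the level-$n$ site; working at level $n$ also avoids ind-objects in the big site. With this inserted your argument is complete, and the statement for $R\Gamma$ follows by applying $R\Gamma(Y_\et,-)$.
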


\begin{proof}
Let $((Y,M_Y)/\calS^\sharp)_{\CRIS}^\aff\subset ((Y,M_Y)/\calS^\sharp)_{\CRIS}$ denote the full subcategory consisting of $(U,T)$ with $T$ being affine. Then the inclusion is a special cocontinuous functor (see \cite[03CG]{stacks-project}) and thus induces an equivalence on the associated topoi. We may and do regard $\alpha$ as a functor from $\Ab(((Y,M_Y)/\calS^\sharp)_{\CRIS}^\aff)$ in the rest of the proof.

For the first assertion, we may assume that $Y=\Spec R$ is affine and $M_Y$ admits a chart over $M_A$ since $\alpha$ is compatible with \'etale localization.
Then one can lift $(Y,M_Y)\rightarrow (\Spec A/I, M_A^a)$ to a morphism $(\calY,M_{\calY})=(\Spf \widetilde{R}, M_{\Spf \widetilde{R}})\rightarrow (\calS,M_{\calS})$ of affine $p$-adic log formal schemes such that $(Y_n,M_{Y_n})\coloneqq(\Spec \widetilde{R}/p^n, M_{\Spec \widetilde{R}/p^n})\rightarrow (S_n,M_{S_n})$ is smooth and integral for every $n\geq 1$ (see \cite[Prop.~3.14, Prop.~4.1]{Kato-log}). 
Write $(\calY^{[m]},M_{\calY^{[m]}})$ for the $(m+1)$-st self-fiber product of $(\calY,M_{\calY})$ over $(\calS,M_{\calS})$ and let $(Y,M_Y)\hookrightarrow(\calD^{[m]},M_{\calD^{[m]}})$ be the $p$-adically completed PD-envelope of the closed immersion $(Y,M_Y)\hookrightarrow(\calY^{[m]},M_{\calY^{[m]}})$ relative to $\calS^\sharp$. Then $\calD^{[m]}=\Spf B^{[m]}$ is affine (note $B^{[0]}=\widehat{R}$) and $B^{[m]}/p^n$ is flat over $A/p^n$ for every $n\geq 1$ by \cite[Cor.~4.5, Prop.~6.5]{Kato-log}. In particular, $B^{[m]}$ is a $p$-torsion free $p$-complete ring, and if we set $J^{[m]}\coloneqq \Ker(B^{[m]}\rightarrow R)$, then $(B^{[m]}, M_{\calD^{[m]}}, J^{[m]},\pi)$ defines an object of $((Y,M_Y)/\calS^\sharp)_{\CRIS}^\op$. We know from \cite[Prop.~IV.3.1.4.2]{Ogus-log} that the sheaf represented by $(B^{[0]}, M_{\calD^{[0]}}, J^{[0]},\pi)$ surjects onto the final object of the associated topos as morphisms of presheaves and $(B^{[m]}, M_{\calD^{[m]}}, J^{[m]},\pi)$ represents its $(m+1)$-st product as sheaves. Similarly, if we set $D^{[m]}_n\coloneqq \Spec B^{[m]}/p^n$, analogous properties hold for $(Y,D^{[0]}_n), (Y, D^{[m]}_n)\in ((Y,M_Y)/S_n^\sharp)_\CRIS^\aff$.

For a quasi-coherent $\calO_{Y/\calS}$-module $\calE$, set $E^{[m]}\coloneqq \alpha(\calE)(B^{[m]}, M_{\calD^{[m]}}, J^{[m]},\pi)=\varprojlim_n \calE(Y, D^{[m]}_n)$ for every $m\geq 1$. Note that each $E^{[m]}$ is a $p$-adically complete $B^{[m]}$-module. It follows easily from the preceding paragraph and the standard discussion on the HPD-stratification that both $\calE$ and $\alpha(\calE)$ determine and are determined by the $2$-truncated cosimplicial module $(E^{[\bullet]})_{0\leq \bullet\leq 2}$ over the $2$-truncated cosimplicial ring $(B^{[\bullet]})_{0\leq \bullet\leq 2}$ such that the induced map $E^{[\bullet]}\otimes_{B^{[\bullet]}}B^{[\bullet']}\rightarrow E^{[\bullet']}$ is an isomorphism for every map $[\bullet]\rightarrow [\bullet']$ (cf.~\cite[Prop.~6.8]{du-moon-shimizu-cris-pushforward}). One can similarly see that the category of $p$-adically completed crystals on $((Y,M_Y)/(A,M_A))_\CRIS$ is equivalent to the same category. The first assertion follows from these observations. 

For the second assertion, let us first construct a comparison map.
Let $n\geq 1$. The inclusion $((Y,M_Y)/\calS_n^\sharp)_\CRIS^\aff \hookrightarrow ((Y,M_Y)/\calS^\sharp)_\CRIS^\aff$ and the functor $((Y,M_Y)/(A,M_A))_{\CRIS} \rightarrow ((Y,M_Y)/\calS_n^\sharp)_\CRIS^\aff$ sending $(B,M_{\Spf B},J,f)$ to $(\Spec \overline{B},\Spec B_n)$ are both continuous and cocontinuous. They give rise to morphisms of topoi, which appear in the following commutative diagram of topoi
\[
\xymatrix{
& \Sh((Y,M_Y)/\calS_n^\sharp)_\CRIS^\aff)\ar[ld]_-{i_n}\ar[d]^-{u_{Y/\calS_n}} & \\
\Sh(((Y,M_Y)/\calS^\sharp)_{\CRIS}^\aff)\ar[r]^-{u_{Y/\calS}}& \Sh(Y_\et) & \Sh(((Y,M_Y)/(A,M_A))_{\CRIS})\ar[l]_-{u_{Y}}\ar[lu]_-{\beta_n}.
}
\]
Here one sees the commutative by verifying $i_n^\ast\circ u_{Y/\calS}=u_{Y\calS_n}$ and $\beta_n^\ast\circ u_{Y/\calS_n}^\ast=u_{Y}^\ast$.
Let $\calE$ be a quasi-coherent $\calO_{Y/\calS}$-module. By \cite[Prop.~10.5]{du-moon-shimizu-cris-pushforward}, we have $Ru_{Y/\calS,\ast}\calE\xrightarrow{\cong}R\varprojlim_n Ru_{Y/\calS_n,\ast}i_n^\ast\calE$ in $D(Y_\et,A)$\footnote{In \textit{loc.~cit.}, we prove an isomorphism in $D(Y_\et,Z)$ but the same proof works for the $A$-modules.}. On the other hand, we compute $\beta_n^\ast i_n^\ast \calE=\alpha(\calE)_n$ and thus $\alpha(\calE)\xrightarrow{\cong}R\varprojlim_n \beta_n^\ast i_n^\ast \calE$ by Lemma~\ref{lem: affine vanishing on crystalline site}. Hence $Ru_{Y,\ast}\alpha(\calE)\xrightarrow{\cong}R\varprojlim_n Ru_{Y,\ast} \beta_n^\ast i_n^\ast \calE$ by \cite[0A07]{stacks-project}. It is easy to see that the adjunction maps $i_n^\ast \calE \rightarrow R\beta_{n,\ast}\beta_n^\ast i_n^\ast \calE$ induce a comparison map
\begin{align*}
Ru_{Y/\calS,\ast}\calE\xrightarrow{\cong}R\varprojlim_n Ru_{Y/\calS_n,\ast}i_n^\ast\calE  \rightarrow & R\varprojlim_n Ru_{Y/\calS_n,\ast}R\beta_{n,\ast}\beta_n^\ast i_n^\ast \calE \\
&\xrightarrow{\cong}R\varprojlim_n Ru_{Y,\ast} \beta_n^\ast i_n^\ast \calE\xleftarrow{\cong}Ru_{Y,\ast}\alpha(\calE).    
\end{align*}
By further applying $R\Gamma(Y_\et,-)$, we also obtain 
\[
R\Gamma(((Y,M_Y)/\calS^\sharp)_\CRIS^\aff,\calE)\rightarrow R\Gamma(((Y,M_Y)/(A,M_A))_\CRIS,\calE)\quad \text{in} \quad D(A).
\]
To show that these are isomorphisms, it is enough to show that the map  
\[
Ru_{Y/\calS_n,\ast}i_n^\ast\calE  \rightarrow Ru_{Y/\calS_n,\ast}R\beta_{n,\ast}\beta_n^\ast i_n^\ast \calE=Ru_{Y,\ast} \beta_n^\ast i_n^\ast\calE
\]
is an isomorphism for each $n$, which can be checked \'etale locally on $Y$. More precisely, it is enough to show that for each \'etale morphism $Y'=\Spec R\rightarrow Y$ from an affine scheme, the induced map 
\begin{align*}
R\Gamma(((Y',M_{Y'})/\calS_n^\sharp &)_\CRIS^\aff,i_n^\ast\calE)=R\Gamma(Y',Ru_{Y/\calS_n,\ast}i_n^\ast\calE)\\ 
&\rightarrow R\Gamma(Y',Ru_{Y,\ast} \beta_n^\ast i_n^\ast\calE)= R\Gamma(((Y',M_{Y'})/(A,M_A))_\CRIS,\beta_n^\ast i_n^\ast\calE)
\end{align*}
is an isomorphism. For this, we may assume that $Y=Y'=\Spec R$ and consider the objects introduced in the second paragraph of the current proof. It follows from the \v{C}ech--Alexandar method (Lemma~\ref{lem: affine vanishing on crystalline site}, its analogue for $((Y,M_Y)/\calS_n)_\CRIS^\aff$, and \cite[p.~1184, fn.~10]{bhatt-scholze-prismaticcohom}) that they are naturally represented by the cosimplicial module $\calE(Y,D_n^{[\bullet]})$, which completes the proof.
\end{proof}

\begin{rem}
It is straightforward to see the following claims from the proof of Proposition~\ref{prop:comparison of crystals in two crystalline sites}:
\begin{enumerate}
 \item We may relax the assumptions in Proposition~\ref{prop:comparison of crystals in two crystalline sites}: for example, it continues to hold if $(Y,M_Y)\rightarrow (\Spec A/I,N_A^a)\rightarrow (\Spf A,M_A^a)$ is the base change along $(A_0,M_{A_0})\rightarrow (A,M_A)$ of morphisms $(Y_0,M_{Y_0})\rightarrow (\Spec A_0/I_0,N_{A_0}^a)\rightarrow (\Spf A_0,M_{A_0}^a)$ satisfying the assumptions.
 \item 
 One could formulate and show that the equivalence in Proposition~\ref{prop:comparison of crystals in two crystalline sites} (in a derived sense) is compatible with $Rh_{\CRIS,\ast}$; for example, this is the case if $h\colon (Y,M_Y)\rightarrow (\Spec A/I,M_A^a)$ is the reduced or mod $p$ fiber of a proper semistable formal scheme $X\rightarrow \Spf \calO_K$ (use a \v{C}ech--Alexander method similar to the one in \S~\ref{sec: semistable-case}).
\end{enumerate}
\end{rem}

\subsection{The \texorpdfstring{$\delta_\log$}{deltalog}-crystalline site} \label{sec: delta-log-cris-site}

In this subsection, we will review the $\delta_\log$-crystalline site considered in \cite[\S~6.1]{koshikawa}. Unless otherwise specified, we let $(A, (p), M_A)$ denote a bounded prelog prism with integral $M_A$, such that it is of rank $1$, or $(A, M_A)$ is a log ring. Let $I\subset A$ be a PD-ideal containing $p$. Let $(Y, M_Y)$ be a smooth log scheme over $(A/I, M_A)$ of Cartier type (cf.~\cite[Def.~4.8]{Kato-log}).

\begin{defn}[{\cite[Def.~6.4]{koshikawa}}]
Write $((Y, M_Y)/(A, M_A))_{\delta\text{-}\CRIS}$ to be the $\delta_{\log}$-\textit{crystalline site}, which is defined as the opposite category whose objects consist of
\begin{itemize}
\item a log prism $(B, (p), M_{\text{Spf}(B)}) = (B, (p), M_B)^a$ associated to a prelog prism $(B, (p), M_B)$ over $(A, (p), M_A)$ with integral log structure, and a $p$-completed PD ideal $J \subset B$ such that $B/J$ is classically $p$-complete (such object $(B, J, M_B)^a$ is called a $\delta_{\log}$-PD triple over $(A, M_A)$),
\item a map of $p$-adic formal schemes $f: \text{Spf}(B/J) \to Y$ over $A$, and
\item an exact closed immersion of log $p$-adic formal schemes
\[
(\text{Spf}(B/J), f^* M_Y) \hookrightarrow (\text{Spf}(B), M_{\text{Spf}(B)})
\]
over $(A, M_A)$.
\end{itemize}

Morphisms are those that preserve all structures. We endow it with the \'etale topology. The functor $\mathcal{O}_{\delta\text{-}\CRIS}$ sending $(B, J, M_B)^a$ to $B$ is indeed a sheaf and called the structure sheaf.  
\end{defn}

When $(A,I)=(\Z_p,(p))$, where $(p)$ is with the canonical PD-structure, and $A$ is equipped it with the trivial log structure, we will simply write $((Y,M_Y)/(A,M_A))_{\dCRIS}$ as $(Y,M_Y)_{\dCRIS}$ and call it the \textit{absolute} $\delta_\log$-crystalline site. The $\delta$-structure defines a natural endomorphism $\phi=\phi_{\delta\text{-}\CRIS}$ on the structure sheaf $\mathcal{O}_{\delta\text{-}\CRIS}$.

There are two natural functions associated with $((Y, M_{Y})/(A,M_A))_{\delta\text{-CRIS}}$ that play a crucial role in the comparison theorems discussed in the body of this paper.

\begin{construction}[{\cite[Rem.~6.6]{koshikawa}}]\label{const: delta-cris to affine cris}
Forgetting the $\delta$- and $\delta_\log$-structures defines a functor $((Y, M_{Y})/(A,M_A))_{\delta\text{-}\CRIS} \to ((Y, M_{Y})/(A,M_A))_{\CRIS}$. It is cocontinuous and induces a morphism of topoi
\[
\nu_\CRIS\coloneqq \nu_{Y/(A,M_A),\CRIS}\colon \Sh(((Y, M_{Y})/(A,M_A))_{\delta\text{-}\CRIS}) \to \Sh(((Y, M_{Y})/(A,M_A))_{\CRIS}), 
\]
Note $\nu_{\CRIS}^{-1}\calO_\CRIS=\calO_{\dCRIS}$. 
Define the \textit{projection to \'etale topos} functor
\[
u^\delta_Y\coloneqq u^\delta_{Y/(A,M_A)}\colon\Sh(((Y,M_{Y})/(A,M_A))_{\delta\text{-}\CRIS})\rightarrow \Sh(Y_\et)
\]
to be the composite $u_{Y/(A,M_A)}\circ \nu_{Y/(A,M_A),\CRIS}$ where $u_{Y/(A,M_A)}$ is defined in Proposition~\ref{prop: proj to etale for cris topos}. 

As
$\nu_\CRIS^\ast\colon \Sh(((Y, M_{Y})/(A,M_A))_{\CRIS},\calO_\CRIS)\rightarrow \Sh(((Y,M_{Y})/(A,M_A))_{\delta\text{-}\CRIS},\calO_{\delta\text{-}\CRIS})$ is exact, we have an adjunction morphism
\[
Ru_{Y/(A,M_A),\ast}\calF_\CRIS\rightarrow Ru_{Y/(A,M_A),\ast}^\delta \nu_\CRIS^\ast\calF_\CRIS
\]
for every $\calO_\CRIS$-module $\calF_\CRIS$.
In Proposition~\ref{prop:comparison-delta-log-cris-log-cris}, we show that this is a quasi-isomorphism if $Y$ is smooth of Cartier type over $(A/I,M_A)$ and $\calF_\CRIS$ is a $p$-adically completed crystal.
\end{construction}

\begin{construction}\label{const: f^J_Prism}
There is a cocontinuous functor
\[
f_{\Prism/(A,M_A)}\colon((Y,M_{Y})/(A,M_A))_{\delta\text{-CRIS}} \to ((Y^{(1)},M_Y^{(1)})/(\phi_\ast A, \phi_\ast M_A))_{\Prism,\et}
\]
defined at the beginning of \cite[\S~6.2]{koshikawa}. Here $(\phi_\ast A, \phi_\ast M_A)$ denotes $(A, M_A)$ regarded as a prelog ring over itself by $\phi$. The Frobenius endomorphism $\phi$ on the prelog ring $(A/(p), M_A)$ factors as
$(A/p, M_A)\rightarrow (A/I, M_A)\xrightarrow{\psi} (\phi_\ast A/p,\phi_\ast M_A)$, 
and we set $(Y^{(1)}, M_{Y^{(1)}})\coloneqq (Y, M_Y)\times_{(\Spec A/I,M_A^a),\psi}(\Spec \phi_\ast A/p,(\phi_\ast M_A)^a)$. 

Let us recall the construction in the case where $(Y,M_Y)$ is associated to a prelog ring $(R,P)$ over $(A,M_A)$ with $P$ integral. Take $(B,J,M_B)^a\in ((Y,M_{Y})/(A,M_A))_{\delta\text{-CRIS}}^\op$ such that $(B,M_B)$ is a log ring.
Use similar notations for $(B,J,M_B)$ and consider the pushout $M_B \to M_B^{(1)}$ of $M_A \xrightarrow{\phi_{M_A}} \phi_\ast M_A$ along $M_A \to M_B$ in the category of monoids. It defines  morphisms $(B,J,M_B) \xrightarrow{\psi_B} (\phi_\ast B,(p),M_B^{(1)})\rightarrow (\phi_\ast B,\phi_\ast M_B)$ of prelog rings. 

Write $f_B\colon (\Spec(B/J), M_B^a) \to (Y,M_Y)$ for the structure map and observe that the map $(\Spec(\phi_\ast B/p),(\phi_{\ast} M_B)^a) \xrightarrow{\psi_B} (\Spec(B/J), M_B^a) \xrightarrow{f_B} (Y,M_Y)$ factors through $Y^{(1)}$ over $\phi_\ast A/p$ and defines an object $(\phi_\ast B, (p), M_B^{(1)})^{a} \in (Y^{(1)},M_Y^{(1)})_\Prism$. This construction is functorial and defines the desired $f_{\Prism/(A,M_A)}$. It induces a morphism of ringed topoi
\[
\nu_\Prism \coloneqq (\nu_\Prism^{-1},\nu_{\Prism,\ast})\colon \Sh(((Y,M_Y)/(A,M_A))_{\dCRIS}) \rightarrow \Sh(((Y^{(1)},M_Y^{(1)})/(\phi_\ast A, \phi_\ast M_A)^a)_{\Prism,\et}).
\]
By construction, we have $\nu_\Prism^{-1}\calO_{\Prism} = \phi_{\ast} \calO_{\dCRIS}$.
\end{construction}

\bibliographystyle{amsalpha}
\bibliography{library}

\end{document}